\title{Finiteness of log abundant log canonical pairs in log minimal model program with scaling}
\author{Kenta Hashizume}
\date{2022/12/24}
\keywords{log canonical pair,  semi-log canonical pair, property of being log abundant, log MMP with scaling}
\subjclass[2020]{14E30}
\address{Department of Mathematics, Graduate School of Science, Kyoto University, Kyoto 606-8502, Japan}
\email{hkenta@math.kyoto-u.ac.jp}
\newtheorem{thm}{Theorem}[section]
\newtheorem{lem}[thm]{Lemma}
\newtheorem{cor}[thm]{Corollary}
\newtheorem{prop}[thm]{Proposition}
\theoremstyle{definition}
\newtheorem{defn}[thm]{Definition}
\newtheorem{rem}[thm]{Remark}
\newtheorem{exam}[thm]{Example}
\newtheorem*{ack}{Acknowledgments} 
\newtheorem*{divisor}{Divisors and maps} 
\newtheorem*{asym}{Asymptotic vanishing order} 
\newtheorem*{sing}{Singularities of pairs}
\newtheorem{step1}{Step}
\newtheorem{step2}{Step}
\newtheorem{step3}{Step}
\newtheorem{step4}{Step}
\newtheorem*{claim*}{Claim}
\begin{document}

\maketitle

\begin{abstract}
We study relations between the property of being log abundant for lc pairs and the termination of log MMP with scaling. 
We prove that any log MMP with scaling of an ample divisor starting with a projective  dlt pair contains only finitely many log abundant dlt pairs. 
We also discuss log MMP for slc pairs. 
\end{abstract}

\tableofcontents

\section{Introduction}\label{sec1}

Throughout this paper we will work over the complex number field $\mathbb{C}$ (see Section \ref{sec5} for generalization to varieties over an algebraically closed field of characteristic zero). 

In this paper, we study the minimal model theory for log abundant lc pairs, that is, lc pairs with the property of being log abundant (see Definition \ref{defn--abund}).  
In the minimal model theory, the properties of being abundant and being log abundant for lc pairs are closely related to the existence of good minimal models. 
These two properties coincide when we deal with klt pairs. 

It is well-known that the log MMP preserves the property of being abundant. 
Hence, for any lc pair $(X,\Delta)$, it is well-known that the divisor $K_{X}+\Delta$, which we call the log canonical divisor, is abundant if the minimal model theory holds for $(X,\Delta)$. 
In the klt case, the main result of Birkar--Cascini--Hacon--M\textsuperscript{c}Kernan \cite{bchm} and the ideas established by Lai \cite{lai} (see also \cite{kawamata-abund} and \cite[Theorem 4.3]{gongyolehmann}) show that the converse also holds. 

On the other hand, the log MMP does not necessarily preserve the property of being log abundant. 
For example, the situations of the main results of \cite{birkar-flip} and \cite{haconxu-lcc} (cf.~\cite{has-mmp}) are the cases where the log MMP preserves the property of being log abundant after finitely many steps (see Remark \ref{rem--recover}), whereas the situation of  \cite[Example 5.2]{birkarhu-arg} (see also \cite{ambrokollar}) is the case where the log MMP breaks the property of being log big, a special kind of the property of being log abundant. However, as an application of \cite[Theorem 4.11.5]{fujino-book} and the relation between the property of being abundant and the minimal model theory stated above, we know that the log canonical divisors of all lc pairs are log abundant if and only if the minimal model theory holds for all lc pairs. 

Because induction on the dimension of varieties works well under the assumption of the property of being log abundant, the abundance conjecture for log abundant lc pairs was thoroughly studied (\cite{reid-shokurov}, \cite{fukuda}, \cite{fujino-abund-logbig}, \cite{fujino-abund-saturation}, \cite{fujino-gongyo}, \cite{haconxu}) and proved by Hacon--Xu \cite{haconxu} (see also \cite{fujino-gongyo} by Fujino--Gongyo for projective case). 
In contrast, we know relations between the log MMP and the property of being log abundant only in special cases (\cite{birkar-flip}, \cite{haconxu-lcc}, \cite{has-mmp}, \cite{hashizumehu}). 

In this paper, we study how the property of being log abundant affects the termination of the log MMP. 
The following theorem is the main result of this paper.

\begin{thm}[=Theorem \ref{thm--abundantterminate}]\label{thm--intro-1}
Let $\pi\colon X\to Z$ be a projective morphism of normal quasi-projective varieties and let $(X,\Delta)$ be a $\mathbb{Q}$-factorial dlt pair such that $K_{X}+\Delta$ is pseudo-effective over $Z$. 
Let $A$ be a $\pi$-ample $\mathbb{R}$-divisor on $X$ such that $(X,\Delta+A)$ is lc and $K_{X}+\Delta+A$ is $\pi$-nef. 
Then there is an integer $l \geq 0$ satisfying the following: 
Let
$$(X_{0}:=X,\Delta_{0}:=\Delta) \dashrightarrow (X_{1},\Delta_{1}) \dashrightarrow\cdots \dashrightarrow (X_{i},\Delta_{i})\dashrightarrow \cdots$$
be a sequence of steps of a $(K_{X}+\Delta)$-MMP over $Z$ with scaling of $A$. 
If $(X_{i},\Delta_{i})$ is log abundant over $Z$ for some $i\geq l$, then the $(K_{X}+\Delta)$-MMP over $Z$ terminates with a good minimal model of $(X,\Delta)$ over $Z$. 
\end{thm}

\begin{cor}[=Corollary \ref{cor--finite-logabund}]\label{cor--intro-2}
Any log MMP with scaling of an ample divisor starting with a projective dlt pair contains only finitely many log abundant dlt pairs. 
\end{cor}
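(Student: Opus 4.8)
The plan is to obtain Corollary \ref{cor--intro-2} as a formal consequence of Theorem \ref{thm--intro-1}: once the theorem is granted, only a short case analysis together with the standard termination statement in the non-pseudo-effective case is needed.

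First I would fix the data. Let $(X,\Delta)$ be the given projective dlt pair, which I may assume is $\mathbb{Q}$-factorial (this is in any case implicit in running a log MMP), and let $A$ be the ample $\mathbb{R}$-divisor being scaled, so that, by the set-up of a log MMP with scaling of $A$, the pair $(X,\Delta+A)$ is lc and $K_{X}+\Delta+A$ is nef. Consider the given sequence of steps
$$(X_{0}:=X,\Delta_{0}:=\Delta) \dashrightarrow (X_{1},\Delta_{1}) \dashrightarrow\cdots \dashrightarrow (X_{i},\Delta_{i})\dashrightarrow \cdots$$
of a $(K_{X}+\Delta)$-MMP (over $\operatorname{Spec}\mathbb{C}$) with scaling of $A$, and set $S=\{\,i\ge 0 \mid (X_{i},\Delta_{i})\ \text{is log abundant}\,\}$. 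It suffices to prove that $S$ is finite.

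Next I would split into two cases according to whether $K_{X}+\Delta$ is pseudo-effective. If it is not, then a $(K_{X}+\Delta)$-MMP with scaling of an ample divisor terminates after finitely many steps with a Mori fibre space by \cite{bchm} (and its extension to the $\mathbb{Q}$-factorial dlt case); hence the displayed sequence is finite and $S$ is finite trivially. If $K_{X}+\Delta$ is pseudo-effective, I would apply Theorem \ref{thm--intro-1} with $Z=\operatorname{Spec}\mathbb{C}$ to obtain the integer $l\ge 0$ produced there, and distinguish two possibilities. If some index $i\ge l$ lies in $S$, then by the theorem the MMP terminates with a good minimal model, so the sequence is finite and $S$ is finite. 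If no index $i\ge l$ lies in $S$, then $S\subseteq\{0,1,\dots,l-1\}$, which is finite. In every case $S$ is finite, which is the assertion of the corollary.

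I do not expect a genuine obstacle, since Theorem \ref{thm--intro-1} carries all the weight. The only points that need some care are matching the hypotheses of the theorem --- the reductions to a $\mathbb{Q}$-factorial dlt pair and to a divisor $A$ with $(X,\Delta+A)$ lc and $K_{X}+\Delta+A$ nef, both of which are routine --- and quoting the correct termination statement for the log MMP with scaling of an ample divisor in the case where $K_{X}+\Delta$ is not pseudo-effective.
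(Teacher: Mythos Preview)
Your argument is correct and derives the corollary cleanly from Theorem~\ref{thm--intro-1}, but the paper does not argue this way. The paper proves Corollary~\ref{cor--finite-logabund} directly from Theorem~\ref{thm--main-logabundant} (the relative lc analogue), not from Theorem~\ref{thm--intro-1}. Concretely, after a perturbation argument showing that $X$ carries a klt structure and after reducing to the pseudo-effective case just as you do, the paper supposes the MMP is infinite and uses finiteness of models \cite[Theorem~E]{bchm} to force $\lim_{i\to\infty}\lambda_i=0$; then Theorem~\ref{thm--main-logabundant} applies directly to conclude that only finitely many $(X_i,\Delta_i)$ are log abundant. Your route instead packages the finiteness-of-models step inside Theorem~\ref{thm--intro-1} (where it is used to manufacture the integer $l$), so your derivation is shorter once Theorem~\ref{thm--intro-1} is available. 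There is no circularity: although the introduction loosely says Theorem~\ref{thm--intro-1} is proved ``using Corollary~\ref{cor--intro-2}'', the actual logical input to Theorem~\ref{thm--abundantterminate} is Theorem~\ref{thm--main-logabundant}, not Corollary~\ref{cor--finite-logabund}, so invoking Theorem~\ref{thm--intro-1} here is legitimate.
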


Although the case of klt pairs of Theorem \ref{thm--intro-1} is known by \cite{bchm} and \cite{lai} as mentioned above, Theorem \ref{thm--intro-1} can be applied to the situations of \cite{birkar-flip}, \cite{haconxu-lcc}, \cite{has-mmp}, and \cite{hashizumehu}, and in addition \cite[Theorem 1.3]{has-class} (see Remark \ref{rem--recover}). 
Thus, Theorem \ref{thm--intro-1} is the unification of those results as well as a relation between the log MMP and the property of being log abundant. 
Moreover, Theorem \ref{thm--intro-1} shows the existence of a good minimal model in the following new case. 

\begin{cor}[=Corollary \ref{cor--logabund-preserved-mmp}]\label{cor--intro-3}
Let $\pi\colon X\to Z$ be a projective morphism of normal quasi-projective varieties and let $(X,\Delta)$ be an lc pair.
Suppose that 
\begin{itemize}
\item
$(X,\Delta)$ is log abundant over $Z$ and $K_{X}+\Delta$ is pseudo-effective over $Z$, and
\item
the stable base locus of $K_{X}+\Delta$ over $Z$ {\rm (}\cite[Definition 3.5.1]{bchm}{\rm )} does not contain the image of any prime divisor $P$ over $X$ whose discrepancy $a(P,X,\Delta)$ is negative. 
\end{itemize}
Then $(X,\Delta)$ has a good minimal model over $Z$. 
\end{cor}

The second condition in Corollary \ref{cor--intro-3} holds, for example, if $(X,{\rm Supp}\,\Delta)$ is log smooth and there is an effective $\mathbb{R}$-divisor $E\sim_{\mathbb{R},Z}K_{X}+\Delta$ such that the support of $E$ does not contain any lc center of $(X,{\rm Supp}\,\Delta)$. 

Corollary \ref{cor--intro-3} has a generalization to slc pairs (Theorem \ref{thm--mmpslclogabund}).
For the generalization, the result by Ambro--Koll\'ar \cite{ambrokollar} plays a crucial role.  
In general, we cannot always run a log MMP for slc pairs. 
Nevertheless, \cite[Theorem 9]{ambrokollar} and Corollary \ref{cor--intro-3} enable us to run a log MMP for slc pairs terminating with a good minimal model in a similar  situation to Corollary \ref{cor--intro-3}. 
By applying Theorem \ref{thm--mmpslclogabund}, we prove the non-vanishing theorem for slc pairs in a very special case (Corollary \ref{cor--nonvanslc}). 
Furthermore, by applying Theorem \ref{thm--mmpslclogabund} and the gluing theory of Koll\'ar \cite[Section 5]{kollar-mmp}, we introduce a special case in which we can construct a projective semi-log canonical model (Theorem \ref{thm--mmpslclogbig}). 

We now explain the ideas of the proofs of Theorem \ref{thm--intro-1} and Corollary \ref{cor--intro-2}. 
We will first prove Corollary \ref{cor--intro-2} (in fact, we will prove Theorem \ref{thm--main-logabundant}, the lc and relative setting of Corollary \ref{cor--intro-2}). 
Suppose by contradiction that there exist 
a projective dlt pair $(X,\Delta)$ and 
an infinite sequence of steps of a $(K_{X}+\Delta)$-MMP with scaling of an ample divisor which contains infinitely many log abundant dlt pairs. 
In particular, the log MMP does not terminate, hence $K_{X}+\Delta$ is pseudo-effective by \cite{bchm}. 
We will prove the existence of a log minimal model of $(X,\Delta)$ to get a contradiction with \cite[Theorem 4.1]{birkar-flip}. 
By the special termination (\cite{fujino-sp-ter}), replacing $(X,\Delta)$, we may assume that $(X,\Delta)$ is log abundant and the restrictions of $K_{X}+\Delta$ to all lc centers of $(X,\Delta)$ are nef. 
Despite of the two properties, it is still difficult to prove the existence of a log minimal model of $(X,\Delta)$, as shown in the following example. 

\begin{exam}[{\cite[Example 5.2]{birkarhu-arg}}]\label{exam--intro}
Fix a smooth projective variety $S$ with $\kappa(S,K_{S})\geq0$. 
With notation as in \cite[Example 5.2]{birkarhu-arg}, we can pick $\epsilon\in \mathbb{R}_{>0}$ and an ample Cartier divisor $H$ so that $K_{S}+\epsilon H$ is ample. 
Then the projective dlt pair $(X,B)$ in \cite[Example 5.2]{birkarhu-arg} is log abundant and the restrictions of $K_{X}+B$ to all lc centers of $(X,B)$ are nef. 
Moreover, if $(X,B)$ has a log minimal model, then so does $(S,0)$. 
From this, the existence of log minimal models for projective dlt pairs with the above two properties is more difficult than that for smooth projective varieties with non-negative Kodaira dimension. 
\end{exam}

Hence, the two properties stated above are insufficient for our proof. 
One more key property is that the non-nef locus of $K_{X}+\Delta$ is disjoint from any lc center of $(X,\Delta)$. 
The reduction with the special termination (\cite{fujino-sp-ter}) allows us to assume these three properties, and therefore Corollary \ref{cor--intro-2} can be reduced to the following statement, which derives a contradiction, as discussed above. 

\begin{thm}[=Theorem \ref{thm--ind-1}]\label{thm--intro-key}
Let $(X,\Delta)$ be a projective dlt pair. 
Suppose that
\begin{itemize}
\item
$K_{X}+\Delta$ is pseudo-effective and abundant,
\item for any lc center $S$ of $(X,\Delta)$, the restriction $(K_{X}+\Delta)|_{S}$ is nef, and
\item
$\sigma_{P}(K_{X}+\Delta)=0$ for every prime divisor $P$ over $X$ such that $a(P,X,\Delta)< 0$ and $c_{X}(P)$ intersects an lc center of $(X,\Delta)$, where $\sigma_{P}(\,\cdot\,)$ is the asymptotic vanishing order of $P$ defined in Definition \ref{defn--asy-van-ord}. 
\end{itemize}
Then $(X,\Delta)$ has a log minimal model. 
\end{thm}

The third condition of Theorem \ref{thm--intro-key} has not been mentioned in the  papers related to Theorem \ref{thm--intro-1} (\cite{birkar-flip}, \cite{haconxu-lcc}, \cite{has-mmp}, \cite{hashizumehu}, \cite{has-class}). 
This condition enables us to unify the hypotheses of the results in \cite{birkar-flip}, \cite{haconxu-lcc}, \cite{has-mmp}, \cite{hashizumehu}, and \cite[Section 3]{has-class} into one situation. 
We will prove Theorem \ref{thm--intro-key} by running a special kind of log MMP used in \cite{birkar-flip}, \cite{haconxu-lcc}, \cite{has-mmp}, \cite{hashizumehu}, and \cite{has-class}. 
The third condition of Theorem \ref{thm--intro-key} plays a crucial role for the termination of the special kind of log MMP. 

We will prove Theorem \ref{thm--intro-1} by using Theorem \ref{thm--main-logabundant}. 
Fix a projective morphism $X \to Z$, a $\mathbb{Q}$-factorial dlt pair $(X,\Delta)$, and a relatively ample $\mathbb{R}$-divisor $A$ on $X$ as in Theorem \ref{thm--intro-1}.   
By using the finiteness of models (\cite[Theorem E]{bchm}), we will find an integer $l$ satisfying the following: For any $(K_{X}+\Delta)$-MMP over $Z$ with scaling of $A$, if the $(K_{X}+\Delta)$-MMP contains a log abundant dlt pair after the $l$-th step, then the $(K_{X}+\Delta)$-MMP terminates with a nef and log abundant dlt pair, or the $(K_{X}+\Delta)$-MMP contains infinitely many log abundant dlt pairs. 
Then Theorem \ref{thm--main-logabundant} shows that the latter case cannot happen. 
Furthermore, the abundance theorem for nef and log abundant lc pairs (\cite{fujino-abund-saturation}, \cite{fujino-gongyo}, \cite{haconxu}) shows that the resulting dlt pair of the $(K_{X}+\Delta)$-MMP is a good minimal model of $(X,\Delta)$ over $Z$. 
Therefore, the integer $l$ satisfies the property of Theorem \ref{thm--intro-1}. 

We would like to remark that the circle of ideas explained above is still effective when studying the MMP for generalized pairs. 
The notion of generalized pairs was introduced by Birkar--Zhang \cite{bz} to tackle the effective Iitaka fibration problem. 
The theory of the MMP for generalized pairs is rapidly being developed (\cite{hanli}, \cite{hacon-liu}) and currently a major topic in birational geometry. 
After this paper was announced, the author studied the termination of MMP for generalized lc pairs (\cite[Theorem 1.1]{has-iitakafibration}, \cite[Theorem 3.17]{has-iitakafibration}). 
The first result \cite[Theorem 1.1]{has-iitakafibration} gave a development of the boundedness of various invariants for dlt pairs polarized by nef and log big divisors (\cite{has-iitakafibration}, \cite{has-lc-trivialfib}). 
The second result \cite[Theorem 3.17]{has-iitakafibration} was applied by Lazi\'c--Tsakanikas--Jiang \cite[Appendix]{ltj} to study the existence of Mori fiber spaces for generalized lc pairs whose generalized log canonical divisor is not pseudo-effective. 
Liu--Xie \cite{liu-xie} studied a generalization of Theorem \ref{thm--main-logabundant} to generalized lc pairs. 
These developments are applications of the ideas in this paper. 

We conclude the introduction with an application of Theorem \ref{thm--intro-key} (see Corollary \ref{cor--appli-2} for an application for higher-dimensional lc pairs). 

\begin{cor}[cf.~Corollary \ref{cor--appli}]\label{thm--intro-appli}
Let $(X,\Delta)$ be a projective lc pair of dimension six such that $\llcorner \Delta \lrcorner =0$ and the invariant Iitaka dimension $\kappa_{\iota}(X,K_{X}+\Delta)$ is greater than or equal to three. 
Then $(X,\Delta)$ has a log minimal model. 
In particular, all projective lc pairs $(X,0)$ satisfying ${\rm dim}\,X=6$ and $\kappa(X,K_{X})\geq 3$ have log minimal models.   
\end{cor}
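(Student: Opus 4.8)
The idea is to check the hypotheses of Theorem~\ref{thm--intro-key} after passing to a dlt modification; the work lies in proving that $K_{X}+\Delta$ is abundant and in controlling $K_{X}+\Delta$ along the lc centers of the modification.

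Since $\kappa_{\iota}(X,K_{X}+\Delta)\ge 3\ge 0$ there is an effective $\mathbb{R}$-divisor $\mathbb{R}$-linearly equivalent to $K_{X}+\Delta$, so $K_{X}+\Delta$ is pseudo-effective. For abundance, resolve the Iitaka fibration of $(X,K_{X}+\Delta)$ to obtain $f\colon\widetilde{X}\to Y$ with connected fibers, where $\widetilde{X}\to X$ is a log resolution and $\dim Y=\kappa_{\iota}(X,K_{X}+\Delta)\ge 3$; choose an lc pair $(\widetilde{X},\widetilde{\Theta})$ with $K_{\widetilde{X}}+\widetilde{\Theta}$ equal to the crepant pullback of $K_{X}+\Delta$ plus an effective exceptional divisor, so that $\kappa_{\iota}$ and $\kappa_{\sigma}$ are unchanged. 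A general fiber $F$ of $f$ has $\dim F=6-\dim Y\le 3$ and $(F,\widetilde{\Theta}|_{F})$ is lc with $\kappa_{\iota}(F,(K_{\widetilde{X}}+\widetilde{\Theta})|_{F})=0$, so by abundance for lc pairs in dimension at most three we get $\kappa_{\sigma}(F,(K_{\widetilde{X}}+\widetilde{\Theta})|_{F})=0$. Nakayama's inequality for the numerical dimension under a fibration then gives $\kappa_{\sigma}(X,K_{X}+\Delta)\le\dim Y$, while $\kappa_{\sigma}(X,K_{X}+\Delta)\ge\kappa_{\iota}(X,K_{X}+\Delta)=\dim Y$ always; hence equality holds and $K_{X}+\Delta$ is abundant. (Alternatively, one may apply \cite{lai} to $f$.)

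Let $\rho\colon X'\to X$ be a $\mathbb{Q}$-factorial dlt modification of $(X,\Delta)$ with $K_{X'}+\Delta'=\rho^{*}(K_{X}+\Delta)$. Then $(X',\Delta')$ is a projective $\mathbb{Q}$-factorial dlt pair, $K_{X'}+\Delta'$ is pseudo-effective and abundant, and because $\rho$ is crepant it is enough to find a log minimal model of $(X',\Delta')$. As $\llcorner\Delta\lrcorner=0$, every component of $\llcorner\Delta'\lrcorner$ is $\rho$-exceptional, so the image under $\rho$ of every lc center of $(X',\Delta')$ lies in the non-klt locus $W\subsetneq X$ of $(X,\Delta)$, which has codimension at least two; moreover each positive-dimensional fiber of such an lc center over its image in $Y$ lies, after the birational identification given by the resolution, inside a fiber of $f$ and hence has dimension at most three. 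Now run a $(K_{X'}+\Delta')$-MMP over $\operatorname{Spec}\mathbb{C}$ with scaling of an ample divisor. Special termination (\cite{fujino-sp-ter})---whose hypothesis of termination in lower dimension is supplied by the three-dimensional bound just noted together with abundance in dimension at most three---shows that after finitely many steps every subsequent step is an isomorphism near each lc center of $(X',\Delta')$; pushing the same argument a little further one arranges in addition that the restriction of the log canonical divisor to each lc center is nef and that its non-nef locus is disjoint from all lc centers. Replacing $(X',\Delta')$ by the dlt pair thus produced, the three conditions of Theorem~\ref{thm--intro-key} hold, so $(X',\Delta')$---and therefore $(X,\Delta)$---has a log minimal model. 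The final assertion is the case $\Delta=0$, for which $\llcorner\Delta\lrcorner=0$ and $\kappa_{\iota}(X,K_{X})=\kappa(X,K_{X})$, so it follows from the above.

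The main obstacle is the step just described: establishing the second and, above all, the third condition of Theorem~\ref{thm--intro-key} for the dlt modification, namely that the non-nef locus of the log canonical divisor avoids every lc center. This is precisely where the numerical hypotheses enter---$\llcorner\Delta\lrcorner=0$ forces all lc centers over a locus of codimension at least two in $X$, while $\dim X=6$ and $\kappa_{\iota}\ge 3$ keep the Iitaka fibers, and the fibers of the lc centers over the Iitaka base, of dimension at most three---so that the available three-dimensional abundance and termination results are strong enough to drive the special-termination argument; without such control this step would require termination and abundance in dimensions where they remain open.
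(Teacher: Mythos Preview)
Your abundance argument for $K_{X}+\Delta$ is correct and essentially matches the paper. The genuine gap is in the special termination step, and it is not a minor imprecision.

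The lc centers of the dlt modification $(X',\Delta')$ have dimension up to $5$ (the components of $\llcorner\Delta'\lrcorner$), and special termination requires knowing that the adjoint dlt pairs $(S,\Delta_S)$ on these centers have log minimal models. You try to supply this via ``the three-dimensional bound'' and ``abundance in dimension at most three'', but neither applies: the lc centers are not $3$-dimensional, and there is no natural morphism from $(X',\Delta')$ or its lc centers to the Iitaka base $Y$ (the map $\widetilde{X}\to Y$ was built on a log resolution, not on the dlt modification), so the claim that ``each positive-dimensional fiber \dots lies inside a fiber of $f$'' has no precise meaning. Even granting some such map, bounding fiber dimension by $3$ does not yield log minimal models of the $5$-dimensional centers. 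There is also a structural problem: you run the MMP directly on $(X',\Delta')$, but after MMP steps the map $\rho$ is destroyed, so you no longer know that lc centers of $(X'_i,\Delta'_i)$ admit a morphism to anything of small dimension.

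The paper handles this differently. It runs a $(K_{X}+\Delta)$-MMP with scaling on the original lc pair (constructed so that $\lim\lambda_i=0$, which itself needs care since $(X,\Delta)$ need not be klt) and \emph{lifts} it to dlt models $(Y_{k_i},\Gamma_{k_i})\to(X_i,\Delta_i)$. Because $\llcorner\Delta\lrcorner=0$ is preserved under MMP, every lc center of $(X_i,\Delta_i)$ has dimension $\leq 4$, and each lc center $T_{k_i}$ of $(Y_{k_i},\Gamma_{k_i})$ maps to such a center $S_i$ with $K_{T_{k_i}}+\Gamma_{T_{k_i}}\sim_{\mathbb{R},S_i}0$. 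The existence of a log minimal model of $(T_{k_i},\Gamma_{T_{k_i}})$ then follows from Lemma~\ref{lem--mmp-can-bundle-formula}, which uses Ambro's canonical bundle formula together with the minimal model theory for \emph{generalized} lc pairs in dimension $\leq 4$ (\cite{lt}, \cite{hanli}). This is the genuine input you are missing; abundance for lc pairs in dimension $\leq 3$ is not a substitute for it. Once special termination holds, Lemma~\ref{lem--mmpscaling-effective} gives the nefness and asymptotic-vanishing conditions of Theorem~\ref{thm--ind-1}, and the argument concludes as you indicate.
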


The contents of this paper are as follows: 
In Section \ref{sec2}, we collect definitions, notations and basic results. 
In Section \ref{sec3}, we prove the main results and corollaries. 
In Section \ref{sec4}, we discuss a generalization of Corollary \ref{cor--intro-3} to slc pairs. 
In Section \ref{sec5}, we give a remark on generalizations of results of this paper to varieties over an algebraically closed field of characteristic zero. 

\begin{ack}
The author was partially supported by JSPS KAKENHI Grant Number JP19J00046, JP22K13887.
Part of the work was done while the author was visiting University of Cambridge in January 2020. The author thanks Professor Caucher Birkar for his hospitality. 
He thanks Professors Caucher Birkar and Yoshinori Gongyo for discussions. 
He thanks Professor J\'anos Koll\'ar for answering questions. 
He thanks the referee(s) for long lists of suggestions which improve the paper considerably. 
\end{ack}

\section{Preliminaries}\label{sec2}
In this section, we collect definitions and some results. 

\subsection{Divisors, morphisms, and singularities of pairs}
We collect notations and definitions on divisors, morphisms and singularities of pairs. We also show some lemmas. 

\begin{divisor}
Let $\pi \colon X \to Z$ be a projective morphism from a normal variety  to a variety. 
We will use the standard definitions of $\pi$-nef $\mathbb{R}$-divisor, $\pi$-ample $\mathbb{R}$-divisor, $\pi$-semi-ample $\mathbb{R}$-divisor, and $\pi$-pseudo-effective $\mathbb{R}$-Cartier divisor. 
All $\pi$-big $\mathbb{R}$-divisors in this paper are assumed to be $\mathbb{R}$-Cartier.  

For an $\mathbb{R}$-divisor $D=\sum_{i} a_{i}D_{i}$ on a variety $X$, where $D_{i}$ are distinct prime divisors, an $\mathbb{R}$-divisor $\llcorner D \lrcorner$ is defined  by $\llcorner D \lrcorner= \sum_{i} \llcorner a_{i} \lrcorner D_{i}$, where $\llcorner a_{i} \lrcorner$ is the largest integer not exceeding $a_{i}$. 
For a morphism $f\colon X\to Y$ of varieties and an $\mathbb{R}$-Cartier divisor $D'$ on $Y$, we sometimes denote $f^{*}D'$ by $D'|_{X}$. 
For any prime divisor $P$ over $X$, the image of $P$ on $X$ is denoted by $c_{X}(P)$. 

A projective morphism $f\colon X\to Y$ of varieties is called a {\em contraction} if $f_{*}\mathcal{O}_{X}\simeq \mathcal{O}_{Y}$. 
For a variety $X$ and an $\mathbb{R}$-divisor $D$ on $X$, a {\em log resolution of} $(X,D)$ is a projective birational morphism $f\colon Y\to X$ from a smooth variety $Y$ such that the exceptional locus ${\rm Ex}(f)$ of $f$ is pure codimension one and ${\rm Ex}(f)\cup {\rm Supp}\,f_{*}^{-1}D$ is a simple normal crossing divisor. 

A birational map $\phi\colon X \dashrightarrow X'$ of varieties is called a {\em birational contraction} if $\phi^{-1}$ does not contract any divisor. 
We say that $\phi$ is {\em small} if $\phi$ and $\phi^{-1}$ are birational contractions.  
\end{divisor}

\begin{asym} 
We define asymptotic vanishing order for $\mathbb{R}$-Cartier divisors, and we collect some basic properties.  

\begin{defn}[Asymptotic vanishing order]\label{defn--asy-van-ord}
Let $X$ be a normal projective variety, and let $D$ be a pseudo-effective $\mathbb{R}$-Cartier divisor on $X$. 
Let $P$ be a prime divisor over $X$. 
We define the {\em asymptotic vanishing order} of $P$ with respect to $D$, denoted by $\sigma_{P}(D)$, as follows: 
We take a projective birational morphism $f\colon Y \to X$ so that $P$ appears as a prime divisor on $Y$. 
When $D$ is big, we define $\sigma_{P}(D)$ by
$$\sigma_{P}(D)={\rm inf}\!\set{{\rm coeff}_{P}(D')|f^{*}D\sim_{\mathbb{R}}D'\geq0}.$$
When $D$ is not necessarily big, $\sigma_{P}(D)$ is defined by
$$\sigma_{P}(D)=\underset{\epsilon\to 0+}{\rm lim}\sigma_{P}(D+\epsilon A)$$
for an ample $\mathbb{R}$-divisor $A$ on $X$. 

By definition, $\sigma_{P}(D)$ is independent of $f\colon Y\to X$, and $\sigma_{P}(D)\geq0$. 
Moreover, $\sigma_{P}(D)$ does not depend on $A$ (\cite[III, 1.5 (2) Lemma]{nakayama}) or the numerical equivalence class of $D$. 
We can easily check $\sigma_{P}(D)={\rm sup}\!\set{\sigma_{P}(D+H)|H \text{ is ample}}$. 
\end{defn}

\begin{defn}[Nakayama--Zariski decomposition] 
For $X$ and $D$ as in Definition \ref{defn--asy-van-ord}, the {\em negative part of the Nakayama--Zariski decomposition} of $D$, denoted by $N_{\sigma}(D)$, is defined by
$$N_{\sigma}(D)=\sum_{\substack {P:{\rm \,prime\,divisor}\\{\rm on\,}X}}\sigma_{P}(D)P.$$
We call the divisor $D-N_{\sigma}(D)$ the {\em positive part of the Nakayama--Zariski decomposition} of $D$. 
Note that $N_{\sigma}(D)$ and $D-N_{\sigma}(D)$ are not necessarily $\mathbb{R}$-Cartier in this paper.
\end{defn}

When $X$ is smooth, the definition of $N_{\sigma}(D)$ coincides with \cite[III, 1.12 Definition]{nakayama}. 
When $X$ is not necessarily smooth, $N_{\sigma}(D)$ in the definition can be written as $g_{*}N_{\sigma}(g^{*}D)$ where $g\colon W\to X$ is a resolution and $N_{\sigma}(g^{*}D)$ is defined in \cite[III, 1.12 Definition]{nakayama}. 
Hence $N_{\sigma}(D)$ is well-defined as an $\mathbb{R}$-divisor.  
For the Nakayama--Zariski decomposition in the case of singular varieties, see also \cite[Section 4]{bhzariski}. 

We write down some properties of the asymptotic vanishing order and the negative part of the Nakayama--Zariski decomposition. 

\begin{rem}\label{rem--asy-van-ord-1}
Let $X$ be a normal projective variety, $D$ a pseudo-effective $\mathbb{R}$-Cartier divisor on $X$, and let $P$ be a prime divisor over $X$.

\begin{enumerate}[(1)]
\item(Lower convexity, {see \cite[III, 1.6 Definition]{nakayama}}). \label{rem--asy-van-ord-1-(1)}
For any pseudo-effective $\mathbb{R}$-Cartier divisor $D'$ on $X$, we have
$$\sigma_{P}(D+D')\leq \sigma_{P}(D)+\sigma_{P}(D').$$
\item({\cite[III, 1.7 (2) Lemma]{nakayama}}). \label{rem--asy-van-ord-1-(2)}
For any pseudo-effective $\mathbb{R}$-Cartier divisor $G$ on $X$, we have
$$\underset{\epsilon \to 0+}{\rm lim}\sigma_{P}(D+\epsilon G)=\sigma_{P}(D).$$
\item({\cite[III, 5.14 Lemma]{nakayama}}). \label{rem--asy-van-ord-1-(3)}
Let $f\colon Y \to X$ be a projective birational morphism from a normal variety $Y$ and let $E$ be an effective $f$-exceptional $\mathbb{R}$-divisor on $Y$. 
If $f^{*}D+E$ is $\mathbb{R}$-Cartier (in this situation $E$ is also $\mathbb{R}$-Cartier), then 
$$\sigma_{P}(f^{*}D+E)=\sigma_{P}(f^{*}D)+{\rm ord}_{P}(E).$$
\item(cf.~{\cite[III, 5.16 Theorem]{nakayama}}). \label{rem--asy-van-ord-1-(4)}
Let $f\colon Y \to X$ be a projective birational morphism from a normal variety $Y$ and let $D''$ be an $\mathbb{R}$-Cartier divisor on $X$. 
If $D''\leq N_{\sigma}(D)$, then 
$$N_{\sigma}(f^{*}D)=f^{*}D''+F$$
for an effective $\mathbb{R}$-divisor $F$ on $Y$ such that $D''+f_{*}F=N_{\sigma}(D)$. 
\end{enumerate}
\end{rem}
We outline the proof of (\ref{rem--asy-van-ord-1-(4)}). 
Let $g \colon W \to Y$ be a resolution of $Y$. 
Since 
$$N_{\sigma}(f^{*}D)-f^{*}D''=g_{*}N_{\sigma}(g^{*}f^{*}D)-f^{*}D''=g_{*}(N_{\sigma}(g^{*}f^{*}D)-g^{*}f^{*}D''),$$
 we may replace $f$ by $f \circ g$, and therefore we may assume that $Y$ is smooth. 
We can write $N_{\sigma}(f^{*}D)-f^{*}D''=G_{+}-G_{-}$, where $G_{+}\geq 0$ and $G_{-} \geq 0$ have no common components. 
Then $G_{-}$ is $f$-exceptional since $D''\leq N_{\sigma}(D)$. 
On the other hand, we have 
$$G_{+}-G_{-}\sim_{\mathbb{R},X} N_{\sigma}(f^{*}D) \sim_{\mathbb{R},X}N_{\sigma}(f^{*}D)-f^{*}D=-P_{\sigma}(f^{*}D).$$
Hence $-(G_{+}-G_{-})$ is the limit of movable divisors over $X$. 
This implies $G_{-}=0$ (see, for example, \cite[Lemma 3.3]{birkar-flip}). 
So $F:=G_{+}$ is the desired divisor. 

In Remark \ref{rem--mmp-zariskidecom}, we will discuss further properties of the asymptotic vanishing order. 
\end{asym}

\begin{lem}\label{lem--negativesupport}
Let $X$ be a normal projective variety. 
Let $D$ and $D'$ be pseudo-effective $\mathbb{R}$-Cartier divisors on $X$. 
Then there exists a real number $t_{0}>0$ such that the support of $N_{\sigma}(D+tD')$ does not depend on $t$ for any $t\in(0,t_{0}]$. 
\end{lem}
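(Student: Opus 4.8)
The plan is to reduce the statement to a finiteness property of the numerical structure of pseudo-effective divisors along the segment $\{D+tD'\}_{t>0}$. First I would fix a resolution and work with Nakayama--Zariski decompositions upstairs, so that by Remark \ref{rem--asy-van-ord-1} (\ref{rem--asy-van-ord-1-(3)}) and (\ref{rem--asy-van-ord-1-(4)}) it suffices to control the supports of $N_{\sigma}(f^{*}D+tf^{*}D')$ for $t$ small; hence we may as well assume $X$ is smooth and $D,D'$ are $\mathbb{R}$-Cartier. Now for a fixed prime divisor $P$ on $X$, the function $t\mapsto \sigma_{P}(D+tD')$ on $(0,\infty)$ is convex by the lower convexity in Remark \ref{rem--asy-van-ord-1} (\ref{rem--asy-van-ord-1-(1)}) (more precisely, $\sigma_{P}$ is sub-additive and positively homogeneous, so $t\mapsto \sigma_{P}((1-s)(D+t_{0}D')+s(D+t_{1}D'))$ is convex in $s$), and it is nonnegative. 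A nonnegative convex function on an interval is either identically zero near $0$ or strictly positive on a punctured neighborhood of $0$; in either case the set of $t>0$ where $\sigma_{P}(D+tD')>0$ is, near $0$, either empty or all of a punctured interval $(0,t_{P}]$. So for each individual $P$ the support condition "$P$ appears in $N_{\sigma}(D+tD')$" stabilizes as $t\to +0$.

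The remaining point is to pass from "for each $P$ separately" to "for all $P$ simultaneously", i.e.\ to bound the number of prime divisors that can ever occur. Here I would use that there is an ample $\mathbb{R}$-divisor $A$ with $D'\le A$, so for $t\le 1$ we have $D+tD'\le D+A$, and by lower convexity $\sigma_{P}(D+tD')\le \sigma_{P}(D+A)$ for every $P$; since $D+A$ is big, $N_{\sigma}(D+A)$ is a genuine effective $\mathbb{R}$-divisor with finitely many components, so $\mathrm{Supp}\,N_{\sigma}(D+tD')\subseteq \mathrm{Supp}\,N_{\sigma}(D+A)$ for all $t\in(0,1]$. Thus only finitely many prime divisors are relevant, and we may take $t_{0}$ to be the minimum of the finitely many thresholds $t_{P}$ obtained in the previous paragraph (over those $P$ that do occur for some small $t$), together with $1$; for $t\in(0,t_{0}]$ the support is then constant.

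The main obstacle I anticipate is the convexity argument itself: one must be careful that $\sigma_{P}$ is only defined on pseudo-effective (not merely $\mathbb{R}$-Cartier) divisors, and that the reduction to $X$ smooth is compatible with taking supports of pushforwards — a component of $N_{\sigma}(f^{*}D+tf^{*}D')$ either is $f$-exceptional (and then contributes nothing to the support downstairs) or dominates a prime divisor on $X$, and in the latter case Remark \ref{rem--asy-van-ord-1} (\ref{rem--asy-van-ord-1-(3)}) relates its coefficient to $\sigma_{Q}(D+tD')$ for the corresponding $Q$ on $X$ up to the fixed discrepancy term, so the threshold behaviour transfers. Once the smooth case is settled via convexity plus the uniform bound from bigness of $D+A$, gluing the finitely many thresholds is routine.
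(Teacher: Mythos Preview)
Your overall strategy---show that the zero set of each $t\mapsto\sigma_{P}(D+tD')$ is an interval by convexity of $\sigma_{P}$, then reduce to finitely many $P$ and intersect the resulting thresholds---is exactly the paper's approach, and your convexity argument for a fixed $P$ is correct and equivalent to the paper's case analysis on $J_{i}=\{s:\sigma_{D_{i}}(D+sD')=0\}$.

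There is, however, a genuine gap in your finiteness step. You claim that from $D'\leq A$ one gets $\sigma_{P}(D+tD')\leq\sigma_{P}(D+A)$ ``by lower convexity''. This does not follow: lower convexity gives only $\sigma_{P}(D+A)\leq\sigma_{P}(D+tD')+\sigma_{P}(A-tD')$, which is the reverse inequality. And the claimed inequality is actually false: take $D'=0$, $D=P$ a $(-1)$-curve on a smooth surface, and $A$ ample enough that $P+A$ is nef; then $\sigma_{P}(D+tD')=\sigma_{P}(P)=1$ while $\sigma_{P}(D+A)=0$. So $\operatorname{Supp}N_{\sigma}(D+tD')$ need not be contained in $\operatorname{Supp}N_{\sigma}(D+A)$.

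The fix is the bound the paper uses: directly from lower convexity and homogeneity,
\[
\sigma_{P}(D+tD')\leq\sigma_{P}(D)+t\,\sigma_{P}(D')\qquad(0<t\leq 1),
\]
so any component of $N_{\sigma}(D+tD')$ is a component of $N_{\sigma}(D)$ or of $N_{\sigma}(D')$. This immediately gives the finite set $\{D_{1},\dots,D_{k}\}$ over which you take the minimum of thresholds. With that one line replaced, your argument goes through, and the preliminary reduction to $X$ smooth becomes unnecessary.
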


\begin{proof}
By Remark \ref{rem--asy-van-ord-1} (\ref{rem--asy-van-ord-1-(1)}), the components of $N_{\sigma}(D+tD')$ are those of $N_{\sigma}(D)+N_{\sigma}(D')$ for any $t\in (0,1]$. 
Let $D_{1},\cdots , D_{k}$ be all components of $N_{\sigma}(D)+N_{\sigma}(D')$, and we consider the set 
$$J_{i}:=\{s\in(0,1]\,|\,\sigma_{D_{i}}(D+sD')=0\}$$
 for all $1\leq i\leq k$. 
 
Fix $i$. 
When $J_{i}$ is not empty and ${\rm inf}\,J_{i}=0$, we pick an element $s_{i}$ of $J_{i}$. 
In this case, for every $s\in(0,s_{i}]$ we can find real numbers $s', s''\in(0,s_{i}]$ such that $s\in [s',s'']$ and $\sigma_{D_{i}}(D+s'D')=\sigma_{D_{i}}(D+s''D')=0$.  
Hence we have $\sigma_{D_{i}}(D+sD')=0$ for all $s\in(0,s_{i}]$ by Remark \ref{rem--asy-van-ord-1} (\ref{rem--asy-van-ord-1-(1)}). 
We put $s_{i} =1$ when $J_{i}$ is empty, and we put $s_{i}=\frac{1}{2}{\rm inf}\,J_{i}$ when $J_{i}$ is not empty and ${\rm inf}\,J_{i}>0$, then $\sigma_{D_{i}}(D+sD')>0$ for all $s\in(0,s_{i}]$. 
In this way, we can find $s_{i}\in (0,1]$ such that $\sigma_{D_{i}}(D+sD')=0$ for all $s\in(0,s_{i}]$ or $\sigma_{D_{i}}(D+sD')>0$ for all $s\in(0,s_{i}]$. 

We set $t_{0}={\rm min}\{s_{i}\}_{1\leq i \leq k}$. 
By definition of $D_{1},\cdots , D_{k}$, we have 
$$N_{\sigma}(D+tD')=\sum_{i=1}^{k}\sigma_{D_{i}}(D+tD')D_{i}.$$
The construction of $t_{0}$ shows that ${\rm Supp}\,N_{\sigma}(D+tD')$ does not depend on $t\in (0,t_{0}]$. 
\end{proof}

\begin{sing}
A {\em pair} $(X,\Delta)$ consists of a normal variety $X$ and an effective $\mathbb{R}$-divisor $\Delta$ on $X$ such that $K_{X}+\Delta$ is $\mathbb{R}$-Cartier. 

Let $(X,\Delta)$ be a pair, and let $P$ be a prime divisor over $X$. 
We denote by $a(P,X,\Delta)$ the discrepancy of $P$ with respect to $(X,\Delta)$. 
We will freely use the standard definitions of Kawamata log terminal (klt, for short) pair, log canonical (lc, for short) pair and divisorial log terminal (dlt, for short) pair as in \cite{kollar-mori} or \cite{bchm}. 
In \cite{kollar-mori}, pairs and classes of singularities are defined in the framework of $\mathbb{Q}$-divisors, but we can similarly define those classes of singularities for pairs consisting of a normal variety and an effective $\mathbb{R}$-divisor. 
When $(X,\Delta)$ is an lc pair, an {\em lc center} of $(X,\Delta)$ is $c_{X}(P)$ for a prime divisor $P$ over $X$ such that $a(P,X,\Delta)=-1$. 
We will freely use basic properties of lc centers of dlt pairs proved in \cite[Proposition 3.9.2]{fujino-what-log-ter} and \cite[Theorem 4.16]{kollar-mmp}.  
\end{sing}

\begin{lem}\label{lem--adjunction}
Let $(X,\Delta)$ be a dlt pair and $S$ an lc center of $(X,\Delta)$. 
Fix a log resolution $f\colon Y\to X$ of $(X,\Delta)$ which is an isomorphism over an open subset of $X$ intersecting $S$. 
Let $T\subset Y$ be a subvariety such that the restriction of $f$ to $T$ induces a birational morphism $f|_{T}\colon T \to S$, and let $M$ be an $f$-exceptional divisor on $Y$ such that all components $P$ of $M$ satisfy $a(P,X,\Delta)>0$. 
Then all components of $M|_{T}$ are exceptional over $S$. 
\end{lem}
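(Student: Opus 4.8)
The plan is to compare discrepancies on $Y$ and on $T$ via adjunction, using the fact that $T$ is a log smooth model of $S$ and that $f|_T$ is birational onto $S$. First I would set up adjunction for the dlt pair $(X,\Delta)$ along the lc center $S$: write $K_S + \Delta_S = (K_X+\Delta)|_S$ for the $\mathbb{R}$-divisor $\Delta_S$ obtained by the standard adjunction formula (using that $f$ is an isomorphism over an open subset of $X$ meeting $S$, so that $S$ is in particular normal — indeed dlt centers are normal — and $\Delta_S$ is well defined). Pulling back along the birational morphism $f|_T\colon T\to S$, I get $(f|_T)^*(K_S+\Delta_S)$, and I want to express $K_T + (\text{something})$ in terms of this. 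Since $Y$ is smooth, $T$ is a smooth divisorial stratum (after possibly noting $T$ is one of the strata of the simple normal crossing pair $\operatorname{Ex}(f)\cup\operatorname{Supp}f_*^{-1}\Delta$, or at least is contained in a log smooth model), adjunction on $Y$ gives $K_T + \Delta_T = (K_Y + \Delta_Y)|_T$ where $K_Y+\Delta_Y = f^*(K_X+\Delta)$ defines $\Delta_Y$ (so $\Delta_Y$ has a component with coefficient $1$ birational to $S$, namely $T$ itself if $T$ is that component, or $T$ is a stratum of $\llcorner\Delta_Y\lrcorner$).

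The key computation is then: for a prime divisor $Q$ on $T$ (equivalently, over $S$), compare $a(Q,S,\Delta_S)$ with the coefficient of $Q$ in $\Delta_T$. By compatibility of adjunction with discrepancies, $a(Q, S, \Delta_S) = a(Q, T, \Delta_T)$ when $Q$ is a divisor over $T$; and for $Q$ a prime divisor on $T$ itself this reads $a(Q,S,\Delta_S) = -\operatorname{coeff}_Q(\Delta_T)$. Now $\Delta_T = (K_Y+\Delta_Y - K_T)|_T$, and the components $P$ of $M$ contribute to $\Delta_Y$ with coefficient $-a(P,X,\Delta) < 0$ (this is where the hypothesis $a(P,X,\Delta)>0$ for all components of $M$ enters). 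So $-M$ is part of the "negative" piece of $\Delta_Y$, and I want to show its restriction $-M|_T$ lands in divisors that are exceptional over $S$.

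The crucial point is the following: suppose $P$ is an $f$-exceptional divisor with $a(P,X,\Delta)>0$, and $P|_T$ has a component $Q$ that is \emph{not} exceptional over $S$, i.e. $f|_T$ maps $Q$ onto a prime divisor $Q_S$ on $S$. Then on one hand $\operatorname{coeff}_Q(\Delta_T)$ receives a strictly negative contribution $-a(P,X,\Delta)\cdot(\text{something}\ge 1)$ from $P$ (plus contributions from the genuine boundary $f_*^{-1}\Delta$ and other components of $\llcorner\Delta_Y\lrcorner$, which are bounded by their nature of a dlt/log smooth pair), while on the other hand $\operatorname{coeff}_{Q_S}(\Delta_S)$ is determined intrinsically on $S$ and is $\le 1$ with the component $Q$ of $\Delta_T$ mapping to it; pushing forward, $(f|_T)_*\Delta_T = \Delta_S$ up to exceptional divisors, and the coefficient of $Q_S$ in $\Delta_S$ must equal $\operatorname{coeff}_Q(\Delta_T)$ since $Q\to Q_S$ is birational and $T\to S$ is birational. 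But $P$ is $f$-exceptional, so $c_X(P)\subsetneq X$; I must show $c_X(P)\cap S$, which contains $f(Q)=Q_S$, cannot be a divisor in $S$ when $a(P,X,\Delta)>0$ — otherwise $\Delta_S$ would have a component with coefficient $\le \operatorname{coeff}_Q(\Delta_T)$ carrying the negative contribution from $P$, which would force $a(P,X,\Delta)\le$ (coefficient of that component in $\Delta$) in a way incompatible with $P$ being exceptional and $(X,\Delta)$ being dlt near $S$. I expect the cleanest route is: use that $(X,\Delta)$ is dlt, so there is a log resolution isomorphism over the generic point of $S$, hence $a(P,X,\Delta)>0$ together with $P$ exceptional forces $c_X(P)$ to meet $S$ in codimension $\ge 2$ in $S$ — equivalently $P|_T$ is exceptional over $S$ — because any prime divisor over $X$ whose center meets $S$ in a divisor either is a component of $\Delta$ (coefficient in $(0,1]$, so discrepancy in $[-1,0)$, contradicting $>0$) or is exceptional with center meeting $S$ properly. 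The main obstacle is making this last dichotomy rigorous: I will need to argue carefully, perhaps by passing to the log resolution $Y$ and analyzing which components of $\llcorner\Delta_Y\lrcorner$ restrict to divisors on $T$ dominating divisors of $S$, invoking that on the log smooth pair $(Y,\Delta_Y)$ adjunction to the stratum $T$ is transparent, and that an $f$-exceptional $P$ with positive discrepancy cannot, after restriction to $T$, coincide with the birational transform of a component of $\Delta_S$.
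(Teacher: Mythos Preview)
Your setup is correct: writing $K_Y+\Delta_Y=f^*(K_X+\Delta)$, adjoining to $T$, and pushing forward along $f|_T\colon T\to S$ to recover $\Delta_S$ is exactly the right framework. The gap is in the coefficient comparison. You observe that a component $P$ of $M$ contributes $-a(P,X,\Delta)<0$ to $\mathrm{coeff}_Q(\Delta_T)$, but $Q$ may simultaneously lie on other components of $\Delta_Y$ (namely components of the effective part $\Gamma$, i.e.\ strict transforms of $\Delta$ and exceptional divisors with non-positive discrepancy), which contribute \emph{positively}. Without ruling out such overlap, you cannot conclude that $\mathrm{coeff}_{Q_S}(\Delta_S)=\mathrm{coeff}_Q(\Delta_T)$ is negative, and the argument stalls. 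Your last paragraph recognizes this (``the main obstacle is making this last dichotomy rigorous'') but the proposed route via analyzing $c_X(P)\cap S$ is circular: that $c_X(P)$ meets $S$ in codimension $\ge 2$ is essentially equivalent to what you are trying to prove.

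The paper's proof supplies precisely the missing separation step. Write $K_Y+\Gamma=f^*(K_X+\Delta)+E$ with $\Gamma,E\ge 0$ having no common components (so all components of $M$ are components of $E$), and use the dlt structure to realize $S$ as an irreducible component of $D_1\cap\cdots\cap D_k$ for components $D_i$ of $\lfloor\Delta\rfloor$, with $T$ the corresponding component of $D_1'\cap\cdots\cap D_k'$ on $Y$. The key point is a codimension count on the SNC divisor: if $G|_T$ and $E|_T$ (where $G=\Gamma-\sum D_i'$) shared a component $Q$, then $Q\subset G'\cap E'\cap D_1'\cap\cdots\cap D_k'$ for some components $G'$ of $G$ and $E'$ of $E$; the left side has codimension $k+1$ in $Y$ while the right side has codimension $k+2$, a contradiction. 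Once $G|_T$ and $E|_T$ have no common components, $\Delta_S=(f|_T)_*G|_T-(f|_T)_*E|_T\ge 0$ forces $(f|_T)_*E|_T=0$, hence $M|_T$ is exceptional over $S$. This codimension/SNC argument is the idea your sketch is missing.
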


\begin{proof}
We may write $K_{Y}+\Gamma=f^{*}(K_{X}+\Delta)+E$ such that $\Gamma \geq0$ and $E\geq0$ have no common components and all components of $M$ are components of $E$. 
Since $(X,\Delta)$ is dlt, there are components $D_{1},\cdots ,D_{k}$ of $\llcorner \Delta \lrcorner$ such that $S$ is an irreducible component of $D_{1}\cap\cdots \cap D_{k}$ (see \cite[Proposition 3.9.2]{fujino-what-log-ter} or \cite[Theorem 4.16]{kollar-mmp}). 
Let $D'_{1},\cdots ,D'_{k}$ be the birational transforms of $D_{1},\cdots , D_{k}$ on $Y$, respectively. 
By definitions of $\Gamma$ and $T$, the divisors $D'_{1},\cdots,D'_{k}$ are components of $\llcorner \Gamma \lrcorner$ and $T$ is an irreducible component of $D'_{1}\cap\cdots \cap D'_{k}$. 
We put $G=\Gamma-\sum_{i=1}^{k}D'_{i}$. We apply \cite[Proof of Lemma 2.4]{hashizumehu} to check that $E|_{T}$ is exceptional over $S$. Suppose that $G|_{T}$ and $E|_{T}$ have a common component $Q$. Then $Q\subset G'\cap E'\cap D'_{1}\cap\cdots \cap D'_{k}$ for some components $G'$ of $G$ and $E'$ of $E$. 
The left hand side has codimension $k+1$ in $Y$ but the right hand side has codimension $k+2$ in $Y$, which is impossible. Therefore, $G|_{T}$ and $E|_{T}$ have no common components. We put $f_{T}=f|_{T}\colon T\to S$ and define $\Delta_{S}$ on $S$ by adjunction $K_{S}+\Delta_{S}=(K_{X}+\Delta)|_{S}$. Then $\Delta_{S}\geq0$ and $\Delta_{S}=f_{T*}G|_{T}-f_{T*}E|_{T}$. Since $f_{T}\colon T\to S$ is birational, $f_{T*}G|_{T}$ and $f_{T*}E|_{T}$ have no common components. From these facts, it follows that $f_{T*}E|_{T}=0$. Therefore $E|_{T}$ is exceptional over $S$, so is $M|_{T}$.   
\end{proof}

\begin{lem}\label{lem--discre-relation}
Let $(X,\Delta)$ and $(X',\Delta')$ be projective dlt pairs, $S$ and $S'$ lc centers of $(X,\Delta)$ and $(X',\Delta')$ respectively, and let $f\colon X\dashrightarrow X'$ be a birational map such that $f$ is an isomorphism on an open set intersecting $S$ and the restriction of $f$ to $S$ induces a birational map $f|_{S}\colon S\dashrightarrow S'$. 
Suppose that $K_{X}+\Delta$ is pseudo-effective. 
Suppose in addition that 
\begin{itemize}
\item
$a(D',X',\Delta')\leq a(D',X,\Delta)$ for all prime divisors $D'$ on $X'$, and
\item
$\sigma_{P}(K_{X}+\Delta)=0$ for all prime divisors $P$ over $X$ such that $a(P,X,\Delta)< 0$ and $c_{X}(P)$ intersects $S$, where $\sigma_{P}(\,\cdot\,)$ is the asymptotic vanishing order of $P$ defined in Definition \ref{defn--asy-van-ord}. 
\end{itemize}
Let $(S,\Delta_{S})$ and $(S',\Delta_{S'})$ be projective dlt pairs, where $\Delta_{S}$ and $\Delta_{S'}$ are constructed by adjunctions $K_{S}+\Delta_{S}=(K_{X}+\Delta)|_{S}$ and $K_{S'}+\Delta_{S'}=(K_{X'}+\Delta')|_{S'}$, respectively. 
Then the inequality 
$$a(Q,S',\Delta_{S'})\leq a(Q,S,\Delta_{S})$$
 holds for all prime divisors $Q$ on $S'$. 
\end{lem}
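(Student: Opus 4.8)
The plan is to work on a common log resolution, transfer the comparison of discrepancies there, and cut everything down to a stratum lying over both $S$ and $S'$; the crux is that the second hypothesis forces the ``positive part'' of the difference of the two pulled‑back log canonical divisors to be exceptional over $S'$. First I would pick an open $U\subseteq X$ meeting $S$ over which $(X,\Delta)$ is log smooth and $f$ is an isomorphism, and take a common log resolution $p\colon W\to X$, $q\colon W\to X'$ of $(X,\Delta)$, $(X',\Delta')$ and $f$ with $p$ (hence $q$) an isomorphism over $U$ (resp.\ over $f(U)$). Since $S$ is an lc center of the dlt pair $(X,\Delta)$, there are components $D_{1},\dots,D_{k}$ of $\llcorner\Delta\lrcorner$ with $S$ an irreducible component of $D_{1}\cap\cdots\cap D_{k}$; log canonicity gives $a(D_{i},X,\Delta)=-1$, and the first hypothesis then gives $a(D_{i},X',\Delta')=-1$, so the strict transforms $D_{i}'$ on $X'$ (which exist because $f$ is an isomorphism near the generic point of $S$) are components of $\llcorner\Delta'\lrcorner$ and $S'$ is an irreducible component of $D_{1}'\cap\cdots\cap D_{k}'$. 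Let $T\subseteq W$ be the closure of the preimage of $U\cap S$; then $T$ is an irreducible component of $D_{1}^{W}\cap\cdots\cap D_{k}^{W}$ and $h:=p|_{T}\colon T\to S$, $h':=q|_{T}\colon T\to S'$ are birational morphisms. Writing $K_{W}+B=p^{*}(K_{X}+\Delta)+E$ and $K_{W}+B'=q^{*}(K_{X'}+\Delta')+E'$ in the usual way (so that $E$, resp.\ $E'$, is supported on the $p$‑exceptional, resp.\ $q$‑exceptional, prime divisors of positive discrepancy), restricting to $T$, using adjunction for log smooth strata, and applying Lemma \ref{lem--adjunction} on each side, I obtain
$$K_{T}+B_{T}-E_{T}=h^{*}(K_{S}+\Delta_{S}),\qquad K_{T}+B_{T}'-E_{T}'=h'^{*}(K_{S'}+\Delta_{S'}),$$
with $B_{T},B_{T}'\geq0$, $E_{T}=E|_{T}\geq0$ being $h$‑exceptional and $E_{T}'=E'|_{T}\geq0$ being $h'$‑exceptional.

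These crepant descriptions give $a(Q,S,\Delta_{S})=a(Q_{T},T,B_{T}-E_{T})$ and $a(Q,S',\Delta_{S'})=a(Q_{T},T,B_{T}'-E_{T}')$ for a prime divisor $Q$ on $S'$ with strict transform $Q_{T}$ on $T$, where $Q_{T}$ is a non‑$h'$‑exceptional prime divisor on $T$; conversely every non‑$h'$‑exceptional prime divisor of $T$ arises this way. Using the identity ${\rm mult}_{E_{i}}G=a(E_{i},X',\Delta')-a(E_{i},X,\Delta)$ for $G:=p^{*}(K_{X}+\Delta)-q^{*}(K_{X'}+\Delta')$, one checks that $(B_{T}'-E_{T}')-(B_{T}-E_{T})=-G|_{T}$; hence the asserted inequality $a(Q,S',\Delta_{S'})\leq a(Q,S,\Delta_{S})$ is equivalent to ${\rm coeff}_{Q_{T}}(G|_{T})\leq0$. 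Writing $G=G^{+}-G^{-}$ with $G^{\pm}\geq0$ having no common components and using $G^{-}|_{T}\geq0$, it therefore suffices to prove that $G^{+}|_{T}$ is $h'$‑exceptional.

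For the latter, let $E_{j}$ be a component of $G^{+}$ meeting $T$. By the first hypothesis $E_{j}$ is $q$‑exceptional, and $E_{j}\not\supseteq T$ since the only prime divisors of $W$ through the generic point of $T$ are among $D_{1}^{W},\dots,D_{k}^{W}$, for which ${\rm mult}\,G=0$. From the identity $p^{*}(K_{X}+\Delta)+G^{-}=q^{*}(K_{X'}+\Delta')+G^{+}$, in which $G^{+}$ is effective and $q$‑exceptional, Remark \ref{rem--asy-van-ord-1}(3) gives
$$\sigma_{E_{j}}\bigl(p^{*}(K_{X}+\Delta)+G^{-}\bigr)=\sigma_{E_{j}}\bigl(q^{*}(K_{X'}+\Delta')\bigr)+{\rm mult}_{E_{j}}(G^{+})\geq{\rm mult}_{E_{j}}(G^{+}),$$
where $K_{X'}+\Delta'$ is pseudo‑effective because $\Delta'\geq f_{*}\Delta$ and $f_{*}(K_{X}+\Delta)$ is pseudo‑effective; on the other hand, lower convexity (Remark \ref{rem--asy-van-ord-1}(1)) together with $\sigma_{E_{j}}(G^{-})=0$ (as $E_{j}\not\subseteq\Supp G^{-}$) gives $\sigma_{E_{j}}\bigl(p^{*}(K_{X}+\Delta)+G^{-}\bigr)\leq\sigma_{E_{j}}(K_{X}+\Delta)$. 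Thus $\sigma_{E_{j}}(K_{X}+\Delta)\geq{\rm mult}_{E_{j}}(G^{+})>0$. Since $E_{j}$ meets $T$, the center $c_{X}(E_{j})$ meets $S$, so the second hypothesis forces $a(E_{j},X,\Delta)\geq0$; then $a(E_{j},X',\Delta')>a(E_{j},X,\Delta)\geq0$, so $E_{j}$ is a component of $E'$, and Lemma \ref{lem--adjunction} applied to $(X',\Delta')$, $q$ and $T$ shows that $E_{j}|_{T}$ is $h'$‑exceptional. Summing over the components of $G^{+}$ meeting $T$ we conclude that $G^{+}|_{T}$ is $h'$‑exceptional, which completes the proof.

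I expect the main difficulty to be the third paragraph: noticing that Remark \ref{rem--asy-van-ord-1}(3) applies precisely because the first hypothesis makes $G^{+}$ $q$‑exceptional, and then marrying the resulting identity with pseudo‑effectivity so that the vanishing of $\sigma_{E_{j}}(K_{X}+\Delta)$ imposed by the second hypothesis can be exploited. The first two paragraphs are routine, but some care is needed to produce a single stratum $T$ simultaneously birational to $S$ and $S'$ and to carry out the two adjunction computations (which is where Lemma \ref{lem--adjunction} is used twice).
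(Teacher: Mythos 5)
Your proposal is correct and follows essentially the same route as the paper's proof: reduce to showing that the components of the positive part $G^{+}$ (the paper's $M$) meeting $T$ restrict to divisors exceptional over $S'$, get $\sigma_{E_{j}}(K_{X}+\Delta)\geq{\rm mult}_{E_{j}}(G^{+})>0$ via lower convexity and Remark \ref{rem--asy-van-ord-1} (\ref{rem--asy-van-ord-1-(3)}), use the second hypothesis to force $a(E_{j},X,\Delta)\geq0$, hence $a(E_{j},X',\Delta')>0$, and conclude with Lemma \ref{lem--adjunction}. The only cosmetic differences are that you verify $T\not\subset\Supp G$ through the snc stratum structure (the paper uses $q$-exceptionality of $G^{+}$ and dlt-ness of $(X',\Delta')$) and that pseudo-effectivity of $K_{X'}+\Delta'$ is cleanest not via $f_{*}$ of a pseudo-effective class but directly from $K_{X'}+\Delta'=q_{*}\bigl(p^{*}(K_{X}+\Delta)+G^{-}\bigr)$, which your displayed identity already gives.
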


\begin{proof}
By taking a log resolution $g\colon Y\to X$ of $(X,\Delta)$ which resolves the indeterminacy of $f\colon X\dashrightarrow X'$, we may find a common log resolution $g$ and $g' \colon Y \to {X'}$ of the map $(X,\Delta)\dashrightarrow (X',\Delta')$ and a subvariety $T\subset Y$ birational to $S$ and $S'$ such that the induced morphisms $g|_{T}\colon T\to S$ and $g'|_{T}\colon T\to S'$ form a common resolution of $f|_{S}\colon S\dashrightarrow S'$. 
We may write 
\begin{equation*}\tag{$*$}\label{proof-lem--discre-relation-*}
g^{*}(K_{X}+\Delta)=g'^{*}(K_{X'}+\Delta')+M-N
\end{equation*}
such that $M\geq0$ and $N\geq0$ have no common components. 
By the first condition of Lemma \ref{lem--discre-relation}, $M$ is $g'$-exceptional. 
Since the morphism $g'\colon Y \to X'$ is an isomorphism over an open subset which contains general points of $S'$, we see that $T\not\subset {\rm Supp}\,M$. 
We also see that $T\not\subset {\rm Supp}\,N$. 
Indeed, we can find a prime divisor $P'$ over $X$ such that $c_{Y}(P')=T$ and $a(P',X,\Delta)=-1$. 
If $T\subset {\rm Supp}\,N$, then the fact $T\not\subset {\rm Supp}\,M$ shows that $a(P',X',\Delta')=a(P',X,\Delta)+{\rm ord}_{P'}(M)-{\rm ord}_{P'}(N)<-1$, which contradicts that $(X',\Delta')$ is dlt. 
Thus, we see that $T\not\subset {\rm Supp}\,N$. 
Restricting (\ref{proof-lem--discre-relation-*}) to $T$, we have 
\begin{equation*}\tag{$**$}\label{proof-lem--discre-relation-**}
g|_{T}^{*}(K_{S}+\Delta_{S})=g'|_{T}^{*}(K_{S'}+\Delta_{S'})+M|_{T}-N|_{T},
\end{equation*}
where $M|_{T}$ and $N|_{T}$ are effective. 

We decompose $M=M_{0}+M_{1}$ for $M_{0}\geq 0$ and $M_{1}\geq 0$ such that any component of $M_{0}$ does not intersect $T$ and all components of $M_{1}$ intersect $T$.  
By (\ref{proof-lem--discre-relation-**}), for any prime divisor $Q$ on $S'$ we obtain
\begin{equation*}\tag{$*\!*\!*$}\label{proof-lem--discre-relation-***}
a(Q,S',\Delta_{S'})-a(Q,S,\Delta_{S})={\rm ord}_{Q}(M|_{T}-N|_{T})\leq {\rm ord}_{Q}(M_{1}|_{T}). 
\end{equation*}
Pick any component $E$ of $M_{1}$. 
Then $\sigma_{E}(N)={\rm coeff}_{E}(N)=0$ since $M$ and $N$ have no common components. 
Since $M$ is $g'$-exceptional, we have $K_{X'}+\Delta'=g'_{*}(g^{*}(K_{X}+\Delta)+N)$ by (\ref{proof-lem--discre-relation-*}). 
Since $K_{X}+\Delta$ is pseudo-effective, which is the hypothesis of Lemma \ref{lem--discre-relation}, we see that $K_{X'}+\Delta'$ is pseudo-effective.  
By Remark \ref{rem--asy-van-ord-1} (\ref{rem--asy-van-ord-1-(1)}), (\ref{rem--asy-van-ord-1-(3)}), and relation (\ref{proof-lem--discre-relation-*}), we have
\begin{equation*}
\begin{split}
\sigma_{E}(K_{X}+\Delta)=&\sigma_{E}(g^{*}(K_{X}+\Delta))\\
=&\sigma_{E}(g^{*}(K_{X}+\Delta))+\sigma_{E}(N)\\
\geq& \sigma_{E}(g^{*}(K_{X}+\Delta)+N)\\
=&\sigma_{E}(g'^{*}(K_{X'}+\Delta')+M)\\
=&\sigma_{E}(g'^{*}(K_{X'}+\Delta'))+{\rm coeff}_{E}(M)>0.
\end{split}
\end{equation*}
By definition of $M_{1}$, the image $g(E)$ intersects $S$. 
If $a(E,X,\Delta)<0$, then the inequality $\sigma_{E}(K_{X}+\Delta)>0$ contradicts the second condition of Lemma \ref{lem--discre-relation}. 
Therefore, it holds that $a(E,X,\Delta)\geq0$. 
Using (\ref{proof-lem--discre-relation-*}) again, we obtain 
$$a(E,X',\Delta')>a(E,X,\Delta)\geq0.$$
Thus, we see that all components $E$ of $M_{1}$ satisfy $a(E,X',\Delta')>0$. 
By Lemma \ref{lem--adjunction}, all components of $M_{1}|_{T}$ are exceptional over $S'$. 
Recalling that $Q$ is a prime divisor on $S'$, we see that ${\rm ord}_{Q}(M_{1}|_{T})=0$. 
By (\ref{proof-lem--discre-relation-***}), we have $a(Q,S',\Delta_{S'})\leq a(Q,S,\Delta_{S})$. 
\end{proof}

\subsection{Log MMP and models} 
In this paper, we will freely use the log MMP with scaling as in {\cite[Definition 2.4]{birkar-flip}} and {\cite[4.4.11]{fujino-book}}. 

\begin{defn}[Models, {\cite[Definition 2.2]{has-trivial}}]
Let $\pi\colon X \to Z$ be a projective morphism from a normal variety to a variety, and let $(X,\Delta)$ be an lc pair.  
Let $\pi '\colon X' \to Z$ be a projective morphism from a normal variety to $Z$ and let $\phi\colon X \dashrightarrow X'$ be a birational map over $Z$. 
Let $E$ be the reduced $\phi^{-1}$-exceptional divisor on $X'$, that is, $E=\sum E_{j}$ where $E_{j}$ are $\phi^{-1}$-exceptional prime divisors on $X'$. 
When $K_{X'}+\phi_{*}\Delta+E$ is $\mathbb{R}$-Cartier, the pair $(X', \Delta'=\phi_{*}\Delta+E)$ is called a {\it log birational model} of $(X,\Delta)$ over $Z$. 
A log birational model $(X', \Delta')$ of $(X,\Delta)$ over $Z$ is a {\it weak log canonical model} ({\it weak lc model}, for short) if 
\begin{itemize}
\item
$K_{X'}+\Delta'$ is nef over $Z$, and 
\item
for any prime divisor $D$ on $X$ which is exceptional over $X'$, we have
\begin{equation*}
a(D, X, \Delta) \leq a(D, X', \Delta').
\end{equation*}
\end{itemize}
A weak lc model $(X',\Delta')$ of $(X,\Delta)$ over $Z$ is a {\it log minimal model} if 
\begin{itemize}
\item
$X'$ is $\mathbb{Q}$-factorial, and 
\item
the above inequality on discrepancies is strict. 
\end{itemize}
A log minimal model $(X',\Delta')$ of $(X, \Delta)$ over $Z$ is called a {\it good minimal model} if $K_{X'}+\Delta'$ is semi-ample over $Z$. 

In particular, we do not assume that log minimal models are dlt. 
This definition is different from the definition of log minimal models in \cite{birkar-flip}. 
For a remark on the difference between these two definitions, see \cite[Remark 2.4]{has-trivial}. 
\end{defn}

\begin{rem}[{\cite[Remark 2.5]{hashizumehu}}, see also {\cite[Remark 2.7]{birkar-flip}}]\label{rem--models}
Let $\pi\colon X \to Z$ be a projective morphism from a normal variety to a variety,  and let $(X,\Delta)$ be an lc pair.  
Let $(X',\Delta')$ be a log minimal model of $(X,\Delta)$ over $Z$, and let $(X'',\Delta'')$ be a $\mathbb{Q}$-factorial lc pair with a small birational map $X' \dashrightarrow X''$ over $Z$ such that $K_{X''}+\Delta''$ is nef over $Z$ and the birational transform of $\Delta'$ on $X''$ is equal to $\Delta''$. 
Then $(X'',\Delta'')$ is also a log minimal model of $(X,\Delta)$ over $Z$. 
Furthermore, if $(X',\Delta')$ is a good minimal model of $(X,\Delta)$ over $Z$, then $(X'',\Delta'')$ is also a good minimal model of $(X,\Delta)$ over $Z$. 
\end{rem}

\begin{rem}\label{rem--mmp-zariskidecom}
Let $(X,\Delta)$ be a projective lc pair and let $(X',\Delta')$ be a weak lc model of $(X,\Delta)$. 
By taking a common resolution of $X\dashrightarrow X'$ and using the negativity lemma, we can easily check the following facts. 
\begin{enumerate}[(1)]
\item \label{rem--mmp-zariskidecom-(3)}
By Remark \ref{rem--asy-van-ord-1} (\ref{rem--asy-van-ord-1-(3)}) (see also \cite[Remark 2.6]{birkar-flip}), we have the equality
$$\sigma_{P}(K_{X}+\Delta)=a(P,X',\Delta')-a(P,X,\Delta)$$
for all prime divisors $P$ over $X$.  
\item \label{rem--mmp-zariskidecom-(1)}
Suppose that $(X',\Delta')$ is a log minimal model and 
 $(X,\Delta)\dashrightarrow (X',\Delta')$ is a sequence of steps of a $(K_{X}+\Delta)$-MMP. 
Let $D$ be a prime divisor on $X$. 
Then $D$ is contracted by the birational map $X\dashrightarrow X'$ if and only if $D$ is a component of $N_{\sigma}(K_{X}+\Delta)$. 
\end{enumerate}
\end{rem}

\begin{thm}[Dlt blow-up, {\cite[Theorem 10.4]{fujino-fund}}, {\cite[Theorem 3.1]{kollarkovacs}}]\label{thm--dltblowup} Let $X$ be a normal quasi-projective variety, and let $(X, \Delta)$ be an lc pair. 
Then, there is a projective birational morphism $f\colon Y \to X$ from a normal quasi-projective variety $Y$ such that $(Y,\Gamma)$ is a $\mathbb{Q}$-factorial dlt pair and $K_{Y}+\Gamma=f^{*}(K_{X}+\Delta)$, where $\Gamma$ is the sum of $f_{*}^{-1}\Delta$ and the reduced $f$-exceptional divisor. 
We call the morphism $f\colon (Y,\Gamma) \to (X,\Delta)$ a dlt blow-up, and we call $(Y,\Gamma)$ a $\mathbb{Q}$-factorial dlt model of $(X,\Delta)$. 
\end{thm}

\begin{lem}[$\mathbb{Q}$-factorial dlt closure]\label{lem--Qfacdltclosure}
Let $\pi\colon X\to Z$ be a projective morphism from a normal quasi-projective variety to a quasi-projective variety. 
Let $(X,\Delta)$ be a $\mathbb{Q}$-factorial dlt pair, and let $Z\hookrightarrow Z^{c}$ be an open immersion to a projective variety. 
Then there is a projective morphism $\pi^{c}\colon X^{c} \to Z^{c}$ and a projective $\mathbb{Q}$-factorial dlt pair $(X^{c},\Delta^{c})$ such that $X$ is an open subset of $X^{c}$, $\pi^{c}|_{X}=\pi$, $(X^{c}|_{X},\Delta^{c}|_{X})=(X,\Delta)$, and all lc centers of $(X^{c},\Delta^{c})$ intersect $X$. 
\end{lem}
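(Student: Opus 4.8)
\emph{Plan of proof.}
The plan is to compactify the morphism $\pi$, to pass to a log smooth pair on a log resolution of the compactified pair whose boundary is tailored so that it dominates $(X,\Delta)$ over $X$ and is ``klt enough'' over the complement, and then to run a relative MMP that modifies only the part lying over $X$. First I would compactify $\pi$: since $X$ is quasi-projective and $\pi$ is projective there is a closed immersion $X\hookrightarrow\mathbb{P}^{N}\times Z$ over $Z$, so taking the closure in $\mathbb{P}^{N}\times Z^{c}$, normalizing, and replacing the result by the normalization of the blow-up along the complement of $X$ (none of which affects $X$), I obtain a projective morphism $\bar{\pi}\colon\bar{X}\to Z^{c}$ extending $\pi$ with $X$ an open subset of $\bar{X}$ and $\bar{X}\setminus X$ the support of a reduced divisor $D$. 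Let $\bar{\Delta}$ be the closure of $\Delta$ in $\bar{X}$; each component of $\bar{\Delta}$ is the closure of a component of $\Delta$, hence meets $X$.

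Next I would fix a log resolution $h\colon W\to\bar{X}$ of $(\bar{X},\bar{\Delta}+D)$, chosen so that, moreover, no irreducible subvariety contained in $W\setminus W_{X}$ (where $W_{X}:=h^{-1}(X)$) is a component of an intersection of coefficient-one components of $h_{*}^{-1}\bar{\Delta}$; this can be arranged by finitely many further blow-ups with centers in $h^{-1}(\bar{X}\setminus X)$, which leave $W_{X}$ unchanged. Pick $\delta_{0}\in(0,1)$ with $a(P,X,\Delta)+1-\delta_{0}>0$ for the finitely many prime divisors $P$ on $W$ that are exceptional over $X$ (possible as $(X,\Delta)$ is dlt), fix $\epsilon\in(0,1)$, and put
$$\Gamma_{W}:=h_{*}^{-1}\bar{\Delta}+(1-\delta_{0})E_{\mathrm{int}}+(1-\epsilon)\Sigma,$$
where $E_{\mathrm{int}}$ is the reduced sum of the $h$-exceptional prime divisors with center meeting $X$ and $\Sigma$ is the reduced sum of the prime divisors on $W$ with center contained in $\bar{X}\setminus X$. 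Then $(W,\Gamma_{W})$ is log smooth, hence $\mathbb{Q}$-factorial dlt and projective over $Z^{c}$, its coefficient-one components are exactly the coefficient-one components of $h_{*}^{-1}\bar{\Delta}$ (each meeting $W_{X}$), and by the choice of $h$ every lc center of $(W,\Gamma_{W})$ meets $W_{X}$. With $h_{X}:=h|_{W_{X}}$ one computes
$$K_{W_{X}}+\Gamma_{W}|_{W_{X}}=h_{X}^{*}(K_{X}+\Delta)+G_{X},\qquad G_{X}=\sum_{P}\bigl(a(P,X,\Delta)+1-\delta_{0}\bigr)P\geq0,$$
the sum over $h_{X}$-exceptional prime divisors $P$, with $\Supp G_{X}=\Exc(h_{X})$.

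Finally I would run a $(K_{W}+\Gamma_{W})$-MMP over $\bar{X}$ whose every step contracts a negative extremal ray lying over a point of $X$; equivalently, a step of the MMP for the effective $h_{X}$-exceptional divisor $G_{X}$ over $X$, extended by the identity over $\bar{X}\setminus X$ (such steps are isomorphisms over $\bar{X}\setminus X$). Over $X$ this MMP terminates by \cite{bchm}: a relative minimal model over $X$ is at hand, namely $(X,\Delta)$, so the MMP with scaling of an ample divisor terminates; and by the negativity lemma the strict transform of $G_{X}$ on the output is effective, $h$-exceptional and nef over $X$, hence $0$, so the output is small over $X$ and, $X$ being $\mathbb{Q}$-factorial, equals $(X,\Delta)$ there. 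Calling the output $(X^{c},\Delta^{c})\to Z^{c}$: it is projective, $\mathbb{Q}$-factorial and dlt because MMP steps preserve these properties; $X$ is the open subset of $X^{c}$ lying over $X\subset\bar{X}$, with $(X^{c}|_{X},\Delta^{c}|_{X})=(X,\Delta)$ and $\pi^{c}|_{X}=\pi$; and $(X^{c},\Delta^{c})$ coincides with $(W,\Gamma_{W})$ over $\bar{X}\setminus X$, so an lc center of $(X^{c},\Delta^{c})$ disjoint from $X$ would be an lc center of $(W,\Gamma_{W})$ contained in $W\setminus W_{X}$ — excluded by the choice of $h$. Hence all lc centers of $(X^{c},\Delta^{c})$ meet $X$.

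The main obstacle I anticipate is the middle step: arranging the log resolution so that $(W,\Gamma_{W})$ has no lc center disjoint from $W_{X}$ (which forces the separating blow-ups over $\bar{X}\setminus X$ and a careful choice of the coefficient $1-\delta_{0}$ on $E_{\mathrm{int}}$), while keeping the coefficients large enough over $X$ that the relative MMP recovers $(X,\Delta)$ exactly and not a pair with a strictly smaller boundary. By contrast the termination of the relative MMP is soft, since it is run only over $X$, where a relative minimal model is manifestly available.
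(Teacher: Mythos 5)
Your overall strategy (compactify, pass to a projective log smooth pair whose lc centers all meet the preimage of $X$, then contract the exceptional divisors over $X$ by a relative MMP and conclude by $\mathbb{Q}$-factoriality) is the same as the paper's, but the central step of your argument has a genuine gap: the MMP you describe — ``contract a negative extremal ray lying over a point of $X$; equivalently, a step of the MMP for $G_{X}$ over $X$, extended by the identity over $\bar{X}\setminus X$'' — is not an available operation. First, when $K_{W}+\Gamma_{W}$ is negative on a curve lying over a point of $X$, the cone theorem for $\overline{NE}(W/\bar{X})$ need not produce a negative extremal ray whose locus lies over $X$: the decomposition of that curve class may only involve rays spanned by curves over $\bar{X}\setminus X$, and even a ray containing a curve over $X$ may contain numerically equivalent curves over the boundary, so its contraction is not an isomorphism over $\bar{X}\setminus X$. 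Second, a step of the $(K_{W_{X}}+\Gamma_{W}|_{W_{X}})$-MMP over the open base $X$ modifies a closed subset of $W_{X}$ whose closure in $W$ typically meets $W\setminus W_{X}$ — indeed the divisors you need to contract are the restrictions to $W_{X}$ of the global divisors in $E_{\mathrm{int}}$, which in general cross the boundary — and such a quasi-projective modification does not extend ``by the identity'' to a projective variety over $Z^{c}$; making birational modifications over $X$ compatible with projectivity over $Z^{c}$ is exactly the difficulty the lemma is about. Since both your termination argument and your final claim that $(X^{c},\Delta^{c})$ coincides with $(W,\Gamma_{W})$ over $\bar{X}\setminus X$ (which is how you verify that every lc center meets $X$) rest on this operation, the proof does not go through as written. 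The paper instead runs an honest $(K_{\overline{X}'}+\overline{\Delta}')$-MMP over $\overline{X}$ with scaling of an ample divisor, allows its steps to modify the part over the boundary, uses \cite[Theorem 3.5]{birkar-flip} to see that after finitely many steps the restriction over $X$ is a $\mathbb{Q}$-factorial dlt model of $(X,\Delta)$ (hence equal to $X$ by smallness and $\mathbb{Q}$-factoriality), and obtains the lc-center condition from the compactification statement \cite[Theorem 4.17]{fujino-fund} for the log smooth pair together with the behaviour of discrepancies along the MMP, not from the MMP being trivial over $\bar{X}\setminus X$.

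A secondary point: your choice of $\delta_{0}$ presupposes $a(P,X,\Delta)>-1$ for every prime divisor $P$ on $W$ that is exceptional with center meeting $X$. For an arbitrary log resolution $h$ of $(\bar{X},\bar{\Delta}+D)$ this fails as soon as $\llcorner \Delta \lrcorner$ has strata (blowing up a stratum produces $a=-1$), in which case $G_{X}$ is not effective and $(X,\Delta)$ is no longer a weak lc model of $(W_{X},\Gamma_{W}|_{W_{X}})$ over $X$. You must build into the choice of $h$ that over $X$ it extracts only divisors of discrepancy $>-1$ — possible because $(X,\Delta)$ is dlt, e.g.\ by taking $h$ to be an isomorphism over the open subset where $(X,\Delta)$ is log smooth — which is exactly the hypothesis the paper imposes on its resolution $f$. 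Relatedly, $(W_{X},\Gamma_{W}|_{W_{X}})$ is dlt but not klt, so the termination you invoke should be the lc statement \cite[Theorem 4.1]{birkar-flip} (or Theorem 3.5 there) rather than \cite{bchm}; this is minor once the main issue above is repaired.
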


\begin{proof}
We take an open immersion $X\hookrightarrow \overline{X}$ to a projective variety with a projective morphism $\overline{X}\to Z^{c}$. 
Since $(X,\Delta)$ is dlt, we can take a log resolution $f\colon X'\to X$ of $(X,\Delta)$ such that $a(E,X,\Delta)>-1$ for all $f$-exceptional prime divisors $E$. 
Let $\Delta'$ be the sum of $f^{-1}_{*}\Delta$ and the reduced $f$-exceptional divisor. 
By \cite[Theorem 4.17]{fujino-fund} and taking blow-ups if necessary, we may take an open immersion $X'\hookrightarrow \overline{X}'$ to a smooth projective variety and a birational morphism $\overline{X}'\to \overline{X}$ such that $(\overline{X}',\overline{\Delta}')$ is log smooth and all lc centers of $(\overline{X}',\overline{\Delta}')$ intersect $X'$, where $\overline{\Delta}'$ is the closure of $\Delta'$ in $\overline{X}'$. 

We run a $(K_{\overline{X}'}+\overline{\Delta}')$-MMP over $\overline{X}$ with scaling of an ample divisor. 
After finitely many steps, we get a projective $\mathbb{Q}$-factorial dlt pair $(X^{c},\Delta^{c})$ whose restriction over $X$, denoted by $(\tilde{X}^{c},\tilde{\Delta}^{c})$, is a $\mathbb{Q}$-factorial dlt model of $(X,\Delta)$ (cf.~\cite[Proof of Corollary 3.6]{birkar-flip}, see also \cite[Theorem 3.5]{birkar-flip}). 
$$
\xymatrix
{
 \overline{X}'\ar[dr]\ar@{-->}[rr]&&X^{c}\ar[dl]\ar@{}[r]|*{\supset}&\tilde{X}^{c}\ar[dl]\\
&\overline{X} \ar@{}[r]|*{\supset} &X&
}
$$
Since $a(E,X,\Delta)>-1$ for all $f$-exceptional prime divisors $E$, the birational  morphism $\tilde{X}^{c}\to X$ is small. 
Since $X$ is $\mathbb{Q}$-factorial, the morphism is an isomorphism. 
We see that $\Delta^{c}|_{X}=\Delta$ by construction. 
Since all lc centers of $(\overline{X}',\overline{\Delta}')$ intersect $X'$, all lc centers of $(X^{c},\Delta^{c})$ intersect $X$. 
Thus, $(X^{c},\Delta^{c})$ is the desired $\mathbb{Q}$-factorial dlt pair. 
\end{proof}

We close this subsection with a slight generalization of \cite[Lemma 2.14]{has-mmp}.

\begin{lem}\label{lem--mmp-termi}
Let $\pi\colon X \to Z$ be a projective morphism of normal quasi-projective varieties, and let $(X,\Delta)$ be an lc pair.  
Let $H\geq 0$ be an $\mathbb{R}$-Cartier divisor on $X$ such that $(X,\Delta+H)$ is an lc pair and $K_{X}+\Delta+H$ is nef over $Z$. 
Assume that for any $\mu \in (0,1]$, the pair $(X,\Delta+\mu H)$ has a log minimal model over $Z$.

Then, we can construct a sequence of steps of the $(K_{X}+\Delta)$-MMP over $Z$ with scaling of $H$ 
$$(X_{0}:=X,\Delta_{0}:=\Delta) \dashrightarrow (X_{1},\Delta_{1}) \dashrightarrow\cdots \dashrightarrow (X_{i},\Delta_{i})\dashrightarrow \cdots$$
such that if we define $$\lambda_{i}={\rm inf}\set{\mu \in \mathbb{R}_{\geq0} \!|\! K_{X_{i}}+\Delta_{i}+\mu H_{i}\text{\rm \, is nef over }Z},$$ where $H_{i}$ is the birational transform of $H$ on $X_{i}$,
then the $(K_{X}+\Delta)$-MMP terminates after finitely many steps or we have ${\rm lim}_{i \to \infty}\lambda_{i}=0$ even if the $(K_{X}+\Delta)$-MMP does not terminate. 
\end{lem}

\begin{proof}
The argument in \cite[Proof of Lemma 2.14]{has-mmp} works with no changes because we may use \cite[Proposition 6.2]{hashizumehu} or \cite[Theorem 1.7]{hashizumehu}.  
\end{proof}

\subsection{Invariant Iitaka dimension and numerical dimension}\label{subsec2.2}

We define invariant Iitaka dimension (\cite{choi}) and numerical dimension (\cite{nakayama}) for $\mathbb{R}$-Cartier divisors on normal projective varieties, and then we collect some properties. 
Afterwards, we define the notion of log abundant divisors and prove some results. 

\begin{defn}[Invariant Iitaka dimension]\label{defn--inv-iitaka-dim}
Let $X$ be a normal projective variety, and let $D$ be an $\mathbb{R}$-Cartier  divisor on $X$. 
We define the {\em invariant  Iitaka dimension} of $D$, denoted by $\kappa_{\iota}(X,D)$, as follows (\cite[Definition 2.2.1]{choi}, see also \cite[Definition 2.5.5]{fujino-book}):  
If there is an $\mathbb{R}$-divisor $E\geq 0$ such that $D\sim_{\mathbb{R}}E$, set $\kappa_{\iota}(X,D)=\kappa(X,E)$. 
Here, the right hand side is the usual Iitaka dimension of $E$. 
Otherwise, we set $\kappa_{\iota}(X,D)=-\infty$. 
Note that $\kappa(X,E)$ is independent of the choice of $E$ (see \cite[Proposition 2.1.2]{choi}), hence $\kappa_{\iota}(X,D)$ is well-defined.

Let $X\to Z$ be a projective morphism from a normal variety to a variety, and let $D$ be an $\mathbb{R}$-Cartier divisor on $X$. 
Then the {\em relative invariant Iitaka dimension} of $D$, denoted by $\kappa_{\iota}(X/Z,D)$, is similarly defined: 
If there is an $\mathbb{R}$-divisor $E\geq 0$ such that $D\sim_{\mathbb{R},Z}E$ then we set $\kappa_{\iota}(X/Z,D)=\kappa_{\iota}(F,D|_{F})$, where $F$ is a sufficiently general fiber of the Stein factorization of $X\to Z$, and otherwise we set $\kappa_{\iota}(X/Z,D)=-\infty$. 
Note that this definition is independent of the choice of $E$ and $F$, hence $\kappa_{\iota}(X/Z,D)$ is well-defined. 
\end{defn}

\begin{defn}[Numerical dimension]\label{defn--num-dim}
Let $X$ be a normal projective variety, and let $D$ be an $\mathbb{R}$-Cartier divisor on $X$. 
We define the {\em numerical dimension} of $D$, denoted by $\kappa_{\sigma}(X,D)$, as follows (\cite[V, 2.5 Definition]{nakayama}): 
For any Cartier divisor $A$ on $X$, we set
\begin{equation*}
\sigma(D;A)={\rm max}\!\Set{\!k\in \mathbb{Z}_{\geq0} | \underset{m\to \infty}{\rm lim\,sup}\frac{{\rm dim}\,H^{0}(X,\mathcal{O}_{X}(\llcorner mD \lrcorner+A))}{m^{k}}>0\!}
\end{equation*}
if ${\rm dim}\,H^{0}(X,\mathcal{O}_{X}(\llcorner mD \lrcorner+A))>0$ for infinitely many $m\in \mathbb{Z}_{>0}$, and otherwise we set $\sigma(D;A)=-\infty$. 
Then, we define 
\begin{equation*}
\kappa_{\sigma}(X,D):={\rm max}\!\set{\sigma(D;A) | \text{$A$ is a Cartier divisor on $X$}\!}.
\end{equation*}

Let $X\to Z$ be a projective morphism from a normal variety to a variety, and let $D$ be an $\mathbb{R}$-Cartier divisor on $X$. 
Then, the {\em relative numerical dimension} of $D$ over $Z$ is defined by $\kappa_{\sigma}(F,D|_{F})$, where $F$ is a sufficiently general fiber of the Stein factorization of $X\to Z$. In this paper, we denote $\kappa_{\sigma}(F,D|_{F})$ by $\kappa_{\sigma}(X/Z,D)$. 
\end{defn}

\begin{rem}[{\cite[V, 2.7 Proposition]{nakayama}, \cite[Remark 3.2]{has-class}}]\label{rem--div}
We write down some basic properties of the invariant Iitaka dimension and the numerical dimension. 
\begin{enumerate}[(1)]
\item \label{rem--div-(1)}
Let $D_{1}$ and $D_{2}$ be $\mathbb{R}$-Cartier divisors on a normal projective variety $X$. 
\begin{itemize}
\item
Suppose that $D_{1}\sim_{\mathbb{R}}D_{2}$ holds. 
Then the equalities $\kappa_{\iota}(X,D_{1})=\kappa_{\iota}(X,D_{2})$ and $\kappa_{\sigma}(X,D_{1})=\kappa_{\sigma}(X, D_{2})$ hold. 
\item
Suppose that $D_{1}\sim_{\mathbb{R}}N_{1}$ and $D_{2}\sim_{\mathbb{R}}N_{2}$ for some $\mathbb{R}$-divisors $N_{1}\geq0$ and $N_{2}\geq0$ such that ${\rm Supp}\,N_{1}={\rm Supp}\,N_{2}$. 
Then the equalities $\kappa_{\iota}(X,D_{1})=\kappa_{\iota}(X,D_{2})$ and $\kappa_{\sigma}(X,D_{1})=\kappa_{\sigma}(X, D_{2})$ hold. 
\end{itemize}
\item \label{rem--div-(2)}
Let $f\colon Y \to X$ be a surjective morphism of normal projective varieties, and let $D$ be an $\mathbb{R}$-Cartier divisor on $X$. 
\begin{itemize}
\item
The equalities $\kappa_{\iota}(X,D)=\kappa_{\iota}(Y,f^{*}D)$ and $\kappa_{\sigma}(X,D)=\kappa_{\sigma}(Y,f^{*}D)$ hold.
\item
Suppose that $f$ is birational. 
Let $D'$ be an $\mathbb{R}$-Cartier divisor on $Y$ such that  $D'-f^{*}D$ is effective and $f$-exceptional. 
Then we have $\kappa_{\iota}(X,D)=\kappa_{\iota}(Y,D')$ and $\kappa_{\sigma}(X,D)=\kappa_{\sigma}(Y, D')$.
\end{itemize}
\end{enumerate}
\end{rem}

\begin{defn}[Relatively abundant divisor and relatively log abundant divisor]\label{defn--abund}
Let $\pi\colon X\to Z$ be a projective morphism from a normal variety to a variety, and let $D$ be an $\mathbb{R}$-Cartier divisor on $X$. 
We say that $D$ is $\pi$-{\em abundant} (or {\em abundant over} $Z$) if the equality $\kappa_{\iota}(X/Z,D)=\kappa_{\sigma}(X/Z,D)$ holds. 
When $Z$ is a point, we simply say that $D$ is {\em abundant}. 

Let $\pi\colon X\to Z$ and $D$ be as above, and let $(X,\Delta)$ be an lc pair. 
We say that $D$ is $\pi$-{\em log abundant} (or {\em log abundant over} $Z$) {\em with respect to} $(X,\Delta)$ if $D$ is abundant over $Z$ and for any lc center $S$ of $(X,\Delta)$ with the normalization $S^{\nu}\to S$, the pullback $D|_{S^{\nu}}$ is abundant over $Z$. 
When $Z$ is a point, we simply say that $D$ is {\em log abundant with respect to} $(X,\Delta)$.  

Let $X\to Z$ be a projective morphism from a normal variety to a variety and $(X,\Delta)$ an lc (resp.~dlt) pair. When $K_{X}+\Delta$ is log abundant over $Z$ with respect to $(X,\Delta)$, we call $(X,\Delta)$ a {\em log abundant lc} (resp.~{\em log abundant dlt}) {\em pair over} $Z$ or we say that $(X,\Delta)$ is {\em log abundant over} $Z$. When $Z$ is a point, we simply say that $(X,\Delta)$ is {\em log abundant}.  
\end{defn}

\begin{lem}[{\cite[Lemma 2.11]{hashizumehu}}]\label{lem--iitakafib} 
Let $(X,\Delta)$ be a projective lc pair with an $\mathbb{R}$-divisor $\Delta$. 
Suppose that $K_{X}+\Delta$ is abundant and there is an effective $\mathbb{R}$-divisor $D\sim_{\mathbb{R}}K_{X}+\Delta$. 
Let $X\dashrightarrow V$ be the Iitaka fibration associated to $D$. 
Pick a log resolution $f\colon Y\to X$ of $(X,\Delta)$ such that the induced map $Y\dashrightarrow V$ is a morphism, and let $(Y,\Gamma)$ be a projective lc pair such that we can write $K_{Y}+\Gamma=f^{*}(K_{X}+\Delta)+E$ for an effective $f$-exceptional $\mathbb{R}$-divisor $E$. 
Then, we have $\kappa_{\sigma}(Y/V,K_{Y}+\Gamma)=0$. 
\end{lem}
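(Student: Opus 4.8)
The plan is to transfer the statement to the log resolution $(Y,\Gamma)$, record that $K_{Y}+\Gamma$ is again abundant with numerical dimension $\dim V$, and then cut down by the Iitaka fibration $g\colon Y\to V$ to reduce everything to a statement about the restriction of $K_{Y}+\Gamma$ to a general fibre of $g$. Concretely, since $K_{Y}+\Gamma=f^{*}(K_{X}+\Delta)+E$ with $E\geq0$ and $f$-exceptional, Remark \ref{rem--div} (\ref{rem--div-(2)}) gives $\kappa_{\iota}(Y,K_{Y}+\Gamma)=\kappa_{\iota}(X,K_{X}+\Delta)$ and $\kappa_{\sigma}(Y,K_{Y}+\Gamma)=\kappa_{\sigma}(X,K_{X}+\Delta)$; hence $K_{Y}+\Gamma$ is abundant, and because $D\sim_{\mathbb{R}}K_{X}+\Delta$ is effective with Iitaka fibration $X\dashrightarrow V$ we have $\dim V=\kappa(X,D)=\kappa_{\iota}(X,K_{X}+\Delta)$. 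Thus $\kappa_{\sigma}(Y,K_{Y}+\Gamma)=\dim V$, and moreover $K_{Y}+\Gamma\sim_{\mathbb{R}}f^{*}D+E$.

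Next I would cut down by $g$. Let $Y\to V'\to V$ be the Stein factorization and let $F$ be a sufficiently general fibre of $Y\to V'$, so that $\kappa_{\sigma}(Y/V,K_{Y}+\Gamma)=\kappa_{\sigma}(F,(K_{Y}+\Gamma)|_{F})$ by definition. For general $F$ the image $f(F)\subset X$ is a general fibre of $X\dashrightarrow V$ and $f|_{F}\colon F\to f(F)$ is birational; since these $f(F)$ sweep out $X$ while $f(E)$ has codimension $\geq2$ in $X$, the divisor $E|_{F}$ is effective and exceptional over $f(F)$. Hence Remark \ref{rem--div} (\ref{rem--div-(2)}) gives $\kappa_{\sigma}(F,(K_{Y}+\Gamma)|_{F})=\kappa_{\sigma}(F,(f^{*}D)|_{F})$, while the defining property of the Iitaka fibration gives $\kappa(F,(f^{*}D)|_{F})=\kappa(f(F),D|_{f(F)})=0$. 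In particular $\kappa_{\sigma}(F,(f^{*}D)|_{F})\geq0$, so it remains to prove the opposite inequality, and this is the only place where abundance is used in an essential way.

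For the last step, note that replacing $f$ by a higher log resolution changes neither the hypotheses nor the conclusion, since both are carried by the birational class of the general fibre by Remark \ref{rem--div} (\ref{rem--div-(2)}); so I may assume that $g$ realizes the Iitaka fibration strongly, that is, $f^{*}D\sim_{\mathbb{Q}}g^{*}A+R$ with $A$ big on $V$ and $R\geq0$ on $Y$. Restricting to a general $F$ disjoint from $\Supp A$ gives $(f^{*}D)|_{F}\sim_{\mathbb{R}}R|_{F}$, so it suffices to show $\kappa_{\sigma}(F,R|_{F})=0$. Here I would invoke the behaviour of the numerical dimension along a fibration (Nakayama, \cite{nakayama}): for $A$ big on $V$ and $R\geq0$ on $Y$ one has $\kappa_{\sigma}(Y,g^{*}A+R)\geq\dim V+\kappa_{\sigma}(F,R|_{F})$. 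Since the left-hand side equals $\kappa_{\sigma}(Y,f^{*}D)=\dim V$ by the first paragraph, we conclude $\kappa_{\sigma}(F,R|_{F})\leq0$, hence $=0$, and combining the reductions yields $\kappa_{\sigma}(Y/V,K_{Y}+\Gamma)=0$.

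The main obstacle is precisely the inequality $\kappa_{\sigma}(Y,g^{*}A+R)\geq\dim V+\kappa_{\sigma}(F,R|_{F})$, i.e.\ the super-additivity of $\kappa_{\sigma}$ along the Iitaka fibration of an abundant divisor. The elementary route is the estimate $h^{0}(Y,\llcorner m(g^{*}A+R)\lrcorner+H)\geq h^{0}(V,\mathcal{O}_{V}(mA)\otimes g_{*}\mathcal{O}_{Y}(\llcorner mR\lrcorner+H))$ for $H$ ample on $Y$, whose right-hand term involves a coherent sheaf of generic rank $h^{0}(F,\llcorner mR|_{F}\lrcorner+H|_{F})$, a quantity of order $m^{\kappa_{\sigma}(F,R|_{F})}$ along a suitable sequence of $m$; one then has to show that twisting a rank-$r$ sheaf on $V$ by the big divisor $mA$ produces of order $r\cdot m^{\dim V}$ sections, uniformly in $m$. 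Securing this uniformity is the delicate point, and it is exactly what Nakayama's analysis of $\kappa_{\sigma}$ in \cite{nakayama} supplies; apart from it, the argument is bookkeeping with Remark \ref{rem--div} and the Iitaka fibration.
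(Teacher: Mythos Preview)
The paper does not prove this lemma; it is quoted verbatim from \cite[Lemma 2.11]{hashizumehu} with no argument supplied. So there is no in-paper proof to compare against, and I comment on your approach on its own merits.

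Your overall strategy is the right one and matches how the cited source argues: transfer to $(Y,\Gamma)$, observe $\kappa_{\sigma}(Y,K_{Y}+\Gamma)=\dim V$ by abundance, and then bound $\kappa_{\sigma}(F,(K_{Y}+\Gamma)|_{F})$ above by $0$ via a super-additivity of $\kappa_{\sigma}$ along a fibration with big base class. The lower bound $\geq 0$ is immediate from $(K_{Y}+\Gamma)|_{F}\sim_{\mathbb{R}}(f^{*}D+E)|_{F}\geq 0$ for general $F$, so you do not even need the Iitaka-dimension computation for that direction. The crux is exactly the inequality $\kappa_{\sigma}(Y,g^{*}A+R')\geq \dim V+\kappa_{\sigma}(F,R'|_{F})$ for $A$ big and $R'\geq 0$, and you correctly isolate this and attribute it to Nakayama's analysis of $\kappa_{\sigma}$; this is indeed where the content lies.

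Your second paragraph, however, is both unnecessary and not well-founded. The claim that $E|_{F}$ is exceptional for $f|_{F}\colon F\to f(F)$ requires $f(E)\cap f(F)$ to have codimension $\geq 2$ in $f(F)$, but $f(F)$ is a fibre of a \emph{fixed} rational map, not a general member of a covering family in general position with respect to $f(E)$; moreover $f(F)$ need not be normal and $D|_{f(F)}$ need not be $\mathbb{R}$-Cartier, so invoking Remark~\ref{rem--div}~(\ref{rem--div-(2)}) there is illegitimate. The fix is trivial and removes the paragraph entirely: since $K_{Y}+\Gamma\sim_{\mathbb{R}}g^{*}A+(R+E)$ with $R+E\geq 0$, apply your paragraph-3 inequality directly with $R':=R+E$ in place of $R$. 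Then $\kappa_{\sigma}(F,(K_{Y}+\Gamma)|_{F})=\kappa_{\sigma}(F,R'|_{F})\leq \kappa_{\sigma}(Y,g^{*}A+R')-\dim V=\kappa_{\sigma}(Y,K_{Y}+\Gamma)-\dim V=0$, and you are done without ever comparing $(K_{Y}+\Gamma)|_{F}$ to $(f^{*}D)|_{F}$. A minor point: since $D$ is only an $\mathbb{R}$-divisor, your $\sim_{\mathbb{Q}}$ in paragraph 3 should be $\sim_{\mathbb{R}}$.
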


\begin{lem}\label{lem--comparedim}
Let $\pi\colon X \to Z$ and $\pi'\colon X' \to Z$ be projective morphisms of normal quasi-projective varieties. 
Let $(X,\Delta)$ and $(X',\Delta')$ be dlt pairs, and let $S$ and $S'$ be lc centers of $(X,\Delta)$ and $(X',\Delta')$ respectively. 
Suppose that there is a small birational map $f\colon X\dashrightarrow X'$ over $Z$ such that $f_{*}\Delta=\Delta'$, $f$ is an isomorphism on an open set intersecting $S$, and the restriction of $f$ to $S$ induces a birational map $f|_{S}\colon S\dashrightarrow S'$. 
If there is an $\mathbb{R}$-divisor $A$ on $X$ such that $K_{X}+\Delta+aA$ and $K_{X'}+\Delta'+a'f_{*}A$ are nef over $Z$ for some $a>a'>0$, then the following inequalities hold true:
\begin{equation*}
\begin{split}
\kappa_{\iota}(S/Z,(K_{X}+\Delta)|_{S})&\geq \kappa_{\iota}(S'/Z,(K_{X'}+\Delta')|_{S'}) \quad{\rm and}\\
\kappa_{\sigma}(S/Z,(K_{X}+\Delta)|_{S})&\geq \kappa_{\sigma}(S'/Z,(K_{X'}+\Delta')|_{S'}). 
\end{split}
\end{equation*} 
\end{lem}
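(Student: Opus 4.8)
The plan is to reduce both inequalities to the comparison of Iitaka and numerical dimensions of the restrictions of the log canonical divisors to a common birational model of $S$ and $S'$, exploiting the hypothesis $a > a' > 0$ together with the nefness of $K_X+\Delta+aA$ and $K_{X'}+\Delta'+a'f_*A$ to control discrepancies. First I would take a common log resolution $g\colon Y\to X$, $g'\colon Y\to X'$ of the birational map $f$ together with a subvariety $T\subset Y$ dominating both $S$ and $S'$ birationally, as in the proof of Lemma \ref{lem--discre-relation}; here one uses that $f$ is an isomorphism over an open set meeting $S$, so $T$ is well-defined and $g|_T\colon T\to S$, $g'|_T\colon T\to S'$ form a common resolution of $f|_S$. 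Writing $g^{*}(K_X+\Delta) = g'^{*}(K_{X'}+\Delta') + M - N$ with $M,N\geq 0$ having no common components, the point is that $f$ being small and $f_*\Delta=\Delta'$ forces $M$ and $N$ to be $g$- and $g'$-exceptional respectively, and a discrepancy comparison (via the nef divisors $K_X+\Delta+aA$ and $K_{X'}+\Delta'+a'f_*A$, using that along a valuation over $X$ the function $a(\,\cdot\,,X,\Delta)$ dominates $a(\,\cdot\,,X',\Delta')$ in this setup) shows that $M$ is in fact $g'$-exceptional; I expect $M=0$ or at least that $M$ is effective exceptional over $X'$ so that $N-M$ is an effective divisor comparing the two pullbacks in one direction.

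Next I would restrict the relation to $T$. Since $T\not\subset \operatorname{Supp}M\cup\operatorname{Supp}N$ (same argument as in Lemma \ref{lem--discre-relation}: $T\subset\operatorname{Supp}N$ would force a discrepancy $<-1$ on $X'$, contradicting dlt, and $T\subset\operatorname{Supp}M$ is excluded because $g'$ is an isomorphism near general points of $S'$), the restriction $g|_T^{*}(K_S+\Delta_S) = g'|_T^{*}(K_{S'}+\Delta_{S'}) + M|_T - N|_T$ is a genuine relation of $\mathbb{R}$-Cartier divisors on $T$ with $M|_T, N|_T\geq 0$. Using Lemma \ref{lem--adjunction} to see that the components of $M|_T$ lying over $S$ with positive discrepancy become exceptional over $S'$, I would arrange that, up to a divisor exceptional over $S'$, the pullback $g'|_T^{*}(K_{S'}+\Delta_{S'})$ is dominated by $g|_T^{*}(K_S+\Delta_S)$ plus an effective $g|_T$-exceptional divisor. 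Then Remark \ref{rem--div} \ref{rem--div-(2)} — invariance of $\kappa_\iota$ and $\kappa_\sigma$ under pullback and under adding effective exceptional divisors — gives, fiberwise over $Z$, the inequalities $\kappa_\iota(S/Z,(K_X+\Delta)|_S)\geq\kappa_\iota(S'/Z,(K_{X'}+\Delta')|_{S'})$ and likewise for $\kappa_\sigma$, after passing to a sufficiently general fiber of the Stein factorization of $T\to Z$ compatible with those of $S\to Z$ and $S'\to Z$.

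The main obstacle I anticipate is the discrepancy bookkeeping that ensures $M|_T$ contributes only divisors exceptional over $S'$ — that is, making precise that every component $E$ of $M$ meeting $T$ has $a(E,X',\Delta')>0$. In Lemma \ref{lem--discre-relation} this followed from the hypothesis $\sigma_P(K_X+\Delta)=0$ for the relevant valuations, but here that hypothesis is replaced by the existence of $A$ with $a>a'>0$ making both $K_X+\Delta+aA$ and $K_{X'}+\Delta'+a'f_*A$ nef; so the delicate part is to run the corresponding argument using nefness (pulling back both nef divisors to $Y$, comparing coefficients of $M$ and $N$ against $(a-a')$ times the pullback of $A$, and invoking the negativity lemma) to conclude that components of $M$ meeting $T$ have strictly positive discrepancy over $X'$, hence their restrictions to $T$ are $g'|_T$-exceptional by Lemma \ref{lem--adjunction}. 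Once that is in place, the dimension comparison is a formal consequence of Remark \ref{rem--div}.
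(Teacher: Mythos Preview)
Your approach follows the template of Lemma \ref{lem--discre-relation} too closely and thereby misses a much simpler argument. The paper does not decompose $g^{*}(K_X+\Delta)-g'^{*}(K_{X'}+\Delta')$ into $M-N$ and then chase discrepancies of components via Lemma \ref{lem--adjunction}. Instead it observes that since $f$ is small, both
\[
E:=g'^{*}(K_{X'}+\Delta'+af_*A)-g^{*}(K_X+\Delta+aA)
\quad\text{and}\quad
E':=g^{*}(K_X+\Delta+a'A)-g'^{*}(K_{X'}+\Delta'+a'f_*A)
\]
are exceptional over both $X$ and $X'$; applying the negativity lemma once over $X'$ (using nefness of $K_X+\Delta+aA$) gives $E\geq 0$, and once over $X$ (using nefness of $K_{X'}+\Delta'+a'f_*A$) gives $E'\geq 0$. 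A linear combination using $a>a'>0$ then yields directly
\[
g^{*}(K_X+\Delta)=g'^{*}(K_{X'}+\Delta')+F,\qquad F\geq 0,\quad F\text{ $g'$-exceptional}.
\]
Restricting to $T$ (well-defined since $F$ is also $g$-exceptional and $g$ is an isomorphism over an open set meeting $S$) gives $g|_T^{*}((K_X+\Delta)|_S)=g'|_T^{*}((K_{X'}+\Delta')|_{S'})+F|_T$ with $F|_T\geq 0$, and Remark \ref{rem--div} finishes. No appeal to Lemma \ref{lem--adjunction} or componentwise discrepancy analysis is needed.

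Your plan, by contrast, has a genuine gap. In your $M-N$ decomposition you focus entirely on showing that $M|_T$ is exceptional over $S'$, but you never address the $-N|_T$ term, which would pull the inequality the wrong way. In fact the paper's computation shows $N=0$ in your notation (since $M-N=F\geq 0$ with $M,N$ having no common components forces $N=0$), but you do not arrive at this, and your remark that you expect ``$N-M$ is an effective divisor'' has the sign reversed. The claim that ``$a(\,\cdot\,,X,\Delta)$ dominates $a(\,\cdot\,,X',\Delta')$'' is also not a hypothesis here; it is exactly the content of $F\geq 0$, which must be \emph{derived} from the two nefness assumptions via the negativity lemma as above.
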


\begin{proof}
By taking a log resolution $g\colon Y\to X$ of $(X,\Delta)$ which resolves the indeterminacy of $f\colon X\dashrightarrow X'$, we may find a common log resolution $g$ and $g' \colon Y \to {X'}$ of the map $(X,\Delta)\dashrightarrow (X',\Delta')$ and a subvariety $T\subset Y$ birational to $S$ and $S'$ such that the induced morphisms $g|_{T}\colon T\to S$ and $g'|_{T}\colon T\to S'$ form a common resolution of $f|_{S}\colon S\dashrightarrow S'$. 
We note that $A$ and $f_{*}A$ are $\mathbb{R}$-Cartier by hypothesis. 
Since $f$ is small, by the negativity lemma, we may write
\begin{equation*}
\begin{split}
&E+g^{*}(K_{X}+\Delta+aA)=g'^{*}(K_{X'}+\Delta'+af_{*}A)\quad {\rm and}\\
&g^{*}(K_{X}+\Delta+a'A)=g'^{*}(K_{X'}+\Delta'+a'f_{*}A)+E'
\end{split}
\end{equation*}
for some $g$-exceptional $\mathbb{R}$-divisor $E\geq0$ and $g'$-exceptional $\mathbb{R}$-divisor $E'\geq0$. 
Since $a>a'>0$, if we put $F=\frac{1}{a-a'}(a'E+aE')$, then $F \geq 0$ and $g'$-exceptional and we have 
$$g^{*}(K_{X}+\Delta)=g'^{*}(K_{X'}+\Delta')+F.$$ 
Since $g\colon Y \to X$ is an isomorphism over an open subset which contains general points of $S$, we have $F|_{T}\geq0$. 
By restricting to $T$, we obtain
$$g|_{T}^{*}((K_{X}+\Delta)|_{S})=g'|_{T}^{*}((K_{X'}+\Delta')|_{S'})+F|_{T}.$$
From this relation, the inequalities of Lemma \ref{lem--comparedim} follow. 
\end{proof}

\begin{lem}\label{lem--relabund-globabund}
Let $\pi \colon X\to Z$ be a morphism of normal projective varieties, and let $(X,\Delta)$ be an lc pair such that $K_{X}+\Delta$ is abundant over $Z$. 
Let $A_{Z}$ be an ample Cartier divisor on $Z$, and pick $\alpha>2\cdot{\rm dim}\,X$. 
Then $K_{X}+\Delta+\alpha \pi^{*}A_{Z}$ is (globally) abundant. 
\end{lem}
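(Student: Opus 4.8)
The plan is to relate the global invariant Iitaka and numerical dimensions of $K_{X}+\Delta+\alpha\pi^{*}A_{Z}$ to the relative ones over $Z$ (which coincide by hypothesis), and to use that the ample pullback $\alpha\pi^{*}A_{Z}$ turns the ``base direction'' of $K_{X}+\Delta$ into a big divisor on $Z$. Write $D:=K_{X}+\Delta$. First I would make two reductions. Passing to the Stein factorization $X\to Z'\to Z$ and replacing $Z$ by $Z'$ and $A_{Z}$ by its pullback to $Z'$ (again ample and Cartier, with ${\rm dim}\,Z'={\rm dim}\,Z$ and the same general fibres), I may assume $\pi$ is a contraction; note that ${\rm dim}\,X$ and $2\,{\rm dim}\,X$ are unchanged. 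Since adding the pullback of an $\mathbb{R}$-Cartier divisor from $Z$ does not affect the relative invariant Iitaka or numerical dimension, $D+\alpha\pi^{*}A_{Z}$ is again abundant over $Z$; set $k:=\kappa_{\iota}(X/Z,D)=\kappa_{\sigma}(X/Z,D)\in\{-\infty\}\cup\mathbb{Z}_{\geq0}$. If $k=-\infty$, then $\kappa_{\sigma}(F,D|_{F})=-\infty$ for a general fibre $F$, so $D|_{F}$ is not pseudo-effective, hence $D+\alpha\pi^{*}A_{Z}$ (which restricts to $D|_{F}$ on $F$) is not pseudo-effective, and therefore $\kappa_{\iota}(X,D+\alpha\pi^{*}A_{Z})=\kappa_{\sigma}(X,D+\alpha\pi^{*}A_{Z})=-\infty$; here for $\kappa_{\sigma}$ one uses that a divisor negative on some movable curve has no large sections after twisting by a fixed divisor. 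So from now on $k\geq0$, and the goal is $\kappa_{\iota}(X,D+\alpha\pi^{*}A_{Z})=\kappa_{\sigma}(X,D+\alpha\pi^{*}A_{Z})=k+{\rm dim}\,Z$; by the general inequality $\kappa_{\iota}\leq\kappa_{\sigma}$ it suffices to prove $\kappa_{\sigma}(X,D+\alpha\pi^{*}A_{Z})\leq k+{\rm dim}\,Z$ and $\kappa_{\iota}(X,D+\alpha\pi^{*}A_{Z})\geq k+{\rm dim}\,Z$.

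The upper bound follows from the numerical analogue of Iitaka's easy addition, $\kappa_{\sigma}(X,L)\leq\kappa_{\sigma}(F,L|_{F})+{\rm dim}\,Z$, applied to $L=D+\alpha\pi^{*}A_{Z}$, using $L|_{F}=D|_{F}$ and $\kappa_{\sigma}(F,D|_{F})=k$ (the inequality is vacuous when $L$ is not pseudo-effective). For the lower bound the key claim is that $D+(2\,{\rm dim}\,X)\pi^{*}A_{Z}$ is pseudo-effective. Granting this, I would argue as follows. Since $k\geq0$, pick $E\geq0$ with $D\sim_{\mathbb{R},Z}E$, so $D\sim_{\mathbb{R}}E+\pi^{*}G$ for an $\mathbb{R}$-Cartier divisor $G$ on $Z$, and $D+\alpha\pi^{*}A_{Z}\sim_{\mathbb{R}}E+\pi^{*}(G+\alpha A_{Z})$. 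From the claim one deduces that $G+(2\,{\rm dim}\,X)A_{Z}$ is pseudo-effective on $Z$, hence $G+\alpha A_{Z}=\bigl(G+(2\,{\rm dim}\,X)A_{Z}\bigr)+(\alpha-2\,{\rm dim}\,X)A_{Z}$ is big for every $\alpha>2\,{\rm dim}\,X$. Writing a big divisor as ample $+$ effective, one gets $D+\alpha\pi^{*}A_{Z}\sim_{\mathbb{R}}(E+\pi^{*}E_{0})+\pi^{*}A_{0}$ with $E+\pi^{*}E_{0}\geq0$ and $A_{0}$ ample on $Z$. Then the easy direction of Iitaka's additivity over a base with an ample (equivalently, big) pulled‑back divisor gives $\kappa_{\iota}(X,D+\alpha\pi^{*}A_{Z})\geq\kappa_{\iota}(F,(E+\pi^{*}E_{0})|_{F})+{\rm dim}\,Z=\kappa_{\iota}(F,D|_{F})+{\rm dim}\,Z=k+{\rm dim}\,Z$; using the properties of $\kappa_{\iota},\kappa_{\sigma}$ from Remark~\ref{rem--div} throughout. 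Combining with the upper bound and $\kappa_{\iota}\leq\kappa_{\sigma}$ yields the asserted equalities.

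The main obstacle is the pseudo-effectivity of $D+(2\,{\rm dim}\,X)\pi^{*}A_{Z}$ (equivalently, the bound ``$G\geq-(2\,{\rm dim}\,X)A_{Z}$'' on the base part of $K_{X}+\Delta$), and this is exactly where the hypothesis $\alpha>2\,{\rm dim}\,X$ enters. By the duality between pseudo-effective divisors and movable curves it reduces to checking $(D+(2\,{\rm dim}\,X)\pi^{*}A_{Z})\cdot C\geq0$ for every movable curve class $C$ on $X$: if $\pi$ contracts $C$ then $C$ restricts to a movable class on a general fibre $F$ and $D|_{F}=(K_{X}+\Delta)|_{F}$ is pseudo-effective (as $k\geq0$), so $D\cdot C\geq0$; if not, then $\pi_{*}C$ is a nonzero movable class on $Z$ with $A_{Z}\cdot\pi_{*}C\geq1$, and one bounds $(K_{X}+\Delta)\cdot C$ from below via the cone theorem on $X$, using the length estimate $-(K_{X}+\Delta)\cdot\Gamma\leq2\,{\rm dim}\,X$ for rational curves $\Gamma$ spanning $(K_{X}+\Delta)$‑negative extremal rays applied to a decomposition of $[C]$ in $\overline{\rm NE}(X)$. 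The delicate point is the contribution of $\pi$‑contracted negative extremal rays to that decomposition; this is controlled precisely because $(K_{X}+\Delta)|_{F}$ is pseudo-effective on general fibres. I expect this step to be the technical core; the remaining ingredients — easy addition and its easy converse for $\kappa_{\sigma}$ and $\kappa_{\iota}$ over a contraction with big base class, together with the behaviour of these invariants under pullback recorded in Remark~\ref{rem--div} — are standard.
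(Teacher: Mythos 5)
Your overall target ($\kappa_{\iota}=\kappa_{\sigma}=k+\dim Z$, via an upper bound by easy addition and a lower bound from bigness of a ``base part'') is reasonable, but two steps in the lower bound do not hold as written. The cleaner logical failure is the deduction ``from the claim one deduces that $G+(2\dim X)A_{Z}$ is pseudo-effective on $Z$.'' You only know $D\sim_{\mathbb{R}}E+\pi^{*}G$ for \emph{some} choice of $E\geq 0$ and $G$; pseudo-effectivity of $E+\pi^{*}\bigl(G+(2\dim X)A_{Z}\bigr)$ with $E\geq0$ does not imply pseudo-effectivity of $G+(2\dim X)A_{Z}$, because $E$ (its vertical components, or even a horizontal component) can absorb negativity from the base: on a Hirzebruch surface $\mathbb{F}_{a}\to\mathbb{P}^{1}$ one has $C_{\infty}+\pi^{*}N\equiv C_{0}$ pseudo-effective with $\deg N=-a<0$. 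Moreover $G$ itself is not well-defined, since replacing $E$ by $E+\pi^{*}E_{0}$ changes $G$ by $-E_{0}$. This is exactly the point the paper's proof is designed to handle: it first passes (crepantly, via a log smooth model over the relative Iitaka fibration and the weak semistable reduction results of \cite{has-mmp}) to a situation where $K_{X'}+\Delta'\sim_{\mathbb{R},V'}0$, so that $K_{X'}+\Delta'$ genuinely descends to a divisor $D'$ on the base $V'$ with $\kappa_{\sigma}(V'/Z,D')=\dim V'-\dim Z$, i.e.\ $D'$ big over $Z$; then pseudo-effectivity of $D'+\alpha'g'^{*}A_{Z}$ together with relative bigness and ampleness of $A_{Z}$ gives bigness of $D'+\alpha g'^{*}A_{Z}$, and abundance of $K_{X}+\Delta+\alpha\pi^{*}A_{Z}$ follows because big divisors are abundant and abundance is preserved under the pullbacks involved. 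Without descending to such a base, the ``base part'' you want to be big simply is not there to argue with.

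The second problem is your key claim itself: pseudo-effectivity of $K_{X}+\Delta+(2\dim X)\pi^{*}A_{Z}$. Your sketch via the duality with movable curves and a decomposition of $[C]$ in $\overline{\rm NE}(X)$ breaks down precisely at the point you flag: $\pi$-contracted $(K_{X}+\Delta)$-negative extremal rays can appear in the decomposition with positive coefficients (they do, e.g.\ for fiberwise flipping or divisorial rays), $\pi^{*}A_{Z}$ pairs to zero with them, and pseudo-effectivity of $(K_{X}+\Delta)|_{F}$ on general fibres gives no bound on their total contribution, since the decomposition is neither unique nor controlled. So the ``technical core'' you defer is not a routine verification. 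The paper's proof of this pseudo-effectivity is different and genuinely uses the MMP: for any ample $H$ on $X$ it takes a good minimal model of the klt pair $(X,\Delta+H)$ over $Z$ (by \cite{bchm}), applies the length-of-extremal-rays bound there to see that $K_{\widetilde{X}}+\widetilde{\Delta}+\widetilde{H}+\alpha\widetilde{\pi}^{*}A_{Z}$ is globally nef (any negative extremal ray would be horizontal over $Z$ and meet $\pi^{*}A_{Z}$ with degree at least $1$), pulls this back to deduce pseudo-effectivity of $K_{X}+\Delta+H+\alpha\pi^{*}A_{Z}$, and then lets $H\to 0$. You would need to either reproduce an argument of this type or find a genuinely new proof of the nefness/pseudo-effectivity threshold; as it stands, both the claim and the use you make of it are gaps.
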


\begin{proof}
We may assume that $\pi$ is a contraction and $(X,\Delta)$ is a $\mathbb{Q}$-factorial dlt pair by taking the Stein factorization of $\pi$ and a dlt blow-up of $(X,\Delta)$, respectively. 
We may assume that $K_{X}+\Delta$ is pseudo-effective over $Z$ because otherwise  $K_{X}+\Delta+\alpha \pi^{*}A_{Z}$ is not pseudo-effective. 

First we show that $K_{X}+\Delta+\alpha \pi^{*}A_{Z}$ is pseudo-effective. 
Pick any ample $\mathbb{R}$-divisor $H$ on $X$. 
By \cite{bchm}, there is a good minimal model $(\tilde{X},\tilde{\Delta}+\tilde{H})$ of $(X,\Delta+H)$ over $Z$. 
Let $\tilde{\pi}\colon \tilde{X}\to Z$ be the induced morphism. 
By the length of extremal rays (\cite[Section 18]{fujino-fund}), we see that $K_{\tilde{X}}+\tilde{\Delta}+\tilde{H}+\alpha \tilde{\pi}^{*}A_{Z}$ is  (globally) nef. 
This shows that $K_{X}+\Delta+H+\alpha \pi^{*}A_{Z}$ is pseudo-effective for any ample $H$. 
Thus $K_{X}+\Delta+\alpha \pi^{*}A_{Z}$ is pseudo-effective. 

We fix $\alpha'$ such that $2\cdot {\rm dim}\,X<\alpha'<\alpha$. 
By the previous argument, $K_{X}+\Delta+\alpha' \pi^{*}A_{Z}$ is pseudo-effective. 
Since $K_{X}+\Delta$ is pseudo-effective and abundant over $Z$, we may pick $E\geq0$ such that $K_{X}+\Delta\sim_{\mathbb{R},Z}E$. 
We take the relative Iitaka fibration $X \dashrightarrow V$ over $Z$ induced by $E$. 
By Remark \ref{rem--div} and replacing $(X,\Delta)$, we may assume that $X \dashrightarrow V$ is a morphism. 
By Lemma \ref{lem--iitakafib}, we have 
$$\kappa_{\sigma}(X/V,K_{X}+\Delta)=0.$$
Furthermore, the equality 
$$\kappa_{\sigma}(X/Z,K_{X}+\Delta)={\rm dim}\,V-{\rm dim}\,Z$$
holds by construction.  
By \cite[Lemma 3.1]{has-mmp}, we get a projective lc pair $(X',\Delta')$ and the following diagram of normal projective varieties
$$
\xymatrix
{
X\ar[d]&W\ar[l]_{p}\ar[r]^{p'}&X'\ar[d]^{\varphi'}\\
V\ar[dr]&&V'\ar[ll]\ar[dl]^{g'}\\
&Z
}
$$
such that 
\begin{itemize}
\item
the three horizontal morphisms are birational and the other morphisms are contractions, and
\item
$K_{X'}+\Delta'\sim_{\mathbb{R},V'}0$.
\end{itemize}
Furthermore, the construction of the diagram and \cite[Remark 3.2]{has-mmp} imply that we may assume the following property. 
\begin{itemize} 
\item
 $p^{*}(K_{X}+\Delta)+F_{p}=p'^{*}(K_{X'}+\Delta')+F_{p'}$ for a $p$-exceptional $\mathbb{R}$-divisor $F_{p}\geq0$ and a $p'$-exceptional $\mathbb{R}$-divisor $F_{p'}\geq 0$. 
\end{itemize}
Pick an $\mathbb{R}$-Cartier divisor $D'$ on $V'$ such that $\varphi'^{*}D'\sim_{\mathbb{R}}K_{X'}+\Delta'$. 
Then 
$$\kappa_{\sigma}(V'/Z,D')=\kappa_{\sigma}(X'/Z,K_{X'}+\Delta')=\kappa_{\sigma}(X/Z,K_{X}+\Delta)={\rm dim}\,V'-{\rm dim}\,Z,$$
 so $D'$ is big over $Z$. 
By construction and Remark \ref{rem--div}, $D'+\alpha' g'^{*}A_{Z}$ is pseudo-effective, and $K_{X}+\Delta+\alpha \pi^{*}A_{Z}$ is abundant if and only if $D'+\alpha g'^{*}A_{Z}$ is abundant. 
Now
\begin{equation*}
\begin{split}
D'+\alpha g'^{*}A_{Z}=(1-\delta)(D'+\alpha'g'^{*}A_{Z})+\delta D'+(\alpha-\alpha')g'^{*}A_{Z}+\delta \alpha'g'^{*}A_{Z}
\end{split}
\end{equation*}
for any $\delta>0$. 
Since $D'$ is big over $Z$ and $A_{Z}$ is ample, taking a sufficiently small $\delta>0$ we see that $D'+\alpha g'^{*}A_{Z}$ is big. 
Therefore $K_{X}+\Delta+\alpha \pi^{*}A_{Z}$ is abundant. 
\end{proof}

\begin{lem}[{cf.~\cite[Lemma 3.4]{has-class}}]\label{lem--relnefabundance}
Let $\pi\colon X\to Z$ be a projective morphism of normal quasi-projective varieties, and let $(X,\Delta)$ be an lc pair. 
If $K_{X}+\Delta$ is nef and log abundant over $Z$ with respect to $(X,\Delta)$, then it is semi-ample over $Z$. 
\end{lem}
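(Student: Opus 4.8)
The statement is the relative form, over a quasi-projective base, of the abundance theorem for projective nef and log abundant lc pairs (\cite{fujino-abund-saturation}, \cite{haconxu}), so the plan is to reduce to that case. First I would make two harmless reductions: replacing $\pi$ by the contraction part of its Stein factorization (nef-ness over $Z$, log abundance over $Z$, and the conclusion of semi-ampleness over $Z$ all descend through the finite part), and then taking a dlt blow-up $f\colon Y\to X$ as in Theorem \ref{thm--dltblowup}, so that $K_{Y}+\Gamma=f^{*}(K_{X}+\Delta)$ is again nef over $Z$; here $(Y,\Gamma)$ is $\mathbb{Q}$-factorial dlt and is log abundant over $Z$ by Remark \ref{rem--div} (\ref{rem--div-(2)}) and the fact that every lc center of $(Y,\Gamma)$ maps onto an lc center of $(X,\Delta)$, and semi-ampleness of $K_{Y}+\Gamma$ over $Z$ gives that of $K_{X}+\Delta$ over $Z$ because $f_{*}\mathcal{O}_{Y}=\mathcal{O}_{X}$. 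So we may assume $(X,\Delta)$ is $\mathbb{Q}$-factorial dlt and $\pi$ is a contraction.

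To obtain a projective total space I would compactify the base. Fix an open immersion $Z\hookrightarrow Z^{c}$ into a projective variety and use Lemma \ref{lem--Qfacdltclosure} to produce a projective $\mathbb{Q}$-factorial dlt pair $(X^{c},\Delta^{c})$ with $\pi^{c}\colon X^{c}\to Z^{c}$ extending $\pi$, $(X^{c}|_{X},\Delta^{c}|_{X})=(X,\Delta)$, and every lc center of $(X^{c},\Delta^{c})$ meeting $X$. Since $K_{X}+\Delta$ is nef over $Z$, every extremal ray contracted in a $(K_{X^{c}}+\Delta^{c})$-MMP over $Z^{c}$ with scaling of an ample divisor maps into $Z^{c}\setminus Z$, so each step is an isomorphism over $Z$; after finitely many steps one arrives at a model, which we again call $(X^{c},\Delta^{c})$, on which $K_{X^{c}}+\Delta^{c}$ is nef over $Z^{c}$, with $X$ still an open subset restricting to $(X,\Delta)$ and all lc centers meeting $X$. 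Moreover $(X^{c},\Delta^{c})$ is log abundant over $Z^{c}$: the relative invariant Iitaka dimension and relative numerical dimension of $K_{X^{c}}+\Delta^{c}$, and of its restriction to the normalization of each lc center, are computed on general fibers over $Z^{c}$, and these lie over the dense open set $Z$ and agree there with the corresponding fibers over $Z$.

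Now I would globalize. Fix a very ample divisor $A_{Z^{c}}$ on $Z^{c}$, an integer $\alpha>2\dim X^{c}$, and general members $A_{1},\dots,A_{\alpha}\in|A_{Z^{c}}|$ so that $(X^{c},\Delta^{c}+\sum_{i}\pi^{c*}A_{i})$ is dlt with the same lc centers as $(X^{c},\Delta^{c})$. Applying Lemma \ref{lem--relabund-globabund} to $(X^{c},\Delta^{c})$ and to the normalization of each lc center of $(X^{c},\Delta^{c})$ (with the multiple of the pulled-back ample exceeding twice the relevant dimension in each case), the divisor $K_{X^{c}}+\Delta^{c}+\sum_{i}\pi^{c*}A_{i}\sim_{\mathbb{R}}K_{X^{c}}+\Delta^{c}+\alpha\pi^{c*}A_{Z^{c}}$ is abundant and log abundant in the absolute sense, and a length-of-extremal-rays argument as in the proof of Lemma \ref{lem--relabund-globabund} shows it is globally nef. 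Thus $(X^{c},\Delta^{c}+\sum_{i}\pi^{c*}A_{i})$ is a projective lc pair whose log canonical divisor is nef and log abundant, hence semi-ample by \cite{fujino-abund-saturation} and \cite{haconxu}; in particular $K_{X^{c}}+\Delta^{c}+\alpha\pi^{c*}A_{Z^{c}}$ is semi-ample. Passing to the normalization of the image of $X^{c}$ in the product of the ample model of $K_{X^{c}}+\Delta^{c}+\alpha\pi^{c*}A_{Z^{c}}$ with $Z^{c}$ — on which $\pi^{c*}A_{Z^{c}}$ becomes the pullback of a divisor that is ample over $Z^{c}$ — one concludes that $K_{X^{c}}+\Delta^{c}$ is semi-ample over $Z^{c}$; restricting over $Z$ and reversing the dlt blow-up and the Stein factorization gives that $K_{X}+\Delta$ is semi-ample over $Z$.

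The only point that is not formal bookkeeping is the termination of the $(K_{X^{c}}+\Delta^{c})$-MMP over $Z^{c}$: this MMP is supported over the lower-dimensional locus $Z^{c}\setminus Z$, and one has to know it stops — this is the main obstacle, and it is precisely why one wants the MMP to be an isomorphism over $Z$. The preservation of log abundance under the dlt blow-up and under the compactification is automatic, since the invariants involved are fiberwise over the base and the birational modifications do not touch $Z$; and the descent of semi-ampleness from $Z^{c}$ to $Z$ only uses the behaviour of relative ampleness under finite morphisms. If one can invoke the cited abundance theorem directly over a quasi-projective base, the whole compactification step is unnecessary and the lemma follows at once from the dlt blow-up reduction.
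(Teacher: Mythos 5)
There is a genuine gap, and you have in fact named it yourself: the termination of the $(K_{X^{c}}+\Delta^{c})$-MMP over $Z^{c}$. After the $\mathbb{Q}$-factorial dlt closure of Lemma \ref{lem--Qfacdltclosure}, the divisor $K_{X^{c}}+\Delta^{c}$ is nef only over $Z$, not over $Z^{c}$, and your globalization step (adding $\alpha\pi^{c*}A_{Z^{c}}$) does nothing for curves contracted into $Z^{c}\setminus Z$; so you really do need a model that is nef over $Z^{c}$, and you simply assert that ``after finitely many steps'' the MMP over $Z^{c}$ produces one. Termination (or even the existence of a log minimal model over $Z^{c}$) for such an lc pair is not known in general --- it is precisely the kind of statement this paper is after, and the compactification-plus-MMP device you describe is exactly what appears in Step 3 of the proof of Theorem \ref{thm--main-logabundant}, where the needed termination is obtained only through the full inductive machinery of that theorem (running a $(K_{\bar{X}}+\bar{\Delta}+\bar{H})$-MMP with scaling, verifying $\lambda_{i}\to 0$ and log abundance at every step, and invoking the induction on dimension). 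Since Lemma \ref{lem--relnefabundance} is itself an ingredient in those later arguments (e.g.\ in Theorem \ref{thm--abundantterminate} and its corollaries), resting its proof on this unproved termination is either circular or simply unjustified. The rest of your reductions (Stein factorization, dlt blow-up, isomorphism of the MMP steps over $Z$, Lemma \ref{lem--relabund-globabund} on the pair and on its lc centers) are fine as bookkeeping, but they do not close this hole.

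The paper's own proof takes the route you mention only in your last sentence: for $\mathbb{Q}$-boundaries the relative statement follows by induction on dimension directly from \cite[Theorem 1.1]{fujino-abund-saturation} and \cite[Theorem 1.4]{haconxu}, which are available over a quasi-projective base, so no compactification and no MMP are needed; for $\mathbb{R}$-boundaries one reduces to the $\mathbb{Q}$-coefficient case by the argument of \cite[Proof of Lemma 3.4]{has-class}. Note that your proposal also overlooks this second point: $\Delta$ is an $\mathbb{R}$-divisor in the statement, while the abundance results for nef and log abundant pairs that you invoke are stated for $\mathbb{Q}$-divisors, so a rational-approximation step is required even in the absolute case.
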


\begin{proof}
When $\Delta$ is a $\mathbb{Q}$-divisor, the lemma  follows by induction using \cite[Theorem 1.1]{fujino-abund-saturation} and \cite[Theorem 2]{haconxu}. 
When $\Delta$ is not necessarily a $\mathbb{Q}$-divisor, we may reduce to the case of $\mathbb{Q}$-divisors by the argument of \cite[Proof of Lemma 3.4]{has-class} with minor changes. 
\end{proof}

\subsection{On log MMP and existence of good minimal models}

In this subsection we collect results on the log MMP and the existence of good minimal models for lc pairs. 

Firstly, we introduce a remark and a lemma on log MMP with scaling.  

\begin{rem}[{\cite{fujino-sp-ter}, \cite[Remark 2.10]{birkar-flip}}]\label{rem--mmplift}
We recall a part of the argument of the special termination which is important for the proofs of the main results of this paper. 

Let $\pi\colon X \to Z$ be a projective morphism of normal quasi-projective varieties and $(X,\Delta)$ a dlt pair. 
Fix an lc center $S$ of $(X,\Delta)$, and define a dlt pair $(S,\Delta_{S})$ where $\Delta_{S}$ is constructed by adjunction $K_{S}+\Delta_{S}=(K_{X}+\Delta)|_{S}$. 
Let $A$ be an effective $\mathbb{R}$-divisor on $X$ such that $(X,\Delta+A)$ is lc and $K_{X}+\Delta+A$ is nef over $Z$.
For a sequence of steps of a $(K_{X}+\Delta)$-MMP over $Z$ with scaling of $A$
$$(X_{0}:=X,\Delta_{0}:=\Delta) \dashrightarrow (X_{1},\Delta_{1}) \dashrightarrow\cdots \dashrightarrow (X_{i},\Delta_{i})\dashrightarrow \cdots,$$ 
suppose that each birational map $X\dashrightarrow X_{i}$ induces a small birational map $S\dashrightarrow S_{i}$ to an lc center $S_{i}$ of $(X_{i},\Delta_{i})$ such that 
the birational transform of $\Delta_{S}$ on $S_{i}$ is equal to the $\mathbb{R}$-divisor $\Delta_{S_{i}}$ defined by adjunction $K_{S_{i}}+\Delta_{S_{i}}=(K_{X_{i}}+\Delta_{i})|_{S_{i}}$. 
Let $(T,\Psi)\to (S,\Delta_{S})$ be a dlt blow-up, and we put $A_{T}=A|_{T}$.
Let $X\to V$ be the $(K_{X}+\Delta)$-negative extremal contraction of the first step of the $(K_{X}+\Delta)$-MMP over $Z$. 
It is obvious that $(S_{1},\Delta_{S_{1}})$ is a weak lc model of $(T,\Psi)$ over $V$ with relatively ample log canonical divisor. 
Then $(T,\Psi)$ has a good  minimal model over $V$ (\cite[Remark 2.10]{has-mmp}). 
By \cite[Theorem 4.1 (iii)]{birkar-flip}, we get a sequence of steps of a $(K_{T}+\Psi)$-MMP over $V$ terminating with a good minimal model $(T_{1},\Psi_{1})$ over $V$. 
By construction, this is a $(K_{T}+\Psi)$-MMP over $Z$ with scaling of $A_{T}$ and $(T_{1},\Psi_{1})$ is a $\mathbb{Q}$-factorial dlt model of $(S_{1},\Delta_{S_{1}})$ as in Theorem \ref{thm--dltblowup}. 
Repeating this discussion, over the sequence of birational maps $S\dashrightarrow \cdots \dashrightarrow S_{i} 
\dashrightarrow \cdots$, we can construct a sequence of steps of a $(K_{T}+\Psi)$-MMP over $Z$ with scaling of $A_{T}$
 $$(T_{0}:=T,\Psi_{0}:=\Psi) \dashrightarrow (T_{1},\Psi_{1}) \dashrightarrow \cdots \dashrightarrow (T_{i},\Psi_{i})\dashrightarrow \cdots$$
such that each $(T_{i},\Psi_{i})$ is a $\mathbb{Q}$-factorial dlt model of $(S_{i},\Delta_{S_{i}})$. 
We note that the birational map $(T_{i},\Psi_{i})\dashrightarrow (T_{i+1},\Psi_{i+1})$ is not necessarily one step of the $(K_{T_{i}}+\Psi_{i})$-MMP and it can be an isomorphism for some $i$. 

We define $$\lambda_{i}={\rm inf}\set{\mu \in \mathbb{R}_{\geq0} \!|\! K_{X_{i}}+\Delta_{i}+\mu A_{i}\text{\rm \, is nef over }Z},$$ where $A_{i}$ is the birational transform of $A$ on $X_{i}$. 
By the negativity lemma, for every $i$, the birational map $X_{i}\dashrightarrow X_{i+1}$ is an isomorphism on a neighborhood of $S_{i}$ if and only if the induced birational map $(S_{i}, \Delta_{S_{i}})\dashrightarrow (S_{i+1}, \Delta_{S_{i+1}})$ is an isomorphism. 
The isomorphism holds for all $i\gg 0$ if the $(K_{T}+\Psi)$-MMP over $Z$ terminates. 
Let $A_{T_{i}}$ be the birational transform of $A_{T}$ on $T_{i}$. 
We define 
$$\lambda'_{i}={\rm inf}\set{\mu \in \mathbb{R}_{\geq0} \!|\! K_{T_{i}}+\Psi_{i}+\mu A_{T_{i}}\text{\rm \, is nef over }Z}.$$ 
Then $\lambda'_{i}\leq \lambda_{i}$. 

Thus, if ${\rm lim}_{i\to \infty}\lambda_{i}=0$, then ${\rm lim}_{i\to \infty}\lambda'_{i}=0$ and furthermore $K_{T}+\Psi$ is pseudo-effective over $Z$. 
In this situation, by \cite[Theorem 4.1 (iii)]{birkar-flip} and \cite[Lemma 2.15]{has-mmp} and  since $(T,\Psi)$ is a $\mathbb{Q}$-factorial dlt model of $(S,\Delta_{S})$, the existence of a log minimal model of $(S,\Delta_{S})$ over $Z$ implies the termination of the $(K_{T}+\Psi)$-MMP over $Z$, and the termination of the $(K_{T}+\Psi)$-MMP implies that the birational map $X_{i} \dashrightarrow X_{i+1}$ is an isomorphism on a neighborhood of $S_{i}$ for all $i \gg0$, as explained above. 
\end{rem}

\begin{lem}\label{lem--mmpscaling-effective}
Let $\pi\colon X \to Z$ be a projective morphism of normal quasi-projective varieties, $(X,\Delta)$ an lc pair, and let $S$ be a subvariety of $X$. 
Let
$$(X_{0}:=X,\Delta_{0}:=\Delta) \dashrightarrow (X_{1},\Delta_{1}) \dashrightarrow\cdots \dashrightarrow (X_{i},\Delta_{i})\dashrightarrow \cdots$$ 
be a sequence of steps of a $(K_{X}+\Delta)$-MMP over $Z$ with scaling of an $\mathbb{R}$-divisor $A\geq0$. 
Let $A_{i}$ be the birational transform of $A$ on $X_{i}$. 
We define 
$$\lambda_{i}={\rm inf}\set{\mu \in \mathbb{R}_{\geq0} \!|\! K_{X_{i}}+\Delta_{i}+\mu A_{i}\text{\rm \, is nef over }Z}.$$ 
Suppose that each step of the $(K_{X}+\Delta)$-MMP is an isomorphism on a neighborhood of $S$ and ${\rm lim}_{i\to\infty}\lambda_{i}=0$.  
Then, for any $\pi$-ample $\mathbb{R}$-divisor $H$ on $X$ and closed point $x\in S$, there is $E\geq 0$ such that $E\sim_{\mathbb{R},Z}K_{X}+\Delta+H$ and ${\rm Supp}\,E \not\ni x$. 
In particular, if $Z$ is a point, then $\sigma_{P}(K_{X}+\Delta)=0$ for all prime divisors $P$ over $X$ such that $c_{X}(P)$ intersects $S$, where $\sigma_{P}(\,\cdot\,)$ is the asymptotic vanishing order of $P$ defined in Definition \ref{defn--asy-van-ord}.  
\end{lem}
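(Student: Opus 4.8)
The plan is to move from $X$ to a stage $(X_{l},\Delta_{l})$ of the given MMP at which the log canonical divisor becomes nef over $Z$ after adding a small multiple of $A_{l}$, and then to transport an effective divisor back to $X$ through a common resolution, using crucially that $X\dashrightarrow X_{l}$ is an isomorphism near $S$. First I would make two reductions. Since $H$ is $\pi$-ample and $Z$ is quasi-projective, replacing $H$ by $H+\pi^{*}M$ for a sufficiently ample $M$ on $Z$ — which leaves $K_{X}+\Delta+H$ unchanged modulo $\sim_{\mathbb{R},Z}$ — we may assume $H$ is ample on $X$. Then, because $A$ is fixed and ampleness is an open condition, we fix a small $t>0$ with $t\le 1$ such that $H-tA$ is still ample on $X$; note $(X,\Delta+tA)$ is lc. Since ${\rm lim}_{i\to\infty}\lambda_{i}=0<t$, the properties of an MMP with scaling recalled in Section~\ref{sec2} provide an index $l$ with $K_{X_{l}}+\Delta_{l}+tA_{l}$ nef over $Z$ and such that $X_{0}\dashrightarrow X_{l}$ is a finite sequence of steps of the given MMP. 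As every step is an isomorphism on a neighbourhood of $S$, so is the composite $\phi_{l}\colon X\dashrightarrow X_{l}$; set $x_{l}=\phi_{l}(x)\in X_{l}$.

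Next I would take a common resolution $p\colon W\to X$, $q\colon W\to X_{l}$ (for instance the normalisation of the graph of $\phi_{l}$), so that $p$ and $q$ are isomorphisms over the open subsets on which $\phi_{l}$ is an isomorphism; in particular $\tilde{x}:=p^{-1}(x)$ is a single point with $q(\tilde{x})=x_{l}$, lying on no $p$-exceptional nor $q$-exceptional divisor. Because $(X,\Delta+tA)\dashrightarrow (X_{l},\Delta_{l}+tA_{l})$ is a sequence of MMP steps, discrepancies do not decrease, so
\[
p^{*}(K_{X}+\Delta+tA)=q^{*}(K_{X_{l}}+\Delta_{l}+tA_{l})+F
\]
for some $F\ge 0$ that is $q$-exceptional. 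Choosing an effective $p$-exceptional $G$ on $W$ with $-G$ $p$-ample (a standard fact for projective birational morphisms), the divisor $p^{*}(H-tA)-\delta G$ is ample on $W$ for all sufficiently small $\delta>0$. Adding these relations,
\[
p^{*}(K_{X}+\Delta+H)=\bigl(q^{*}(K_{X_{l}}+\Delta_{l}+tA_{l})+(p^{*}(H-tA)-\delta G)\bigr)+F+\delta G ,
\]
and the bracketed divisor is the sum of a divisor nef over $Z$ and an ample divisor, hence ample over $Z$. An $\mathbb{R}$-divisor ample over $Z$ is $\sim_{\mathbb{R},Z}$ to an effective divisor missing the prescribed point $\tilde{x}$, and $F$, $\delta G$ also miss $\tilde{x}$ as they are exceptional over $X_{l}$, resp.\ $X$. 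Thus $p^{*}(K_{X}+\Delta+H)\sim_{\mathbb{R},Z}E'$ for some $E'\ge 0$ with $\tilde{x}\notin\operatorname{Supp}E'$; pushing forward by $p$, which is an isomorphism near $x$, yields $E:=p_{*}E'\ge 0$ with $E\sim_{\mathbb{R},Z}K_{X}+\Delta+H$ and $x\notin\operatorname{Supp}E$.

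The main obstacle is the control of the support near $x$, not the mere existence of an effective representative. Once ${\rm lim}_{i}\lambda_{i}=0$, the divisor $K_{X}+\Delta$ is pseudo-effective over $Z$, so $K_{X}+\Delta+H$ is big over $Z$ and some effective $E\sim_{\mathbb{R},Z}K_{X}+\Delta+H$ exists automatically; but a Kodaira-type argument gives no information about whether $x\in\operatorname{Supp}E$. The two hypotheses are exactly what resolve this: ``${\rm lim}_{i}\lambda_{i}=0$'' allows $t$ to be taken small enough that $H-tA$ stays ample while still reaching a stage nef over $Z$, and ``each step is an isomorphism near $S$'' makes $p$ and $q$ isomorphisms near $x$, so that the only potentially problematic correction terms $F$ and $G$ are exceptional there and hence avoid $\tilde{x}$. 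Accordingly, the delicate part of the write-up will be to record carefully why the common resolution can be chosen isomorphic over the locus where $\phi_{l}$ is an isomorphism, and why ampleness over $Z$ produces an effective representative avoiding a prescribed point.
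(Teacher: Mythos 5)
Your overall architecture is sound and is essentially the strategy of the paper: use $\lim_i\lambda_i=0$ to reach a stage $X_l$ where $K_{X_l}+\Delta_l+tA_l$ is nef over $Z$ and $X\dashrightarrow X_l$ is a $(K_X+\Delta+tA)$-MMP, produce an effective representative of an ample-over-$Z$ class avoiding the marked point, and transport it back through the graph using that every step is an isomorphism near $S$, so that the correction term $F$ misses $\tilde x$. The gap is the step where you upgrade $p^{*}(H-tA)$ to an ample divisor on $W$: you invoke, as ``a standard fact for projective birational morphisms,'' the existence of an effective $p$-exceptional divisor $G$ on $W$ with $-G$ $p$-ample. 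This is not a standard fact and is false in general: for a small projective birational morphism (e.g.\ a small resolution of a threefold node) there are no $p$-exceptional divisors at all, and $-G=0$ is not $p$-ample since $p$ is not finite; and even when exceptional divisors exist, $-G$ cannot be $p$-ample if $p$ factors through a small non-isomorphism, since $G$ is then trivial over that locus. Your proposed $W$, the normalization of the graph of $\phi_l$ (where $\phi_l$ is a composition of MMP steps, typically including flips, over a merely normal, possibly non-$\mathbb{Q}$-factorial $X$), comes with no guarantee that such a $G$ exists; such a $G$ is available essentially only when $p$ is a blow-up of an ideal sheaf cosupported in a closed subset of codimension at least two, which you neither arrange nor verify. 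Consequently the assertion that the bracketed divisor is ample over $Z$ is unjustified, and with it the construction of $E'$ avoiding $\tilde x$.

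The gap is repairable inside your framework: either choose $W\to X$ via elimination of indeterminacy as the blow-up of an ideal cosupported in the indeterminacy locus of $\phi_l$ (codimension $\geq 2$), for which the exceptional anti-ample divisor exists; or drop the exceptionality requirement altogether and produce some $G\geq 0$ with $-G$ $p$-ample and $\tilde x\notin {\rm Supp}\,G$ (twist a $p$-ample line bundle by a sufficiently ample pullback from $X$ and use global generation at $x$; since you add $\delta G$ back, exceptionality of $G$ is not used elsewhere in your computation). For comparison, the paper sidesteps the issue by working on $X_i$ itself: it picks an honest ample $B_i$ on $X_i$ so that $K_{X_i}+\Delta_i+\lambda_iA_i+B_i$ is ample over $Z$, chooses the effective divisor on the open set $U_i$ over which the map is an isomorphism and $H_i$ is ample over $Z$, and takes its closure, using that $X_i\setminus U_i$ has codimension at least two to land in the class $K_{X_i}+\Delta_i+\lambda_iA_i+\epsilon H_i$ (recording also that $H_i$ is $\mathbb{R}$-Cartier, a point your arrangement does not need).
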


\begin{proof}
Fix a $\pi$-ample $\mathbb{R}$-divisor $H$ on $X$ and a closed point $x\in S$. 
Fix $i \gg 0$ such that $\frac{1}{2}H-\lambda_{i}A$ is $\pi$-ample and $X \dashrightarrow X_{i}$ is a sequence of steps of a $(K_{X}+\Delta+\lambda_{i}A+\epsilon H)$-MMP over $Z$ for some $\epsilon\in (0,\frac{1}{2})$. 
Since $H-\epsilon H-\lambda_{i}A$ is ample over $Z$ and 
$$K_{X}+\Delta+H=K_{X}+\Delta+\lambda_{i}A+\epsilon H+(H-\epsilon H-\lambda_{i}A),$$
it is sufficient to find $E_{0}\geq 0$ such that $E_{0}\sim_{\mathbb{R},Z}K_{X}+\Delta+\lambda_{i}A+\epsilon H$ and ${\rm Supp}\,E_{0}\not\ni x$. 
Note that $X \dashrightarrow X_{i}$ is a birational contraction and the birational transform $H_{i}$ of $H$ on $X_{i}$ is $\mathbb{R}$-Cartier (\cite[Remark 6.1]{hashizumehu}). 

Let $U$ be the largest open subset of $X$ on which $X \dashrightarrow X_{i}$ is an isomorphism, and let $U_{i}$ be its image on $X_{i}$. 
Note that $U$ depends on the fixed index $i$.  
By hypothesis, the inclusion $S \subset U$ holds.
Moreover, the codimension of $X_{i}\setminus U_{i}$ in $X_{i}$ is at least two and $H_{i}|_{U_{i}}$ is ample over $Z$. 
Let $S_{i}$ (resp.~$x_{i}$) be the image of $S$ (resp.~$x$) on $X_{i}$. 
Then $x_{i}\in S_{i}\subset U_{i}$. 
Pick an ample $\mathbb{R}$-divisor $B_{i}$ on $X_{i}$ such that the restriction $(\epsilon H_{i}-B_{i})|_{U_{i}}$ is ample over $Z$. 
Then there is $E_{U_{i}}\geq 0$ such that ${\rm Supp}\,E_{U_{i}}\not\ni x_{i}$ and
$$E_{U_{i}}\sim_{\mathbb{R},Z}(K_{X_{i}}+\Delta_{i}+\lambda_{i}A_{i}+B_{i})|_{U_{i}}+(\epsilon H_{i}-B_{i})|_{U_{i}}.$$ 
Let $E_{i}$ be the closure of $E_{U_{i}}$ in $X_{i}$. 
Then we have $E_{i}\geq0$ and ${\rm Supp}\,E_{i}\not\ni x_{i}$. 
We also have $E_{i}\sim_{\mathbb{R},Z}K_{X_{i}}+\Delta_{i}+\lambda_{i}A_{i}+\epsilon H_{i}$ since the codimension of $X_{i}\setminus U_{i}$ in $X_{i}$ is at least two.  
By combining this with the fact that $H_{i}$ is $\mathbb{R}$-Cartier, we see that $E_{i}$ is $\mathbb{R}$-Cartier. 

We take the normalization $Y$ of the graph of $X \dashrightarrow X_{i}$, and we denote the natural morphisms $Y\to X$ and $Y\to X_{i}$ by $f$ and $f_{i}$, respectively. 
Since $X \dashrightarrow X_{i}$ is a sequence of steps of a $(K_{X}+\Delta+\lambda_{i}A+\epsilon H)$-MMP, we have
$$f^{*}(K_{X}+\Delta+\lambda_{i}A+\epsilon H)=f_{i}^{*}(K_{X_{i}}+\Delta_{i}+\lambda_{i}A_{i}+\epsilon H_{i})+F$$
for some $F\geq0$, and ${\rm Supp}\,F$ does not intersect $f^{-1}(U)$. 
We set $E_{0}=f_{*}(f_{i}^{*}E_{i}+F)$. 
By definition of $x_{i}$ and $U$, it follows that $E_{0}\sim_{\mathbb{R},Z}K_{X}+\Delta+\lambda_{i}A+\epsilon H$ and ${\rm Supp}\,E_{0}\not\ni x$. 

The final assertion of Lemma \ref{lem--mmpscaling-effective} is obvious from this fact and the definition of the asymptotic vanishing order. 
\end{proof}

Secondly, we introduce the good minimal model analog of \cite[Theorem 1.1]{bhzariski}. 
This is known to the experts, and the proof of \cite[Theorem 1.1]{bhzariski} works with no changes to prove the case of the existence of a good minimal model. 
So we skip the proof.

\begin{thm}[cf.~{\cite[Theorem 1.1]{bhzariski}}]\label{thmzariski} 
Let $(X,\Delta)$ be a projective lc pair such that $K_{X}+\Delta$ is pseudo-effective. 
Then $(X,\Delta)$ has a log minimal model (resp.~a good minimal model) if and only if there exists a resolution $f\colon Y\to X$ such that $f^{*}(K_{X}+\Delta)$ has the Nakayama--Zariski decomposition with nef (resp.~semi-ample) positive part. 
\end{thm}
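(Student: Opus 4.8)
The statement is cited as a good-minimal-model analogue of \cite[Theorem 1.1]{bhzariski}, so the plan is to follow that paper's argument verbatim and only check that the semi-ampleness of the positive part is preserved through each step. The \emph{only if} direction is straightforward: if $(X,\Delta)$ has a good minimal model $(X',\Delta')$, take a common resolution $p\colon Y\to X$, $q\colon Y\to X'$ of the birational map. By the negativity lemma (together with Remark \ref{rem--mmp-zariskidecom} (\ref{rem--mmp-zariskidecom-(3)})) one has $p^{*}(K_{X}+\Delta)=q^{*}(K_{X'}+\Delta')+F$ with $F\geq 0$ and $F$ $q$-exceptional, and since $F$ is then a $q$-exceptional divisor whose pushforward is effective, it coincides with its own Nakayama--Zariski negative part; hence $N_{\sigma}(p^{*}(K_{X}+\Delta))=F$ and the positive part is $q^{*}(K_{X'}+\Delta')$, which is semi-ample because $K_{X'}+\Delta'$ is. (For the log-minimal-model version one only gets nef, which is exactly \cite[Theorem 1.1]{bhzariski}.)

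For the \emph{if} direction, suppose $f\colon Y\to X$ is a resolution such that $f^{*}(K_{X}+\Delta)=P+N$ is the Nakayama--Zariski decomposition with $P$ semi-ample. Choose a log resolution and a boundary $\Gamma$ on $Y$ so that $(Y,\Gamma)$ is log smooth and $K_{Y}+\Gamma=f^{*}(K_{X}+\Delta)+E$ for an effective $f$-exceptional $E$; then by Remark \ref{rem--asy-van-ord-1} (\ref{rem--asy-van-ord-1-(3)}) we have $N_{\sigma}(K_{Y}+\Gamma)=N+E$, so $K_{Y}+\Gamma$ has Nakayama--Zariski decomposition with semi-ample (hence nef) positive part $P$. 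By \cite[Theorem 1.1]{bhzariski}, $(Y,\Gamma)$ has a log minimal model; the key point now is to upgrade this to a \emph{good} minimal model. The plan is to run a $(K_{Y}+\Gamma)$-MMP with scaling of an ample divisor; by \cite[Theorem 4.1]{birkar-flip} (which applies once we know a log minimal model exists) this MMP terminates with a log minimal model $(Y',\Gamma')$. One then checks that $K_{Y'}+\Gamma'$ equals the pushforward of the positive part $P$: by Remark \ref{rem--mmp-zariskidecom} (\ref{rem--mmp-zariskidecom-(1)}) the MMP contracts exactly the components of $N_{\sigma}(K_{Y}+\Gamma)$, so on $Y'$ the divisor $K_{Y'}+\Gamma'$ is the birational transform of $P$ modulo the contracted part; since $P$ was already semi-ample on $Y$, a standard argument (using that the MMP steps are $P$-trivial, so $P$ descends and its semi-ampleness is preserved) shows $K_{Y'}+\Gamma'$ is semi-ample, i.e.\ $(Y',\Gamma')$ is a good minimal model of $(Y,\Gamma)$. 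Finally, descend along $f$: a good minimal model of $(Y,\Gamma)$ over $X$ combined with the relation $K_{Y}+\Gamma=f^{*}(K_{X}+\Delta)+E$ yields a good minimal model of $(X,\Delta)$ — this is the content of applying \cite[Theorem 4.1]{birkar-flip} relatively over $X$, or can be seen directly since $f^{*}(K_{X}+\Delta)$ and $K_{Y}+\Gamma$ differ by an effective exceptional divisor.

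The main obstacle I expect is the step showing that semi-ampleness of the positive part $P$ survives the MMP steps that contract the components of $N$: one must argue that each extremal ray of the MMP is $P$-trivial (which follows because $P$ is nef and the MMP is an MMP for $K_{Y}+\Gamma\equiv P+N$ with $N$ supported on the exceptional locus, so the negative extremal rays lie over $N$), and then invoke that a semi-ample divisor remains semi-ample after a flip or divisorial contraction of a ray on which it is trivial — this is where one really uses that $P$ is semi-ample and not merely nef. Everything else is bookkeeping with the Nakayama--Zariski decomposition (Remark \ref{rem--asy-van-ord-1}) and the negativity lemma. Since the paper states this is known to experts and that the proof of \cite[Theorem 1.1]{bhzariski} works with no changes, I would present only this outline and omit the routine details, exactly as the authors do.
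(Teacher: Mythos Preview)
The paper gives no proof here: it says the result is known to experts, that the argument of \cite[Theorem 1.1]{bhzariski} works unchanged for the good minimal model case, and then skips it. Your outline follows that intended route, and the \emph{only if} direction and the overall strategy for the \emph{if} direction are correct.

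There is, however, a gap in your \emph{if} direction, precisely at the point you flag as the main obstacle. You claim each step of the $(K_Y+\Gamma)$-MMP is $P$-trivial because $P$ is nef and the extremal ray lies over $N+E$; but from $(K_Y+\Gamma)\cdot R<0$ and $P\cdot R\geq 0$ one only deduces $(N+E)\cdot R<0$, not $P\cdot R=0$. (On surfaces the Zariski decomposition forces $P\cdot N_i=0$ for each component $N_i$, but the Nakayama--Zariski decomposition in higher dimension does not have this orthogonality property.) So the step-by-step descent of $P$ and preservation of semi-ampleness is not justified.

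The clean fix, which is what the argument of \cite{bhzariski} actually does, bypasses individual steps. Once \cite[Theorem 1.1]{bhzariski} and \cite[Theorem 4.1]{birkar-flip} produce a log minimal model $(Y',\Gamma')$, take a common resolution $g\colon W\to Y$, $g'\colon W\to Y'$ and write $g^*(K_Y+\Gamma)=g'^*(K_{Y'}+\Gamma')+G$ with $G\geq 0$ and $g'$-exceptional. Since $K_{Y'}+\Gamma'$ is nef, Remark \ref{rem--asy-van-ord-1} (\ref{rem--asy-van-ord-1-(3)}) gives $N_\sigma(g^*(K_Y+\Gamma))=G$. On the other hand, since $g^*(K_Y+\Gamma)=g^*P+g^*(N+E)$ with $g^*P$ nef, lower convexity (Remark \ref{rem--asy-van-ord-1} (\ref{rem--asy-van-ord-1-(1)})) gives $N_\sigma(g^*(K_Y+\Gamma))\leq g^*(N+E)$, while Remark \ref{rem--asy-van-ord-1} (\ref{rem--asy-van-ord-1-(4)}) with $D''=N+E$ gives the reverse inequality. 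Hence $G=g^*(N+E)$ and therefore $g'^*(K_{Y'}+\Gamma')=g^*P$ is semi-ample, so $K_{Y'}+\Gamma'$ is semi-ample. The descent from $(Y,\Gamma)$ to $(X,\Delta)$ is then exactly Lemma \ref{lem--bir-equiv}, not a relative MMP over $X$.
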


Thirdly, we prepare results on equivalence about the existence of log minimal models and good minimal models for given two lc pairs. 

\begin{lem}[{\cite[Lemma 2.15]{has-mmp}}]\label{lem--bir-equiv}
Let $\pi\colon X \to Z$ be a projective morphism of normal quasi-projective varieties and $(X,\Delta)$ an lc pair. 
Let $(Y,\Gamma)$ be an lc pair such that there is a projective birational morphism $f\colon Y\to X$ satisfying $K_{Y}+\Gamma=f^{*}(K_{X}+\Delta)+E$ for an effective $f$-exceptional $\mathbb{R}$-divisor $E$. 
Then $(X,\Delta)$ has a weak lc model (resp.~a log minimal model, a good minimal model) over $Z$ if and only if  $(Y,\Gamma)$ has a weak lc model (resp.~a log minimal model, a good minimal model) over $Z$. 
\end{lem}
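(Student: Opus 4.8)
This is \cite[Lemma 2.15]{has-mmp}; here I sketch the strategy. The naive idea of directly carrying a model of one pair over to a model of the other fails: in the definition of a log birational model the boundary is the pushforward of the given boundary plus the \emph{reduced} inverse-exceptional divisor, and for $(X,\Delta)$ and for $(Y,\Gamma)$ these boundaries genuinely differ along the $f$-exceptional locus. So the plan is to bypass models and detect their existence through the Nakayama--Zariski decomposition of the log canonical divisor, which I will show is essentially the same for the two pairs.

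\emph{Step 1 (comparison on a common resolution).} Choose a smooth variety $W$ with a projective birational morphism $g\colon W\to X$ which is a log resolution of both $(X,\Delta)$ and $(Y,\Gamma)$ and factors as $g=f\circ g_{Y}$ for a morphism $g_{Y}\colon W\to Y$. Pulling back $K_{Y}+\Gamma=f^{*}(K_{X}+\Delta)+E$ by $g_{Y}$ yields
\begin{equation*}
g_{Y}^{*}(K_{Y}+\Gamma)=g^{*}(K_{X}+\Delta)+g_{Y}^{*}E.
\end{equation*}
Since $E$ is effective and $f$-exceptional, $\Supp E$ has image of codimension at least two in $X$, so $g_{Y}^{*}E\geq 0$ is $g$-exceptional, and $g^{*}(K_{X}+\Delta)+g_{Y}^{*}E$ is $\mathbb{R}$-Cartier. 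Hence Remark \ref{rem--asy-van-ord-1} (\ref{rem--asy-van-ord-1-(3)}) applies and gives $\sigma_{P}(g_{Y}^{*}(K_{Y}+\Gamma))=\sigma_{P}(g^{*}(K_{X}+\Delta))+\operatorname{ord}_{P}(g_{Y}^{*}E)$ for every prime divisor $P$ over $X$, in particular for every prime divisor $P$ on $W$. Summing over the prime divisors on $W$ gives $N_{\sigma}(g_{Y}^{*}(K_{Y}+\Gamma))=N_{\sigma}(g^{*}(K_{X}+\Delta))+g_{Y}^{*}E$, so the positive parts of the two Nakayama--Zariski decompositions coincide:
\begin{equation*}
g_{Y}^{*}(K_{Y}+\Gamma)-N_{\sigma}\bigl(g_{Y}^{*}(K_{Y}+\Gamma)\bigr)=g^{*}(K_{X}+\Delta)-N_{\sigma}\bigl(g^{*}(K_{X}+\Delta)\bigr).
\end{equation*}

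\emph{Step 2 (reduction and conclusion).} Since $f_{*}(K_{Y}+\Gamma)=K_{X}+\Delta$ and $K_{Y}+\Gamma=f^{*}(K_{X}+\Delta)+E$ with $E\geq 0$, the divisor $K_{X}+\Delta$ is pseudo-effective over $Z$ if and only if $K_{Y}+\Gamma$ is; if neither is, then neither pair has a weak lc model (the log canonical divisor of a weak lc model over $Z$ pulls back to a divisor pseudo-effective over $Z$), and the lemma holds. So I may assume both log canonical divisors are pseudo-effective over $Z$. Now I invoke the relative analogue of Theorem \ref{thmzariski} (which follows from the projective statement by a standard compactification argument over $Z$; cf.\ \cite{has-mmp}): an lc pair with log canonical divisor pseudo-effective over $Z$ has a log minimal model (resp.\ a good minimal model) over $Z$ precisely when the positive part of the Nakayama--Zariski decomposition of the pullback of its log canonical divisor to a log resolution is nef (resp.\ semi-ample) over $Z$. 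By Step 1 this positive part is the same for $(X,\Delta)$ and for $(Y,\Gamma)$, so the two pairs simultaneously have, or fail to have, a log minimal model over $Z$, and likewise a good minimal model over $Z$. The weak lc model assertion follows from the log minimal model case together with the standard equivalence, for an lc pair over $Z$ with pseudo-effective log canonical divisor, between the existence of a weak lc model over $Z$ and of a log minimal model over $Z$; alternatively one passes to $\mathbb{Q}$-factorial dlt models of the two pairs and compares discrepancies directly, as in \cite{has-mmp}.

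The main obstacle is the input used in Step 2---the relative Zariski-decomposition characterization of log minimal and good minimal models---and, more conceptually, the observation made at the start that models cannot be transferred verbatim; this is precisely what forces the detour through the Nakayama--Zariski decomposition. Granting that characterization, the only real computation is the three lines of Step 1, and the remainder is bookkeeping with the definitions of models and with pseudo-effectivity.
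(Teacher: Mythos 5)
The paper does not actually reprove this lemma; it is quoted from \cite[Lemma 2.15]{has-mmp} and the text explicitly defers the proof there, so there is no in-paper argument to match yours against. Judged on its own terms, your Step 1 is correct as far as it goes: $g_{Y}^{*}E$ is effective and $g$-exceptional, so (once the log canonical divisors are known to be pseudo-effective, which should logically come before Step 1 since $\sigma_{P}$ is only defined for pseudo-effective classes) Remark \ref{rem--asy-van-ord-1} (\ref{rem--asy-van-ord-1-(3)}) gives $N_{\sigma}(g_{Y}^{*}(K_{Y}+\Gamma))=N_{\sigma}(g^{*}(K_{X}+\Delta))+g_{Y}^{*}E$ and hence equal positive parts; and the reduction of the weak lc model clause to the log minimal model clause via \cite[Corollary 3.7]{birkar-flip} is fine. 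This is exactly the strategy the paper uses for the neighbouring Lemma \ref{lem--minmodel-zariskidecom} --- but, tellingly, that lemma is stated only for projective pairs over a point.

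The genuine gap is your Step 2 input: a relative-over-$Z$ analogue of Theorem \ref{thmzariski} (together with a relative theory of $\sigma_{P}$ and $N_{\sigma}$ over a quasi-projective base, including the relative analogue of Remark \ref{rem--asy-van-ord-1} (\ref{rem--asy-van-ord-1-(3)})), which you assert ``follows from the projective statement by a standard compactification argument over $Z$.'' Nothing of the sort is available off the shelf in this paper: $\sigma_{P}$ and $N_{\sigma}$ are only defined for projective $X$, Theorem \ref{thmzariski} is only stated over a point, and the lemma you are proving is used in the genuinely relative setting. Moreover, the compactification you would need is itself substantial: one must extend $(X,\Delta)\to Z$ to a projective lc or dlt pair over a projective closure of $Z$ (this is Lemma \ref{lem--Qfacdltclosure}, resp.\ the lc closure theorem of \cite{has-mmp}/\cite{haconxu-lcc}, both nontrivial), and then relate ``(good) minimal model over $Z$'' and ``relative Nakayama--Zariski positive part nef/semi-ample over $Z$'' to absolute statements for the compactified pair twisted by a large multiple of the pullback of an ample divisor on the base --- precisely the kind of delicate bookkeeping the paper carries out in Step \ref{step3--main-logabundant} of the proof of Theorem \ref{thm--main-logabundant} with Lemma \ref{lem--relabund-globabund}, and which cannot be dismissed as routine. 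Until you either prove that relative Zariski-type characterization (with the relative $N_{\sigma}$ calculus) or replace it by a direct model-theoretic comparison over $Z$ as in \cite{has-mmp}, the argument only establishes the absolute case of the lemma.
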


\begin{lem}[cf.~{\cite[Lemma 5.3]{hmx-boundgentype}}]\label{lem--minmodel-zariskidecom}
Let $(X,\Delta)$ be a projective lc pair, and let $(Y,\Gamma)$ be an lc pair with a projective birational morphism $f\colon Y\to X$. 
Suppose that $K_{X}+\Delta$ is pseudo-effective and all prime divisors $D$ on $Y$ satisfy
$$0\leq a(D,Y,\Gamma)-a(D,X,\Delta)\leq \sigma_{D}(K_{X}+\Delta).$$  
Then $K_{Y}+\Gamma$ is pseudo-effective. 
Furthermore, $(X,\Delta)$ has a log minimal model (resp.~a good minimal model) if and only if $(Y,\Gamma)$ has a log minimal model (resp.~a good minimal model).  
\end{lem}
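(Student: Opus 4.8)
The plan is to reduce everything to a comparison of Nakayama--Zariski decompositions on a common resolution, using Theorem \ref{thmzariski} and Lemma \ref{lem--bir-equiv} to translate between the two pairs. First I would show $K_{Y}+\Gamma$ is pseudo-effective: on a common log resolution $g\colon W\to Y$, $h=f\circ g\colon W\to X$, write $g^{*}(K_{Y}+\Gamma)=h^{*}(K_{X}+\Delta)+G$ where $G$ is $h$-exceptional (not necessarily effective) with $\operatorname{ord}_{P}(G)=a(P,X,\Delta)-a(P,Y,\Gamma)$ for every prime divisor $P$ over $X$; the hypothesis $a(D,Y,\Gamma)\geq a(D,X,\Delta)$ for $D$ on $Y$ together with the discrepancy inequality gives $-G$ effective enough that $h^{*}(K_{X}+\Delta)-g^{*}(K_{Y}+\Gamma)$ has the shape of a negative part, and since $K_{X}+\Delta$ is pseudo-effective so is $g^{*}(K_{Y}+\Gamma)$, hence $K_{Y}+\Gamma$.

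Next I would analyze the positive parts. The key point is that the hypothesis $a(D,Y,\Gamma)-a(D,X,\Delta)\leq\sigma_{D}(K_{X}+\Delta)$ precisely says that, writing $K_{X}+\Delta=P_{\sigma}(K_X+\Delta)+N_{\sigma}(K_X+\Delta)$, the divisor $E:=\sum_{D\text{ on }Y}\bigl(a(D,Y,\Gamma)-a(D,X,\Delta)\bigr)D$ on $Y$ satisfies $f_*E\leq N_{\sigma}(K_X+\Delta)$, so Remark \ref{rem--asy-van-ord-1} (\ref{rem--asy-van-ord-1-(4)}) applies and controls $N_{\sigma}(f^{*}(K_X+\Delta))$. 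Concretely I would pass to $W$ and compare $N_{\sigma}(h^{*}(K_X+\Delta))$ with $N_{\sigma}(g^{*}(K_Y+\Gamma))$: the relation $g^{*}(K_{Y}+\Gamma)=h^{*}(K_{X}+\Delta)+G$ with the bounds on $G$ forces, via Remark \ref{rem--asy-van-ord-1} (\ref{rem--asy-van-ord-1-(1)}) and (\ref{rem--asy-van-ord-1-(3)}), that the two divisors have the same positive part (up to pushing forward along $g$); that is, $g^{*}(K_Y+\Gamma)-N_{\sigma}(g^{*}(K_Y+\Gamma))$ and $h^{*}(K_X+\Delta)-N_{\sigma}(h^{*}(K_X+\Delta))$ define the same numerical class. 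In particular one has a nef (resp.\ semi-ample) positive part on a resolution of $Y$ if and only if one does on a resolution of $X$.

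Finally I would assemble the equivalence: by Theorem \ref{thmzariski}, $(X,\Delta)$ has a log minimal model (resp.\ a good minimal model) iff $f^{*}(K_X+\Delta)$ (equivalently $h^{*}(K_X+\Delta)$) has Nakayama--Zariski decomposition with nef (resp.\ semi-ample) positive part, iff by the previous paragraph $g^{*}(K_Y+\Gamma)$ does, iff by Theorem \ref{thmzariski} again $(Y,\Gamma)$ has a log minimal model (resp.\ a good minimal model). Alternatively, once the positive parts are identified, I could build a common log smooth model $(W,\Gamma_W)$ with $K_{W}+\Gamma_{W}=g^{*}(K_Y+\Gamma)+($effective exceptional$)$ that is simultaneously of the type handled by Lemma \ref{lem--bir-equiv} over both $X$ and $Y$, and conclude directly. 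I expect the main obstacle to be the bookkeeping in the middle step: verifying that the upper bound $a(D,Y,\Gamma)-a(D,X,\Delta)\leq\sigma_{D}(K_X+\Delta)$ really does translate, after pullback to $W$, into the statement that $N_{\sigma}(g^{*}(K_Y+\Gamma))$ and $N_{\sigma}(h^{*}(K_X+\Delta))$ differ by exactly the exceptional divisor $G$, so that the positive parts coincide — this requires careful use of Remark \ref{rem--asy-van-ord-1} (\ref{rem--asy-van-ord-1-(3)}) and (\ref{rem--asy-van-ord-1-(4)}) and attention to which divisors are $\mathbb{R}$-Cartier.
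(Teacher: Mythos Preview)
Your approach is essentially the paper's: compare positive parts of Nakayama--Zariski decompositions on a resolution and conclude via Theorem~\ref{thmzariski}. Two remarks. First, the paper streamlines the setup by writing $K_{Y}+\Delta_{Y}=f^{*}(K_{X}+\Delta)$, observing that the hypothesis becomes $0\leq \Delta_{Y}-\Gamma\leq N_{\sigma}(K_{Y}+\Delta_{Y})$, and then replacing $(X,\Delta)$ by $(Y,\Delta_{Y})$ so that $X=Y$; this removes the need to juggle two morphisms $g,h$ and also shows that your divisor $G=g^{*}\bigl(K_{Y}+\Gamma-f^{*}(K_{X}+\Delta)\bigr)$ is the $g$-pullback of a divisor on $Y$ and is \emph{not} $h$-exceptional in general (your claim to the contrary is a slip, though you never actually use it). Second, the ``main obstacle'' you flag is resolved not by Remark~\ref{rem--asy-van-ord-1}~(\ref{rem--asy-van-ord-1-(3)}) --- which concerns adding an \emph{effective} exceptional divisor, while here you are subtracting one that need not be exceptional --- but by \cite[III,~1.8~Lemma]{nakayama}: if $0\leq E\leq N_{\sigma}(D)$ then $N_{\sigma}(D-E)=N_{\sigma}(D)-E$. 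After the reduction to $X=Y$, one applies this with $D=g^{*}(K_{X}+\Delta)$ and $E=g^{*}(\Delta-\Gamma)$, the inequality $E\leq N_{\sigma}(D)$ coming from Remark~\ref{rem--asy-van-ord-1}~(\ref{rem--asy-van-ord-1-(4)}), and the positive parts coincide on the nose.
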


\begin{proof}
Let $\Delta_{Y}$ be an $\mathbb{R}$-divisor defined by $K_{Y}+\Delta_{Y}=f^{*}(K_{X}+\Delta)$. 
Then the hypothesis on discrepancies is equivalent to $0\leq \Delta_{Y}-\Gamma\leq N_{\sigma}(K_{Y}+\Delta_{Y})$. 
Thus we have $\Delta_{Y}\geq 0$. 
By Lemma \ref{lem--bir-equiv}, we may replace $(X,\Delta)$ by $(Y,\Delta_{Y})$. 
Therefore we may assume $X=Y$. 

Pick any resolution $g\colon W\to X$. 
By Remark \ref{rem--asy-van-ord-1} (\ref{rem--asy-van-ord-1-(4)}), we have $$N_{\sigma}(g^{*}(K_{X}+\Delta))\geq g^{*}(\Delta-\Gamma) = g^{*}((K_{X}+\Delta)-(K_{X}+\Gamma)).$$
Therefore, $g^{*}(K_{X}+\Gamma)\geq g^{*}(K_{X}+\Delta)-N_{\sigma}(g^{*}(K_{X}+\Delta))$ and the right hand side is the limit of effective divisors by definition of $N_{\sigma}(\,\cdot\,)$. 
Thus, $g^{*}(K_{X}+\Gamma)$ is pseudo-effective, hence $K_{X}+\Gamma$ is pseudo-effective.  
By \cite[III, 1.8 Lemma]{nakayama}, we have
\begin{equation*}
\begin{split}
N_{\sigma}(g^{*}(K_{X}+\Gamma))&=N_{\sigma}\bigl(g^{*}(K_{X}+\Delta)-g^{*}((K_{X}+\Delta)-(K_{X}+\Gamma))\bigr)\\
&=N_{\sigma}(g^{*}(K_{X}+\Delta))-g^{*}(K_{X}+\Delta)+g^{*}(K_{X}+\Gamma).
\end{split}
\end{equation*}
Thus, the positive parts of the Nakayama--Zariski decompositions of $g^{*}(K_{X}+\Delta)$ and $g^{*}(K_{X}+\Gamma)$ are the same. 
Since the morphism $g\colon W\to X$ is an arbitrary resolution, we may apply Theorem \ref{thmzariski}. 
By Theorem \ref{thmzariski}, we see that Lemma \ref{lem--minmodel-zariskidecom} holds.
\end{proof}

\begin{lem}\label{lem--bir-relation}
Let $\pi\colon X \to Z$ and $\pi'\colon X' \to Z$ be projective morphisms of normal quasi-projective varieties, and let $X\dashrightarrow X'$ be a birational map over $Z$. 
Let $(X,\Delta)$ and $(X',\Delta')$ be lc pairs. 
Suppose that 
\begin{itemize}
\item
$a(P,X,\Delta)\leq a(P,X',\Delta')$ for all prime divisors $P$ on $X$, and 
\item
$a(P',X',\Delta')\leq a(P',X,\Delta)$ for all prime divisors $P'$ on $X'$. 
\end{itemize}
Then $K_{X}+\Delta$ is abundant over $Z$ if and only if $K_{X'}+\Delta'$ is abundant over $Z$. 
Moreover, $(X,\Delta)$ has a log minimal model (resp.~a good minimal model) over $Z$ if and only if $(X',\Delta')$ has a log minimal model (resp.~a good minimal model) over $Z$. 
\end{lem}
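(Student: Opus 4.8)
The two conditions say that $X \dashrightarrow X'$ identifies the discrepancies of $(X,\Delta)$ and $(X',\Delta')$ at every divisor living on either $X$ or $X'$. The standard way to exploit this is to pass to a common resolution and compare both pairs with a single pair upstairs. Concretely, let $g\colon Y\to X$ and $g'\colon Y\to X'$ be a common log resolution of $X\dashrightarrow X'$, and let $\Gamma_Y$ on $Y$ be the $\mathbb{R}$-divisor whose coefficient along a prime divisor $P$ on $Y$ is $\max\{1,\ -a(P,X,\Delta)\} = 1$ on the $g$-exceptional or $g'$-exceptional divisors not already in the birational transforms, and in general is chosen so that $a(P,Y,\Gamma_Y) = \min\{a(P,X,\Delta),\ a(P,X',\Delta')\}$ for all prime divisors $P$ on $Y$ — which by the two hypotheses equals $a(P,X,\Delta)$ whenever $P$ appears on $X$ and equals $a(P,X',\Delta')$ whenever $P$ appears on $X'$. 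Since both pairs are lc, $\Gamma_Y\geq 0$ and $(Y,\Gamma_Y)$ is lc. One then checks that $K_Y+\Gamma_Y = g^{*}(K_X+\Delta)+E = g'^{*}(K_{X'}+\Delta')+E'$ for effective $g$-exceptional $E$ and $g'$-exceptional $E'$; this is exactly where the inequalities are used (the difference $K_Y+\Gamma_Y - g^{*}(K_X+\Delta)$ has nonnegative coefficients because $a(P,Y,\Gamma_Y)\le a(P,X,\Delta)$, and it is $g$-exceptional because on non-$g$-exceptional divisors the two discrepancies agree; symmetrically for $g'$).

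With $(Y,\Gamma_Y)$ in hand, the rest is bookkeeping. For abundance, $K_X+\Delta$ is pseudo-effective/abundant over $Z$ iff $K_Y+\Gamma_Y = g^{*}(K_X+\Delta)+E$ is, by Remark \ref{rem--div} (2), since $E$ is effective $g$-exceptional; likewise $K_Y+\Gamma_Y$ is abundant over $Z$ iff $K_{X'}+\Delta'$ is, using the expression $K_Y+\Gamma_Y = g'^{*}(K_{X'}+\Delta')+E'$. Chaining the two equivalences gives that $K_X+\Delta$ is abundant over $Z$ iff $K_{X'}+\Delta'$ is. For the existence of (good) minimal models, apply Lemma \ref{lem--bir-equiv} to $g\colon Y\to X$: $(X,\Delta)$ has a log minimal model (resp.\ good minimal model) over $Z$ iff $(Y,\Gamma_Y)$ does; apply Lemma \ref{lem--bir-equiv} again to $g'\colon Y\to X'$: $(Y,\Gamma_Y)$ has one iff $(X',\Delta')$ does. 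Transitivity finishes the proof.

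The only genuinely delicate point is verifying that $\Gamma_Y$ is well defined, effective, and makes $(Y,\Gamma_Y)$ lc, and that the two claimed identities for $K_Y+\Gamma_Y$ hold simultaneously with $E,E'$ effective and exceptional for $g,g'$ respectively. This is where the hypothesis that the discrepancy inequalities go in opposite directions on $X$ versus $X'$ is essential: it forces $\min\{a(P,X,\Delta),a(P,X',\Delta')\}$ to coincide with the "correct" discrepancy on each side, so that neither $E$ nor $E'$ acquires a component that is non-exceptional for the relevant morphism. Once this is set up carefully, everything else is a direct invocation of Lemma \ref{lem--bir-equiv} and Remark \ref{rem--div}. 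I do not expect any obstacle beyond this setup; in particular no use of a minimal model program is needed, only the birational invariance packaged in the cited results. One should also note $K_X+\Delta$ pseudo-effective over $Z$ iff $K_{X'}+\Delta'$ is, which follows from the same two expressions for $K_Y+\Gamma_Y$ together with Remark \ref{rem--div} (2), so that "abundant over $Z$" is meaningful for both pairs under the same hypothesis.
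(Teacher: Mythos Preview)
Your approach is essentially the paper's: pass to a common log resolution, build a single lc pair $(Y,\Gamma_Y)$ that sits above both via an effective exceptional discrepancy, and invoke Remark~\ref{rem--div}(\ref{rem--div-(2)}) and Lemma~\ref{lem--bir-equiv} twice. The paper defines $\Gamma=-\sum_{E}\min\{0,\,a(E,X,\Delta),\,a(E,X',\Delta')\}E$ and then checks $F:=K_Y+\Gamma-f^{*}(K_X+\Delta)$ and $F':=K_Y+\Gamma-f'^{*}(K_{X'}+\Delta')$ are effective and exceptional exactly as you outline.

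There is one small slip. Your formula $a(P,Y,\Gamma_Y)=\min\{a(P,X,\Delta),\,a(P,X',\Delta')\}$ does \emph{not} give $\Gamma_Y\geq 0$ in general: for a prime divisor $P$ on $Y$ that is exceptional over both $X$ and $X'$, both discrepancies can be strictly positive, and then $\mathrm{coeff}_{P}(\Gamma_Y)=-\min\{\cdots\}<0$. The lc hypothesis only gives $a(P,\cdot,\cdot)\geq -1$, not $\leq 0$, so your sentence ``Since both pairs are lc, $\Gamma_Y\geq 0$'' is not justified. The fix is exactly the paper's: include $0$ in the minimum. This does not disturb the exceptionality check, since for $P$ non-exceptional over $X$ one has $a(P,X,\Delta)\leq 0$ and $a(P,X,\Delta)\leq a(P,X',\Delta')$, so the minimum is still $a(P,X,\Delta)$ and $\mathrm{coeff}_P(E)=0$. (The first, garbled clause in your definition of $\Gamma_Y$ should simply be deleted.) With this correction, your argument is the paper's argument.
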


\begin{proof}
Let $f\colon Y\to X$ and $f'\colon Y\to X'$ be a common log resolution of the map $(X,\Delta)\dashrightarrow (X',\Delta')$. 
We define an $\mathbb{R}$-divisor $\Gamma$ on $Y$ by
$$\Gamma=-\sum_{E}{\rm min}\{0,a(E,X,\Delta),a(E,X',\Delta')\}E,$$
where $E$ runs over all prime divisors on $Y$. 
By construction of $Y$ and since $(X,\Delta)$ and $(X',\Delta')$ are lc, $(Y,\Gamma)$ is log smooth and lc. 
We put $F=K_{Y}+\Gamma-f^{*}(K_{X}+\Delta)$ and $F'=K_{Y}+\Gamma-f'^{*}(K_{X'}+\Delta').$ 
Then, for any prime divisor $D$ on $Y$, we have
$${\rm coeff}_{D}(F)=a(D,X,\Delta)-{\rm min}\{0,a(D,X,\Delta),a(D,X',\Delta')\}\geq0,$$
and ${\rm coeff}_{D}(F)=0$ when $D$ is a prime divisor on $X$ because we have $a(D,X,\Delta)\leq 0$ and $a(D,X,\Delta)\leq a(D,X',\Delta')$ by the first condition of Lemma \ref{lem--bir-relation}. 
Thus $F$ is effective and $f$-exceptional. 
The same argument shows that $F'$ is effective and $f'$-exceptional. 
By applying Remark \ref{rem--div} (\ref{rem--div-(2)}) or Lemma \ref{lem--bir-equiv}  to the morphisms $(Y,\Gamma)\to (X,\Delta)$ and $(Y,\Gamma)\to (X',\Delta')$, we see that Lemma \ref{lem--bir-relation} holds. 
\end{proof}

Finally, we prove a key lemma for the proof of Theorem \ref{thm--intro-1}.

\begin{lem}\label{lem--abund-inv}
Let $\pi \colon X\to Z$ be a projective morphism of normal quasi-projective varieties, $(X,\Delta)$ a $\mathbb{Q}$-factorial dlt pair such that $K_{X}+\Delta$ is pseudo-effective over $Z$, and let $A$ be a $\pi$-ample $\mathbb{R}$-divisor on $X$. 
Then there exists a real number $\epsilon>0$ such that for any two real numbers $t,t'\in(0,\epsilon]$, any sequence of steps of a $(K_{X}+\Delta+tA)$-MMP 
$(X,\Delta+t A)\dashrightarrow (Y,\Delta_{Y}+tA_{Y})$ over $Z$ to a good minimal model $(Y,\Delta_{Y}+tA_{Y})$, and any sequence of steps of a $(K_{X}+\Delta+t'A)$-MMP $(X,\Delta+t' A)\dashrightarrow (Y',\Delta_{Y'}+t'A_{Y'})$ over $Z$ to a good minimal model  $(Y',\Delta_{Y'}+t'A_{Y'})$, the following properties hold. 
 \begin{itemize}
 \item
The induced birational map $Y\dashrightarrow Y'$ is small,
\item
$Y\dashrightarrow Y'$ is an isomorphism on an open subset $U\subset Y$ which intersects all lc centers of $(Y,\Delta_{Y})$ and whose image $U'\subset Y'$ intersects all lc centers of $(Y',\Delta_{Y'})$, and in particular, $Y \dashrightarrow Y'$ induces a one-to-one correspondence between lc centers of $(Y,\Delta_{Y})$ and lc centers of $(Y',\Delta_{Y'})$, and 
\item
for any lc center $S_{Y}$ of $(Y,\Delta_{Y})$ with the induced birational map $S_{Y}\dashrightarrow S_{Y'}$ to the corresponding lc center $S_{Y'}$ of $(Y',\Delta_{Y'})$, the following equalities hold:
\begin{equation*}
\begin{split}
\kappa_{\iota}(S_{Y}/Z, (K_{Y}+\Delta_{Y})|_{S_{Y}})&=\kappa_{\iota}(S_{Y'}/Z, (K_{Y'}+\Delta_{Y'})|_{S_{Y'}})\quad {\rm and}\\ 
\kappa_{\sigma}(S_{Y}/Z, (K_{Y}+\Delta_{Y})|_{S_{Y}})&=\kappa_{\sigma}(S_{Y'}/Z, (K_{Y'}+\Delta_{Y'})|_{S_{Y'}}). 
\end{split}
\end{equation*}
In particular, $(Y,\Delta_{Y})$ is log abundant over $Z$ if and only if $(Y',\Delta_{Y'})$ is log abundant over $Z$. 
\end{itemize}
\end{lem}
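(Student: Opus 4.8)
The plan is to reduce everything to a comparison of two good minimal models obtained by running MMP with slightly different ample perturbations, and then invoke the finiteness-of-models machinery of \cite{bchm} together with the discrepancy comparison lemmas (Lemma \ref{lem--discre-relation}, Lemma \ref{lem--comparedim}) already established. First I would use the relative finiteness of models (\cite[Theorem E]{bchm}) applied to the pair $(X,\Delta+tA)$ as $t$ ranges over a small interval $(0,\epsilon]$: since $A$ is ample and $K_X+\Delta$ is pseudo-effective over $Z$, for $\epsilon$ small enough all these pairs have good minimal models over $Z$ (by \cite{bchm} again, as $\Delta+tA$ is big over $Z$ up to $\mathbb{R}$-linear equivalence — actually $A$ ample makes $\Delta+tA$ big over $Z$), and the set of birational models that can occur as a log minimal model of some $(X,\Delta+tA)$ is finite. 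Shrinking $\epsilon$, I can assume there is a \emph{single} $\mathbb{Q}$-factorial model on which the rational contraction $X\dashrightarrow \bullet$ is fixed and $K_\bullet+\Delta_\bullet+tA_\bullet$ is nef over $Z$ for all $t\in(0,\epsilon]$; so any two good minimal models $(Y,\Delta_Y+tA_Y)$, $(Y',\Delta_{Y'}+t'A_{Y'})$ of Lemma \ref{lem--abund-inv} differ from this common model, and from each other, by a small birational map over $Z$ — this gives the first bullet. (Small: a prime divisor is contracted in a step of the $(K_X+\Delta+tA)$-MMP iff it lies in $N_\sigma(K_X+\Delta+tA)$, and by Lemma \ref{lem--negativesupport} this support is independent of $t$ for $t$ small, so $Y$ and $Y'$ contract the same divisors over $X$, hence $Y\dashrightarrow Y'$ is small.)

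Next, for the second bullet, I would note that $K_Y+\Delta_Y$ is the limit of $K_Y+\Delta_Y+tA_Y$ as $t\to 0$, and since each step of the MMP is $(K_X+\Delta+tA)$-negative, the induced maps respect lc centers: a dlt pair's lc centers are intersections of components of $\llcorner\Delta\lrcorner$, and a small birational map that is $(K+\Delta+tA)$-nonpositive on both sides preserves these (this is precisely the kind of statement underlying Remark \ref{rem--mmplift} and the special termination). More concretely, I would run the two MMPs so that both factor through the common model, observe that the common model's lc centers map birationally to those of $Y$ and of $Y'$ respectively via maps that are isomorphisms on suitable big open sets (since the maps are small and the boundary components are tracked), and thereby get the one-to-one correspondence of lc centers together with the open set $U$ intersecting all of them.

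For the third bullet — the equality of invariant Iitaka and numerical dimensions of the restrictions — I would apply Lemma \ref{lem--comparedim} in both directions. Indeed, given an lc center $S_Y$ of $(Y,\Delta_Y)$ corresponding to $S_{Y'}$, the small birational map $Y\dashrightarrow Y'$ over $Z$ satisfies $f_*\Delta_Y=\Delta_{Y'}$ and is an isomorphism on an open set meeting $S_Y$; moreover $K_Y+\Delta_Y+tA_Y$ and $K_{Y'}+\Delta_{Y'}+t'A_{Y'}$ are both nef over $Z$, and by choosing the interval and the labels so that $t>t'>0$ (or swapping the roles of $Y$ and $Y'$), the hypotheses of Lemma \ref{lem--comparedim} are met, yielding $\kappa_\iota(S_Y/Z,\cdot)\ge\kappa_\iota(S_{Y'}/Z,\cdot)$ and similarly for $\kappa_\sigma$; the reverse inequalities come from applying the same lemma with $Y$ and $Y'$ interchanged (legitimate because $K_{Y'}+\Delta_{Y'}+t'A_{Y'}$ nef and $K_Y+\Delta_Y+(t/2)A_Y$, say, nef gives the needed pair of coefficients). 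Combining, all four quantities agree, and since ``log abundant'' is the conjunction of abundance of $K+\Delta$ (which is a birational invariant here, as $Y\dashrightarrow Y'$ is small and $\Delta$-preserving, via Lemma \ref{lem--bir-relation} or Remark \ref{rem--div}) and abundance of its restriction to each lc center (which is $\kappa_\iota=\kappa_\sigma$ on the normalization, preserved by the equalities just shown), the last assertion follows.

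The main obstacle I anticipate is making the \emph{uniformity in $t$} precise: one must choose $\epsilon$ so that a single finite set of models suffices for \emph{all} $t\in(0,\epsilon]$ simultaneously and so that the contracted-divisor set is $t$-independent — this requires combining \cite[Theorem E]{bchm} with the polytope decomposition of the $\mathbb{R}$-divisor ``chamber'' and with Lemma \ref{lem--negativesupport}, and being careful that the MMP with scaling actually lands in the relevant chamber. A secondary technical point is verifying that the small birational map $Y\dashrightarrow Y'$ genuinely restricts to a birational map on each pair of corresponding lc centers that is an isomorphism on an open set meeting the generic point — this needs the dlt structure (components of $\llcorner\Delta\lrcorner$) to be tracked through the MMP and the general-point locus to be disjoint from the (codimension $\ge 2$) indeterminacy, which is where the hypothesis that the map is small and $\Delta$-preserving is used decisively.
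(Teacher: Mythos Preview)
Your argument for smallness via Lemma~\ref{lem--negativesupport} and Remark~\ref{rem--mmp-zariskidecom} is fine, and the idea of using Lemma~\ref{lem--comparedim} is the right one. But there is a genuine gap in the third bullet, and it stems from the ``single model'' claim in the first paragraph.

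You assert that, after shrinking $\epsilon$, there is a single $\mathbb{Q}$-factorial model on which $K_\bullet+\Delta_\bullet+tA_\bullet$ is nef over $Z$ for \emph{all} $t\in(0,\epsilon]$. This cannot be extracted from \cite[Theorem~E]{bchm}: finiteness of models requires a compact rational polytope of boundaries, and $(0,\epsilon]$ is not compact. In fact, if the $(K_X+\Delta)$-MMP with scaling of $A$ does not terminate (which is exactly the case of interest here), there is no single model serving as a weak lc model for all small $t$. This breaks your reverse inequality: you claim $K_Y+\Delta_Y+(t/2)A_Y$ is nef, but $(Y,\Delta_Y+tA_Y)$ being a good minimal model only gives nefness at the value $t$, not at $t/2$. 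So Lemma~\ref{lem--comparedim} only runs in one direction between $Y$ and $Y'$, and you never get the opposite inequality.

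The paper circumvents this by not comparing $Y$ and $Y'$ directly. Instead it fixes one reference $(K_X+\Delta)$-MMP with scaling of $A$, say $X=X_0\dashrightarrow X_1\dashrightarrow\cdots$, with $\lim\lambda_i=0$. It then finds an index $m$ beyond which the maps $X_m\dashrightarrow X_i$ are small, preserve lc centers, and the restricted Iitaka and numerical dimensions have stabilized (these are non-increasing sequences of integers bounded below). Setting $\epsilon=\lambda_{m-1}/2$, any good minimal model $Y$ for parameter $t\in(0,\epsilon]$ is compared \emph{one-sidedly} via Lemma~\ref{lem--comparedim} with $X_j$ (where $j$ is minimal with $\lambda_j\le t$, so $X\dashrightarrow X_j$ is itself an MMP to a good minimal model at level $t$, giving the lc-center correspondence between $Y$ and $X_j$ and hence $X_m$) and again with $X_l$ (minimal with $\lambda_l\le t/2$). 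This yields the sandwich
\[
\kappa(S_{X_m})\ \ge\ \kappa(S_Y)\ \ge\ \kappa(S_{X_l})\ =\ \kappa(S_{X_m}),
\]
the last equality being the stabilization past $m$. Thus every $Y$ has the same restricted dimensions as the fixed reference $X_m$, and the comparison between $Y$ and $Y'$ follows. The point is that the reference MMP supplies, for each $t$, a model at a \emph{strictly smaller} level to compare against; you cannot manufacture such a model out of $Y$ alone.
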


\begin{proof}
By rescaling $A$, we may assume that $K_{X}+\Delta+A$ is nef over $Z$. 
By adding the pullback of a sufficiently ample $\mathbb{R}$-divisor on $Z$ to $A$, we may further assume that $A$ is ample. 
By replacing $A$ with a general element of $|A|_{\mathbb{R}}$ (see \cite[pp.~112--113]{hashizumehu} for general semi-ample $\mathbb{R}$-divisors), we may assume that $A\geq0$ and that $(X,\Delta+A)$ is a dlt pair whose lc centers are lc centers of $(X,\Delta)$. 
We fix a sequence of steps of a $(K_{X}+\Delta)$-MMP over $Z$ with scaling of $A$
$$(X_{0}:=X,\Delta_{0}:=\Delta) \dashrightarrow (X_{1},\Delta_{1}) \dashrightarrow\cdots \dashrightarrow (X_{i},\Delta_{i})\dashrightarrow \cdots.$$ 
For each $i \geq 0$, let $A_{i}$ be the birational transform of $A$ on $X_{i}$. 
We set 
$$\lambda_{i}={\rm inf}\set{\mu \in \mathbb{R}_{\geq0} \!|\! K_{X_{i}}+\Delta_{i}+\mu A_{i}\text{\rm \, is nef over }Z}.$$
Then ${\rm lim}_{i\to \infty} \lambda_{i}=0$ by \cite{bchm} (see also \cite[Theorem 4.1 (ii)]{birkar-flip}). 

\begin{step3}\label{step1--abund-inv}
In this step, we find an index $m$ such that the birational map $X_{m} \dashrightarrow X_{i}$ satisfies the three properties of Lemma \ref{lem--abund-inv} for any $i\geq m$. 

If the $(K_{X}+\Delta)$-MMP over $Z$ terminates with a log minimal model $(X_{m'},\Delta_{m'})$ over $Z$, then there is nothing to prove because we may take $m'$ as $m$. 
Thus we may assume that the $(K_{X}+\Delta)$-MMP over $Z$ does not terminate.  Then there is $i'$ such that for any $i\geq i'$, the birational map $X_{i}\dashrightarrow X_{i+1}$ is small and for any lc center $S_{i}$ of $(X_{i},\Delta_{i})$, the restriction of $X_{i}\dashrightarrow X_{i+1}$ to $S_{i}$ induces a birational map $S_{i}\dashrightarrow S_{i+1}$ to an lc center $S_{i+1}$ of $(X_{i+1},\Delta_{i+1})$ (cf.~\cite[Proof of Theorem 4.2.1]{fujino-sp-ter}). 
For any $i\geq i'$ and any lc center $S_{i}$ of $(X_{i},\Delta_{i})$ with the birational map $S_{i}\dashrightarrow S_{i+1}$, we may find a common resolution $ Y\to X_{i}$ and $Y\to X_{i+1}$ of $X_{i}\dashrightarrow X_{i+1}$ and a subvariety $T \subset Y$ birational to $S_{i}$ and $S_{i+1}$ such that the induced morphisms $T \to S_{i}$ and $T \to S_{i+1}$ form a common resolution of $S_{i}\dashrightarrow S_{i+1}$. 
By \cite[Lemma 4.2.10]{fujino-sp-ter}, we have 
$(K_{X_{i}}+\Delta_{i})|_{T}=(K_{X_{i+1}}+\Delta_{i+1})|_{T}+E$  
 for some $E\geq0$ on $T$. 
Then, for the sequence 
$$S_{i'}\dashrightarrow \cdots \dashrightarrow S_{i}\dashrightarrow \cdots$$
 of birational maps of lc centers, the two sequences 
 $$\{\kappa_{\iota}(S_{i}/Z, (K_{X_{i}}+\Delta_{i})|_{S_{i}})\}_{i\geq i'} \qquad {\rm and} \qquad \{\kappa_{\sigma}(S_{i}/Z, (K_{X_{i}}+\Delta_{i})|_{S_{i}})\}_{i\geq i'}$$
  consisting of elements of $\mathbb{Z}_{\geq 0}\cup\{-\infty\}$ are non-increasing. 
From this, for the sequence $S_{i'}\dashrightarrow \cdots $ of birational maps, $\kappa_{\iota}(S_{i}/Z, (K_{X_{i}}+\Delta_{i})|_{S_{i}})$ and $\kappa_{\sigma}(S_{i}/Z, (K_{X_{i}}+\Delta_{i})|_{S_{i}})$ do not depend on $i$ for all $i\gg i'$. 

By this discussion, we can find an index $m$ such that for any $i\geq m$, the birational map $X_{m} \dashrightarrow X_{i}$ satisfies the three properties of Lemma \ref{lem--abund-inv}, in other words, we have 
\begin{itemize}
\item
the birational map $X_{m}\dashrightarrow X_{i}$ is small,
\item
$X_{m}\dashrightarrow X_{i}$ is an isomorphism on an open set $U_{m}\subset X_{m}$ such that $U_{m}$ intersects all lc centers of $(X_{m},\Delta_{m})$ and the image $U_{i}\subset X_{i}$ intersects all lc centers of $(X_{i},\Delta_{i})$, and 
\item
for any lc center $S_{m}$ of $(X_{m},\Delta_{m})$ with the induced birational map $S_{m}\dashrightarrow S_{i}$ to the corresponding lc center $S_{i}$ of $(X_{i},\Delta_{i})$, the following equalities hold: 
\begin{equation*}
\begin{split}
\kappa_{\iota}(S_{m}/Z, (K_{X_{m}}+\Delta_{m})|_{S_{m}})&=\kappa_{\iota}(S_{i}/Z, (K_{X_{i}}+\Delta_{i})|_{S_{i}})\quad {\rm and}\\ 
\kappa_{\sigma}(S_{m}/Z, (K_{X_{m}}+\Delta_{m})|_{S_{m}})&=\kappa_{\sigma}(S_{i}/Z, (K_{X_{i}}+\Delta_{i})|_{S_{i}}). 
\end{split}
\end{equation*}
\end{itemize}
\end{step3}

\begin{step3}\label{step2--abund-inv}
We may assume that $m>0$. 
We set $\epsilon=\frac{\lambda_{m-1}}{2}$. 
In this step, we prove that for any $t\in(0,\epsilon]$ and any sequence of steps of a $(K_{X}+\Delta+tA)$-MMP 
$$(X,\Delta+t A)\dashrightarrow (Y,\Delta_{Y}+tA_{Y})$$ over $Z$ to a good minimal model $(Y,\Delta_{Y}+tA_{Y})$, the birational map $Y\dashrightarrow X_{m}$ satisfies the three conditions of Lemma \ref{lem--abund-inv}. 

Fix $t\in(0,\epsilon]$ and $(X,\Delta+t A)\dashrightarrow (Y,\Delta_{Y}+tA_{Y})$ the log MMP over $Z$ as above. 
Let $j$ be the minimum index such that $K_{X_{j}}+\Delta_{j}+tA_{j}$ is nef over $Z$. 
Such $j$ exists since ${\rm lim}_{i\to \infty} \lambda_{i}=0$, and we have $j\geq m$ since $\lambda_{m-1}>\epsilon\geq t$. 
By construction, 
$$(X,\Delta+tA)\dashrightarrow (X_{j},\Delta_{j}+tA_{j})$$ is a sequence of steps of a $(K_{X}+\Delta+tA)$-MMP over $Z$ to a good minimal model. 

Both $(X,\Delta+tA)\dashrightarrow (X_{j},\Delta_{j}+tA_{j})$ and $(X,\Delta+t A)\dashrightarrow (Y,\Delta_{Y}+tA_{Y})$ are sequences of steps of the $(K_{X}+\Delta+tA)$-MMP over $Z$ to good minimal models. 
So the induced birational map $Y\dashrightarrow X_{j}$ is small and it is an isomorphism on an open set $V\subset Y$ which intersects all lc centers of $(Y,\Delta_{Y}+tA_{Y})$ and whose image $V_{j}\subset X_{j}$ intersects all lc centers of $(X_{j},\Delta_{j}+tA_{j})$ (see, for example, \cite[Lemma 2.8]{has-flop}). 
Since $A$ is general, lc centers of $(X_{j},\Delta_{j}+tA_{j})$ (resp.~lc centers of $(Y,\Delta_{Y}+tA_{Y})$) are the same as lc centers of $(X_{j},\Delta_{j})$ (resp.~lc centers of $(Y,\Delta_{Y})$). 
By applying the first and the second conditions in Step \ref{step1--abund-inv} to $X_{m}\dashrightarrow X_{j}$ and $X_{m}\dashrightarrow X_{i}$ for any $i\geq m$, we see that $Y\dashrightarrow X_{i}$ is small and we can find an open set $V^{(i)}\subset Y$ such that $Y\dashrightarrow X_{i}$ is an isomorphism on $V^{(i)}$, all lc centers of $(Y,\Delta_{Y})$ intersect $V^{(i)}$, and the image $V_{i}\subset X_{i}$ intersects all lc centers of $(X_{i},\Delta_{i})$. 
In this way, the first and the second conditions of Lemma \ref{lem--abund-inv} hold for $Y\dashrightarrow X_{i}$ for all $i\geq m$. 
In particular, those conditions hold for $Y\dashrightarrow X_{m}$. 

Pick any lc center $S_{Y}$ of $(Y,\Delta_{Y})$. 
Then there is an lc center $S_{m}$ on $(X_{m},\Delta_{m})$ such that the restriction of $Y\dashrightarrow X_{m}$ to $S_{Y}$ induces a birational map $S_{Y}\dashrightarrow S_{m}$. 
Now the divisor $K_{X_{m}}+\Delta_{m}+\lambda_{m-1}A_{m}$ is nef over $Z$. 
Since $X_{m} \dashrightarrow Y$ is small, we can apply Lemma \ref{lem--comparedim} to $X_{m}\dashrightarrow Y$ over $Z$, $S_{m}\dashrightarrow S_{Y}$, $K_{X_{m}}+\Delta_{m}+\lambda_{m-1}A_{m}$, and $K_{Y}+\Delta_{Y}+tA_{Y}$. 
Since $\lambda_{m-1}>\epsilon\geq t>0$, we have 
\begin{equation*}
\begin{split}
\kappa_{\iota}(S_{m}/Z, (K_{X_{m}}+\Delta_{m})|_{S_{m}})&\geq \kappa_{\iota}(S_{Y}/Z, (K_{Y}+\Delta_{Y})|_{S_{Y}})\quad {\rm and}\\ 
\kappa_{\sigma}(S_{m}/Z, (K_{X_{m}}+\Delta_{m})|_{S_{m}})&\geq \kappa_{\sigma}(S_{Y}/Z, (K_{Y}+\Delta_{Y})|_{S_{Y}}). 
\end{split}
\end{equation*}
Let $l$ be the minimum index such that $K_{X_{l}}+\Delta_{l}+\frac{t}{2}A_{l}$ is nef over $Z$.
Then $l\geq m$ since $\lambda_{m-1}>\frac{t}{2}$, and the birational map $(X,\Delta+\frac{t}{2}A)\dashrightarrow (X_{l},\Delta_{l}+\frac{t}{2}A_{l})$ is a sequence of steps of a $(K_{X}+\Delta+\frac{t}{2}A)$-MMP over $Z$ to a good minimal model. 
By the discussion of the previous paragraph, $Y\dashrightarrow X_{l}$ is small and there is an lc center $S_{l}$ on $(X_{l},\Delta_{l})$ such that the restriction of $Y\dashrightarrow X_{l}$ induces a birational map $S_{Y}\dashrightarrow S_{l}$. 
Applying Lemma \ref{lem--comparedim} to $Y\dashrightarrow X_{l}$ over $Z$, $S_{Y} \dashrightarrow S_{l}$, $K_{Y}+\Delta_{Y}+t A_{Y}$, and $K_{X_{l}}+\Delta_{l}+\frac{t}{2} A_{l}$, we obtain 
\begin{equation*}
\begin{split}
\kappa_{\iota}(S_{Y}/Z, (K_{Y}+\Delta_{Y})|_{S_{Y}})&\geq \kappa_{\iota}(S_{l}/Z, (K_{X_{l}}+\Delta_{l})|_{S_{l}})\quad {\rm and}\\ 
\kappa_{\sigma}(S_{Y}/Z, (K_{Y}+\Delta_{Y})|_{S_{Y}})&\geq \kappa_{\sigma}(S_{l}/Z, (K_{X_{l}}+\Delta_{l})|_{S_{l}}). 
\end{split}
\end{equation*}
By construction, the birational map $X_{m}\dashrightarrow X_{l}$ induces a birational map $S_{m}\dashrightarrow S_{l}$. 
By the above inequalities and the third condition of Step \ref{step1--abund-inv} for $S_{m}\dashrightarrow S_{l}$, we obtain
\begin{equation*}
\begin{split}
\kappa_{\iota}(S_{m}/Z, (K_{X_{m}}+\Delta_{m})|_{S_{m}})\geq&\kappa_{\iota}(S_{Y}/Z, (K_{Y}+\Delta_{Y})|_{S_{Y}})\\
\geq& \kappa_{\iota}(S_{l}/Z, (K_{X_{l}}+\Delta_{l})|_{S_{l}})\\
=&\kappa_{\iota}(S_{m}/Z, (K_{X_{m}}+\Delta_{m})|_{S_{m}})
\end{split}
\end{equation*}
for any lc center $S_{Y}$, and the same relation holds between $\kappa_{\sigma}(S_{Y}/Z, (K_{Y}+\Delta_{Y})|_{S_{Y}})$ and $\kappa_{\sigma}(S_{m}/Z, (K_{X_{m}}+\Delta_{m})|_{S_{m}})$. 
In this way, the third condition of Lemma \ref{lem--abund-inv} holds for any lc center $S_{Y}$ with the birational map $S_{Y}\dashrightarrow S_{m}$ induced by 
$Y\dashrightarrow X_{m}$. 

From this discussion, $Y\dashrightarrow X_{m}$ satisfies the three properties of Lemma \ref{lem--abund-inv}. 
\end{step3}

Let $t,t'\in(0,\epsilon]$, $(X,\Delta+t A)\dashrightarrow (Y,\Delta_{Y}+tA_{Y})$, and $(X,\Delta+t' A)\dashrightarrow (Y',\Delta_{Y'}+t'A_{Y'})$ be as in Lemma \ref{lem--abund-inv}. 
Then the induced birational map $Y \dashrightarrow Y'$ is the composition of $Y\dashrightarrow X_{m}$ and $X_{m} \dashrightarrow Y'$. 
By Step \ref{step2--abund-inv}, it is easy to check that $Y\dashrightarrow Y'$ satisfies the three conditions of Lemma \ref{lem--abund-inv}. 
Therefore, the real number $\epsilon>0$ defined in Step \ref{step2--abund-inv} satisfies the condition of Lemma \ref{lem--abund-inv}. 
\end{proof}

\section{Log MMP containing log abundant log canonical pairs}\label{sec3}

In this section, we study log MMP with scaling containing log abundant lc pairs. 

\subsection{Proofs of main results}\label{subsec--mainresult}
The goal of this subsection is to prove Theorem \ref{thm--main-logabundant} and Theorem \ref{thm--abundantterminate}. 
We start with recalling the projective case of \cite[Proposition 3.3]{has-mmp}. 

\begin{lem}[cf.~{\cite[Proposition 3.3]{has-mmp}}]\label{lem--fiberspacemmp}
Let $(X,\Delta)$ be a projective lc pair. 
Suppose that there is a contraction $X \to V$ to a normal projective variety $V$ such that
\begin{itemize}
\item
$\kappa_{\sigma}(X/V, K_{X}+\Delta)\!=0$, 
\item
$\kappa_{\sigma}(X, K_{X}+\Delta)={\rm dim}\,V$, and 
\item
all lc centers of $(X,\Delta)$ dominate $V$. 
\end{itemize}
Then, $(X,\Delta)$ has a good minimal model. 
\end{lem}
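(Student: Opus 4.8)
The plan is to reduce the statement to \cite[Proposition 3.3]{has-mmp} (proved there via weak semistable reduction, \cite{ak}), the point being that the numerical hypotheses here force exactly the situation treated in loc.\ cit.; below I describe the mechanism I have in mind. First I would apply \cite[Lemma 3.1]{has-mmp} and \cite[Remark 3.2]{has-mmp} to the contraction $X\to V$, exactly as in the proof of Lemma \ref{lem--relabund-globabund}: using $\kappa_{\sigma}(X/V,K_{X}+\Delta)=0$, this produces a projective lc pair $(X',\Delta')$ with a contraction $\varphi'\colon X'\to V'$, a common resolution $p\colon W\to X$, $p'\colon W\to X'$, and effective divisors $F_{p}\geq 0$ ($p$-exceptional) and $F_{p'}\geq 0$ ($p'$-exceptional) with $K_{X'}+\Delta'\sim_{\mathbb{R},V'}0$ and $p^{*}(K_{X}+\Delta)+F_{p}=p'^{*}(K_{X'}+\Delta')+F_{p'}$. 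Comparing this relation with the two crepant expressions for $K_{W}$, one checks that $a(P,X,\Delta)\leq a(P,X',\Delta')$ for every prime divisor $P$ on $X$ and $a(P',X',\Delta')\leq a(P',X,\Delta)$ for every prime divisor $P'$ on $X'$, so by Lemma \ref{lem--bir-relation} it is enough to produce a good minimal model of $(X',\Delta')$.

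Next I would descend to the base. Since $K_{X'}+\Delta'\sim_{\mathbb{R},V'}0$, there is an $\mathbb{R}$-Cartier divisor $D'$ on $V'$ with $\varphi'^{*}D'\sim_{\mathbb{R}}K_{X'}+\Delta'$. Applying Remark \ref{rem--div} (\ref{rem--div-(2)}) to $p$, to $p'$, and to the surjection $\varphi'$, together with the displayed relation on $W$, gives
\begin{equation*}
\kappa_{\sigma}(V',D')=\kappa_{\sigma}(X',K_{X'}+\Delta')=\kappa_{\sigma}(X,K_{X}+\Delta)={\rm dim}\,V={\rm dim}\,V'.
\end{equation*}
Thus $\kappa_{\sigma}(V',D')$ equals the dimension of $V'$, and so $D'$ is big.

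The heart of the argument would then be the canonical bundle formula for the lc-trivial fibration $\varphi'\colon(X',\Delta')\to V'$. Because every lc center of $(X,\Delta)$ dominates $V$, and this dominance is inherited by the lc centers of $(X',\Delta')$ through the crepant construction above, the fibration has no lc center lying over a divisor of $V'$; hence, after a further birational modification of $V'$, one obtains $D'\sim_{\mathbb{R}}K_{V'}+B_{V'}+M_{V'}$ with $(V',B_{V'})$ lc and $M_{V'}$ nef. As $D'$ is big, $K_{V'}+B_{V'}+M_{V'}$ is big, i.e.\ the (generalized) pair $(V',B_{V'}+M_{V'})$ is of log general type and hence has a good minimal model; equivalently, $D'$ birationally admits a Nakayama--Zariski decomposition with semi-ample positive part. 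Pulling this decomposition back by $\varphi'$ — using that the positive part of the pullback of $D'$ along a fibration is the pullback of the positive part of $D'$, and that semi-ampleness descends along surjections — shows that $\varphi'^{*}D'\sim_{\mathbb{R}}K_{X'}+\Delta'$ birationally admits a Nakayama--Zariski decomposition with semi-ample positive part. By Theorem \ref{thmzariski}, $(X',\Delta')$ has a good minimal model, and then so does $(X,\Delta)$ by Lemma \ref{lem--bir-relation}.

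I expect the last paragraph to be the main obstacle: arranging the canonical bundle formula so that the base genuinely carries a (generalized) lc pair — this is precisely where the hypothesis that all lc centers dominate $V$ is used — and then invoking the existence of good minimal models for (generalized) lc pairs of log general type, as well as transporting that good minimal model back up to $X'$ compatibly with the Nakayama--Zariski decomposition. By contrast, the klt case in \cite{bchm} and the weak semistable reduction of \cite{ak} are routine inputs; essentially all of the new content of \cite[Proposition 3.3]{has-mmp} lies in handling the (possibly non-klt) descended pair on the base.
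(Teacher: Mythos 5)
The first thing to note is that the paper contains no proof of Lemma \ref{lem--fiberspacemmp}: it is quoted verbatim as a special case of \cite[Proposition 3.3]{has-mmp}, which was proved there via weak semistable reduction (\cite{ak}). Your proposal announces a ``reduction to \cite[Proposition 3.3]{has-mmp}'' but never actually invokes that proposition; what you write is an independent proof attempt through the canonical bundle formula. Its early stages are sound: the output of \cite[Lemma 3.1, Remark 3.2]{has-mmp} does give $p^{*}(K_{X}+\Delta)+F_{p}=p'^{*}(K_{X'}+\Delta')+F_{p'}$, this yields the two discrepancy inequalities needed for Lemma \ref{lem--bir-relation}, and the computation $\kappa_{\sigma}(V',D')=\kappa_{\sigma}(X,K_{X}+\Delta)={\rm dim}\,V'$, hence bigness of $D'$, is correct (though already here you should note that applying \cite[Lemma 3.1]{has-mmp} to the \emph{given} contraction $X\to V$ requires knowing $\kappa_{\iota}(X/V,K_{X}+\Delta)\geq 0$, i.e.\ relative abundance in numerical dimension zero, e.g.\ via \cite{gongyo1}; in Lemma \ref{lem--relabund-globabund} an effective representative is in hand before that lemma is used).

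The genuine gaps are in the last paragraph, and they are fatal as written. First, the claim that horizontality of lc centers is ``inherited'' by $(X',\Delta')$ is unjustified: the relation $p^{*}(K_{X}+\Delta)+F_{p}=p'^{*}(K_{X'}+\Delta')+F_{p'}$ gives $a(E,X',\Delta')=a(E,X,\Delta)+{\rm ord}_{E}(F_{p'})-{\rm ord}_{E}(F_{p})$, so $(X',\Delta')$ may acquire new lc places (with ${\rm ord}_{E}(F_{p})>0$) whose centers are vertical over $V'$; this is exactly the phenomenon visible in Proposition \ref{prop--crepantmmp}, where the semistable-reduction model has vertical lc centers and the divisor $\bar{G}$ is introduced precisely to absorb them. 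Without horizontality you only obtain a generalized \emph{lc} structure on $V'$. Second, and decisively, ``a generalized lc pair of log general type has a good minimal model'' is not an available theorem: it is an open problem at least as strong as the existence of good minimal models for lc pairs of log general type, so it cannot serve as an input here. Even if you repaired the first gap and landed in the generalized klt case, you would still need the b-nefness of the moduli part (Ambro, Fujino--Gongyo, cf.\ \cite{fg-bundle}), a base-point-freeness/semi-ampleness statement for generalized pairs, and the identity $N_{\sigma}(\varphi'^{*}D')=\varphi'^{*}N_{\sigma}(D')$ used to transport the decomposition upstairs (a Nakayama-type statement, cf.\ the use of \cite[III, 5.17 Corollary]{nakayama} in Lemma \ref{lem--mmp-can-bundle-formula}), none of which is supplied. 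So the argument does not close; the intended (and the paper's) route is simply to quote \cite[Proposition 3.3]{has-mmp}.
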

Next, we prove three results using Lemma \ref{lem--fiberspacemmp} and the ideas from \cite{hashizumehu}.  

\begin{prop}\label{prop--crepantmmp}
Let $(X,\Delta)$ be a projective lc pair such that $K_{X}+\Delta$ is pseudo-effective and abundant. 
Then there is a dlt blow-up $\widetilde{f}\colon(\widetilde{X},\widetilde{\Delta})\to (X,\Delta)$ and effective $\mathbb{R}$-divisors $\widetilde{G}$ and $\widetilde{H}$ on $\widetilde{X}$ satisfying the following properties.
\makeatletter 
\renewcommand{\p@enumii}{III-} 
\makeatother
\begin{enumerate}[(I)]
\item \label{prop--crepantmmp-(I)}
$K_{\widetilde{X}}+\widetilde{\Delta}\sim_{\mathbb{R}}\widetilde{G}+\widetilde{H}$, 
\item \label{prop--crepantmmp-(II)}
${\rm Supp}\,\widetilde{G}\subset {\rm Supp}\,\llcorner \widetilde{\Delta} \lrcorner$, and
\item \label{prop--crepantmmp-(III)}
there exists a positive real number $t_{0}$ such that for any $t\in(0,t_{0}]$, the following properties hold: 
\begin{enumerate}[({III-}a)]
\item \label{prop--crepantmmp-(III-a)}
The pair $(\widetilde{X},\widetilde{\Delta}+t\widetilde{H})$ is $\mathbb{Q}$-factorial dlt and the support of $N_{\sigma}(K_{\widetilde{X}}+\widetilde{\Delta}+t\widetilde{H})$ does not depend on $t$, and 
\item \label{prop--crepantmmp-(III-b)}
the pair $(\widetilde{X},\widetilde{\Delta}-t\widetilde{G})$ has a good minimal model. 
\end{enumerate}
\end{enumerate}
\end{prop}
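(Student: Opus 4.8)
The plan is to reduce the whole statement to an application of Lemma~\ref{lem--fiberspacemmp} over the base of an Iitaka fibration, after a sufficiently careful choice of dlt blow-up.

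\emph{Choice of the model.} Since $K_{X}+\Delta$ is pseudo-effective and abundant, $\kappa_{\iota}(X,K_{X}+\Delta)=\kappa_{\sigma}(X,K_{X}+\Delta)\geq 0$, so $K_{X}+\Delta\sim_{\mathbb{R}}D$ for an effective $\mathbb{R}$-divisor $D$, which I would take of the form $D=N_{\sigma}(K_{X}+\Delta)+M$ with $M$ the movable part in sufficiently general position. Let $\phi\colon X\dashrightarrow V$ be the Iitaka fibration associated to $D$, so $\dim V=\kappa_{\sigma}(X,K_{X}+\Delta)$; since a general fibre of $\phi$ has Iitaka dimension $0$, the restriction of $M$ to it is effective and $\mathbb{R}$-linearly trivial, hence $0$, so $M$ is vertical over $V$. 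I would first take a log resolution of $(X,\Delta)$ on which $\phi$ becomes a morphism, and then build a dlt blow-up $\widetilde{f}\colon(\widetilde{X},\widetilde{\Delta})\to(X,\Delta)$ as in Theorem~\ref{thm--dltblowup} which in addition dominates that resolution — so that the induced map $p\colon\widetilde{X}\to V$ is a morphism, after replacing $V$ by a higher birational model if necessary — and which is obtained after extracting enough divisors (necessarily of discrepancy $-1$, hence legitimately part of a dlt blow-up) that $\Supp\widetilde{\Delta}\cup\Supp(\widetilde{f}^{*}D)$ is simple normal crossing near every lc centre of $(\widetilde{X},\widetilde{\Delta})$ and that every lc centre of $(\widetilde{X},\widetilde{\Delta})$ not dominating $V$ is contained in a component of $\llcorner\widetilde{\Delta}\lrcorner\cap\Supp(\widetilde{f}^{*}D)$ not dominating $V$. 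Put $\widetilde{D}:=\widetilde{f}^{*}D\sim_{\mathbb{R}}K_{\widetilde{X}}+\widetilde{\Delta}\geq 0$. By Lemma~\ref{lem--iitakafib} and Remark~\ref{rem--div} we get $\kappa_{\sigma}(\widetilde{X}/V,K_{\widetilde{X}}+\widetilde{\Delta})=0$, and by Remark~\ref{rem--asy-van-ord-1}~(\ref{rem--asy-van-ord-1-(4)}) we have $\widetilde{f}^{*}N_{\sigma}(K_{X}+\Delta)\leq N_{\sigma}(K_{\widetilde{X}}+\widetilde{\Delta})$, so $\widetilde{D}\leq N_{\sigma}(K_{\widetilde{X}}+\widetilde{\Delta})+\widetilde{f}^{*}M$ with $\widetilde{f}^{*}M$ vertical over $V$.

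\emph{The decomposition.} Let $\widetilde{G}$ be the part of $\widetilde{D}$ supported on $\Supp\llcorner\widetilde{\Delta}\lrcorner$ and set $\widetilde{H}:=\widetilde{D}-\widetilde{G}\geq 0$. Then (\ref{prop--crepantmmp-(I)}) and (\ref{prop--crepantmmp-(II)}) hold by construction, $\widetilde{H}$ has no component in $\llcorner\widetilde{\Delta}\lrcorner$, and by the normal crossing arrangement together with the genericity of $M$ no component of $\widetilde{H}$ passes through an lc centre of $(\widetilde{X},\widetilde{\Delta})$; moreover $\widetilde{G}\leq\widetilde{D}\leq N_{\sigma}(K_{\widetilde{X}}+\widetilde{\Delta})+(\text{vertical over }V)$.

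\emph{Property (III).} Fix $t_{0}>0$ small and $t\in(0,t_{0}]$. For (\ref{prop--crepantmmp-(III-a)}): $\widetilde{X}$ is $\mathbb{Q}$-factorial, $\widetilde{H}$ shares no component with $\llcorner\widetilde{\Delta}\lrcorner$, $\Supp(\widetilde{\Delta}+\widetilde{H})$ is snc near the lc centres of $(\widetilde{X},\widetilde{\Delta})$, and $(\widetilde{X},\widetilde{\Delta})$ is klt away from them, so $(\widetilde{X},\widetilde{\Delta}+t\widetilde{H})$ is $\mathbb{Q}$-factorial dlt for $0<t\ll 1$; constancy of $\Supp N_{\sigma}(K_{\widetilde{X}}+\widetilde{\Delta}+t\widetilde{H})$ is Lemma~\ref{lem--negativesupport} applied with $D=K_{\widetilde{X}}+\widetilde{\Delta}$ and $D'=\widetilde{H}$. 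For (\ref{prop--crepantmmp-(III-b)}): $K_{\widetilde{X}}+\widetilde{\Delta}-t\widetilde{G}\sim_{\mathbb{R}}(1-t)\widetilde{G}+\widetilde{H}$ has the same support as $\widetilde{G}+\widetilde{H}$, so $\kappa_{\sigma}(\widetilde{X},K_{\widetilde{X}}+\widetilde{\Delta}-t\widetilde{G})=\kappa_{\sigma}(\widetilde{X},K_{\widetilde{X}}+\widetilde{\Delta})=\dim V$ by Remark~\ref{rem--div}; on a general fibre $F$ of $p$ we have $\widetilde{G}|_{F}\leq N_{\sigma}(K_{\widetilde{X}}+\widetilde{\Delta})|_{F}\leq N_{\sigma}((K_{\widetilde{X}}+\widetilde{\Delta})|_{F})$ and $\kappa_{\sigma}((K_{\widetilde{X}}+\widetilde{\Delta})|_{F})=0$, so subtracting (a multiple of) part of the Nakayama--Zariski negative part leaves the numerical dimension $0$, giving $\kappa_{\sigma}(\widetilde{X}/V,K_{\widetilde{X}}+\widetilde{\Delta}-t\widetilde{G})=0$; and for $0<t\ll 1$ the lc centres of $(\widetilde{X},\widetilde{\Delta}-t\widetilde{G})$ are exactly those lc centres of $(\widetilde{X},\widetilde{\Delta})$ not contained in $\Supp\widetilde{G}$, which by the last condition in the choice of the model all dominate $V$. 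Then Lemma~\ref{lem--fiberspacemmp} applied to $(\widetilde{X},\widetilde{\Delta}-t\widetilde{G})\to V$ yields a good minimal model, which is (\ref{prop--crepantmmp-(III-b)}). Shrinking $t_{0}$ so that all of this holds for every $t\in(0,t_{0}]$ finishes the argument.

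\emph{Main difficulty.} The delicate point is the construction of the dlt blow-up in the first step: one must simultaneously place $\Supp\widetilde{\Delta}\cup\Supp\widetilde{f}^{*}D$ in good position over $V$ (so that $\widetilde{H}$ misses the lc centres and $(\widetilde{X},\widetilde{\Delta}+t\widetilde{H})$ is dlt) and guarantee that, after discarding the reduced–boundary part of $\widetilde{f}^{*}D$ into $\widetilde{G}$, \emph{every} remaining lc centre dominates $V$ — a priori an lc stratum cut out by boundary components each dominating $V$, or one lying on a vertical boundary component absent from $\widetilde{f}^{*}D$, could fail to dominate $V$ — all while keeping $\widetilde{f}$ crepant and $\widetilde{X}$ $\mathbb{Q}$-factorial. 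This is precisely the bookkeeping carried out in \cite{hashizumehu}, ultimately resting, through Lemma~\ref{lem--fiberspacemmp}, on weak semistable reduction \cite{ak}.
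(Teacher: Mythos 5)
There is a genuine gap at the very first step, and it is not mere bookkeeping. The statement requires $\widetilde{f}$ to be a dlt blow-up in the sense of Theorem \ref{thm--dltblowup}, i.e.\ crepant: $K_{\widetilde{X}}+\widetilde{\Delta}=\widetilde{f}^{*}(K_{X}+\Delta)$ with $\widetilde{\Delta}=\widetilde{f}^{-1}_{*}\Delta+(\text{reduced exceptional divisor})$, which forces every $\widetilde{f}$-exceptional prime divisor to have discrepancy exactly $-1$ with respect to $(X,\Delta)$. Your construction asks this crepant model to dominate a log resolution on which the Iitaka fibration $X\dashrightarrow V$ becomes a morphism and on which $\Supp\widetilde{\Delta}\cup\Supp\widetilde{f}^{*}D$ is snc near the lc centres; the divisors one must extract to achieve either of these are in general of discrepancy $>-1$ (already when $(X,\Delta)$ is klt every dlt blow-up is small, while the Iitaka map is almost never a morphism on $X$ and $\Supp D$ is almost never snc), so such a $\widetilde{f}$ simply does not exist. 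The parenthetical ``necessarily of discrepancy $-1$'' is exactly the point that fails. Relatedly, the choice $D=N_{\sigma}(K_{X}+\Delta)+M$ with $M$ ``in sufficiently general position'' is not available: the positive part of the Nakayama--Zariski decomposition is a fixed limit of effective classes, not a divisor you may move to avoid lc centres, and the claim that $M$ is vertical over $V$ (and the inequality $N_{\sigma}(K_{\widetilde{X}}+\widetilde{\Delta})|_{F}\leq N_{\sigma}((K_{\widetilde{X}}+\widetilde{\Delta})|_{F})$ you use on a general fibre) are unjustified.

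The paper resolves precisely this tension by \emph{not} working on a crepant model at first: it takes a log smooth model $(\bar{X},\bar{\Delta})$ with $K_{\bar{X}}+\bar{\Delta}=\bar{f}^{*}(K_{X}+\Delta)+\bar{E}$, $\bar{E}\geq0$ exceptional, where one is free to resolve so that $\bar{X}\to V$ is a morphism and $(\bar{X},\bar{\Delta}+\bar{D})$ is log smooth. There the freedom to choose $\bar{D}\sim_{\mathbb{R}}K_{\bar{X}}+\bar{\Delta}$ as (effective)$+$(pullback of a general ample divisor on $V$) lets one force $\Supp\bar{D}$ to contain every vertical lc centre, and \cite[Lemma 2.10]{has-trivial} produces the reduced vertical part $\bar{\Delta}''$ with $\Supp\bar{\Delta}''\subset\Supp\bar{G}$ so that all lc centres of $(\bar{X},\bar{\Delta}-t\bar{G})$ dominate $V$; Lemma \ref{lem--fiberspacemmp} is applied on $\bar{X}\to V$, and the relative statement $\kappa_{\sigma}(\bar{X}/V,K_{\bar{X}}+\bar{\Delta}-t\bar{G})=0$ follows from the support-equality trick of Remark \ref{rem--div} (\ref{rem--div-(1)}), with no need for the $N_{\sigma}$-restriction inequality. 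Only afterwards does one run a $(K_{\bar{X}}+\bar{\Delta})$-MMP over $X$, which contracts $\bar{E}$ and terminates at a genuine dlt blow-up $(\widetilde{X},\widetilde{\Delta})$, and one checks that for small $t$ this same MMP is a $(K_{\bar{X}}+\bar{\Delta}+t\bar{H})$-MMP and a $(K_{\bar{X}}+\bar{\Delta}-t\bar{G})$-MMP, so that (\ref{prop--crepantmmp-(III-a)}) and (\ref{prop--crepantmmp-(III-b)}) descend to $\widetilde{X}$. This transfer-along-MMP step is the mechanism that reconciles the crepancy requirement with the resolution requirements, and it is entirely absent from your argument; without it (or a substitute), the proof does not go through.
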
 
 
\begin{proof}
We apply arguments in \cite[Steps 2--3 in the proof of Theorem 1.1]{hashizumehu}. 
Since $K_{X}+\Delta$ is pseudo-effective and abundant, we can find an effective $\mathbb{R}$-divisor $D$ on $X$ such that $K_{X}+\Delta\sim_{\mathbb{R}}D$. 
We take the Iitaka fibration $X\dashrightarrow V$ associated to $D$. 
Then we have ${\rm dim}\,V=\kappa_{\sigma}(X, K_{X}+\Delta)$. 
We take a log resolution $\bar{f}\colon \bar{X}\to X$ of $(X,\Delta)$ such that the induced map $\bar{X}\dashrightarrow V$ is a morphism. 
Let $(\bar{X},\bar{\Delta})$ be a log smooth model of $(X,\Delta)$ as in \cite[Definition 2.9]{has-trivial}. 
Then $(\bar{X},\bar{\Delta})$ is a log smooth lc pair such that 
\begin{enumerate}[(i)]
\item \label{proof-prop--creppantmmp-(i)}
$K_{\bar{X}}+\bar{\Delta}=\bar{f}^{*}(K_{X}+\Delta)+\bar{E}$ for an $\bar{f}$-exceptional $\mathbb{R}$-divisor $\bar{E}\geq0$ whose support contains all $\bar{f}$-exceptional prime divisors $\bar{P}$ satisfying $a(\bar{P},X,\Delta)>-1$. 
\end{enumerate}
By Remark \ref{rem--div} (\ref{rem--div-(2)}) and Lemma \ref{lem--iitakafib}, we have 
\begin{enumerate}[(i)]\setcounter{enumi}{1}
\item \label{proof-prop--creppantmmp-(ii)}
$\kappa_{\sigma}(\bar{X},K_{\bar{X}}+\bar{\Delta})={\rm dim}\,V$ and $\kappa_{\sigma}(\bar{X}/V,K_{\bar{X}}+\bar{\Delta})=0$.
\end{enumerate}
By (\ref{proof-prop--creppantmmp-(i)}) and the construction of the Iitaka fibration, $K_{\bar{X}}+\bar{\Delta}$ is $\mathbb{R}$-linearly equivalent to the sum of an effective $\mathbb{R}$-divisor on $\bar{X}$ and the pullback of an ample $\mathbb{R}$-divisor on $V$. 
Therefore, we can find an effective $\mathbb{R}$-divisor $\bar{D}\sim_{\mathbb{R}}K_{\bar{X}}+\bar{\Delta}$ such that ${\rm Supp}\,\bar{D}$ contains all lc centers of $(\bar{X},\bar{\Delta})$ which are vertical over $V$. 
By applying \cite[Lemma 2.10]{has-trivial} to the morphism $(\bar{X},\bar{\Delta})\to V$, we may assume that
\begin{enumerate}[(i)]\setcounter{enumi}{2}
\item \label{proof-prop--creppantmmp-(iii)}
$\bar{\Delta}=\bar{\Delta}'+\bar{\Delta}''$ such that $\bar{\Delta}'$ is effective, $\bar{\Delta}''=0$ or $\bar{\Delta}''$ is a reduced divisor which is vertical over $V$, and all lc centers of $(\bar{X},\bar{\Delta}-\bar{\Delta}'')$ dominate $V$. 
\end{enumerate}
By taking a log resolution of $(\bar{X},\bar{\Delta}+\bar{D})$ and replacing $(\bar{X},\bar{\Delta})$ with a log smooth model in \cite[Definition 2.9]{has-trivial} (and replacing $\bar{D}$ accordingly), we may assume that $(\bar{X},\bar{\Delta}+\bar{D})$ is log smooth.
Note that (\ref{proof-prop--creppantmmp-(iii)}) is preserved after this replacement.  
We have ${\rm Supp}\,\bar{D}\supset {\rm Supp}\,\bar{\Delta}''$ since ${\rm Supp}\,\bar{D}$ contains all lc centers of $(\bar{X},\bar{\Delta})$ which are vertical over $V$. 
By decomposing $\bar{D}$ appropriately, we obtain effective $\mathbb{R}$-divisors $\bar{G}$ and $\bar{H}$ such that 
\begin{enumerate}[(i)]\setcounter{enumi}{3}
\item \label{proof-prop--creppantmmp-(iv)}
$K_{\bar{X}}+\bar{\Delta}\sim_{\mathbb{R}}\bar{G}+\bar{H}$, 
\item \label{proof-prop--creppantmmp-(v)}
${\rm Supp}\,\bar{\Delta}''\subset {\rm Supp}\,\bar{G}\subset {\rm Supp}\,\llcorner \bar{\Delta} \lrcorner$, and 
\item \label{proof-prop--creppantmmp-(vi)}
no component of $\bar{H}$ is a component of $\llcorner \bar{\Delta} \lrcorner$ and $(\bar{X},{\rm Supp}\,(\bar{\Delta}+\bar{H}))$ is log smooth.   
\end{enumerate}
We fix a real number $t_{0}\in(0,1)$ such that $\bar{\Delta}-t_{0}\bar{G}\geq 0$. 
For any $t\in(0,t_{0}]$, we consider the pair $(\bar{X},\bar{\Delta}-t\bar{G})$.
By (\ref{proof-prop--creppantmmp-(v)}), there is a real number $t'>0$ such that $t'\bar{\Delta}''\leq t\bar{G}$. 
Then all lc centers of $(\bar{X},\bar{\Delta}-t\bar{G})$ are lc centers of $(\bar{X},\bar{\Delta}-t'\bar{\Delta}'')$. 
Since $(\bar{X},\bar{\Delta})$ is lc, all lc centers of $(\bar{X},\bar{\Delta}-t'\bar{\Delta}'')$ are lc centers of $(\bar{X},\bar{\Delta}-\bar{\Delta}'')$. 
By these facts and (\ref{proof-prop--creppantmmp-(iii)}), we see that all lc centers of $(\bar{X},\bar{\Delta}-t\bar{G})$ dominate $V$. 
Since we have 
$$K_{\bar{X}}+\bar{\Delta}\sim_{\mathbb{R}}\bar{G}+\bar{H} \quad\text{and} \quad K_{\bar{X}}+\bar{\Delta}-t\bar{G}\sim_{\mathbb{R}}(1-t)\bar{G}+\bar{H},$$ 
by Remark \ref{rem--div} (\ref{rem--div-(1)}) we have $\kappa_{\sigma}(\bar{X},K_{\bar{X}}+\bar{\Delta}-t\bar{G})=\kappa_{\sigma}(\bar{X},K_{\bar{X}}+\bar{\Delta}).$ 
By  (\ref{proof-prop--creppantmmp-(ii)}), we obtain 
$$\kappa_{\sigma}(\bar{X},K_{\bar{X}}+\bar{\Delta}-t\bar{G})={\rm dim}\,V.$$ 
Similarly, we obtain $\kappa_{\sigma}(\bar{X}/V,K_{\bar{X}}+\bar{\Delta}-t\bar{G})=0$. 
Thus, the morphism $(\bar{X},\bar{\Delta}-t\bar{G})\to V$ satisfies the conditions of Lemma \ref{lem--fiberspacemmp}. 
By Lemma \ref{lem--fiberspacemmp}, the pair $(\bar{X},\bar{\Delta}-t\bar{G})$ has a good minimal model for any $t\in(0,t_{0}]$. 

We run a $(K_{\bar{X}}+\bar{\Delta})$-MMP over $X$. 
By construction of $\bar{\Delta}$ (see (\ref{proof-prop--creppantmmp-(i)})), we get a dlt blow-up $\widetilde{f}\colon(\widetilde{X},\widetilde{\Delta})\to (X,\Delta)$.  
Let $\widetilde{G}$ (resp.~$\widetilde{H}$) be the birational transform of $\bar{G}$ (resp.~$\bar{H}$) on $\widetilde{X}$. 
We show that the dlt blow-up $\widetilde{f}\colon(\widetilde{X},\widetilde{\Delta})\to (X,\Delta)$ and the $\mathbb{R}$-divisors $\widetilde{G}$ and $\widetilde{H}$ satisfy all the conditions of Proposition \ref{prop--crepantmmp}. 
By (\ref{proof-prop--creppantmmp-(iv)}), we have the relation $K_{\widetilde{X}}+\widetilde{\Delta}\sim_{\mathbb{R}}\widetilde{G}+\widetilde{H}$, so (\ref{prop--crepantmmp-(I)}) of Proposition \ref{prop--crepantmmp} holds. 
Condition (\ref{proof-prop--creppantmmp-(v)}) shows (\ref{prop--crepantmmp-(II)}) of Proposition \ref{prop--crepantmmp}. 
By (\ref{proof-prop--creppantmmp-(vi)}), replacing  $t_{0}$ if necessary, we may assume that $(\bar{X},\bar{\Delta}+t_{0}\bar{H})$ is dlt. 
Since $\bar{X}\dashrightarrow \widetilde{X}$ is a sequence of steps of a $(K_{\bar{X}}+\bar{\Delta})$-MMP, replacing $t_{0}$ again, we may assume the following.
\begin{itemize}
\item
$\bar{X}\dashrightarrow \widetilde{X}$ is a sequence of steps of a $(K_{\bar{X}}+\bar{\Delta}+t\bar{H})$-MMP and a sequence of steps of a $(K_{\bar{X}}+\bar{\Delta}-t\bar{G})$-MMP for any $t\in(0,t_{0}]$. 
\end{itemize}
By this statement, $(\widetilde{X},\widetilde{\Delta}-t\widetilde{G})$ has a good minimal model for all $t\in(0,t_{0}]$ because $(\bar{X},\bar{\Delta}-t\bar{G})$ has a good minimal model. In particular, (\ref{prop--crepantmmp-(III-b)}) of Proposition \ref{prop--crepantmmp} holds. 
The statement and the $\mathbb{Q}$-factorial dlt property of $(\bar{X},\bar{\Delta}+t_{0}\bar{H})$ imply the $\mathbb{Q}$-factorial dlt property of $(\widetilde{X},\widetilde{\Delta}+t_{0}\widetilde{H})$. 
Applying Lemma \ref{lem--negativesupport} to $K_{\widetilde{X}}+\widetilde{\Delta}$ and $\widetilde{H}$ and replacing $t_{0}$ again, we may assume that the support of $N_{\sigma}(K_{\widetilde{X}}+\widetilde{\Delta}+t\widetilde{H})$ does not depend on $t\in (0,t_{0}]$. 
By this fact and the fact that $(\widetilde{X},\widetilde{\Delta}+t_{0}\widetilde{H})$ is $\mathbb{Q}$-factorial dlt, $t_{0}$ satisfies (\ref{prop--crepantmmp-(III-a)}) of Proposition \ref{prop--crepantmmp}. Thus $t_{0}$ satisfies (\ref{prop--crepantmmp-(III)}) of Proposition \ref{prop--crepantmmp}.  

Thus, $\widetilde{f}\colon(\widetilde{X},\widetilde{\Delta})\to (X,\Delta)$, $\widetilde{G}$, and $\widetilde{H}$ satisfy all conditions of Proposition \ref{prop--crepantmmp}. 
\end{proof} 
 
\begin{prop}\label{prop--specialmmp}
Let $(X,\Delta)$ be a projective $\mathbb{Q}$-factorial dlt pair such that there are two effective $\mathbb{R}$-divisors $G$ and $H$ on $X$ satisfying (\ref{prop--crepantmmp-(I)}), (\ref{prop--crepantmmp-(II)}), (\ref{prop--crepantmmp-(III)}), (\ref{prop--crepantmmp-(III-a)}), and (\ref{prop--crepantmmp-(III-b)}) in Proposition \ref{prop--crepantmmp}. 
Then there is a real number $\lambda_{0}>0$ and a sequence of birational maps
\begin{equation*}
X \dashrightarrow X_{1}\dashrightarrow X_{2}\dashrightarrow \cdots \dashrightarrow X_{i}\dashrightarrow \cdots
\end{equation*}
such that putting $\Delta_{i}$ and $H_{i}$ as the birational transforms of $\Delta$ and $H$ on $X_{i}$ respectively, the following properties hold.
\begin{enumerate}[(1)]
\item \label{prop--specialmmp-(1)}
The pair $(X,\Delta+\lambda_{0}H)$ is $\mathbb{Q}$-factorial dlt and all lc centers of $(X,\Delta+\lambda_{0}H)$ are lc centers of $(X,\Delta)$, 
\item \label{prop--specialmmp-(2)}
the birational map $X \dashrightarrow X_{1}$ is a sequence of steps of a $(K_{X}+\Delta+\lambda_{0}H)$-MMP to a good minimal model $(X_{1},\Delta_{1}+\lambda_{0}H_{1})$, 
\item \label{prop--specialmmp-(3)}
the sequence of birational maps $X_{1}\dashrightarrow \cdots \dashrightarrow X_{i}\dashrightarrow \cdots$ is a sequence of steps of a $(K_{X_{1}}+\Delta_{1})$-MMP with scaling of $\lambda_{0}H_{1}$ such that if we define
$$\lambda_{i}={\rm inf}\set{\mu \in \mathbb{R}_{\geq0} \!|\! K_{X_{i}}+\Delta_{i}+\mu H_{i}\text{\rm \;is nef}}$$
for each $i\geq 1$, then we have ${\rm lim}_{i \to \infty}\lambda_{i}=0$, 
\item \label{prop--specialmmp-(4)}
for all $i\geq1$ and all positive real numbers $u\in [\lambda_{i},\lambda_{i-1}]$, the pair $(X_{i},\Delta_{i}+u H_{i})$ is a good minimal model of  $(X,\Delta+u H)$ and $(X_{1},\Delta_{1}+u H_{1})$, and 
\item \label{prop--specialmmp-(5)}
the $(K_{X_{1}}+\Delta_{1})$-MMP with scaling of $\lambda_{0}H_{1}$ discussed in (\ref{prop--specialmmp-(3)}) occurs only in ${\rm Supp}\,\llcorner \Delta_{1}\lrcorner$, in other words, for every $i\geq1$, any curve contracted by the extremal contraction in the $i$-th step of the $(K_{X_{1}}+\Delta_{1})$-MMP is contained in ${\rm Supp}\,\llcorner \Delta_{i}\lrcorner$. 
\end{enumerate}
\end{prop}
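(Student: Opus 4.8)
The plan is to take $\lambda_0\in(0,t_0]$ small, read off (\ref{prop--specialmmp-(1)}) at once, and build the sequence as a $(K_X+\Delta+\lambda_0 H)$-MMP ending in a good minimal model $(X_1,\Delta_1+\lambda_0 H_1)$ followed by a $(K_{X_1}+\Delta_1)$-MMP with scaling of $\lambda_0 H_1$. The device that makes everything work is the $\mathbb{R}$-linear equivalence
\[
K_X+\Delta+uH\ \sim_{\mathbb{R}}\ (1+u)\Bigl(K_X+\Delta-\tfrac{u}{1+u}G\Bigr)\qquad(u\geq 0),
\]
which follows from $K_X+\Delta\sim_{\mathbb{R}}G+H$. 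Since $\tfrac{u}{1+u}<u\leq t_0$ for $u\in(0,\lambda_0]$, the pair $(X,\Delta-\tfrac{u}{1+u}G)$ has a good minimal model by hypothesis, so by Theorem~\ref{thmzariski} some resolution $f\colon Y\to X$ exhibits $f^{*}(K_X+\Delta-\tfrac{u}{1+u}G)$ with a Nakayama--Zariski decomposition whose positive part is semi-ample; as $\sigma_P(\,\cdot\,)$ is invariant under $\sim_{\mathbb{R}}$ and homogeneous of degree one, the positive part of $f^{*}(K_X+\Delta+uH)$ is then $\mathbb{R}$-linearly equivalent to $(1+u)$ times that semi-ample divisor, hence semi-ample; applying Theorem~\ref{thmzariski} once more, $(X,\Delta+uH)$ has a good minimal model for every $u\in(0,\lambda_0]$.

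First I would fix $\lambda_0$ small. Since $H\geq 0$ and $(X,\Delta+tH)$ is dlt for $t\in(0,t_0]$ by (\ref{prop--crepantmmp-(III-a)}), no component of $H$ lies in $\llcorner\Delta\lrcorner$, so $\Supp G\cap\Supp H=\emptyset$ by (\ref{prop--crepantmmp-(II)}), and, shrinking $\lambda_0$, $(X,\Delta+\lambda_0 H)$ is $\mathbb{Q}$-factorial dlt with $\llcorner\Delta+\lambda_0 H\lrcorner=\llcorner\Delta\lrcorner$; hence all its lc centers are lc centers of $(X,\Delta)$, which is (\ref{prop--specialmmp-(1)}). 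Because $(X,\Delta+\lambda_0 H)$ is $\mathbb{Q}$-factorial dlt and has a good minimal model, fixing a general ample $A_0\geq 0$ with $K_X+\Delta+\lambda_0 H+A_0$ nef and running a $(K_X+\Delta+\lambda_0 H)$-MMP with scaling of $A_0$ terminates with a good minimal model $(X_1,\Delta_1+\lambda_0 H_1)$ by \cite[Theorem 4.1]{birkar-flip} — this is (\ref{prop--specialmmp-(2)}); by the displayed equivalence it is simultaneously a $(K_X+\Delta-\tfrac{\lambda_0}{1+\lambda_0}G)$-MMP, so $(X_1,\Delta_1-\tfrac{\lambda_0}{1+\lambda_0}G_1)$ is a good minimal model of $(X,\Delta-\tfrac{\lambda_0}{1+\lambda_0}G)$. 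Then, since $K_{X_1}+\Delta_1+\lambda_0 H_1$ is nef and $K_{X_1}+\Delta_1\sim_{\mathbb{R}}G_1+H_1$ is pseudo-effective, I run a $(K_{X_1}+\Delta_1)$-MMP with scaling of $\lambda_0 H_1$ to obtain the tail $X_1\dashrightarrow X_2\dashrightarrow\cdots$ with thresholds $\lambda_i$.

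For (\ref{prop--specialmmp-(3)}): for each $\mu\in(0,\lambda_0)$ the initial stretch of the tail along which $\lambda_i>\mu$ is a $(K_{X_1}+\Delta_1+\mu H_1)$-MMP with scaling of $(\lambda_0-\mu)H_1$, which terminates by \cite[Theorem 4.1]{birkar-flip} once we know $(X_1,\Delta_1+\mu H_1)$ has a good minimal model; hence $\lambda_i\leq\mu$ for $i\gg 0$, and $\mu\to 0$ gives $\lim_i\lambda_i=0$. So the crux is that $(X_1,\Delta_1+\mu H_1)$ — equivalently, by the displayed equivalence, $(X_1,\Delta_1-sG_1)$ — has a good minimal model for all small $\mu$ (resp.\ $s$); I would prove this by repeating the first paragraph with $(X_1,\Delta_1,G_1,H_1)$ in place of $(X,\Delta,G,H)$: conditions (\ref{prop--crepantmmp-(I)}), (\ref{prop--crepantmmp-(II)}) transfer to $X_1$ by birational transform and (\ref{prop--crepantmmp-(III-a)}) is restored by Lemma~\ref{lem--negativesupport}, leaving only that $(X_1,\Delta_1-sG_1)$ has a good minimal model for $s$ in a punctured interval; for $s$ near $\tfrac{\lambda_0}{1+\lambda_0}$ this holds since $X\dashrightarrow X_1$ stays a $(K_X+\Delta-sG)$-MMP (finitely many extremal rays, open condition), so $(X_1,\Delta_1-sG_1)$ is a log minimal model of $(X,\Delta-sG)$, which is good because $(X,\Delta-sG)$ has a good minimal model, and to reach all small $s$ one arranges $X\dashrightarrow X_1$ compatibly with the whole segment $\{\Delta-sG\}$ via finiteness of models (\cite[Theorem E]{bchm}) — this last step is the point I expect to be the main obstacle. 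Granting it, (\ref{prop--specialmmp-(4)}) is the standard behaviour of MMP with scaling: for positive $u\in[\lambda_i,\lambda_{i-1}]$ the pair $(X_i,\Delta_i+uH_i)$ is a log minimal model of $(X_1,\Delta_1+uH_1)$, hence of $(X,\Delta+uH)$, and all three pairs have good minimal models by the first two paragraphs, so it is a good minimal model.

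Finally, (\ref{prop--specialmmp-(5)}): at the $i$-th step ($i\geq 1$) the contracted extremal ray $R_i$ satisfies $(K_{X_i}+\Delta_i)\cdot R_i<0$ and $(K_{X_i}+\Delta_i+\lambda_i H_i)\cdot R_i=0$ with $\lambda_i>0$, hence $H_i\cdot R_i>0$; as $X_i$ is $\mathbb{Q}$-factorial and $K_{X_i}+\Delta_i\sim_{\mathbb{R}}G_i+H_i$, this forces $G_i\cdot R_i=(K_{X_i}+\Delta_i)\cdot R_i-H_i\cdot R_i<0$, so every curve in $R_i$ lies in $\Supp G_i\subset\Supp\llcorner\Delta_i\lrcorner$ by (\ref{prop--crepantmmp-(II)}); thus the exceptional locus of the $i$-th step lies in $\Supp\llcorner\Delta_i\lrcorner$, and since $\Supp G\cap\Supp H=\emptyset$ keeps the relevant supports separated under the birational transforms along the MMP, a straightforward induction shows that $X_1\dashrightarrow X_i$ is an isomorphism over $X_1\setminus\Supp\llcorner\Delta_1\lrcorner$ for every $i$, which is (\ref{prop--specialmmp-(5)}).
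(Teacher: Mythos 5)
Your construction of $\lambda_{0}$, of $X\dashrightarrow X_{1}$ via the identity $K_{X}+\Delta+uH\sim_{\mathbb{R}}(1+u)\bigl(K_{X}+\Delta-\tfrac{u}{1+u}G\bigr)$, and your intersection-number argument for (\ref{prop--specialmmp-(5)}) all match the paper. But the step you yourself flag as "the main obstacle" is a genuine gap, and the patch you propose does not work. To get (\ref{prop--specialmmp-(3)}) and (\ref{prop--specialmmp-(4)}) you need $(X_{1},\Delta_{1}+\mu H_{1})$ (equivalently $(X_{1},\Delta_{1}-sG_{1})$) to have a good minimal model for \emph{all} small $\mu$, not just for $\mu$ near $\lambda_{0}$; your argument only yields it near $\lambda_{0}$, because $X\dashrightarrow X_{1}$ is a $(K_{X}+\Delta-sG)$-MMP only for $s$ in a neighborhood of $\tfrac{\lambda_{0}}{1+\lambda_{0}}$, and for smaller $s$ the steps need not be negative, so $(X_{1},\Delta_{1}-sG_{1})$ need not be a minimal model of $(X,\Delta-sG)$ at all (divisors contracted by $X\dashrightarrow X_{1}$ may violate the discrepancy inequality for small $s$). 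Invoking the finiteness of models of \cite[Theorem E]{bchm} to cover the whole segment is not justified here: that theorem is for klt pairs with a big (general ample) part, whereas the pairs $(X,\Delta-sG)$ are only dlt and $K_{X}+\Delta$ is merely pseudo-effective and abundant, so there is no way to "arrange $X\dashrightarrow X_{1}$ compatibly with the whole segment" by that route. The same unproved transfer is silently used again in your (\ref{prop--specialmmp-(4)}), where "a log minimal model of $(X_{1},\Delta_{1}+uH_{1})$, hence of $(X,\Delta+uH)$" needs exactly the discrepancy comparison that is in question.

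The paper closes this gap with the hypothesis you never use, namely the latter half of (\ref{prop--crepantmmp-(III-a)}): the support of $N_{\sigma}(K_{X}+\Delta+tH)$ is independent of $t\in(0,t_{0}]$. For each $t\in(0,\lambda_{0})$ one runs a \emph{new} $(K_{X}+\Delta+tH)$-MMP from $X$ (possible by (\ref{prop--crepantmmp-(III-b)}), \cite[Theorem 4.1]{birkar-flip} and the $\mathbb{R}$-linear identity) to a good minimal model $(\tilde{X}_{t},\tilde{\Delta}_{t}+t\tilde{H}_{t})$; by Remark \ref{rem--mmp-zariskidecom} (\ref{rem--mmp-zariskidecom-(1)}) the divisors contracted by $X\dashrightarrow \tilde{X}_{t}$ are the components of $N_{\sigma}(K_{X}+\Delta+tH)$, so the constancy of that support shows $X\dashrightarrow X_{1}$ and $X\dashrightarrow \tilde{X}_{t}$ contract the same divisors, hence $X_{1}\dashrightarrow \tilde{X}_{t}$ is small and $(\tilde{X}_{t},\tilde{\Delta}_{t}+t\tilde{H}_{t})$ is a good minimal model of $(X_{1},\Delta_{1}+tH_{1})$ for every $t\in(0,\lambda_{0})$; then \cite[Lemma 2.14]{has-mmp} produces the scaled MMP with ${\rm lim}_{i}\lambda_{i}=0$, and (\ref{prop--specialmmp-(4)}) follows from the smallness of $X_{i}\dashrightarrow\tilde{X}_{u}$ together with Remark \ref{rem--models}. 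You should rework your third paragraph along these lines. (Minor point: (\ref{prop--crepantmmp-(II)}) plus (\ref{prop--crepantmmp-(III-a)}) only gives that $G$ and $H$ have no common components, not ${\rm Supp}\,G\cap{\rm Supp}\,H=\emptyset$; fortunately neither (\ref{prop--specialmmp-(1)}) nor (\ref{prop--specialmmp-(5)}) actually needs the stronger disjointness.)
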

 
\begin{proof}
Throughout this proof, unless otherwise stated, (\ref{prop--crepantmmp-(I)}), (\ref{prop--crepantmmp-(II)}), (\ref{prop--crepantmmp-(III)}), (\ref{prop--crepantmmp-(III-a)}), and (\ref{prop--crepantmmp-(III-b)}) mean the properties (\ref{prop--crepantmmp-(I)}), (\ref{prop--crepantmmp-(II)}), (\ref{prop--crepantmmp-(III)}), (\ref{prop--crepantmmp-(III-a)}), and (\ref{prop--crepantmmp-(III-b)}) in Proposition \ref{prop--crepantmmp}, respectively. 
We follow \cite[Step 4 in the proof of Theorem 1.1]{hashizumehu}. 

By (\ref{prop--crepantmmp-(I)}), for any $s \in \mathbb{R}_{\geq 0}$ we have 
\begin{equation*}\tag{$\spadesuit$}\label{prop--specialmmp-(spadesuit)}
K_{X}+\Delta+s H\sim_{\mathbb{R}}(1+s)\Bigl(K_{X}+\Delta-\frac{s}{1+s}G\Bigr). 
\end{equation*}
Fix $\lambda_{0}\in(0,t_{0})$, where $t_{0}$ is as in (\ref{prop--crepantmmp-(III)}). 
By (\ref{prop--crepantmmp-(III-a)}), the pair $(X,\Delta+\lambda_{0}H)$ is $\mathbb{Q}$-factorial dlt, and we can easily check that lc centers of $(X,\Delta+\lambda_{0}H)$ are lc centers of $(X,\Delta)$. 
Hence (\ref{prop--specialmmp-(1)}) of Proposition \ref{prop--specialmmp} holds. 

By (\ref{prop--crepantmmp-(III-b)}) and \cite[Theorem 4.1]{birkar-flip}, we can run a $(K_{X}+\Delta-\frac{\lambda_{0}}{1+\lambda_{0}}G)$-MMP that terminates with a good minimal model $(X_{1},\Delta_{1}-\frac{\lambda_{0}}{1+\lambda_{0}}G_{1})$, where $\Delta_{1}$ (resp.~$G_{1}$) is the birational transform of $\Delta$ (resp.~$G$) on $X_{1}$. 
Note that $\frac{\lambda_{0}}{1+\lambda_{0}}< t_{0}$, therefore we may use (\ref{prop--crepantmmp-(III-b)}). 
By (\ref{prop--specialmmp-(spadesuit)}), the birational map
\begin{equation*}
(X,\Delta+\lambda_{0}H)\dashrightarrow (X_{1},\Delta_{1}+\lambda_{0}H_{1})
\end{equation*}
is a sequence of steps of a $(K_{X}+\Delta+\lambda_{0}H)$-MMP and $(X_{1},\Delta_{1}+\lambda_{0}H_{1})$ is a good minimal model of $(X,\Delta+\lambda_{0}H)$, where $H_{1}$ is the birational transform of $H$ on $X_{1}$.  
It is obvious that the birational map $X\dashrightarrow X_{1}$ satisfies (\ref{prop--specialmmp-(2)}) of Proposition \ref{prop--specialmmp}. 

By using (\ref{prop--crepantmmp-(III-b)}), \cite[Theorem 4.1]{birkar-flip}, (\ref{prop--specialmmp-(spadesuit)}), and the same argument as above, for every $t\in(0,\lambda_{0})$ we get a sequence of steps of a $(K_{X}+\Delta+tH)$-MMP
\begin{equation*}\tag{$\clubsuit$}\label{prop--specialmmp-(clubsuit)}
(X,\Delta+tH)\dashrightarrow (\tilde{X}_{t},\tilde{\Delta}_{t}+t\tilde{H}_{t})
\end{equation*}
that terminates with a good minimal model $(\tilde{X}_{t},\tilde{\Delta}_{t}+t\tilde{H}_{t})$. 
The condition (\ref{prop--crepantmmp-(III-a)}) and Remark \ref{rem--mmp-zariskidecom} (\ref{rem--mmp-zariskidecom-(1)}) show that $X\dashrightarrow X_{1}$ and $X\dashrightarrow \tilde{X}_{t}$ contract the same divisors for all $t\in(0,\lambda_{0})$, therefore the induced birational map $X_{1}\dashrightarrow \tilde{X}_{t}$ is small. 
Then $(\tilde{X}_{t},\tilde{\Delta}_{t}+t\tilde{H}_{t})$ is a good minimal model of $(X_{1},\Delta_{1}+tH_{1})$ for all $t\in(0,\lambda_{0})$. 
Therefore, we see that $(X_{1},\Delta_{1}+tH_{1})$ has a good minimal model for all $t\in(0,\lambda_{0})$ and $K_{X_{1}}+\Delta_{1}$ is a limit of movable divisors. 
By \cite[Lemma 2.14]{has-mmp}, we get a sequence of steps of a $(K_{X_{1}}+\Delta_{1})$-MMP with scaling of $\lambda_{0}H_{1}$
\begin{equation*}
(X_{1},\Delta_{1})\dashrightarrow (X_{2},\Delta_{2})\dashrightarrow\cdots \dashrightarrow (X_{i},\Delta_{i})\dashrightarrow \cdots
\end{equation*}
such that if we define $\lambda_{i}$ by 
$$\lambda_{i}={\rm inf}\!\set{\!\mu\in\mathbb{R}_{\geq0} | \text{$K_{X_{i}}+\Delta_{i}+\mu H_{i}$ is nef}\!},$$
where $H_{i}$ is the birational transform of $H_{1}$ on $X_{i}$, then the $(K_{X_{1}}+\Delta_{1})$-MMP terminates after finitely many steps or we have ${\rm lim}_{i\to \infty}\lambda_{i}=0$ even if it does not terminate. 
Since $K_{X_{1}}+\Delta_{1}$ is pseudo-effective, in both cases we have ${\rm lim}_{i\to \infty}\lambda_{i}=0$. 
Therefore, we see that the $(K_{X_{1}}+\Delta_{1})$-MMP satisfies (\ref{prop--specialmmp-(3)}) of Proposition \ref{prop--specialmmp}.  
 
We pick any $i\geq 1$ and any positive real number $u\in [\lambda_{i},\lambda_{i-1}]$. 
By construction of the log MMP with scaling, $(X_{i},\Delta_{i}+u H_{i})$ is an lc pair and  $K_{X_{i}}+\Delta_{i}+u H_{i}$ is nef. 
Since $K_{X_{1}}+\Delta_{1}$ is a limit of movable divisors, the $(K_{X_{1}}+\Delta_{1})$-MMP contains only flips. 
Since the birational map $X_{1}\dashrightarrow \tilde{X}_{u}$ is small, where $\tilde{X}_{u}$ is as in (\ref{prop--specialmmp-(clubsuit)}), the induced birational map $X_{i}\dashrightarrow \tilde{X}_{u}$ is small. 
By (\ref{prop--specialmmp-(clubsuit)}), the pair $(\tilde{X}_{u},\tilde{\Delta}_{u}+u \tilde{H}_{u})$ is a good minimal model of $(X,\Delta+u H)$, from which and Remark \ref{rem--models} we see that $(X_{i},\Delta_{i}+u H_{i})$ is a good minimal model of $(X,\Delta+u H)$. 
Then $K_{X_{i}}+\Delta_{i}+u H_{i}$ is semi-ample. 
Since $X_{1}\dashrightarrow X_{i}$ is small, the pair $(X_{i},\Delta_{i}+u H_{i})$ is a good minimal model of $(X_{1},\Delta_{1}+u H_{1})$. 
In this way, we see that (\ref{prop--specialmmp-(4)}) of Proposition \ref{prop--specialmmp} holds. 

By construction of the log MMP with scaling, any curve which is contracted in the $i$-th step of the $(K_{X_{1}}+\Delta_{1})$-MMP with scaling of $\lambda_{0}H_{1}$ has a negative (resp.~positive) intersection number with $K_{X_{i}}+\Delta_{i}$ (resp.~$H_{i}$). 
By (\ref{prop--crepantmmp-(I)}), the relation $G_{i}\sim_{\mathbb{R}}-H_{i}+K_{X_{i}}+\Delta_{i}$ holds, where $G_{i}$ is the birational transform of $G$ on $X_{i}$. 
Therefore, curves contracted in the $i$-th step of the $(K_{X_{1}}+\Delta_{1})$-MMP with scaling of $\lambda_{0}H_{1}$ have negative intersection numbers with $G_{i}$. 
Since $G_{i}\geq0$, the $(K_{X_{1}}+\Delta_{1})$-MMP occurs only in ${\rm Supp}\, G_{1}$. 
Now the inclusion ${\rm Supp}\,G_{1}\subset {\rm Supp}\,\llcorner \Delta_{1} \lrcorner$ holds by (\ref{prop--crepantmmp-(II)}), hence (\ref{prop--specialmmp-(5)}) of Proposition \ref{prop--specialmmp} holds. 
\end{proof}
 
\begin{thm}\label{thm--ind-1}
Let $(X,\Delta)$ be a projective dlt pair. 
Suppose that
\begin{itemize}
\item
$K_{X}+\Delta$ is pseudo-effective and abundant,
\item for any lc center $S$ of $(X,\Delta)$, the restriction $(K_{X}+\Delta)|_{S}$ is nef, and
\item
$\sigma_{P}(K_{X}+\Delta)=0$ for every prime divisor $P$ over $X$ such that $a(P,X,\Delta)< 0$ and $c_{X}(P)$ intersects an lc center of $(X,\Delta)$, where $\sigma_{P}(\,\cdot\,)$ is the asymptotic vanishing order of $P$ defined in Definition \ref{defn--asy-van-ord}. 
\end{itemize}
Then $(X,\Delta)$ has a log minimal model. 
\end{thm}
 
\begin{proof}
We prove the theorem in several steps. 

\begin{step1}\label{step1-ind-1}
First, we reduce Theorem \ref{thm--ind-1} to special termination of a log MMP with scaling. 
The idea is very similar to \cite[Steps 2--6 in the proof of Theorem 1.1]{hashizumehu}. 

By Proposition \ref{prop--crepantmmp}, we get a dlt blow-up $\widetilde{f}\colon(\widetilde{X},\widetilde{\Delta})\to (X,\Delta)$ and effective $\mathbb{R}$-divisors $\widetilde{G}$ and $\widetilde{H}$ on $\widetilde{X}$ satisfying the conditions of Proposition \ref{prop--crepantmmp}. 
Moreover, we may replace $(X,\Delta)$ by $(\widetilde{X},\widetilde{\Delta})$. 
Indeed, after this replacement, the second condition of Theorem \ref{thm--ind-1} is clearly preserved, and the first (resp.~third) condition of Theorem \ref{thm--ind-1} is preserved by Remark \ref{rem--div} (\ref{rem--div-(2)}) (resp.~the definition of $\sigma_{P}(\,\cdot\,)$). 
Therefore, replacing $(X,\Delta)$ by $(\widetilde{X},\widetilde{\Delta})$, we may assume that there are effective $\mathbb{R}$-divisors $G$ and $H$ on $X$ satisfying (\ref{prop--crepantmmp-(I)}), (\ref{prop--crepantmmp-(II)}), (\ref{prop--crepantmmp-(III)}), (\ref{prop--crepantmmp-(III-a)}), and (\ref{prop--crepantmmp-(III-b)}) in Proposition \ref{prop--crepantmmp}. 
By Proposition \ref{prop--specialmmp}, there is a positive real number $\lambda_{0}$ and a sequence of birational maps
\begin{equation*}
X \dashrightarrow X_{1}\dashrightarrow X_{2}\dashrightarrow \cdots \dashrightarrow X_{i}\dashrightarrow \cdots
\end{equation*}
such that
\begin{enumerate}[(1)]
\item \label{proof-thm--ind-1-step1-(1)}
the pair $(X,\Delta+\lambda_{0}H)$ is $\mathbb{Q}$-factorial dlt and all lc centers of $(X,\Delta+\lambda_{0}H)$ are lc centers of $(X,\Delta)$, 
\item \label{proof-thm--ind-1-step1-(2)}
the birational map $X \dashrightarrow X_{1}$ is a sequence of steps of a $(K_{X}+\Delta+\lambda_{0}H)$-MMP to a good minimal model $(X_{1},\Delta_{1}+\lambda_{0}H_{1})$, 
\item \label{proof-thm--ind-1-step1-(3)}
the sequence of birational maps $X_{1}\dashrightarrow \cdots \dashrightarrow X_{i}\dashrightarrow \cdots$ is a sequence of steps of a $(K_{X_{1}}+\Delta_{1})$-MMP with scaling of $\lambda_{0}H_{1}$ such that if we define
$$\lambda_{i}={\rm inf}\set{\mu \in \mathbb{R}_{\geq0} \!|\! K_{X_{i}}+\Delta_{i}+\mu H_{i}\text{\rm \;is nef}}$$
for each $i\geq 1$, then we have ${\rm lim}_{i \to \infty}\lambda_{i}=0$, 
\item \label{proof-thm--ind-1-step1-(4)}
for all $i\geq1$ and all positive real numbers $u\in [\lambda_{i},\lambda_{i-1}]$, the pair $(X_{i},\Delta_{i}+u H_{i})$ is a good minimal model of  $(X,\Delta+u H)$ and $(X_{1},\Delta_{1}+u H_{1})$, and 
\item \label{proof-thm--ind-1-step1-(5)}
the $(K_{X_{1}}+\Delta_{1})$-MMP with scaling of $\lambda_{0}H_{1}$ in (\ref{proof-thm--ind-1-step1-(3)})  occurs only in ${\rm Supp}\,\llcorner \Delta_{1}\lrcorner$. 
\end{enumerate}

Suppose that the $(K_{X_{1}}+\Delta_{1})$-MMP with scaling of $\lambda_{0}H_{1}$ terminates. 
Then we have $\lambda_{l-1}>\lambda_{l}=0$ for some $l>0$.
For all real numbers $u\in (0,\lambda_{l-1}]$, it follows by (\ref{proof-thm--ind-1-step1-(4)}) that $K_{X_{l}}+\Delta_{l}+uH_{l}$ is nef and 
$$a(P',X,\Delta+uH)\leq a(P',X_{l},\Delta_{l}+uH_{l})$$
 for all prime divisors $P'$ on $X$ which are exceptional over $X_{l}$. 
By taking the limit $u\to 0$, it follows that $K_{X_{l}}+\Delta_{l}$ is nef and 
$$a(P',X,\Delta)\leq a(P',X_{l},\Delta_{l}).$$ 
Therefore $(X_{l},\Delta_{l})$ is a weak lc model of $(X,\Delta)$. 
By \cite[Corollary 3.7]{birkar-flip}, the pair $(X,\Delta)$ has a log minimal model. 

In this way, to prove the existence of a log minimal model of $(X,\Delta)$, it is sufficient to prove the termination of the $(K_{X_{1}}+\Delta_{1})$-MMP. 
By (\ref{proof-thm--ind-1-step1-(5)}), we only need to prove that the non-isomorphic locus of the $(K_{X_{1}}+\Delta_{1})$-MMP is disjoint from the birational transform of $\llcorner \Delta_{1}\lrcorner$ after finitely many steps. 
In the rest of the proof, we will carry out the arguments of the special termination in \cite{fujino-sp-ter}.  

There exists $m>0$ such that for any lc center $S_{m}$ of $(X_{m},\Delta_{m})$ and  any $i \geq m$, the birational map $X_{m}\dashrightarrow X_{i}$ induces a birational map $S_{m}\dashrightarrow S_{i}$ to an lc center $S_{i}$ of $(X_{i},\Delta_{i})$. 
For any $i\geq m$ and any lc center $S_{i}$ of $(X_{i},\Delta_{i})$, we define $\mathbb{R}$-divisors $\Delta_{S_{i}}$ and $H_{S_{i}}$ on $S_{i}$ by 
$$K_{S_{i}}+\Delta_{S_{i}}=(K_{X_{i}}+\Delta_{i})|_{S_{i}} \qquad {\rm and} \qquad H_{S_{i}}=H_{i}|_{S_{i}},$$
 respectively. Similarly, on every lc center $S$ of $(X,\Delta)$, we define $\mathbb{R}$-divisors $\Delta_{S}$ and $H_{S}$ by 
 $$K_{S}+\Delta_{S}=(K_{X}+\Delta)|_{S} \qquad {\rm and} \qquad H_{S}=H|_{S},$$ respectively. Then $H_{S}$ and $H_{S_{i}}$ are effective $\mathbb{R}$-Cartier divisors, and $(S_{i},\Delta_{S_{i}})$ is dlt for all $i\geq m$ and all $S_{i}$. 

From now on, we prove that for any integer $d \geq0$, there is $m_{d}\geq m$ such that  after $m_{d}$ steps the non-isomorphic locus of the $(K_{X_{1}}+\Delta_{1})$-MMP is disjoint from all $d$-dimensional lc centers of $(X_{m_{d}},\Delta_{m_{d}})$. 
The case $d={\rm dim}\,X-1$ of this statement and (\ref{proof-thm--ind-1-step1-(5)}) imply the termination of the $(K_{X_{1}}+\Delta_{1})$-MMP. 
Therefore, to prove Theorem \ref{thm--ind-1} it is sufficient to prove the statement.  We will prove the statement by induction on $d$. 
In the case where $d=0$, we can take $m$ as $m_{0}$. 
We assume the existence of $m_{d-1}$ in the statement. 
By replacing $m$ by $m_{d-1}$, we may assume $m_{d-1}=m$. 
By the arguments as in \cite{fujino-sp-ter}, replacing $m$ again, we may assume that for any $d$-dimensional lc center $S_{m}$ of $(X_{m},\Delta_{m})$, the induced birational map $S_{m}\dashrightarrow S_{i}$ is small and the birational transform of $\Delta_{S_{m}}$ on $S_{i}$ is equal to $\Delta_{S_{i}}$. 
Then $H_{S_{i}}$ is equal to the birational transform of $H_{S_{m}}$ on $S_{i}$. 
Since ${\rm lim}_{i \to \infty}\lambda_{i}=0$, as in Remark \ref{rem--mmplift}, to prove the existence of $m_{d}$, it is sufficient to prove that $(S_{m},\Delta_{S_{m}})$ has a log minimal model. 
\end{step1}

\begin{step1}\label{step2-ind-1}
In the rest of the proof, all indices $i$ are assumed to be greater than or equal to $m$, unless otherwise stated. 
We will show that for any $d$-dimensional lc center $S_{m}$ of $(X_{m},\Delta_{m})$, the pair $(S_{m},\Delta_{S_{m}})$ has a log minimal model, as required. The idea is very similar to \cite[Steps 7--9 in the proof of Theorem 1.1]{hashizumehu}.

Fix an arbitrary $d$-dimensional lc center $S_{m}$ of $(X_{m},\Delta_{m})$. 
By Step \ref{step1-ind-1}, the birational map $S_{m}\dashrightarrow S_{i}$ is small and $\Delta_{S_{i}}$ is equal to the birational transform of $\Delta_{S_{m}}$ on $S_{i}$ for all $i$. 
By construction of the birational map $X\dashrightarrow X_{i}$ in (\ref{proof-thm--ind-1-step1-(1)}), (\ref{proof-thm--ind-1-step1-(2)}), and (\ref{proof-thm--ind-1-step1-(3)}) in Step \ref{step1-ind-1}, we can find an lc center $S$ of $(X,\Delta)$ such that $X\dashrightarrow X_{i}$ induces a birational map $S\dashrightarrow S_{i}$. 
Using the lc center $S$, in the rest of this step, we establish the following variety, divisor, and inequalities. 

\begin{enumerate}[(a)]
\item \label{proof-thm--ind-1-step3-(a)}
A projective $\mathbb{Q}$-factorial variety $T$ together with a birational morphism $\psi\colon T\to S_{m}$ such that all prime divisors $\bar{D}$ on $S$ satisfying $a(\bar{D},S_{m}, \Delta_{S_{m}})<a(\bar{D},S,\Delta_{S})$ appear as prime divisors on $T$, 
\item \label{proof-thm--ind-1-step3-(b)}
an effective $\mathbb{R}$-divisor $\Psi$ on $T$ defined by $\Psi=-\sum_{\substack {D}}a(D,S,\Delta_{S})D$, where $D$ runs over all prime divisors on $T$, 
\item \label{proof-thm--ind-1-step3-(c)}
the inequality $a(Q,S,\Delta_{S}+\lambda_{i}H_{S})\leq a(Q,S_{i},\Delta_{S_{i}}+\lambda_{i}H_{S_{i}})$ for all $i$ and all prime divisors $Q$ over $S$, and 
\item \label{proof-thm--ind-1-step3-(d)}
the inequality $a(Q',S_{m},\Delta_{S_{m}})\leq a(Q',T,\Psi)$ for all prime divisors $Q'$ over $S_{m}$, in particular, the pair $(T,\Psi)$ is lc. 
\end{enumerate}

By taking a resolution $\overline{X}\to X$ of $X$ which resolves the indeterminacy of $X\dashrightarrow X_{i}$, we can construct a common resolution $\overline{X}\to X$ and $\overline{X}\to {X_{i}}$ and a subvariety $\overline{S}\subset \overline{X}$ birational to $S$ and $S_{i}$ such that the induced birational morphisms $\overline{S}\to S$ and $\overline{S}\to S_{i}$ form a common resolution of the birational map $S\dashrightarrow S_{i}$. 
By (\ref{proof-thm--ind-1-step1-(4)}) in Step \ref{step1-ind-1}, comparing coefficients of $(K_{X}+\Delta+\lambda_{i}H)|_{\overline{S}}$ and $(K_{X_{i}}+\Delta_{i}+\lambda_{i}H_{i})|_{\overline{S}}$, the inequality
\begin{equation*}
a(Q,S,\Delta_{S}+\lambda_{i}H_{S})\leq a(Q,S_{i},\Delta_{S_{i}}+\lambda_{i}H_{S_{i}})
\end{equation*}
follows for all prime divisors $Q$ over $S$. 
We have proved (\ref{proof-thm--ind-1-step3-(c)}). 

Let $\widetilde{D}$ be a prime divisor on $S_{m}$. 
Then $\widetilde{D}$ is a prime divisor on $S_{i}$ for any $i$ because the birational map $S_{m}\dashrightarrow S_{i}$ is small. 
For all $i$, we can easily check that the birational map $(X,\Delta)\dashrightarrow(X_{i},\Delta_{i})$ and dlt pairs $(S,\Delta_{S})$ and $(S_{i},\Delta_{S_{i}})$ satisfy the two conditions of Lemma \ref{lem--discre-relation}. 
Indeed, the birational map $X\dashrightarrow X_{i}$ is a birational contraction and $\Delta_{i}$ is the birational transform of $\Delta$ on $X_{i}$, so the first condition of Lemma \ref{lem--discre-relation} holds. 
Since $S$ is an lc center of $(X,\Delta)$, the second condition of Lemma \ref{lem--discre-relation} follows from the third condition of Theorem \ref{thm--ind-1}. 
In this way, we can apply Lemma \ref{lem--discre-relation} to $(X,\Delta)\dashrightarrow(X_{i},\Delta_{i})$, $(S,\Delta_{S})$, and $(S_{i},\Delta_{S_{i}})$. 
Thus, we have 
$$a(\widetilde{D},S_{i},\Delta_{S_{i}}) \leq a(\widetilde{D},S,\Delta_{S})$$
 for all $i$. 
Furthermore, the inequality $a(\widetilde{D},S_{i},\Delta_{S_{i}}+\lambda_{i}H_{S_{i}})\leq a(\widetilde{D},S_{i},\Delta_{S_{i}})$ also holds by a basic property of discrepancies. 
Combining these inequalities with (\ref{proof-thm--ind-1-step3-(c)}), we have
\begin{equation*}
a(\widetilde{D},S,\Delta_{S}+\lambda_{i}H_{S})\leq a(\widetilde{D},S_{i},\Delta_{S_{i}}+\lambda_{i}H_{S_{i}})\leq a(\widetilde{D},S,\Delta_{S})
\end{equation*} 
for all $i$. 
Since $\Delta_{S_{i}}+\lambda_{i}H_{S_{i}}$ is equal to the birational transform of $\Delta_{S_{m}}+\lambda_{i}H_{S_{m}}$ on $S_{i}$, we see that $a(\widetilde{D},S_{i},\Delta_{S_{i}}+\lambda_{i}H_{S_{i}})=a(\widetilde{D},S_{m},\Delta_{S_{m}}+\lambda_{i}H_{S_{m}})$ for all $i$. 
Since ${\rm lim}_{i \to \infty}\lambda_{i}=0$ by (\ref{proof-thm--ind-1-step1-(3)}) in Step \ref{step1-ind-1}, by taking the limit $i\to \infty$, the equality
\begin{equation*}\tag{$\star$}\label{proof-thm--ind-1-(starstar)}
a(\widetilde{D},S_{m},\Delta_{S_{m}})= a(\widetilde{D},S,\Delta_{S})
\end{equation*} 
holds for all prime divisors $\widetilde{D}$ on $S_{m}$. 

We set 
\begin{equation*} 
\mathcal{C}=\Set{ \bar{D} | \begin{array}{l}\!\!\text{$\bar{D}$ is a prime divisor on $S$ such that $a(\bar{D},S_{m}, \Delta_{S_{m}})<a(\bar{D},S,\Delta_{S})$} \end{array}\!\!}. \end{equation*} 
By (\ref{proof-thm--ind-1-(starstar)}), all elements of $\mathcal{C}$ are exceptional over $S_{m}$.  
By a basic property of discrepancies, we have $a(\bar{D},S_{m}, \Delta_{S_{m}}+\lambda_{m}H_{S_{m}}) \leq a(\bar{D},S_{m}, \Delta_{S_{m}})$. 
Combining this with (\ref{proof-thm--ind-1-step3-(c)}), we obtain
$$a(\bar{D},S,\Delta_{S}+\lambda_{m}H_{S}) \leq a(\bar{D},S_{m}, \Delta_{S_{m}})< a(\bar{D},S,\Delta_{S})\leq 0$$
for all $\bar{D}\in \mathcal{C}$. 
Since every element of $\mathcal{C}$ is a prime divisor on $S$, we see that all elements of $\mathcal{C}$ are components of $H_{S}$. 
Thus $ \mathcal{C}$ is a finite set, and furthermore, any $\bar{D}\in \mathcal{C}$ satisfies $-1<a(\bar{D},S,\Delta_{S}+\lambda_{m}H_{S})$. 
Thus, the relation 
$$-1<a(\bar{D},S_{m}, \Delta_{S_{m}})< 0$$ 
holds for all $\bar{D}\in \mathcal{C}$. 
Since $(S_{m}, \Delta_{S_{m}})$ is dlt, by \cite[Lemma 2.16]{hashizumehu}, we get a projective birational morphism $\psi\colon T\to S_{m}$ such that $T$ is $\mathbb{Q}$-factorial and $\psi^{-1}$ extracts exactly elements of $\mathcal{C}$. 
We have constructed the desired variety as in (\ref{proof-thm--ind-1-step3-(a)}). 

Let $D$ be a prime divisor on $T$. 
When $D$ is $\psi$-exceptional, by definitions of $\mathcal{C}$ and $\psi$ we obtain $a(D,S_{m}, \Delta_{S_{m}})<a(D,S,\Delta_{S})\leq 0$. 
When $D$ is not $\psi$-exceptional, by (\ref{proof-thm--ind-1-(starstar)}) we obtain $a(D,S,\Delta_{S})= a(D,S_{m},\Delta_{S_{m}})\leq 0$. 
In any case, the relation
\begin{equation*}\tag{$\star$$\star$}\label{proof-thm--ind-1-(starstarstar)}
a(D,S_{m}, \Delta_{S_{m}})\leq a(D,S,\Delta_{S})\leq 0
\end{equation*} 
holds. 
Furthermore, only finitely many prime divisors $D$ on $T$ satisfy $a(D,S,\Delta_{S})<0$ because $a(D,S,\Delta_{S})=0$ when $D$ is a prime divisor on $S$ and not a component of $\Delta_{S}$. 
Therefore, we may define an $\mathbb{R}$-divisor $\Psi\geq0$ on $T$ by
$$\Psi=-\sum_{\substack {D}}a(D,S,\Delta_{S})D,$$
where $D$ runs over all prime divisors on $T$. 
This is the $\mathbb{R}$-divisor stated in (\ref{proof-thm--ind-1-step3-(b)}). 

Finally, we prove (\ref{proof-thm--ind-1-step3-(d)}). 
Since $T$ is $\mathbb{Q}$-factorial, $K_{T}+\Psi$ is $\mathbb{R}$-Cartier. 
By (\ref{proof-thm--ind-1-(starstarstar)}), we obtain $K_{T}+\Psi\leq \psi^{*}(K_{S_{m}}+\Delta_{S_{m}})$. 
From this, we have
\begin{equation*}
-1\leq a(Q',S_{m},\Delta_{S_{m}})\leq a(Q',T,\Psi) 
\end{equation*}
for any prime divisor $Q'$ over $S_{m}$. 
This shows (\ref{proof-thm--ind-1-step3-(d)}). 
\end{step1}

\begin{step1}\label{step3-ind-1}
In this step, we prove that $(T, \Psi)$ has a log minimal model. 

By the second condition of Theorem \ref{thm--ind-1}, $(S,\Delta_{S})$ is a log minimal model of $(S,\Delta_{S})$ itself. 
To apply Lemma \ref{lem--bir-relation} to the birational map $(S,\Delta_{S})\dashrightarrow (T, \Psi)$, we will prove the following claim. 
\hypertarget{proof-thm--ind-1-step3-claim}{\begin{claim*}
Let $\tilde{Q}$ be a prime divisor over $S$.
Then the following two statements hold:
\begin{itemize}
\item
If $\tilde{Q}$ is a prime divisor on $S$, then $a(\tilde{Q},S,\Delta_{S})\leq a(\tilde{Q},T,\Psi)$, and
\item
if $\tilde{Q}$ is a prime divisor on $T$, then $a(\tilde{Q},T,\Psi)\leq a(\tilde{Q},S,\Delta_{S})$. 
\end{itemize}
\end{claim*}}

\begin{proof}[Proof of Claim]
Let $\tilde{Q}$ be a prime divisor over $S$. 
When $\tilde{Q}$ is a prime divisor on $T$ or a prime divisor on $S$ which is not exceptional over $T$, by (\ref{proof-thm--ind-1-step3-(b)}) in Step \ref{step2-ind-1} we have $$a(\tilde{Q},T,\Psi)=-{\rm coeff}_{\tilde{Q}}(\Psi)=a(\tilde{Q},S,\Delta_{S}).$$ 
Therefore, the second assertion of the claim holds. 
Thus, to prove the first assertion of the claim, we may assume that $\tilde{Q}$ is a prime divisor on $S$ that is exceptional over $T$. 
In this case, (\ref{proof-thm--ind-1-step3-(a)}) in Step \ref{step2-ind-1} shows $a(\tilde{Q},S,\Delta_{S})\leq a(\tilde{Q},S_{m},\Delta_{m})$, then (\ref{proof-thm--ind-1-step3-(d)}) in Step \ref{step2-ind-1} shows
$$a(\tilde{Q},S,\Delta_{S})\leq a(\tilde{Q},S_{m},\Delta_{m})\leq a(\tilde{Q},T,\Psi).$$
Therefore, the first assertion of the claim also holds. 
In this way, the claim holds. 
\end{proof}
By the second condition of Theorem \ref{thm--ind-1} and Lemma \ref{lem--bir-relation}, we see that $(T, \Psi)$ has a log minimal model. 
\end{step1}

\begin{step1}\label{step4-ind-1}
In this step, we prove that $(S_{m},\Delta_{S_{m}})$ has a log minimal model. 

For each $i$, the pair $(S_{i},\Delta_{S_{i}}+\lambda_{i}H_{S_{i}})$ is a weak lc model of $(S_{m}, \Delta_{S_{m}}+\lambda_{i}H_{S_{m}})$ because the birational map $S_{m}\dashrightarrow S_{i}$ is small, the divisor $\Delta_{S_{i}}+\lambda_{i}H_{S_{i}}$ is equal to the birational transform of $\Delta_{S_{m}}+\lambda_{i}H_{S_{m}}$ on $S_{i}$, and the divisor $K_{S_{i}}+\Delta_{S_{i}}+\lambda_{i}H_{S_{i}}=(K_{X_{i}}+\Delta_{i}+\lambda_{i}H_{i})|_{S_{i}}$ is nef. 
Pick any prime divisor $D$ on $T$. 
By Remark \ref{rem--mmp-zariskidecom} (\ref{rem--mmp-zariskidecom-(3)}) and (\ref{proof-thm--ind-1-step3-(c)}) in Step \ref{step2-ind-1}, we obtain
\begin{equation*}
\begin{split}
\sigma_{D}(K_{S_{m}}+\Delta_{S_{m}}+\lambda_{i}H_{S_{m}})&=a(D,S_{i},\Delta_{S_{i}}+\lambda_{i}H_{S_{i}} )-a(D, S_{m}, \Delta_{S_{m}}+\lambda_{i}H_{S_{m}})\\
&\geq a(D,S,\Delta_{S}+\lambda_{i}H_{S} )-a(D, S_{m}, \Delta_{S_{m}}+\lambda_{i}H_{S_{m}}).
\end{split}
\end{equation*}
By (\ref{proof-thm--ind-1-step3-(b)}) in Step \ref{step2-ind-1}, we have $a(D,S,\Delta_{S})=a(D,T,\Psi)$. 
By Remark \ref{rem--asy-van-ord-1} (\ref{rem--asy-van-ord-1-(2)}), using the fact that ${\rm lim}_{i \to \infty}\lambda_{i}=0$ and taking the limit $i\to \infty$, we obtain  
\begin{equation*}
\begin{split}
\sigma_{D}(K_{S_{m}}+\Delta_{S_{m}})\geq&\underset{i\to \infty}{\rm lim}\bigl(a(D,S, \Delta_{S}+\lambda_{i}H_{S})-a(D, S_{m}, \Delta_{S_{m}}+\lambda_{i}H_{S_{m}})\bigr)\\
=&a(D,T,\Psi)-a(D, S_{m}, \Delta_{S_{m}})\\
\geq & 0.
\end{split}
\end{equation*}
Here, the last inequality holds by (\ref{proof-thm--ind-1-step3-(d)}) in Step \ref{step2-ind-1}. 
By Lemma \ref{lem--minmodel-zariskidecom} and the existence of a log minimal model of $(T, \Psi)$ by Step \ref{step3-ind-1}, we see that $(S_{m},\Delta_{S_{m}})$ has a log minimal model.  
\end{step1}
By Step \ref{step1-ind-1} (see also Remark \ref{rem--mmplift}) and Step \ref{step4-ind-1}, we complete the argument of the special termination. 
Therefore, $(X,\Delta)$ has a log minimal model. 
We complete the proof. 
\end{proof}

We are now ready to prove our main results, namely Theorem \ref{thm--main-logabundant} and Theorem \ref{thm--abundantterminate}. 

\begin{thm}\label{thm--main-logabundant}
Let $\pi\colon X\to Z$ be a projective morphism of normal quasi-projective varieties and let $(X,\Delta)$ be an lc pair. 
Let $A$ be an effective $\mathbb{R}$-divisor on $X$ such that $(X,\Delta+A)$ is an lc pair and $K_{X}+\Delta+A$ is nef over $Z$.
Then no infinite sequence of steps of a $(K_{X}+\Delta)$-MMP over $Z$ with scaling of $A$
$$(X_{0}:=X,\Delta_{0}:=\Delta) \dashrightarrow (X_{1},\Delta_{1}) \dashrightarrow\cdots \dashrightarrow (X_{i},\Delta_{i})\dashrightarrow \cdots$$
satisfies the following properties.
\begin{itemize}
\item
If we define $\lambda_{i}={\rm inf}\set{\mu \in \mathbb{R}_{\geq0} \!|\! K_{X_{i}}+\Delta_{i}+\mu A_{i}\text{\rm \, is nef over }Z}$, where $A_{i}$ is the birational transform of $A$ on $X_{i}$, then ${\rm lim}_{i\to\infty}\lambda_{i}=0$, and  
\item
there are infinitely many $i$ such that $(X_{i},\Delta_{i})$ is log abundant over $Z$. 
\end{itemize}
\end{thm}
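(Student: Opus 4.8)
The plan is to argue by contradiction: assume such an infinite sequence exists and deduce that $(X,\Delta)$ has a log minimal model over $Z$, which together with the hypothesis $\lim_{i}\lambda_i=0$ contradicts \cite[Theorem 4.1]{birkar-flip}. First I would record that $K_X+\Delta$ is pseudo-effective over $Z$: for any $t>\lim_i\lambda_i$ the minimal-model-with-scaling formalism gives that $(X,\Delta+tA)$ has a weak lc model over $Z$, so $K_X+\Delta+tA$ is pseudo-effective over $Z$, and letting $t\to0$ gives the claim. Then, replacing $(X,\Delta)$ by a $\mathbb{Q}$-factorial dlt blow-up and lifting the sequence through it (the blow-up is crepant, so log abundant members lift to log abundant members) and using Lemma \ref{lem--bir-equiv} to descend the final model, I may assume $(X,\Delta)$ is $\mathbb{Q}$-factorial dlt.

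The log minimal model will be produced by Theorem \ref{thm--ind-1}, so the task is to arrange its three hypotheses. Abundance of $K_X+\Delta$ over $Z$ is immediate: it is preserved by every step of the MMP (Remark \ref{rem--div}) and holds for infinitely many members. For the rest I would run a special termination argument on the given sequence, following \cite{fujino-sp-ter} and Remark \ref{rem--mmplift}. After finitely many steps each step induces small birational maps on all lc centers; since the invariant Iitaka and numerical dimensions of the restrictions to the lc centers are then non-increasing (Lemma \ref{lem--comparedim}) and infinitely many members are log abundant over $Z$, after a few more steps every member is log abundant over $Z$. To upgrade ``small near lc centers'' to ``isomorphism near lc centers'', for each lc center $S$ with $\mathbb{Q}$-factorial dlt model $(T,\Psi)\to(S,\Delta_S)$ the sequence induces a $(K_T+\Psi)$-MMP over $Z$ with scaling whose scaling numbers again tend to $0$, and this becomes an isomorphism near the transforms of $S$ once $(T,\Psi)$ has a log minimal model over $Z$. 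I would obtain the latter by induction on $\dim S$: $(T,\Psi)$ is log abundant over $Z$ (it is crepant to $(S,\Delta_S)$ and $(X,\Delta)$ is log abundant) and $K_T+\Psi$ is pseudo-effective over $Z$; the lc centers of $(T,\Psi)$ lie over lc centers of $(X,\Delta)$ of strictly smaller dimension near which the sequence is already an isomorphism, so the corresponding restrictions are nef over $Z$; and the asymptotic-vanishing-order condition for $(T,\Psi)$ transports from that for $(X,\Delta)$ by adjunction, via Lemma \ref{lem--adjunction} and Lemma \ref{lem--discre-relation}. Thus Theorem \ref{thm--ind-1}, in lower dimension and after the globalization of the next paragraph, applies to $(T,\Psi)$. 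Iterating over all lc centers, from lowest dimension up, and replacing $(X,\Delta)$ by a later log abundant member a bounded number of times, I reach a $\mathbb{Q}$-factorial dlt pair, log abundant over $Z$, for which every member of the sequence is an isomorphism near every lc center. Then $(K_X+\Delta)|_S$ is nef over $Z$ for every lc center $S$, and (computing orders after the compactification below) $\sigma_P(K_X+\Delta)=\lim_j\sigma_P(K_X+\Delta+\lambda_jA)$ with each term $=a(P,X_j,\Delta_j+\lambda_jA_j)-a(P,X,\Delta+\lambda_jA)=0$ whenever $c_X(P)$ meets an lc center and $a(P,X,\Delta)<0$, so the third hypothesis of Theorem \ref{thm--ind-1} holds too.

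To apply the projective, absolute Theorem \ref{thm--ind-1} I would globalize: fix a projective compactification $Z\hookrightarrow Z^c$ and, by Lemma \ref{lem--Qfacdltclosure}, extend $(X,\Delta)\to Z$ to a projective $\mathbb{Q}$-factorial dlt pair $(X^c,\Delta^c)\to Z^c$ all of whose lc centers meet $X$. For an ample Cartier divisor $A_Z$ on $Z^c$ and $\alpha\gg\dim X$, the pair $(X^c,\Delta^c+\alpha\pi^{c*}A_Z)$ has pseudo-effective and globally abundant log canonical divisor by Lemma \ref{lem--relabund-globabund} (abundance over $Z^c$ being inherited fibrewise from abundance over $Z$), has globally nef restrictions to its lc centers (obtained from nefness over $Z^c$ by adding $\alpha$ times the pullback of an ample divisor and bounding the relevant extremal rays by the cone theorem), and has the same asymptotic vanishing orders as $(X^c,\Delta^c)$ since $\pi^{c*}A_Z$ is nef. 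Theorem \ref{thm--ind-1} then yields a log minimal model of $(X^c,\Delta^c+\alpha\pi^{c*}A_Z)$; restricting over $Z$ and descending through the dlt blow-up by Lemma \ref{lem--bir-equiv} and Remark \ref{rem--models} gives a log minimal model of the original $(X,\Delta)$ over $Z$, the desired contradiction.

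I expect the special termination step to be the main obstacle: one must show that log abundance, nefness of restrictions, and the vanishing-order condition all descend to the dlt models of the lc centers so that the induction on the dimension of lc centers closes, and one must keep the compactification bookkeeping (correspondence of lc centers, harmlessness of adding $\alpha\pi^{c*}A_Z$) consistent throughout. The extraction of pseudo-effectivity and abundance from the sequence and the final descent of the log minimal model are comparatively routine given the cited results.
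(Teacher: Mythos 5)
Your skeleton (argue by contradiction, reduce to $\mathbb{Q}$-factorial dlt, run special termination, verify the three hypotheses of Theorem \ref{thm--ind-1}, compactify $Z$ to handle the relative case) is the paper's skeleton, but the two load-bearing steps where you deviate both have genuine gaps. The most serious one is the globalization. You claim that $(X^{c},\Delta^{c}+\alpha\pi^{c*}A_{Z})$ has globally nef restrictions to its lc centers, ``obtained from nefness over $Z^{c}$ by adding $\alpha$ times the pullback of an ample divisor and bounding the relevant extremal rays by the cone theorem.'' But what the special termination gives you is nefness of $(K_{X}+\Delta)|_{S}$ \emph{over the open base} $Z$ only; over $Z^{c}\setminus Z$ the closure of an lc center is completely uncontrolled, and on a curve contracted over $Z^{c}$ (in particular on any curve lying in a fiber over a boundary point) the term $\alpha\pi^{c*}A_{Z}$ is numerically trivial, so no choice of $\alpha$ and no length-of-extremal-rays bound can repair possible negativity there. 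The same objection applies to your claim that the asymptotic vanishing orders are unchanged after compactification. This is precisely why the paper's Step 3 does \emph{not} verify the hypotheses of Theorem \ref{thm--ind-1} for the compactified pair: instead it runs a fresh absolute MMP for $(\bar{X},\bar{\Delta}+\bar{H})$ with scaling of an ample divisor, proves every member of that new MMP is log abundant (Lemma \ref{lem--relabund-globabund} together with \cite[Lemma 2.10]{hashizumehu}), shows via Lemma \ref{lem--mmpscaling-effective} and a curve argument that the new MMP never modifies $\bar{\pi}^{-1}(Z)\cap{\rm Supp}\llcorner\bar{\Delta}\lrcorner$, and only then invokes the absolute case of the argument for this new sequence.

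The second gap is in your treatment of the lc centers. The paper obtains termination of the induced $(K_{T_{m}}+\Psi_{m})$-MMPs over $Z$ from the \emph{induction hypothesis of Theorem \ref{thm--main-logabundant} itself} in lower dimension (the induced MMP has scaling numbers tending to $0$ and infinitely many log abundant members, so it cannot be infinite). You instead want to apply a globalized Theorem \ref{thm--ind-1} to $(T,\Psi)$ directly, which, besides inheriting the globalization gap above, requires the third hypothesis $\sigma_{Q}(K_{T}+\Psi)=0$ for divisors $Q$ over $T$ with negative discrepancy and center meeting an lc center of $(T,\Psi)$. Lemma \ref{lem--adjunction} and Lemma \ref{lem--discre-relation} do not ``transport'' this: they compare discrepancies along birational maps and say nothing about asymptotic vanishing orders of the restricted divisor $(K_{X}+\Delta)|_{S}$; moreover, at the stage where you handle a $d$-dimensional center, the ambient vanishing $\sigma_{P}(K_{X}+\Delta)=0$ near lc centers is itself not yet available (you only know the MMP is eventually an isomorphism near centers of dimension $<d$), and in any case it concerns divisors over $X$, not over $S$. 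The only mechanism in the paper that produces such vanishing is Lemma \ref{lem--mmpscaling-effective}, which is applied at the top level after the full special termination, in the absolute setting. Structuring the proof as the paper does, by induction on $\dim X$ on the statement of Theorem \ref{thm--main-logabundant} and postponing all use of Theorem \ref{thm--ind-1} to the absolute case, is what closes both of these gaps; as written, your argument does not.
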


\begin{proof}
We prove the theorem by induction on ${\rm dim}\,X$. 
Let $\pi\colon X \to Z$, $(X,\Delta)$, and $A$ be as in Theorem \ref{thm--main-logabundant}, and assume Theorem \ref{thm--main-logabundant} for all lc pairs whose dimensions are less than ${\rm dim}\,X$. 
Suppose by contradiction that there exists an infinite sequence of steps of a $(K_{X}+\Delta)$-MMP over $Z$ with scaling of $A$
$$(X_{0}:=X,\Delta_{0}:=\Delta) \dashrightarrow (X_{1},\Delta_{1}) \dashrightarrow\cdots \dashrightarrow (X_{i},\Delta_{i})\dashrightarrow \cdots$$
satisfying the two properties of Theorem \ref{thm--main-logabundant}. 
The two properties of Theorem \ref{thm--main-logabundant} are preserved after  lifting the log MMP as in \cite[Remark 2.9]{birkar-flip} (see Remark \ref{rem--div} (\ref{rem--div-(2)}) for the preservation of the property of being log abundant over $Z$). 
Note that the argument in \cite[Remark 2.9]{birkar-flip} shows that we can apply \cite[Remark 2.9]{birkar-flip} even if a log MMP does not necessarily contain only flips. 
Thus, by applying \cite[Remark 2.9]{birkar-flip} to this log MMP, we may assume that all $(X_{i},\Delta_{i})$ are $\mathbb{Q}$-factorial dlt. 
By the negativity lemma, $K_{X}+\Delta+\lambda_{i}A$ is pseudo-effective over $Z$ for all $i$. 
Therefore, $K_{X}+\Delta$ is pseudo-effective over $Z$. 

In the rest of the proof, we will prove that $(X,\Delta)$ has a log minimal model over $Z$, and thus we will get a contradiction (\cite[Theorem 4.1 (iii)]{birkar-flip}). 

\begin{step2}\label{step1--main-logabundant}
In this step, we prove that the non-isomorphic locus of the $(K_{X}+\Delta)$-MMP does not intersect the birational transform of $\llcorner \Delta \lrcorner$ after finitely many steps. 

For any $i$ and any lc center $S_{i}$ of $(X_{i},\Delta_{i})$, we define a dlt pair $(S_{i},\Delta_{S_{i}})$ by adjunction $K_{S_{i}}+\Delta_{S_{i}}=(K_{X_{i}}+\Delta_{i})|_{S_{i}}$. 
We prove by induction that for any integer $0\leq d < {\rm dim}\, X$, there is $m_{d}\geq 0$ such that the non-isomorphic locus of the $(K_{X}+\Delta)$-MMP is disjoint from all $d$-dimensional lc centers of $(X_{m_{d}},\Delta_{m_{d}})$ after $m_{d}$ steps. 
We assume the existence of $m_{d-1}$, and  we put $m=m_{d-1}$. 
By the arguments in \cite{fujino-sp-ter}, replacing $m$, we may assume that for any $i\geq m$ and any $d$-dimensional lc center $S_{m}$ of $(X_{m},\Delta_{m})$, the birational map $X_{i}\dashrightarrow X_{i+1}$ induces a small birational map $S_{i}\dashrightarrow S_{i+1}$  such that the birational transform of $\Delta_{S_{i}}$ on $S_{i+1}$ is equal to $\Delta_{S_{i+1}}$. 
Let $(T_{m},\Psi_{m})\to (S_{m},\Delta_{S_{m}})$ be a dlt blow-up, and put $A_{T_{m}}=A_{m}|_{T_{m}}$. 
By ${\rm lim}_{i\to \infty}\lambda_{i}=0$ and Remark \ref{rem--mmplift}, the sequence of birational maps 
$$S_{m}\dashrightarrow \cdots \dashrightarrow S_{i} \dashrightarrow \cdots$$ induces a $(K_{{T}_{m}}+\Psi_{m})$-MMP over $Z$ with scaling of $A_{{T}_{m}}$ 
$$({T}_{m},\Psi_{m})\dashrightarrow \cdots \dashrightarrow ({T}_{i},\Psi_{i})\dashrightarrow \cdots$$ such that $({T}_{i},\Psi_{i})$ is a $\mathbb{Q}$-factorial dlt model of $(S_{i},\Delta_{S_{i}})$ for every $i$ and the termination of the $(K_{{T}_{m}}+\Psi_{m})$-MMP over $Z$ implies the existence of $m_{d}$ stated above. 
By definition, if $(X_{i},\Delta_{i})$ is log abundant over $Z$, then $({T}_{i},\Psi_{i})$ is also log abundant over $Z$. 
Since there are infinitely many log abundant dlt pairs $(X_{i},\Delta_{i})$ over $Z$, there are also infinitely many log abundant dlt pairs $(T_{i},\Psi_{i})$ over $Z$. 
Thus, the $(K_{{T}_{m}}+\Psi_{m})$-MMP over $Z$ with scaling of $A_{{T}_{m}}$ satisfies the two conditions about log MMP in Theorem \ref{thm--main-logabundant}. 
By the induction hypothesis of Theorem \ref{thm--main-logabundant}, the $(K_{{T}_{m}}+\Psi_{m})$-MMP terminates. 
Therefore, we may find the desired index $m_{d}$, which completes the induction on $d$, and hence the proof of our claim in this step. 

In this way, by replacing $(X,\Delta)$ with $(X_{i},\Delta_{i})$ for some $i\gg 0$, we may assume that the non-isomorphic locus of the $(K_{X}+\Delta)$-MMP does not intersect ${\rm Supp}\,\llcorner \Delta \lrcorner$. 
By the second condition  about log MMP in Theorem \ref{thm--main-logabundant}, we may also assume that $(X,\Delta)$ is log abundant over $Z$. 
\end{step2}

\begin{step2}\label{step2--main-logabundant}
In this step, by using Theorem \ref{thm--ind-1} we prove that $(X,\Delta)$ has a log minimal model over $Z$ in the case where $Z$ is a point. 

The first condition of Theorem \ref{thm--ind-1} is clear. 
By the hypothesis of the $(K_{X}+\Delta)$-MMP and Lemma \ref{lem--mmpscaling-effective}, for any ample $\mathbb{R}$-divisor $A'$ on $X$, the stable base locus of $K_{X}+\Delta+A'$ is disjoint from ${\rm Supp}\,\llcorner \Delta \lrcorner$. 
Thus, the restriction of $K_{X}+\Delta$ to any component of ${\rm Supp}\,\llcorner \Delta \lrcorner$ is nef and $\sigma_{P}(K_{X}+\Delta)=0$ for all prime divisors $P$ over $X$ such that $c_{X}(P)$ intersects ${\rm Supp}\,\llcorner \Delta \lrcorner$. 
By these facts, the second and the third conditions of Theorem \ref{thm--ind-1} hold. 
By Theorem \ref{thm--ind-1}, $(X,\Delta)$ has a log minimal model. 
\end{step2}

\begin{step2}\label{step3--main-logabundant}
We prove the existence of a log minimal model of $(X,\Delta)$ over $Z$ in the general case. 
Taking the Stein factorization of $\pi$, we may assume that $\pi$ is a contraction. 

We take an open immersion $Z\hookrightarrow \overline{Z}$ to a normal projective variety. 
By Lemma \ref{lem--Qfacdltclosure}, there exists a $\mathbb{Q}$-factorial dlt closure $(\overline{X},\overline{\Delta})$ of $(X,\Delta)$, that is, a projective $\mathbb{Q}$-factorial dlt pair $(\overline{X},\overline{\Delta})$ with a projective morphism $\overline{\pi}\colon \overline{X} \to \overline{Z}$ such that $X$ is an open subset of $\overline{X}$, $(\overline{X}|_{X},\overline{\Delta}|_{X})=(X,\Delta)$, and all lc centers of $(\overline{X},\overline{\Delta})$ intersect $X$. 
By construction, we have ${\overline{\pi}}^{-1}(Z)=X$. 
Since all lc centers of $(\overline{X},\overline{\Delta})$ intersect $X$ and the property of being $\overline{\pi}$-abundant for $\mathbb{R}$-Cartier divisors is determined on sufficiently general fibers of $\overline{\pi}$ (see \cite[Lemma 2.10]{hashizumehu}), the pair $(\overline{X},\overline{\Delta})$ is log abundant over $\overline{Z}$. 
If $(\overline{X},\overline{\Delta})$ has a log minimal model over $\overline{Z}$, then its restriction over $Z$ is a log minimal model of $(X,\Delta)$ over $Z$. 
In this way, to prove the existence of a log minimal model of $(X,\Delta)$ over $Z$, it is sufficient to prove that $(\overline{X},\overline{\Delta})$ has a log minimal model over $\overline{Z}$. 

Pick an ample Cartier divisor $H_{\overline{Z}}$ on $\overline{Z}$. Let $\overline{H}$ be a general member of the $\mathbb{R}$-linear system $|(3\cdot {\rm dim}\,X) \overline{\pi}^{*}H_{\overline{Z}}|_{\mathbb{R}}$ (see \cite[pp.~112--113]{hashizumehu} for the notion of general semi-ample $\mathbb{R}$-divisors). 
We may assume that $(\overline{X},\overline{\Delta}+\overline{H})$ is a dlt pair whose lc centers are those of $(\overline{X},\overline{\Delta})$. 
We run a $(K_{\overline{X}}+\overline{\Delta}+\overline{H})$-MMP with scaling of an ample divisor
$$(\overline{X}_{0}:=\overline{X},\overline{\Delta}_{0}+\overline{H}_{0}:=\overline{\Delta}+\overline{H})\dashrightarrow\cdots \dashrightarrow (\overline{X}_{j},\overline{\Delta}_{j}+\overline{H}_{j})\dashrightarrow \cdots.$$
To apply the conclusion of Step \ref{step2--main-logabundant} to this log MMP, we check that this log MMP satisfies the two conditions about log MMP in Theorem \ref{thm--main-logabundant}.  
The $(K_{\overline{X}}+\overline{\Delta}+\overline{H})$-MMP satisfies the first condition about log MMP in Theorem \ref{thm--main-logabundant} (\cite[Theorem 4.1 (ii)]{birkar-flip}). 
Hence it is sufficient to prove that $(\overline{X}_{j},\overline{\Delta}_{j}+\overline{H}_{j})$ is log abundant for all $j$. 
By the definition of $\overline{H}$ and the length of extremal rays (\cite[Section 18]{fujino-fund}), the $(K_{\overline{X}}+\overline{\Delta}+\overline{H})$-MMP is also a $(K_{\overline{X}}+\overline{\Delta})$-MMP over $\overline{Z}$. 
Because the non-isomorphic locus of the $(K_{X}+\Delta)$-MMP over $Z$ is disjoint from ${\rm Supp}\,\llcorner \Delta \lrcorner$ and ${\rm lim}_{i\to \infty}\lambda_{i}=0$, we may apply Lemma \ref{lem--mmpscaling-effective}. 
For any ample $\mathbb{R}$-divisor $H'$ on $X$ and any closed point $x\in {\rm Supp}\,\llcorner \Delta \lrcorner$, we can find $E\geq 0$ such that $E \sim_{\mathbb{R},Z}K_{X}+\Delta+H'$ and ${\rm Supp}\,E\not\ni x$. 
Since $X=\overline{\pi}^{-1}(Z)$, for any curve that is contracted by $\overline{\pi}$ and intersects $\overline{\pi}^{-1}(Z)\cap {\rm Supp}\,\llcorner \overline{\Delta} \lrcorner$, its intersection number with $K_{\overline{X}}+\overline{\Delta}$ is non-negative. 
By this fact and induction on the index $j$, we see that the non-isomorphic locus of the $(K_{\overline{X}}+\overline{\Delta})$-MMP over $\overline{Z}$ is disjoint from $\overline{\pi}^{-1}(Z)\cap {\rm Supp}\,\llcorner \overline{\Delta} \lrcorner$. 
This implies that for any lc center $\overline{S}$ of $(\overline{X},\overline{\Delta}+\overline{H})$, the $(K_{\overline{X}}+\overline{\Delta})$-MMP over $\overline{Z}$ does not modify sufficiently general fibers of the morphism $\overline{S}\to \overline{\pi}(\overline{S})$. 
For any lc center $\overline{S}_{j}$ of $(\overline{X}_{j},\overline{\Delta}_{j})$, we define a dlt pair $(\overline{S}_{j},\Delta_{\overline{S}_{j}})$ by adjunction 
$$K_{\overline{S}_{j}}+\Delta_{\overline{S}_{j}}=(K_{\overline{X}_{j}}+\overline{\Delta}_{j})|_{\overline{S}_{j}}.$$ 
By \cite[Lemma 2.10]{hashizumehu}, we may check the property of being abundant over $\overline{Z}$ for $K_{\overline{S}_{j}}+\Delta_{\overline{S}_{j}}$ by restricting the divisor to a sufficiently general fiber of $\overline{S}\to \overline{\pi}(\overline{S})$. 
Since  $(\overline{X},\overline{\Delta})$ is log abundant over $\overline{Z}$, we see that $K_{\overline{S}_{j}}+\Delta_{\overline{S}_{j}}$ is abundant over $\overline{Z}$.  
By the definition of $\overline{H}$ and Lemma \ref{lem--relabund-globabund}, $(K_{\overline{X}_{j}}+\overline{\Delta}_{j}+\overline{H}_{j})|_{\overline{S}_{j}}$ is abundant. 
Because all lc centers of $(\overline{X}_{j},\overline{\Delta}_{j}+\overline{H}_{j})$ are lc centers of $(\overline{X}_{j},\overline{\Delta}_{j})$, the lc pair $(\overline{X}_{j},\overline{\Delta}_{j}+\overline{H}_{j})$ is log abundant for all $j$. 
Therefore, the $(K_{\overline{X}}+\overline{\Delta}+\overline{H})$-MMP with scaling satisfies the two conditions of Theorem \ref{thm--main-logabundant} about log MMP. 

By the conclusion of Step \ref{step2--main-logabundant} in this proof, $(\overline{X},\overline{\Delta}+\overline{H})$ has a log minimal model. 
By \cite[Theorem 4.1 (iii)]{birkar-flip}, the $(K_{\overline{X}}+\overline{\Delta}+\overline{H})$-MMP, which is also a $(K_{\overline{X}}+\overline{\Delta})$-MMP over $\overline{Z}$, terminates. 
This implies the existence of a log minimal model of $(\overline{X},\overline{\Delta})$ over $\overline{Z}$. 
Therefore, $(X,\Delta)$ has a log minimal model over $Z$. 
\end{step2}

By \cite[Theorem 4.1 (iii)]{birkar-flip}, we get a contradiction. 
We finish the proof. 
\end{proof}

\begin{thm}\label{thm--abundantterminate}
Let $\pi\colon X\to Z$ be a projective morphism of normal quasi-projective varieties and let $(X,\Delta)$ be a $\mathbb{Q}$-factorial dlt pair such that $K_{X}+\Delta$ is pseudo-effective over $Z$. 
Let $A$ be a $\pi$-ample $\mathbb{R}$-divisor on $X$ such that $(X,\Delta+A)$ is lc and $K_{X}+\Delta+A$ is $\pi$-nef. 
Then there is an integer $l \geq 0$ satisfying the following: 
Let
$$(X_{0}:=X,\Delta_{0}:=\Delta) \dashrightarrow (X_{1},\Delta_{1}) \dashrightarrow\cdots \dashrightarrow (X_{i},\Delta_{i})\dashrightarrow \cdots$$
be a sequence of steps of a $(K_{X}+\Delta)$-MMP over $Z$ with scaling of $A$. 
If $(X_{i},\Delta_{i})$ is log abundant over $Z$ for some $i\geq l$, then the $(K_{X}+\Delta)$-MMP over $Z$ terminates with a good minimal model of $(X,\Delta)$ over $Z$. 
\end{thm}

\begin{proof}
By Lemma \ref{lem--abund-inv}, there exists $\epsilon>0$ such that for any $s,s'\in(0,\epsilon]$, any sequence of steps of a $(K_{X}+\Delta+sA)$-MMP 
$(X,\Delta+s A)\dashrightarrow (Y,\Delta_{Y}+sA_{Y})$ over $Z$ to a good minimal model $(Y,\Delta_{Y}+sA_{Y})$, and any sequence of steps of a $(K_{X}+\Delta+s'A)$-MMP $(X,\Delta+s' A)\dashrightarrow (Y',\Delta_{Y'}+s'A_{Y'})$ over $Z$ to a good minimal model  $(Y',\Delta_{Y'}+s'A_{Y'})$, the three properties of Lemma \ref{lem--abund-inv} hold true. 
We may assume that $\epsilon<1$. 
By applying the finiteness of models \cite[Theorem E]{bchm} to a suitable convex rational polytope of ${\rm WDiv}_{\mathbb{R}}(X)$ which contains the set $$\left\{\Delta+tA \,\middle|\,\frac{\epsilon}{2}\leq t \leq1\right\},$$ 
we can find finitely many birational maps $X\dashrightarrow Y_{k}$ such that if $t \in [\frac{\epsilon}{2},1]$, then the birational contraction from $X$ to any weak lc model of $(X,\Delta+t A)$ over $Z$ is isomorphic to one of $X\dashrightarrow Y_{k}$. 

Let $l$ be the number of those varieties $Y_{k}$. 
We will prove that $l$ is the desired integer. 
Consider any sequence of steps of a $(K_{X}+\Delta)$-MMP over $Z$ with scaling of $A$
$$(X_{0}:=X,\Delta_{0}:=\Delta) \dashrightarrow (X_{1},\Delta_{1}) \dashrightarrow\cdots \dashrightarrow (X_{i},\Delta_{i})\dashrightarrow \cdots,$$
and suppose that $(X_{i_{0}},\Delta_{i_{0}})$ is log abundant over $Z$ for some $i_{0}\geq l$. 
We define 
$$\lambda_{i}={\rm inf}\set{\mu \in \mathbb{R}_{\geq0} \!|\! K_{X_{i}}+\Delta_{i}+\mu A_{i}\text{\rm \, is nef over }Z},$$
where $A_{i}$ is the birational transform of $A$ on $X_{i}$. 
Since $(X_{i},\Delta_{i}+\lambda_{i}A_{i})$ is a weak lc model of $(X,\Delta+\lambda_{i}A)$ over $Z$ for all $0\leq i<l$, by the choice of $l$, we see that $\lambda_{i_{0}}<\frac{\epsilon}{2}$. 
If $\lambda_{i_{0}}=0$, then $(X_{i_{0}},\Delta_{i_{0}})$ is a log minimal model of $(X,\Delta)$ over $Z$ and it is log abundant over $Z$, so $(X_{i_{0}},\Delta_{i_{0}})$ is a good minimal model over $Z$ by Lemma \ref{lem--relnefabundance}. 
From now on, we assume that $\lambda_{i_{0}}>0$. 
We define $\lambda_{-1}:=1$. 
We pick integers $0\leq l' \leq l''$ such that 
$$\lambda_{l'-1}>\lambda_{l'}=\cdots=\lambda_{i_{0}}=\cdots=\lambda_{l''-1}>\lambda_{l''}.$$
We can find such $l'$ and $l''$ since $\lambda_{-1}>\lambda_{i_{0}}>0$ and ${\rm lim}_{i \to \infty}\lambda_{i}=0$ by construction of the log MMP (cf.~\cite[Theorem 4.1 (ii)]{birkar-flip}).  
We pick $t' \in (\lambda_{l'}, \lambda_{l'-1})$ and $t'' \in (\lambda_{l''},\lambda_{l''-1})$ so that $t'<\epsilon$. 
Then
\begin{itemize}
\item
$t''<\lambda_{i_{0}}<t'$, 
\item
$(X,\Delta+t'A)\dashrightarrow (X_{l'},\Delta_{l'}+t'A_{l'})$ is finite steps of a $(K_{X}+\Delta+t'A)$-MMP over $Z$ to a good minimal model, and 
\item
$(X,\Delta+t''A)\dashrightarrow (X_{l''},\Delta_{l''}+t''A_{l''})$ is finite steps of a $(K_{X}+\Delta+t''A)$-MMP over $Z$ to a good minimal model. 
\end{itemize}
Then $X_{l'}\dashrightarrow X_{l''}$ satisfies the three properties of Lemma \ref{lem--abund-inv}. 
In particular, $X_{l'}\dashrightarrow X_{l''}$ is small and an isomorphism on an open subset of $X_{l'}$ which intersects all lc centers of $(X_{l'},\Delta_{l'})$. 
Since $X_{l'}\dashrightarrow X_{i_{0}} \dashrightarrow X_{l''}$ is a sequence of steps of a $(K_{X_{l'}}+\Delta_{l'})$-MMP over $Z$, we see that 
\begin{itemize}
\item
the birational map $X_{l'} \dashrightarrow X_{i_{0}}$ is small, and 
\item
 $X_{l'} \dashrightarrow X_{i_{0}}$ is an isomorphism on an open set $U' \subset X_{l'}$ such that $U'$ intersects all lc centers of $(X_{l'},\Delta_{l'})$ and the image $U\subset X_{i_{0}}$ intersects all lc centers of $(X_{i_{0}},\Delta_{i_{0}})$.
 \end{itemize}
The same properties hold for the birational map $X_{i_{0}}\dashrightarrow X_{l''}$. 
By Lemma \ref{lem--comparedim}, for any lc center $S_{i_{0}}$ of $(X_{i_{0}},\Delta_{i_{0}})$ with the birational map $S_{l'}\dashrightarrow S_{i_{0}}\dashrightarrow S_{l''}$, where $S_{l'}$ and $S_{l''}$ are corresponding lc centers, we have
\begin{equation*}
\begin{split}
\kappa_{\iota}(S_{l'}/Z, (K_{X_{l'}}+\Delta_{l'})|_{S_{l'}})&\geq \kappa_{\iota}(S_{i_{0}}/Z, (K_{X_{i_{0}}}+\Delta_{i_{0}})|_{S_{i_{0}}})\\&\geq \kappa_{\iota}(S_{l''}/Z, (K_{X_{l''}}+\Delta_{l''})|_{S_{l''}}),
\end{split}
\end{equation*} 
and 
\begin{equation*}
\begin{split}
\kappa_{\sigma}(S_{l'}/Z, (K_{X_{l'}}+\Delta_{l'})|_{S_{l'}})&\geq \kappa_{\sigma}(S_{i_{0}}/Z, (K_{X_{i_{0}}}+\Delta_{i_{0}})|_{S_{i_{0}}})\\&\geq \kappa_{\sigma}(S_{l''}/Z, (K_{X_{l''}}+\Delta_{l''})|_{S_{l''}}). 
\end{split}
\end{equation*}
Now we apply the third condition of Lemma \ref{lem--abund-inv} to $X_{l'}\dashrightarrow X_{l''}$ and $S_{l'}\dashrightarrow S_{l''}$. 
Using the above relations, we have 
\begin{equation*}
\begin{split}
\kappa_{\iota}(S_{l'}/Z, (K_{X_{l'}}+\Delta_{l'})|_{S_{l'}})&= \kappa_{\iota}(S_{i_{0}}/Z, (K_{X_{i_{0}}}+\Delta_{i_{0}})|_{S_{i_{0}}}) \quad {\rm and}\\ 
\kappa_{\sigma}(S_{l'}/Z, (K_{X_{l'}}+\Delta_{l'})|_{S_{l'}})&= \kappa_{\sigma}(S_{i_{0}}/Z, (K_{X_{i_{0}}}+\Delta_{i_{0}})|_{S_{i_{0}}}). 
\end{split}
\end{equation*}
By recalling that $(X_{i_{0}},\Delta_{i_{0}})$ is log abundant over $Z$, we see that $(X_{l'},\Delta_{l'})$ is log abundant over $Z$. 

For any $j \geq l''$ such that $\lambda_{j-1}>\lambda_{j}$, we take $u\in (\lambda_{j},\lambda_{j-1})$. 
Then
$$(X,\Delta+uA)\dashrightarrow (X_{j},\Delta_{j}+uA_{j})$$
 is a sequence of steps of a $(K_{X}+\Delta+uA)$-MMP over $Z$ to a good minimal model. 
Since $t', u \in (0, \epsilon]$ and $(X_{l'},\Delta_{l'})$ is log abundant over $Z$, by applying Lemma \ref{lem--abund-inv} to 
$$(X,\Delta+t'A)\dashrightarrow (X_{l'},\Delta_{l'}+t'A_{l'}) \quad {\rm and} \quad (X,\Delta+uA)\dashrightarrow (X_{j},\Delta_{j}+uA_{j}),$$ we see that $(X_{j},\Delta_{j})$ is log abundant over $Z$. 
Therefore, $(X_{j},\Delta_{j})$ is log abundant over $Z$ for any $j\geq l''$ such that $\lambda_{j-1}>\lambda_{j}$. 
By Theorem \ref{thm--main-logabundant}, we do not have infinitely many indices $j \geq l''$ such that $\lambda_{j-1}>\lambda_{j}$. 
Then we have $\lambda_{m}=0$ for some $m$, which implies that the $(K_{X}+\Delta)$-MMP over $Z$ with scaling of $A$ terminates. 

The resulting log minimal model $(X_{m},\Delta_{m})$ over $Z$ of the $(K_{X}+\Delta)$-MMP over $Z$ satisfies $\lambda_{m-1}>\lambda_{m}$ and $m\geq l''$ since $\lambda_{l''-1}=\lambda_{i_{0}}>0$. 
By the argument in the previous paragraph, $(X_{m},\Delta_{m})$ is log abundant over $Z$.  
By Lemma \ref{lem--relnefabundance}, the pair $(X_{m},\Delta_{m})$ is a good minimal model of $(X,\Delta)$ over $Z$. 
So we are done. 
\end{proof}

\begin{rem}\label{rem--recover}
We check that the main results of \cite{birkar-flip}, \cite{haconxu-lcc}, \cite{has-mmp}, and \cite{hashizumehu}, and in addition \cite[Theorem 1.3]{has-class} follow from Theorem \ref{thm--abundantterminate}. Let $\pi\colon X\to Z$ be a projective morphism of normal quasi-projective varieties, and let $(X,\Delta)$ be an lc pair. 
We may assume that $K_{X}+\Delta$ is pseudo-effective over $Z$. 
Except the situation of \cite[Theorem 1.5]{hashizumehu}, without loss of generality, we may further assume that $(X,\Delta)$ is $\mathbb{Q}$-factorial dlt. 

If $\pi$ and $(X,\Delta)$ are as in \cite[Theorem 1.1]{birkar-flip} (see also \cite[Theorem 1.6]{haconxu-lcc}), then the relative numerical dimension of the restriction of $K_{X}+\Delta$ to any stratum is either zero or $-\infty$, therefore $(X,\Delta)$ is log abundant over $Z$ (\cite{gongyo1}). 
Because any $(K_{X}+\Delta)$-MMP over $Z$ preserves the condition of \cite[Theorem 1.1]{birkar-flip}, any $(K_{X}+\Delta)$-MMP over $Z$ preserves the property of being log abundant over $Z$. 

If $\pi$ and $(X,\Delta)$ are as in \cite[Theorem 1.1]{haconxu-lcc} (or \cite[Theorem 1.4]{birkar-flip}), then there is $U^{0} \subset Z$ as in \cite[Theorem 1.1]{haconxu-lcc}.  
By running a $(K_{X}+\Delta)$-MMP over $Z$ with scaling of an ample divisor 
$$(X,\Delta) \dashrightarrow \cdots \dashrightarrow (X_{i},\Delta_{i})\dashrightarrow \cdots$$
 and replacing $(X,\Delta)$ by $(X_{i},\Delta_{i})$ for some $i\gg 0$, we may assume that $(K_{X}+\Delta)|_{\pi^{-1}(U^{0})}$ is semi-ample over $U^{0}$. 
Then $(X,\Delta)$ is log abundant over $Z$ (\cite[Lemma 2.10]{hashizumehu}). 
Since any $(K_{X}+\Delta)$-MMP over $Z$ does not modify $\pi^{-1}(U^{0})$, it preserves the property of being log abundant over $Z$. 

The above arguments are valid for the case of the main results of \cite{has-mmp} because they are $\mathbb{R}$-boundary divisor analogs of \cite[Theorem 1.1]{birkar-flip} and \cite[Theorem 1.1]{haconxu-lcc}. 

If $\pi$ and $(X,\Delta)$ are as in \cite[Theorem 1.1]{hashizumehu}, for any finite steps 
$$(X,\Delta)\dashrightarrow (X_{i},\Delta_{i})$$
 of a $(K_{X}+\Delta)$-MMP over $Z$ and any lc center $S_{i}$ of $(X_{i},\Delta_{i})$, there is an lc center $S$ of $(X,\Delta)$ such that $X\dashrightarrow X_{i}$ induces a birational map $S\dashrightarrow S_{i}$. 
Let $(S,\Delta_{S})$ and $(S_{i},\Delta_{S_{i}})$ be dlt pairs defined by adjunctions. 
Then $a(Q,S,\Delta_{S})\leq a(Q,S_{i},\Delta_{S_{i}})$ for all prime divisors $Q$ over $S$ (\cite[Lemma 4.2.10]{fujino-sp-ter}). 
By \cite[Lemma 2.17]{hashizumehu} and the hypothesis of \cite[Theorem 1.1]{hashizumehu} on lc centers, $K_{S_{i}}+\Delta_{S_{i}}$ is abundant over $Z$.
Therefore, $(X_{i},\Delta_{i})$ is log abundant over $Z$. 
Note that we need \cite[Theorem 1.1]{has-mmp} to prove \cite[Lemma 2.17]{hashizumehu}, however, \cite[Theorem 1.1]{has-mmp} has been already recovered in the above discussion.  
Thus, any $(K_{X}+\Delta)$-MMP over $Z$ preserves the property of being log abundant over $Z$. 

If $\pi$ and $(X,\Delta)$ are as in \cite[Theorem 1.3]{has-class}, then the relative numerical dimension of the restriction of $K_{X}+\Delta$ to any stratum is either zero or $-\infty$. 
Therefore, this situation is a special case (i.e., the case of $n=3$) of \cite[Theorem 1.1]{hashizumehu}. 

Finally, suppose that $\pi$ and $(X,\Delta)$ are as in \cite[Theorem 1.5]{hashizumehu}, that is, $\Delta=A+B$, where $A$ is a $\pi$-ample $\mathbb{R}$-divisor on $X$ and $B$ is an effective $\mathbb{R}$-divisor on $X$ such that $(X,B)$ is lc. 
We replace $A$ by a general one (cf.~\cite[pp.~112--113]{hashizumehu}), and we take a dlt blow-up $f\colon (Y,B_{Y})\to (X,B)$. 
We put $\Gamma=f^{*}A+B_{Y}$. 
Then we can directly apply the argument in \cite{has-nonvan-gpair}, and we see that $(Y,\Gamma)$ is log abundant over $Z$ and any $(K_{Y}+\Gamma)$-MMP over $Z$ preserves the property of being log abundant. 

In any case, by applying Theorem \ref{thm--abundantterminate} after replacing $(X,\Delta)$ with a $\mathbb{Q}$-factorial dlt model, any $(K_{X}+\Delta)$-MMP over $Z$ with scaling of an ample divisor terminates with a log minimal model $(X_{m},\Delta_{m})$ that is log abundant over $Z$. By Lemma \ref{lem--relnefabundance},  $(X_{m},\Delta_{m})$ is a good minimal model of $(X,\Delta)$ over $Z$. 
\end{rem}

\subsection{Corollaries}\label{subsec--cor}
In this subsection, we prove corollaries.

\begin{cor}\label{cor--finite-logabund}
Any log MMP with scaling of an ample divisor starting with a projective dlt pair contains only finitely many log abundant dlt pairs. 
\end{cor}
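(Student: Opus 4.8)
The plan is to derive the statement as a short corollary of Theorem~\ref{thm--main-logabundant}, of which it is essentially the dlt specialization in the absolute case ($Z$ a point). Fix a log MMP with scaling of an ample divisor $A$ starting from a projective dlt pair $(X,\Delta)$,
\[
(X_{0}:=X,\Delta_{0}:=\Delta)\dashrightarrow (X_{1},\Delta_{1})\dashrightarrow\cdots\dashrightarrow (X_{i},\Delta_{i})\dashrightarrow\cdots,
\]
and set $\lambda_{i}=\inf\{\mu\in\mathbb{R}_{\geq0}\mid K_{X_{i}}+\Delta_{i}+\mu A_{i}\text{ is nef}\}$, where $A_{i}$ is the birational transform of $A$. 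Each $(X_{i},\Delta_{i})$ is again a projective dlt pair, and over a point ``log abundant dlt pair'' is the same as ``log abundant''. By the conventions on log MMP with scaling of an ample divisor (\cite[Definition 2.4]{birkar-flip}), after replacing $A$ by a general member of $|cA|_{\mathbb{R}}$ for $c\gg0$ if necessary --- which alters neither the steps of the MMP nor whether $\lim_{i\to\infty}\lambda_{i}=0$ --- I may assume $A\geq0$, that $(X,\Delta+A)$ is lc, and that $K_{X}+\Delta+A$ is nef, so that the setup of Theorem~\ref{thm--main-logabundant} is in force.

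Next I split into cases. If the MMP has only finitely many steps, the conclusion is immediate. So assume the MMP is infinite. Then $K_{X}+\Delta$ is pseudo-effective, since otherwise \cite{bchm} forces the MMP to terminate with a Mori fibre space, contradicting infiniteness; and since $K_{X}+\Delta$ is pseudo-effective, $\lim_{i\to\infty}\lambda_{i}=0$ by \cite{bchm} (see also \cite[Theorem 4.1 (ii)]{birkar-flip}). Suppose, for contradiction, that infinitely many $(X_{i},\Delta_{i})$ are log abundant. Then the displayed sequence is an infinite $(K_{X}+\Delta)$-MMP over a point with scaling of $A$ for which $\lim_{i\to\infty}\lambda_{i}=0$ and $(X_{i},\Delta_{i})$ is log abundant for infinitely many $i$ --- exactly the configuration excluded by Theorem~\ref{thm--main-logabundant}. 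This contradiction forces only finitely many $(X_{i},\Delta_{i})$ to be log abundant, which is the assertion.

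The whole of the difficulty is absorbed into Theorem~\ref{thm--main-logabundant} (which in turn rests on Theorem~\ref{thm--ind-1} together with the special-termination argument), so the corollary itself presents no real obstacle; the only points needing attention are the harmless normalisation of the scaling divisor to fit the hypotheses of Theorem~\ref{thm--main-logabundant} and the remark that the non-pseudo-effective case terminates on its own. Alternatively, one may argue directly from Theorem~\ref{thm--abundantterminate}: after disposing of the (automatically terminating) non-pseudo-effective case and, if necessary, passing to a small $\mathbb{Q}$-factorial dlt modification --- which, by Lemma~\ref{lem--bir-relation}, preserves the property of being log abundant along the MMP --- one obtains an integer $l$ such that the appearance of a log abundant $(X_{i},\Delta_{i})$ with $i\geq l$ forces the MMP to terminate after finitely many further steps, and since only finitely many pairs occur before step $l$ one again concludes that only finitely many log abundant pairs appear.
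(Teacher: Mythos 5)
Your main argument is correct and is essentially the paper's own proof: after the harmless normalization of $A$, one observes that an infinite such MMP forces $K_{X}+\Delta$ to be pseudo-effective and $\lim_{i\to\infty}\lambda_{i}=0$, and then infinitely many log abundant pairs would contradict Theorem \ref{thm--main-logabundant}. The only difference is that where you justify $\lim_{i\to\infty}\lambda_{i}=0$ by citing \cite{bchm} (and \cite[Theorem 4.1 (ii)]{birkar-flip}), the paper spells this out via finiteness of weak lc models (\cite[Theorem E]{bchm}), using an auxiliary klt boundary on $X$ and the fact that the $(X_{i},\Delta_{i}+\lambda_{i}A_{i})$ are pairwise non-isomorphic weak lc models of $(X,\Delta+\lambda_{i}A)$ --- the same tool, and the same citation the paper itself uses in Lemma \ref{lem--abund-inv}, so this is a difference of detail rather than of approach.
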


\begin{proof}
Fix a projective dlt pair $(X,\Delta)$ and an ample $\mathbb{R}$-divisor $A$ on $X$. 
By using the perturbation of the coefficients of $\Delta$ and \cite[Theorem 3.5.1]{fujino-what-log-ter}, we can find an $\mathbb{R}$-divisor $B$ on $X$ such that $(X,B)$ is klt. 
We may assume that $K_{X}+\Delta$ is pseudo-effective because otherwise any $(K_{X}+\Delta)$-MMP with scaling of $A$ terminates with a Mori fiber space.  
Consider any sequence of steps of a $(K_{X}+\Delta)$-MMP with scaling of $A$ 
$$(X_{0}:=X,\Delta_{0}:=\Delta) \dashrightarrow (X_{1},\Delta_{1}) \dashrightarrow\cdots \dashrightarrow (X_{i},\Delta_{i})\dashrightarrow \cdots.$$
We define 
$$\lambda_{i}={\rm inf}\set{\mu \in \mathbb{R}_{\geq0} \!|\! K_{X_{i}}+\Delta_{i}+\mu A_{i}\text{\rm \, is nef}},$$
where $A_{i}$ is the birational transform of $A$ on $X_{i}$, and we define $\lambda_{\infty}={\rm lim}_{i\to\infty}\lambda_{i}$. 
Suppose that the $(K_{X}+\Delta)$-MMP is an infinite sequence. 
For any $i\neq j$, the birational map $X_{i}\dashrightarrow X_{j}$ is not an isomorphism. 
Since each lc pair $(X_{i},\Delta_{i}+\lambda_{i}A_{i})$ is a weak lc model of $(X,\Delta+\lambda_{i}A)$, if $\lambda_{\infty}>0$ then it contradicts the finiteness of models \cite[Theorem E]{bchm}, therefore $\lambda_{\infty}=0$. 
By Theorem \ref{thm--main-logabundant}, the $(K_{X}+\Delta)$-MMP contains only finitely many log abundant dlt pairs. 
\end{proof}

\begin{cor}\label{cor--mmplclogabund}
For any projective lc pair $(X,\Delta)$ and any ample $\mathbb{R}$-divisor $A$ on $X$, there is a sequence of steps of a $(K_{X}+\Delta)$-MMP with scaling of $A$  which contains only finitely many log abundant lc pairs. 
\end{cor}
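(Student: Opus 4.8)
The plan is to reduce to Corollary \ref{cor--finite-logabund} by passing to a $\mathbb{Q}$-factorial dlt blow-up. First I would dispose of the case in which $K_{X}+\Delta$ is not pseudo-effective: then any $(K_{X}+\Delta)$-MMP with scaling of $A$ terminates after finitely many steps with a Mori fiber space (by \cite{bchm}), so it contains only finitely many lc pairs and there is nothing to prove. Hence I may assume that $K_{X}+\Delta$ is pseudo-effective.

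Next, let $f\colon(Y,\Gamma)\to(X,\Delta)$ be a $\mathbb{Q}$-factorial dlt blow-up as in Theorem \ref{thm--dltblowup}, so that $(Y,\Gamma)$ is a projective $\mathbb{Q}$-factorial dlt pair with $K_{Y}+\Gamma=f^{*}(K_{X}+\Delta)$; in particular $K_{Y}+\Gamma$ is pseudo-effective. I would construct the desired $(K_{X}+\Delta)$-MMP with scaling of $A$ out of a log MMP on $Y$: choose an ample $\mathbb{R}$-divisor $A_{Y}\geq 0$ on $Y$ which is a sufficiently small perturbation of $f^{*}A$, so that $(Y,\Gamma+A_{Y})$ is dlt, $K_{Y}+\Gamma+A_{Y}$ is nef, and $f_{*}A_{Y}=A$, and run a $(K_{Y}+\Gamma)$-MMP with scaling of $A_{Y}$. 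By Corollary \ref{cor--finite-logabund} this $Y$-MMP, say $(Y_{0},\Gamma_{0})=(Y,\Gamma)\dashrightarrow(Y_{1},\Gamma_{1})\dashrightarrow\cdots$, contains only finitely many log abundant dlt pairs. Contracting the birational transforms of the $f$-exceptional divisors step by step, this $Y$-MMP induces a sequence of steps of a $(K_{X}+\Delta)$-MMP with scaling of $A$, say $(X_{0},\Delta_{0})=(X,\Delta)\dashrightarrow(X_{1},\Delta_{1})\dashrightarrow\cdots$, together with crepant birational morphisms $f_{i}\colon(Y_{i},\Gamma_{i})\to(X_{i},\Delta_{i})$, so that $K_{Y_{i}}+\Gamma_{i}=f_{i}^{*}(K_{X_{i}}+\Delta_{i})$.

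It then remains to check that if $(X_{i},\Delta_{i})$ is log abundant then so is $(Y_{i},\Gamma_{i})$; combined with the finiteness on $Y$ this forces only finitely many $i$ with $(X_{i},\Delta_{i})$ log abundant, which is the assertion. For this, $K_{Y_{i}}+\Gamma_{i}$ is abundant since $\kappa_{\iota}$ and $\kappa_{\sigma}$ are pullback-invariant (Remark \ref{rem--div} (\ref{rem--div-(2)})); moreover every lc center $S'$ of $(Y_{i},\Gamma_{i})$ maps onto an lc center $S$ of $(X_{i},\Delta_{i})$ because $f_{i}$ is crepant, whence $(K_{Y_{i}}+\Gamma_{i})|_{S'}=(f_{i}|_{S'})^{*}\bigl((K_{X_{i}}+\Delta_{i})|_{S}\bigr)$, and Remark \ref{rem--div} (\ref{rem--div-(2)}) again shows that $(K_{Y_{i}}+\Gamma_{i})|_{{S'}^{\nu}}$ is abundant once $(K_{X_{i}}+\Delta_{i})|_{S^{\nu}}$ is.

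The main obstacle is the second paragraph, namely transporting the scaling MMP from $Y$ to $X$: since $f^{*}A$ is only nef and big, one has to choose the ample perturbation $A_{Y}$ with care and verify that, after contracting the $f$-exceptional divisors as the $Y$-MMP meets them, one genuinely obtains a $(K_{X}+\Delta)$-MMP with scaling of $A$ — in particular that the scaling thresholds on $Y$ and on $X$ agree and that the intermediate models $X_{i}$ carry the required $\mathbb{R}$-Cartier divisors. A variant that sidesteps the choice of $A_{Y}$ is to run the $(K_{X}+\Delta)$-MMP with scaling of $A$ directly on the lc pair $(X,\Delta)$, lift it to a $\mathbb{Q}$-factorial dlt MMP as in the proof of Theorem \ref{thm--main-logabundant} (via \cite[Remark 2.9]{birkar-flip}), and then apply Theorem \ref{thm--main-logabundant}, whose hypotheses do not require the scaling divisor to be ample, together with the same comparison of log abundance, handling the case $\lim_{i}\lambda_{i}>0$ (where the MMP is finite by finiteness of models) separately.
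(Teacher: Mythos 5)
Your first route runs in the wrong direction. One can lift a $(K_{X}+\Delta)$-MMP on the lc pair to a $\mathbb{Q}$-factorial dlt model (\cite[Remark 2.9]{birkar-flip}), but there is no general mechanism for pushing a $(K_{Y}+\Gamma)$-MMP with scaling of $A_{Y}$ down to a $(K_{X}+\Delta)$-MMP with scaling of $A$: the induced maps $X_{i}\dashrightarrow X_{i+1}$ need not be $(K_{X_{i}}+\Delta_{i})$-negative extremal contractions or flips, the nef thresholds computed with $A_{Y}$ on $Y_{i}$ need not agree with those computed with $A_{i}$ on $X_{i}$ (and the corollary requires scaling of the given $A$ itself), and $K_{X_{i}}+\Delta_{i}$ need not remain $\mathbb{R}$-Cartier. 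You acknowledge all of this, so the weight of the argument falls on your variant; the comparison of log abundance between $(X_{i},\Delta_{i})$ and a crepant dlt model is fine but becomes unnecessary once one applies Theorem \ref{thm--main-logabundant}, which is stated for lc pairs, directly.

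The variant has a genuine gap exactly where the paper does its real work. Theorem \ref{thm--main-logabundant} only excludes infinite MMPs with ${\rm lim}_{i\to\infty}\lambda_{i}=0$ containing infinitely many log abundant pairs, so you must first \emph{construct} a $(K_{X}+\Delta)$-MMP with scaling of $A$ that either terminates or satisfies $\lambda_{\infty}=0$. For a general lc pair this is not automatic: your appeal to ``finiteness of models'' to dispose of the case $\lambda_{\infty}>0$ is a klt statement (\cite[Theorem E]{bchm}); it is what the paper uses for Corollary \ref{cor--finite-logabund}, where the dlt hypothesis lets one put a klt structure on $X$, but it is not available for an arbitrary lc $(X,\Delta)$, and the same objection applies to your citation of \cite{bchm} for termination with a Mori fiber space in the non-pseudo-effective case (the paper uses \cite[Theorem 1.7]{hashizumehu} there). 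The paper's proof instead proceeds by noting that $(X,\Delta+tA)$ has a good minimal model for every $t>0$ (\cite[Theorem 1.5]{hashizumehu}) and then running the construction of \cite[Proof of Lemma 2.14]{has-mmp}, with the length of extremal rays and \cite[Proposition 6.1]{hashizumehu}, to produce one specific $(K_{X}+\Delta)$-MMP with scaling of $A$ which terminates or has $\lambda_{\infty}=0$; only then does Theorem \ref{thm--main-logabundant} give the finiteness of log abundant pairs. Without an input of this kind, the case $\lambda_{\infty}>0$ in your argument is simply unhandled.
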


\begin{proof}
Let $(X,\Delta)$ and $A$ be as in the corollary. 
When $K_{X}+\Delta$ is not pseudo-effective, there is a finite sequence of steps of a $(K_{X}+\Delta)$-MMP with scaling of $A$ terminating with a Mori fiber space (\cite[Theorem 1.7]{hashizumehu}). 
When $K_{X}+\Delta$ is pseudo-effective, $(X,\Delta+tA)$ has a good minimal model for all $t \in (0,1]$ (\cite[Theorem 1.5]{hashizumehu}). 
By Lemma \ref{lem--mmp-termi}, we can construct a sequence of  steps of a $(K_{X}+\Delta)$-MMP with scaling of $A$
$$(X_{0}:=X,\Delta_{0}:=\Delta) \dashrightarrow (X_{1},\Delta_{1}) \dashrightarrow\cdots \dashrightarrow (X_{i},\Delta_{i})\dashrightarrow \cdots$$
such that if we define 
$$\lambda_{i}={\rm inf}\set{\mu \in \mathbb{R}_{\geq0} \!|\! K_{X_{i}}+\Delta_{i}+\mu A_{i}\text{\rm \, is nef}},$$
where $A_{i}$ is the birational transform of $A$ on $X_{i}$, then the $(K_{X}+\Delta)$-MMP terminates or ${\rm lim}_{i\to\infty}\lambda_{i}=0$. 
Therefore, the $(K_{X}+\Delta)$-MMP satisfies the first condition about log MMP in Theorem \ref{thm--main-logabundant} when it is an infinite sequence. 
By Theorem \ref{thm--main-logabundant}, only finitely many log abundant lc pairs appear in the log MMP. 
\end{proof}

\begin{cor}\label{cor--logabund-preserved-mmp}
Let $\pi\colon X\to Z$ be a projective morphism of normal quasi-projective varieties and let $(X,\Delta)$ be an lc pair.
Suppose that 
\begin{itemize}
\item
$(X,\Delta)$ is log abundant over $Z$ and $K_{X}+\Delta$ is pseudo-effective over $Z$, and
\item
the stable base locus of $K_{X}+\Delta$ over $Z$ {\rm (}\cite[Definition 3.5.1]{bchm}{\rm )} does not contain the image of any prime divisor $P$ over $X$ whose discrepancy $a(P,X,\Delta)$ is negative. 
\end{itemize}
Then $(X,\Delta)$ has a good minimal model over $Z$. 
\end{cor}

\begin{proof}
Note that the two properties of $(X,\Delta)$ in Corollary \ref{cor--logabund-preserved-mmp} are preserved after replacing $(X,\Delta)$ by a $\mathbb{Q}$-factorial dlt model (see Remark \ref{rem--div} (\ref{rem--div-(2)}) and Definition \ref{defn--asy-van-ord}). Thus, replacing $(X,\Delta)$ with a $\mathbb{Q}$-factorial dlt model of $(X,\Delta)$, we may assume that $(X,\Delta)$ is $\mathbb{Q}$-factorial dlt. 

We run a $(K_{X}+\Delta)$-MMP over $Z$ with scaling of an ample divisor $A$ 
$$(X_{0}:=X,\Delta_{0}:=\Delta) \dashrightarrow (X_{1},\Delta_{1}) \dashrightarrow\cdots \dashrightarrow (X_{i},\Delta_{i})\dashrightarrow \cdots.$$
For any $i\geq 0$ and any lc center $S_{i}$ of $(X_{i},\Delta_{i})$, there is an lc center $S$ of $(X,\Delta)$ such that $X\dashrightarrow X_{i}$ induces a birational map $S\dashrightarrow S_{i}$. 
We define dlt pairs $(S,\Delta_{S})$ and $(S_{i},\Delta_{S_{i}})$ by adjunctions $K_{S}+\Delta_{S}=(K_{X}+\Delta)|_{S}$ and $K_{S_{i}}+\Delta_{S_{i}}=(K_{X_{i}}+\Delta_{i})|_{S_{i}}$, respectively. 

We fix an index $i$. 
By taking a log resolution $g\colon Y\to X$ of $(X,\Delta)$ which resolves the indeterminacy of $X\dashrightarrow X_{i}$, we may find a common log resolution $g$ and $g_{i} \colon Y \to X_{i}$ of the map $(X,\Delta)\dashrightarrow (X_{i},\Delta_{i})$ and a subvariety $T\subset Y$ birational to $S$ and $S_{i}$ such that the induced morphisms $g|_{T}\colon T\to S$ and $g_{i}|_{T}\colon T\to S_{i}$ form a common resolution of $S\dashrightarrow S_{i}$. 
Then we may write
\begin{equation*}
\begin{split}
g^{*}(K_{X}+\Delta)=g_{i}^{*}(K_{X_{i}}+\Delta_{i})+E \quad {\rm and} \quad
g|_{T}^{*}(K_{S}+\Delta_{S})=&g_{i}|_{T}^{*}(K_{S_{i}}+\Delta_{S_{i}})+E|_{T}
\end{split}
\end{equation*}
for some $g_{i}$-exceptional $\mathbb{R}$-divisor $E$ on $Y$. 
By \cite[Lemma 4.2.10]{fujino-sp-ter}, we have $E \geq 0$, and thus $E|_{T} \geq 0$. 
Furthermore, the second condition of Corollary \ref{cor--logabund-preserved-mmp} implies that 
$$0 \leq a(P, X,\Delta)< a(P,X_{i}, \Delta_{i})$$
for every component $P$ of $E$. 
By Lemma \ref{lem--adjunction}, we see that $E|_{T}$ is exceptional over $S_{i}$. 
Thus, $E|_{T}$ is effective and exceptional over $S_{i}$. 
By Remark \ref{rem--div} (\ref{rem--div-(2)}) and the fact that $(X,\Delta)$ is log abundant over $Z$, which is the
 first condition of Corollary \ref{cor--logabund-preserved-mmp}, we see that $K_{S_{i}}+\Delta_{S_{i}}$ is abundant over $Z$. 
Therefore, $(X_{i},\Delta_{i})$ is log abundant over $Z$. 

The above argument shows that the $(K_{X}+\Delta)$-MMP preserves the property of being log abundant over $Z$. 
Furthermore, if we define 
$$\lambda_{i}={\rm inf}\set{\mu \in \mathbb{R}_{\geq0} \!|\! K_{X_{i}}+\Delta_{i}+\mu A_{i}\text{\rm \, is nef over }Z}$$
 for each $i$, where $A_{i}$ is the birational transform of $A$ on $X_{i}$, then we have ${\rm lim}_{i \to \infty}\lambda_{i}=0$ by \cite[Theorem 4.1 (ii)]{birkar-flip}. 
By Theorem \ref{thm--main-logabundant}, the $(K_{X}+\Delta)$-MMP terminates with a log minimal model $(X_{m},\Delta_{m})$ over $Z$ that is log abundant over $Z$. 
Then $(X_{m},\Delta_{m})$ is a good minimal model of $(X,\Delta)$ over $Z$ by Lemma \ref{lem--relnefabundance}. 
\end{proof}

We close this subsection with applications of Theorem \ref{thm--ind-1}. 

\begin{lem}\label{lem--mmp-can-bundle-formula}
Let $\pi\colon X\to Z$ be a projective morphism from a normal variety to a projective variety such that ${\rm dim}\,Z\leq 4$, and let $(X,\Delta)$ be a projective lc pair such that $K_{X}+\Delta\sim_{\mathbb{R},Z}0$.  
If $K_{X}+\Delta$ is pseudo-effective, then $(X,\Delta)$ has a log minimal model. 
\end{lem}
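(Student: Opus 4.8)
The plan is to descend the problem to the base $Z$ by the canonical bundle formula, where the low dimension makes the existence of a good minimal model available, and then to transport the resulting Nakayama--Zariski decomposition back to $X$. First, two harmless reductions: taking the Stein factorization of $\pi$ we may assume $\pi$ is a contraction, and by taking a dlt blow-up (Theorem~\ref{thm--dltblowup}) together with Lemma~\ref{lem--bir-equiv} we may assume that $(X,\Delta)$ is $\mathbb{Q}$-factorial dlt; note that $K_{X}+\Delta\sim_{\mathbb{R},Z}0$ and pseudo-effectivity of $K_{X}+\Delta$ are preserved under these replacements.

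Since $K_{X}+\Delta\sim_{\mathbb{R},Z}0$, the canonical bundle formula for lc-trivial fibrations (cf.~\cite{fujino-gongyo} and the references therein) yields, after replacing $Z$ by a sufficiently high birational model and $(X,\Delta)$ by a suitable crepant birational model (again harmless by Lemma~\ref{lem--bir-equiv}), a boundary $\mathbb{R}$-divisor $B_{Z}$ and a nef $\mathbb{R}$-divisor $M_{Z}$ on $Z$ such that $(Z,B_{Z}+M_{Z})$ is a generalized lc pair (with NQC nef part) and $K_{X}+\Delta\sim_{\mathbb{R}}\pi^{*}(K_{Z}+B_{Z}+M_{Z})$; if one prefers to avoid generalized pairs, b-semiampleness of the moduli part lets one take $M_{Z}$ to be a general effective member and work with an honest lc pair. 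Pushing forward, $K_{Z}+B_{Z}+M_{Z}$ is pseudo-effective, and by Remark~\ref{rem--div} the invariant Iitaka dimension and the numerical dimension of $K_{X}+\Delta$ coincide with those of $K_{Z}+B_{Z}+M_{Z}$, so in particular $\kappa_{\sigma}(X,K_{X}+\Delta)\leq\dim Z\leq 4$. Because $\dim Z\leq 4$ and $K_{Z}+B_{Z}+M_{Z}$ is pseudo-effective, the pair $(Z,B_{Z}+M_{Z})$ has a good minimal model (for $\dim Z\leq 3$ this is classical, and $\dim Z=4$ is where the hypothesis is used, via the minimal model theory and abundance in dimension four for this class of pairs); equivalently, on some resolution $h\colon V\to Z$ the divisor $h^{*}(K_{Z}+B_{Z}+M_{Z})$ has a Nakayama--Zariski decomposition with semi-ample positive part (cf.~Remark~\ref{rem--mmp-zariskidecom} and \cite{nakayama}). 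Choosing a log resolution $f\colon Y\to X$ of $(X,\Delta)$ through which the rational map $Y\dashrightarrow V$ over $Z$ becomes a morphism $\rho\colon Y\to V$, we get $f^{*}(K_{X}+\Delta)\sim_{\mathbb{R}}\rho^{*}h^{*}(K_{Z}+B_{Z}+M_{Z})$, and since pull-back by a birational morphism carries a Nakayama--Zariski decomposition with nef (resp.~semi-ample) positive part to one of the same kind (Remark~\ref{rem--asy-van-ord-1}, and the argument of Lemma~\ref{lem--minmodel-zariskidecom}), $f^{*}(K_{X}+\Delta)$ has a Nakayama--Zariski decomposition with nef positive part. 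By Theorem~\ref{thmzariski}, $(X,\Delta)$ then has a log minimal model (indeed a good one).

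The main obstacle is the middle step: producing a good minimal model for $(Z,B_{Z}+M_{Z})$ when $\dim Z=4$. This is precisely where the bound $\dim Z\leq 4$ enters, and the extra rigidity supplied by the canonical bundle formula — that $M_{Z}$ is the moduli part of a fibration whose general fibre is an lc log Calabi--Yau pair, so that $\kappa_{\sigma}$ of the restriction of $K_{X}+\Delta$ to the general fibre vanishes — is what makes the required existence and abundance statements on $Z$ accessible. A secondary, purely technical point is the descent of the Nakayama--Zariski decomposition in the last step: pulling back may create new $\rho$-exceptional components, but these are controlled by Remark~\ref{rem--asy-van-ord-1} exactly as in the proof of Lemma~\ref{lem--minmodel-zariskidecom}, so the positive part stays nef. (An alternative to the last two paragraphs, once one knows $K_{X}+\Delta$ is abundant and that the restriction to every lc centre inherits the same structure via the canonical bundle formula over $\pi(\text{centre})$, is to verify the three hypotheses of Theorem~\ref{thm--ind-1} directly; this needs an additional argument to arrange nefness of $(K_{X}+\Delta)|_{S}$ and vanishing of $\sigma_{P}(K_{X}+\Delta)$ near lc centres, which is why I prefer the base-reduction route.)
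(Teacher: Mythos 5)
Your route is essentially the paper's: Stein factorization, Ambro's canonical bundle formula (\cite{fg-bundle}) to write $K_{X}+\Delta\sim_{\mathbb{R}}\pi^{*}(K_{Z}+B_{Z}+M_{Z})$ for a generalized lc pair on the at most four-dimensional base, a minimal model statement on $Z$, and transport of the Nakayama--Zariski decomposition back to $X$ followed by Theorem \ref{thmzariski}. Two of your justifications, however, need repair. First, you assert that $(Z,B_{Z}+M_{Z})$ has a \emph{good} minimal model ``via the minimal model theory and abundance in dimension four for this class of pairs''; abundance in dimension four is open, so this step as stated would fail, and the b-semiampleness of the moduli part, which you offer as a way to avoid generalized pairs, is likewise an open conjecture in general. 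What is actually available, and all that is needed, is the existence of a (not necessarily good) minimal model for generalized lc pairs of dimension at most four whose nef part is a finite $\mathbb{R}_{>0}$-linear combination of b-nef $\mathbb{Q}$-b-Cartier b-divisors; this is exactly how the paper argues, citing \cite[Corollary 3.4]{lt} and \cite[Theorem 1.5]{hanli} (see also \cite{hacon-moraga}). A minimal model already gives the Nakayama--Zariski decomposition of $h^{*}(K_{Z}+B_{Z}+M_{Z})$ with nef positive part, which is all Theorem \ref{thmzariski} requires for the conclusion of the lemma; accordingly your parenthetical claim that $(X,\Delta)$ has a good minimal model must be dropped, since semi-ampleness of the positive part is not established.

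Second, the transfer step is misattributed: the morphism $\rho\colon Y\to V$ is a fibration, not a birational morphism, so Remark \ref{rem--asy-van-ord-1} and the argument of Lemma \ref{lem--minmodel-zariskidecom} (which concern birational pullback and exceptional divisors) do not yield $N_{\sigma}\bigl(\rho^{*}h^{*}(K_{Z}+B_{Z}+M_{Z})\bigr)=\rho^{*}N_{\sigma}\bigl(h^{*}(K_{Z}+B_{Z}+M_{Z})\bigr)$. The fact is true, but the correct reference is \cite[III, 5.17 Corollary]{nakayama}, which is what the paper invokes at this point. With these two corrections your argument coincides with the paper's proof.
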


\begin{proof}
By taking the Stein factorization of $\pi$, we may assume that $Z$ is normal and $\pi$ is a contraction. 
By the argument using the convex geometry and Ambro's canonical bundle formula \cite{fg-bundle}, we can find a generalized lc pair $(Z,B_{Z}+M_{Z})$ such that 
$$K_{X}+\Delta\sim_{\mathbb{R}}\pi^{*}(K_{Z}+B_{Z}+M_{Z})$$
 and the nef part of $(Z,B_{Z}+M_{Z})$ is a finite $\mathbb{R}_{>0}$-linear combination of b-nef $\mathbb{Q}$-b-Cartier b-divisors (see \cite{bz} for generalized pairs). 
Since ${\rm dim}\,Z\leq 4$, by \cite[Theorem 4.1]{lt}, there is a log minimal model $(Z',B_{Z'}+M_{Z'})$ of $(Z,B_{Z}+M_{Z})$ as a generalized lc pair. 
By taking a common resolution of $Z \dashrightarrow Z'$ and the negativity lemma,  we can find a resolution $g\colon Z'\to Z$ such that $g^{*}(K_{Z}+B_{Z}+M_{Z})$ has the Nakayama--Zariski decomposition with nef positive part. 
Take a resolution $f\colon X'\to X$ so that the induced map $\pi'\colon X'\dashrightarrow Z'$ is a morphism. 
By \cite[III, 5.17 Corollary]{nakayama} and since 
$$f^{*}(K_{X}+\Delta)\sim_{\mathbb{R}} \pi'^{*}g^{*}(K_{Z}+B_{Z}+M_{Z}),$$
the positive part of the Nakayama--Zariski decomposition of $f^{*}(K_{X}+\Delta)$ is nef. 
Then $(X,\Delta)$ has a log minimal model by Theorem \ref{thmzariski}.  
\end{proof}

\begin{cor}\label{cor--appli}
Let $\pi\colon X \to Z$ be a projective morphism of normal quasi-projective varieties, and let $(X,\Delta)$ be an lc pair.
Suppose that 
\begin{itemize}
\item
${\rm dim}\,X=6$,
\item
$K_{X}+\Delta$ is pseudo-effective over $Z$ and $\kappa_{\iota}(X/Z,K_{X}+\Delta)\geq 3-{\rm dim}\,\pi(X)$, and
\item
$\llcorner \Delta \lrcorner =0$.
\end{itemize}
Then $(X,\Delta)$ has a log minimal model over $Z$. 
In particular, all projective lc pairs $(X,0)$ satisfying ${\rm dim}\,X=6$ and $\kappa(X,K_{X})\geq 3$ have log minimal models.    
\end{cor}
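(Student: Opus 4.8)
The plan is to reduce Corollary \ref{cor--appli} to Theorem \ref{thm--abundantterminate}; the essential point is to verify that $(X,\Delta)$, and hence every pair occurring in a suitable log MMP, is log abundant over $Z$. First I would replace $\pi$ by its Stein factorization, so that $\pi$ is a contraction, and set $d=\dim Z$, so the general fiber $F$ of $\pi$ has dimension $6-d$. Since $\llcorner\Delta\lrcorner=0$, both $(X,\Delta)$ and $(F,\Delta|_F)$ are klt; thus $(X,\Delta)$ has no lc center, and ``$(X,\Delta)$ is log abundant over $Z$'' means exactly ``$K_X+\Delta$ is abundant over $Z$''. Writing $k=\kappa_\iota(X/Z,K_X+\Delta)=\kappa_\iota(F,(K_X+\Delta)|_F)$, the hypothesis $k\geq 3-d$ together with $k\in\{-\infty\}\cup\mathbb{Z}_{\geq0}$ forces $k\geq 0$, so $K_X+\Delta\sim_{\mathbb{R},Z}E$ for some $E\geq0$ and in particular $K_X+\Delta$ is pseudo-effective over $Z$; moreover $(6-d)-k\leq 3$. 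Replacing $(X,\Delta)$ by a small $\mathbb{Q}$-factorialization (which exists by \cite{bchm}, stays klt, and is harmless by Lemma \ref{lem--bir-equiv}) I may assume $(X,\Delta)$ is $\mathbb{Q}$-factorial.

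The key step is to show $K_X+\Delta$ is abundant over $Z$. I would take the relative Iitaka fibration $X\dashrightarrow V$ over $Z$ associated to $E$, with $\dim V-\dim Z=k$; after resolving the map, its general fiber $G$ has dimension $(6-d)-k\leq3$ and, being a general fiber, carries an lc pair structure for which the restriction of $K_X+\Delta$ is the log canonical divisor and has invariant Iitaka dimension $0$. By the abundance theorem in dimension at most $3$, this divisor then has numerical dimension $0$, so $\kappa_\sigma(X/V,K_X+\Delta)=0$; by easy addition for the numerical dimension (applied fiberwise over $Z$), $\kappa_\sigma(X/Z,K_X+\Delta)\leq\dim V-\dim Z=k$. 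Since also $\kappa_\sigma\geq\kappa_\iota$, we obtain $\kappa_\sigma(X/Z,K_X+\Delta)=k=\kappa_\iota(X/Z,K_X+\Delta)$, i.e.\ $K_X+\Delta$ is abundant over $Z$, hence $(X,\Delta)$ is log abundant over $Z$.

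Finally, fix an ample $\mathbb{R}$-divisor $A$ with $(X,\Delta+A)$ lc and $K_X+\Delta+A$ nef over $Z$, and run a $(K_X+\Delta)$-MMP over $Z$ with scaling of $A$. Abundance over $Z$ is preserved at every step (apply Lemma \ref{lem--bir-relation} to each step), so each $(X_i,\Delta_i)$ is klt with $K_{X_i}+\Delta_i$ abundant over $Z$, hence log abundant over $Z$. By Theorem \ref{thm--abundantterminate} (invoked at step $l$ once the MMP is that long, and trivially otherwise), the MMP terminates with a good minimal model; in particular $(X,\Delta)$ has a log minimal model over $Z$. The ``in particular'' assertion is the case $Z=\mathrm{Spec}\,\mathbb{C}$, $\Delta=0$, $d=0$, $3-d=3$. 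I expect the abundance step to be the main obstacle: one must set up the resolved relative Iitaka fibration so that the general fiber $G$ genuinely carries the pair structure making $(K_X+\Delta)|_G$ its log canonical divisor of invariant Iitaka dimension $0$ — keeping exceptional contributions exceptional after restriction, so that Remark \ref{rem--div} applies — and one uses $\leq 3$-dimensional abundance in the precise form ``$\kappa_\iota=0\Rightarrow\kappa_\sigma=0$ for lc pairs''.
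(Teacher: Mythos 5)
The decisive error is the very first reduction: you claim that $\llcorner \Delta \lrcorner =0$ makes $(X,\Delta)$ (and its general fibers) klt, hence without lc centers, so that ``log abundant over $Z$'' collapses to ``abundant over $Z$''. This is false. The condition $\llcorner \Delta \lrcorner =0$ only rules out lc centers of codimension one; an lc pair with $\llcorner \Delta \lrcorner =0$ can perfectly well have lc centers of codimension $\geq 2$ (e.g.\ $X$ a cone over an elliptic curve with $\Delta=0$, or $(\mathbb{A}^{2},\tfrac12(L_{1}+L_{2}+L_{3}+L_{4}))$ for four lines through a point), and the same applies to the ``in particular'' case $\Delta=0$ with $X$ lc but not klt. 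So what the hypothesis actually gives is only that every lc center has dimension at most $4$. Consequently your later steps break down: being log abundant over $Z$ is \emph{not} the same as $K_{X}+\Delta$ being abundant over $Z$, you never verify abundance of the restrictions of $K_{X_{i}}+\Delta_{i}$ to lc centers (and there is no reason for it under the stated hypotheses), and therefore Theorem \ref{thm--abundantterminate} cannot be invoked the way you do. (The auxiliary claim that a small $\mathbb{Q}$-factorialization exists ``by \cite{bchm}'' also silently uses kltness; for a genuinely lc pair one takes a dlt blow-up instead.)

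Your second paragraph — abundance of $K_{X}+\Delta$ itself via the (relative) Iitaka fibration, $\kappa_{\sigma}(X/V,K_{X}+\Delta)=0$ from low-dimensional good minimal models, and Nakayama's easy addition — is essentially the same computation as in the paper and is fine. But the paper then takes a different route precisely because of the lc centers you discarded: it first reduces to $Z$ a point via an lc closure and an ample pullback, passes to a $\mathbb{Q}$-factorial dlt model and runs an MMP with scaling with $\lambda_{\infty}=0$, handles each lc center $T$ of the dlt model through the induced morphism $T\to S$ onto an lc center $S$ of dimension $\leq 4$ with $K+\Gamma\sim_{\mathbb{R},S}0$, where Lemma \ref{lem--mmp-can-bundle-formula} (canonical bundle formula plus minimal models for generalized pairs in dimension $\leq 4$) supplies log minimal models or Mori fiber spaces, and then combines special termination with Lemma \ref{lem--mmpscaling-effective} to verify the hypotheses of Theorem \ref{thm--ind-1}, which requires only abundance of the total log canonical divisor together with nefness and $\sigma_{P}=0$ along lc centers — not log abundance. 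To repair your argument you would have to either prove log abundance of the pairs in the MMP (which is not available) or switch to this Theorem \ref{thm--ind-1} strategy.
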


\begin{proof}
We first show that we may assume that $Z$ is a point. 
By \cite[Corollary 1.3]{has-mmp}, there is an lc closure $(\overline{X},\overline{\Delta})$ of $(X,\Delta)$, that is, a projective lc pair $(\overline{X},\overline{\Delta})$ and a projective morphism $\overline{\pi}\colon \overline{X} \to \overline{Z}$ such that $X$ (resp.~$Z$) is an open subset of $\overline{X}$ (resp.~$\overline{Z}$), $\overline{\pi}|_{X}=\pi$, $(\overline{X}|_{X},\overline{\Delta}|_{X})=(X,\Delta)$, and all lc centers of $(\overline{X},\overline{\Delta})$ intersect $X$. 
Let $A_{\overline{Z}}$ be a sufficiently ample $\mathbb{R}$-divisor on $\overline{Z}$.  By taking $A_{\overline{Z}}$ generally (see \cite[pp.~112--113]{hashizumehu} for the generality), we may assume $\llcorner (\overline{\Delta}  +\overline{\pi}^{*}A_{\overline{Z}})\lrcorner=0$. 
Moreover, the second condition of Corollary \ref{cor--appli} implies 
$$\kappa_{\iota}(\overline{X},K_{\overline{X}}+\overline{\Delta}  +\overline{\pi}^{*}A_{\overline{Z}}) \geq 3.$$ 
Since $A_{\overline{Z}}$ is sufficiently ample, the existence of a log minimal model of $(X,\Delta)$ over $Z$ follows from that of $(\overline{X},\overline{\Delta}+\overline{\pi}^{*}A_{\overline{Z}})$. 
Replacing $(X,\Delta) \to Z$ by $(\overline{X},\overline{\Delta}+\overline{\pi}^{*}A_{\overline{Z}})\to {\rm Spec}\,\mathbb{C}$, we may assume that $Z$ is a point.

Next, we prove that $K_{X}+\Delta$ is abundant. 
Note that we do not need the condition $\llcorner \Delta \lrcorner =0$ to prove the fact. 
Therefore, in this paragraph we will freely replace $(X,\Delta)$ by a log smooth model (\cite[Definition 2.9]{has-trivial}). 
By replacing $(X,\Delta)$, we may assume that the Iitaka fibration $X \dashrightarrow V$ associated to $K_{X}+\Delta$ is a morphism. 
Since $\kappa_{\iota}(X,K_{X}+\Delta)\geq 3$ and ${\rm dim}\,X=6$, the general fibers $F$ of the Iitaka fibration satisfy ${\rm dim}\,F\leq3$. 
Then $(F,\Delta|_{F})$ has a good minimal model. 
Therefore $\kappa_{\sigma}(X/V, K_{X}+\Delta)=\kappa_{\iota}(F, K_{F}+\Delta|_{F})=0$. 
By \cite[V, 2.7 Proposition (9)]{nakayama}, we have 
$$\kappa_{\sigma}(X, K_{X}+\Delta)\leq\kappa_{\sigma}(X/V, K_{X}+\Delta)+{\rm dim}\,V={\rm dim}\,V.$$
Since ${\rm dim}\,V=\kappa_{\iota}(X,K_{X}+\Delta)$, we have $\kappa_{\sigma}(X, K_{X}+\Delta)\leq\kappa_{\iota}(X,K_{X}+\Delta)$. 
This implies that $K_{X}+\Delta$ is abundant. 

By Lemma \ref{lem--mmp-termi}, we can construct a sequence of  steps of a $(K_{X}+\Delta)$-MMP with scaling of an ample divisor $H$
$$(X_{0}:=X,\Delta_{0}:=\Delta) \dashrightarrow (X_{1},\Delta_{1}) \dashrightarrow\cdots \dashrightarrow (X_{i},\Delta_{i})\dashrightarrow \cdots$$
such that if we define 
$$\lambda_{i}={\rm inf}\set{\mu \in \mathbb{R}_{\geq0} \!|\! K_{X_{i}}+\Delta_{i}+\mu H_{i}\text{\rm \, is nef}}$$
 and $\lambda_{\infty}:={\rm lim}_{i\to\infty}\lambda_{i}$, where $H_{i}$ is the birational transform of $H$ on $X_{i}$, then $\lambda_{\infty}=0$. 
Lifting this log MMP (\cite[Remark 2.9]{birkar-flip}), we get a dlt blow-up $f\colon (Y,\Gamma)\to (X,\Delta)$ and a sequence of steps of a $(K_{Y}+\Gamma)$-MMP with scaling of $f^{*}H$
$$(Y_{0}:=Y,\Gamma_{0}:=\Gamma) \dashrightarrow (Y_{k_{1}},\Gamma_{k_{1}}) \dashrightarrow\cdots \dashrightarrow (Y_{k_{i}},\Gamma_{k_{i}})\dashrightarrow \cdots$$
such that $(Y_{k_{i}},\Gamma_{k_{i}})$ is a $\mathbb{Q}$-factorial dlt model of $(X_{i},\Delta_{i})$ for each $i$. 

For each $i$ and any lc center $T_{k_{i}}$ of $(Y_{k_{i}},\Gamma_{k_{i}})$, we consider the dlt pair $(T_{k_{i}},\Gamma_{T_{k_{i}}})$ defined by adjunction $K_{T_{k_{i}}}+\Gamma_{T_{k_{i}}}=(K_{Y_{k_{i}}}+\Gamma_{k_{i}})|_{T_{k_{i}}}$. 
Since $\llcorner \Delta \lrcorner =0$ by the hypothesis, we have $\llcorner \Delta_{i} \lrcorner =0$. 
Since ${\rm dim}\,X_{i}=6$, all lc centers of $(X_{i},\Delta_{i})$ have the dimension at most four. 
Therefore, the morphism $Y_{k_{i}}\to X_{i}$ induces a morphism $T_{k_{i}}\to S_{i}$ to a variety $S_{i}$ such that ${\rm dim}\,S_{i}\leq 4$ and $K_{Y_{k_{i}}}+\Gamma_{k_{i}}\sim_{\mathbb{R},S_{i}}0$. 
By Lemma \ref{lem--mmp-can-bundle-formula}, the pair $(T_{k_{i}},\Gamma_{T_{k_{i}}})$ has a log minimal model or a Mori fiber space. 

By the argument of the previous paragraph and the standard argument of the special termination (\cite{fujino-sp-ter}, Remark \ref{rem--mmplift}), there is a positive integer $m$ such that for any $i \geq k_{m}$, the non-isomorphic locus of the birational map $Y_{k_{m}}\dashrightarrow Y_{i}$ does not intersect $\llcorner \Gamma_{k_{m}}\lrcorner$. 
Since $\lambda_{\infty}=0$, by Lemma \ref{lem--mmpscaling-effective}, the restriction of 
 $K_{Y_{k_{m}}}+\Gamma_{k_{m}}$ to any component of $\llcorner \Gamma_{k_{m}}\lrcorner$ is nef, and $\sigma_{P}(K_{Y_{k_{m}}}+\Gamma_{k_{m}})=0$ for every prime divisor $P$ over $Y_{k_{m}}$ such that $c_{Y_{k_{m}}}(P)$ intersects an lc center of $(Y_{k_{m}},\Gamma_{k_{m}})$. 
Because $K_{X}+\Delta$ is abundant, we see that $K_{Y_{k_{m}}}+\Gamma_{k_{m}}$ is abundant. 
By Theorem \ref{thm--ind-1}, we see that $(Y_{k_{m}},\Gamma_{k_{m}})$ has a log minimal model. 
Then $(Y,\Gamma)$ has a log minimal model, thus, $(X,\Delta)$ has a log minimal model. 

The second assertion directly follows from the first assertion. 
\end{proof} 

\begin{cor}\label{cor--appli-2}
Let $\pi\colon X \to Z$ be a projective morphism of normal quasi-projective varieties and $(X,\Delta)$ an lc pair. 
Suppose that
\begin{itemize}
\item
$K_{X}+\Delta$ is pseudo-effective over $Z$,
\item
$\kappa_{\iota}(X/Z,K_{X}+\Delta)\geq {\rm dim}\,X-3-{\rm dim}\,\pi(X)$, and 
\item
all lc centers $S$ of $(X,\Delta)$ satisfy ${\rm dim}\,S\leq 4$. 
\end{itemize}
Then $(X,\Delta)$ has a log minimal model over $Z$. 
\end{cor}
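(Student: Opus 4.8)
The plan is to follow the proof of Corollary \ref{cor--appli}, with the hypotheses ${\rm dim}X=6$ and $\llcorner\Delta\lrcorner=0$ replaced by the present assumptions. First note that the hypothesis implies $\kappa_{\iota}(X/Z,K_{X}+\Delta)\geq0$ (the invariant Iitaka dimension takes values in $\{-\infty\}\cup\mathbb{Z}_{\geq0}$, so any finite lower bound forces it to be $\geq0$), hence $K_{X}+\Delta$ is pseudo-effective over $Z$; moreover, since lc pairs of dimension $\leq3$ have good minimal models, we may assume ${\rm dim}X\geq4$. Then, exactly as in Corollary \ref{cor--appli}, I would reduce to the case where $Z$ is a point: using the lc closure of \cite[Corollary 1.3]{has-mmp} take a projective lc pair $(\bar{X},\bar{\Delta})$ with $\bar{\pi}\colon\bar{X}\to\bar{Z}$ such that $X$ (resp.~$Z$) is open in $\bar{X}$ (resp.~$\bar{Z}$), $(\bar{X}|_{X},\bar{\Delta}|_{X})=(X,\Delta)$ and every lc center of $(\bar{X},\bar{\Delta})$ meets $X$, choose a general sufficiently ample $\mathbb{R}$-divisor $A_{\bar{Z}}$ on $\bar{Z}$, and replace $(X,\Delta)\to Z$ by $(\bar{X},\bar{\Delta}+\bar{\pi}^{*}A_{\bar{Z}})\to{\rm Spec}\,\mathbb{C}$. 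Since every lc center of $(\bar{X},\bar{\Delta})$ meets $X$ and restricts there to an lc center of $(X,\Delta)$ of the same dimension, and a general member of $|\bar{\pi}^{*}A_{\bar{Z}}|$ creates no new lc center, the lc centers of the new pair still have dimension $\leq4$; and by Iitaka's easy addition the hypothesis becomes $\kappa_{\iota}(X,K_{X}+\Delta)\geq{\rm dim}X-3$ for the new pair.

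Next I would prove $K_{X}+\Delta$ is abundant, as in Corollary \ref{cor--appli}: freely passing to a log smooth model, the Iitaka fibration $X\to V$ of $K_{X}+\Delta$ may be taken to be a morphism with ${\rm dim}V=\kappa_{\iota}(X,K_{X}+\Delta)\geq{\rm dim}X-3\geq1$, so a general fiber $F$ has ${\rm dim}F\leq3$; as $(F,\Delta|_{F})$ then has a good minimal model, $\kappa_{\sigma}(X/V,K_{X}+\Delta)=\kappa_{\iota}(F,(K_{X}+\Delta)|_{F})=0$, and \cite[V, 2.7 Proposition]{nakayama} gives $\kappa_{\sigma}(X,K_{X}+\Delta)\leq{\rm dim}V=\kappa_{\iota}(X,K_{X}+\Delta)$, whence equality and abundance.

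Then, as in Corollary \ref{cor--appli}, I would run a $(K_{X}+\Delta)$-MMP with scaling of an ample divisor $H$ that either terminates or has $\lambda_{\infty}=0$ (via \cite[Proof of Lemma 2.14]{has-mmp}, the length of extremal rays \cite[Section 18]{fujino-fund}, and \cite[Proposition 6.1]{hashizumehu}), lift it by \cite[Remark 2.9]{birkar-flip} to a $\mathbb{Q}$-factorial dlt model $f\colon(Y,\Gamma)\to(X,\Delta)$ together with a $(K_{Y}+\Gamma)$-MMP with scaling of $f^{*}H$ so that each $(Y_{k_{i}},\Gamma_{k_{i}})$ is a crepant $\mathbb{Q}$-factorial dlt model of $(X_{i},\Delta_{i})$. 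The point at which the new hypothesis enters is the following: for each $i$ and each lc center $T$ of $(Y_{k_{i}},\Gamma_{k_{i}})$, the image $S_{i}$ of $T$ in $X_{i}$ is an lc center of $(X_{i},\Delta_{i})$, and since $X\dashrightarrow X_{i}$ carries lc centers to lc centers birationally (\cite[Lemma 4.2.10]{fujino-sp-ter}), $S_{i}$ corresponds to an lc center of $(X,\Delta)$, hence ${\rm dim}S_{i}\leq4$. Because $K_{Y_{k_{i}}}+\Gamma_{k_{i}}$ is pulled back from $X_{i}$, restriction to $T$ gives a morphism $T\to S_{i}$ with ${\rm dim}S_{i}\leq4$ and $K_{T}+\Gamma_{T}\sim_{\mathbb{R},S_{i}}0$, where $K_{T}+\Gamma_{T}=(K_{Y_{k_{i}}}+\Gamma_{k_{i}})|_{T}$, so Lemma \ref{lem--mmp-can-bundle-formula} shows $(T,\Gamma_{T})$ has a log minimal model (if $K_{T}+\Gamma_{T}$ is pseudo-effective) or a Mori fiber space (otherwise). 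From here the argument is identical to Corollary \ref{cor--appli}: the special termination of \cite{fujino-sp-ter} yields $m$ with the non-isomorphic locus of $Y_{k_{m}}\dashrightarrow Y_{i}$ disjoint from $\llcorner\Gamma_{k_{m}}\lrcorner$ for all $i\geq k_{m}$; together with $\lambda_{\infty}=0$ and Lemma \ref{lem--mmpscaling-effective} this makes the restriction of $K_{Y_{k_{m}}}+\Gamma_{k_{m}}$ to each component of $\llcorner\Gamma_{k_{m}}\lrcorner$ nef and gives $\sigma_{P}(K_{Y_{k_{m}}}+\Gamma_{k_{m}})=0$ whenever $c_{Y_{k_{m}}}(P)$ meets an lc center; abundance of $K_{X}+\Delta$ gives abundance of $K_{Y_{k_{m}}}+\Gamma_{k_{m}}$, and Theorem \ref{thm--ind-1} produces a log minimal model of $(Y_{k_{m}},\Gamma_{k_{m}})$, hence of $(Y,\Gamma)$, hence of $(X,\Delta)$, hence of the original pair over $Z$.

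The main obstacle is exactly the step highlighted above: controlling the dimensions of the lc centers that occur along the lifted MMP, so that the base $S_{i}$ over which $K_{T}+\Gamma_{T}$ becomes trivial stays of dimension $\leq4$ and Lemma \ref{lem--mmp-can-bundle-formula} (which requires a base of dimension $\leq4$) applies. This is precisely what the hypothesis ``${\rm dim}S\leq4$'' is designed to supply, and it propagates along the MMP by the standard behaviour of lc centers of dlt pairs under steps of an MMP; granting this, the remainder of the proof is the same as that of Corollary \ref{cor--appli}.
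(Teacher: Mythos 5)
Your proposal is correct and takes essentially the same route as the paper's proof: reduce to the case where $Z$ is a point via the lc closure, prove abundance of $K_{X}+\Delta$ through the Iitaka fibration and Nakayama's inequality, run and lift the scaled MMP to $\mathbb{Q}$-factorial dlt models, use the hypothesis $\dim S\leq 4$ to apply Lemma \ref{lem--mmp-can-bundle-formula} to the adjunction pairs on lc centers, invoke special termination, and conclude with Theorem \ref{thm--ind-1}. Your extra preliminary remarks (pseudo-effectivity over $Z$ and the reduction to $\dim X\geq 4$) are harmless refinements of steps the paper leaves implicit.
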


Corollary \ref{cor--appli-2} looks similar to the case $n=3$ of \cite[Theorem 1.2]{hashizumehu}. 
The difference between these two results is that Corollary \ref{cor--appli-2} allows the existence of lc centers of dimension four but only shows the existence of a log minimal model. 

\begin{proof}[Proof of Corollary \ref{cor--appli-2}]
The proof is very similar to that of Corollary \ref{cor--appli}. 
So we only outline the proof. 

First, by \cite[Corollary 1.3]{has-mmp} and the same argument as in the first paragraph of the proof of Corollary \ref{cor--appli}, we may assume that $Z$ is a point. 
Note that the three conditions of Corollary \ref{cor--appli-2} are preserved after this reduction. 
Next, by the same argument as in the second paragraph of the proof of Corollary \ref{cor--appli}, we see that $K_{X}+\Delta$ is abundant. 

By Lemma \ref{lem--mmp-termi}, we can construct a sequence of  steps of a $(K_{X}+\Delta)$-MMP with scaling of an ample divisor $H$
$$(X_{0}:=X,\Delta_{0}:=\Delta) \dashrightarrow (X_{1},\Delta_{1}) \dashrightarrow\cdots \dashrightarrow (X_{i},\Delta_{i})\dashrightarrow \cdots$$
such that if we define 
$$\lambda_{i}={\rm inf}\set{\mu \in \mathbb{R}_{\geq0} \!|\! K_{X_{i}}+\Delta_{i}+\mu H_{i}\text{\rm \, is nef}},$$
where $H_{i}$ is the birational transform of $H$ on $X_{i}$, then the $(K_{X}+\Delta)$-MMP terminates or ${\rm lim}_{i\to\infty}\lambda_{i}=0$. 
By lifting this log MMP (\cite[Remark 2.9]{birkar-flip}), we obtain a dlt blow-up $f\colon (Y,\Gamma)\to (X,\Delta)$ and a sequence of steps of a $(K_{Y}+\Gamma)$-MMP with scaling of $f^{*}H$ 
$$(Y_{0}:=Y,\Gamma_{0}:=\Gamma) \dashrightarrow (Y_{k_{1}},\Gamma_{k_{1}}) \dashrightarrow\cdots \dashrightarrow (Y_{k_{i}},\Gamma_{k_{i}})\dashrightarrow \cdots$$
such that $(Y_{k_{i}},\Gamma_{k_{i}})$ is a $\mathbb{Q}$-factorial dlt model of $(X_{i},\Delta_{i})$ for each $i$. 

For each $i$ and any lc center $T_{k_{i}}$ of $(Y_{k_{i}},\Gamma_{k_{i}})$, we consider the dlt pair $(T_{k_{i}},\Gamma_{T_{k_{i}}})$ defined by adjunction $K_{T_{k_{i}}}+\Gamma_{T_{k_{i}}}=(K_{Y_{k_{i}}}+\Gamma_{k_{i}})|_{T_{k_{i}}}$. 
Let $S_{i}$ be the image of $T_{k_{i}}$ by the morphism $Y_{k_{i}}\to X_{i}$. 
Then $S_{i}$ is an lc center of $(X_{i},\Delta_{i})$, and there is an lc center $S$ of $(X,\Delta)$ such that the birational map $X\dashrightarrow X_{i}$ induces a birational map $S \dashrightarrow S_{i}$. 
In particular, we have ${\rm dim}\,S_{i}={\rm dim}\,S\leq 4$, where the inequality follows from the third condition of Corollary \ref{cor--appli-2}. 
Therefore, the induced morphism $T_{k_{i}}\to S_{i}$ satisfies ${\rm dim}\,S_{i}\leq 4$ and $K_{Y_{k_{i}}}+\Gamma_{k_{i}}\sim_{\mathbb{R},S_{i}}0$. 
By Lemma \ref{lem--mmp-can-bundle-formula}, the dlt pair $(T_{k_{i}},\Gamma_{T_{k_{i}}})$ has a log minimal model or a Mori fiber space. 

Finally, by arguing as in the end of the proof of Corollary \ref{cor--appli}, we deduce that $(X,\Delta)$ has a log minimal model, as claimed.  
\end{proof}

\section{On log MMP for semi-log canonical pairs}\label{sec4}

In this section, by applying Corollary \ref{cor--logabund-preserved-mmp} and the main result of \cite{ambrokollar}, we discuss the log MMP for semi-log canonical (slc, for short) pairs. 
We assume that all schemes in this section are reduced separated schemes of finite type over ${\rm Spec}\,\mathbb{C}$. 
For definition of slc pairs, see \cite[Definition 4.13.3]{fujino-book}. 
We only deal with projective slc pairs, that is, slc pairs $(X,\Delta)$ such that $X$ is projective over ${\rm Spec}\,\mathbb{C}$. 

The goal of this section is to prove the following generalization of Corollary \ref{cor--logabund-preserved-mmp} and to discuss its applications.  

\begin{thm}\label{thm--mmpslclogabund}
Let $(X,\Delta)$ be a projective slc pair such that $\Delta$ is a $\mathbb{Q}$-divisor. 
Let $\nu \colon (\bar{X},\bar{\Delta})\to (X,\Delta)$ be the normalization, where $K_{\bar{X}}+\bar{\Delta}=\nu^{*}(K_{X}+\Delta)$. 
Suppose that every irreducible component $(\bar{X}^{(j)},\bar{\Delta}^{(j)})$ of $(\bar{X},\bar{\Delta})$ satisfies the following conditions. 
\begin{itemize}
\item
$(\bar{X}^{(j)},\bar{\Delta}^{(j)})$ is log abundant and $K_{\bar{X}^{(j)}}+\bar{\Delta}^{(j)}$ is pseudo-effective, and
\item
the stable base locus of $K_{\bar{X}^{(j)}}+\bar{\Delta}^{(j)}$ does not contain the image of any prime divisor $P$ over $\bar{X}^{(j)}$ satisfying $a(P,\bar{X}^{(j)},\bar{\Delta}^{(j)})<0$. 
\end{itemize}
Then there is a sequence of MMP steps for $(X,\Delta)$
$$(X,\Delta)\dashrightarrow \cdots \dashrightarrow (X_{i},\Delta_{i})\dashrightarrow \cdots \dashrightarrow (X_{m},\Delta_{m})$$
such that $K_{X_{m}}+\Delta_{m}$ is semi-ample. 
\end{thm}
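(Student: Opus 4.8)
The plan is to reduce Theorem \ref{thm--mmpslclogabund} to Corollary \ref{cor--logabund-preserved-mmp} applied on the normalization, and then to descend the resulting good minimal model to the slc pair using the gluing theory of Ambro--Koll\'ar. The first step is to observe that the hypotheses of Corollary \ref{cor--logabund-preserved-mmp} are exactly imposed on every irreducible component $(\bar{X}^{(j)},\bar{\Delta}^{(j)})$ of the normalization. Hence each $(\bar{X}^{(j)},\bar{\Delta}^{(j)})$ has a good minimal model; since the components are disjoint, $(\bar{X},\bar{\Delta})$ itself has a good minimal model, say $(\bar{Y},\bar{\Gamma})$, obtained by running a $(K_{\bar{X}}+\bar{\Delta})$-MMP with scaling of an ample divisor (componentwise). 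Because $\bar{\Delta}=\nu^{*}(K_{X}+\Delta)-K_{\bar{X}}$ comes from $X$, the normalization crepantly pulls back the conductor, and one should check that the steps of this MMP are compatible with the involution on the conductor divisor that encodes the gluing data; this is where one invokes the descent of the log MMP for the normalization to an MMP for $(X,\Delta)$, as developed in \cite{ambrokollar} (the main result there is precisely what allows running a log MMP for slc pairs in a situation governed by the normalization). The output is a sequence $(X,\Delta)\dashrightarrow\cdots\dashrightarrow(X_{m},\Delta_{m})$ of MMP steps for the slc pair whose normalization is $(\bar{Y},\bar{\Gamma})$.

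**Next I would** argue that $K_{X_{m}}+\Delta_{m}$ is semi-ample. On the normalization, $K_{\bar{Y}}+\bar{\Gamma}$ is semi-ample because $(\bar{Y},\bar{\Gamma})$ is a good minimal model of each component. Semi-ampleness of $K_{X_{m}}+\Delta_{m}$ then follows from descent of semi-ampleness from the normalization to an slc pair, which is a standard consequence of the theory of sources and springs / the gluing result of Fujino and Ambro--Koll\'ar: a line bundle on $X_{m}$ is semi-ample if and only if its pullback to $\bar{Y}$ is semi-ample and the pullback descends compatibly with the conductor involution. One has to verify this compatibility, namely that the morphism defined by a large multiple of $K_{\bar{Y}}+\bar{\Gamma}$ respects the involution $\tau$ on the conductor; this holds because $\tau$ is a $\mathbb{Q}$-isomorphism of log pairs preserving $K+\Gamma$, hence it preserves the associated semi-ample fibration. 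I would also record that the $\mathbb{Q}$-divisor hypothesis on $\Delta$ is used here so that $K_{X_{m}}+\Delta_{m}$ is $\mathbb{Q}$-Cartier and the relevant pushforward/base-point-free statements apply.

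**The hard part will be** making the descent of the MMP precise: one must ensure that the sequence of flips and divisorial contractions run componentwise on $(\bar{X},\bar{\Delta})$ is $\tau$-equivariant, so that it actually glues to a genuine sequence of MMP steps for $(X,\Delta)$ rather than merely for its normalization. In general a log MMP on a disjoint union of the components need not be compatible with the gluing involution, which is exactly why one cannot run the MMP for slc pairs freely; the point of invoking \cite{ambrokollar} is that in the pseudo-effective abundant setting there is enough rigidity (the good minimal model is essentially canonical up to small birational maps that move no lc centers, by Remark \ref{rem--models} and the uniqueness of the ample model) to choose the MMP equivariantly. I would therefore spell out that, after reaching $(\bar{Y},\bar{\Gamma})$, the ample model $\bar{Y}\to \bar{Y}^{\mathrm{can}}$ of $K_{\bar{Y}}+\bar{\Gamma}$ is unique, hence $\tau$-equivariant, and Ambro--Koll\'ar's gluing produces the projective slc model $X_{m}$ together with a semi-ample $K_{X_{m}}+\Delta_{m}$; the intermediate MMP steps on $X$ exist by the same gluing applied stepwise, invoking that each extremal contraction on the normalization that is $\tau$-equivariant descends. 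Once this equivariance is in place, the rest is routine bookkeeping.
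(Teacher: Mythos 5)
Your overall strategy---get good minimal models on the normalization from Corollary \ref{cor--logabund-preserved-mmp} and then glue via Ambro--Koll\'ar---points in the right direction, but the step you yourself flag as the hard part is a genuine gap, and the mechanism you propose for it does not work. The theorem demands a sequence of MMP steps for the slc pair $(X,\Delta)$ in the sense of Definition \ref{defn--mmpslc}, and you propose to produce the intermediate steps by choosing the componentwise $(K_{\bar{X}}+\bar{\Delta})$-MMP ``$\tau$-equivariantly'' and descending each extremal contraction. This is not justified: the involution $\tau$ acts on the normalization of the conductor divisor, not on $\bar{X}$ itself, so equivariance of an extremal contraction is not even the correct condition; uniqueness of the good minimal model or of the ample model does not single out a particular chain of extremal contractions; and descending an individual step of the usual MMP to the slc pair is precisely what fails in general (this is why slc MMP is delicate, cf.\ the examples cited in the paper, \cite[Examples 4 and 5]{ambrokollar}, \cite[Example 5.4]{fujino-fund-slc}). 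Note also that a single extremal contraction or flip on $\bar{X}$ is not an ``MMP step'' in the sense of Definition \ref{defn--mmpslc}, which requires $-(K_{X}+\Delta)$ and $K_{X'}+\Delta'$ ample over a base $Z$ with generically finite structure morphisms; so even granting equivariance you would not have built the objects the statement asks for, nor shown that finitely many of them reach $\lambda=0$.

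The paper's proof sidesteps every equivariance issue by constructing each slc step intrinsically (Lemma \ref{lem--mmpstepslc}): fix an ample $H$ and let $\lambda$ be the nef threshold; by the rationality and base-point-free theorems for slc pairs \cite{fujino-fund-slc}, $K_{X}+\Delta+\lambda H$ is semi-ample and defines $f\colon X\to Z$ on $X$ itself, so no choice on the normalization is involved at this stage; then one takes the relative canonical model of each component of $(\bar{X},\bar{\Delta})$ over (the Stein factorization of) $Z$, which is unique, and the second hypothesis (the stable base locus avoids images of divisors with negative discrepancy) is exactly what verifies the condition of \cite[Theorem 9]{ambrokollar}, so the canonical models glue to an slc pair and \cite[Lemma 12]{ambrokollar} makes the diagram an MMP step with scaling of $H$. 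Corollary \ref{cor--logabund-preserved-mmp} enters only afterwards: it gives a usual $(K+\Delta)$-MMP with scaling on each component terminating in a good minimal model (\cite[Theorem 1.7]{hashizumehu}), and comparing its nef thresholds with the thresholds $\lambda_{i}$ of the slc steps shows $\lambda_{m}=0$ is reached after finitely many steps; semi-ampleness of $K_{X_{m}}+\Delta_{m}$ then descends from the normalization by \cite[Theorem 1.4]{haconxu} or \cite[Theorem 1.5]{fujino-gongyo}, which is the one part of your sketch that matches the paper. So your proposal is missing both the actual construction of the intermediate steps and the termination argument.
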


We recall the definition of an MMP step for projective slc pairs. 

\begin{defn}[MMP step, {\cite[Definition 11]{ambrokollar}}]\label{defn--mmpslc}
Let $(X,\Delta)$ be a projective slc pair such that $\Delta$ is a $\mathbb{Q}$-divisor. 
Then an {\em MMP step} for $(X,\Delta)$ is a diagram  
$$
\xymatrix
{
(X,\Delta)\ar[dr]_{f}\ar@{-->}[rr]^{\phi}&&(X',\Delta')\ar[dl]^{f'}\\
&Z
}
$$
such that
\begin{enumerate}
\item \label{mmpslc-1}
$(X',\Delta')$ is an slc pair such that $X'$ is projective,
\item \label{mmpslc-2}
$\phi$ is birational and an isomorphism on an open dense subset containing all generic points of codimension one singular locus,  
\item \label{mmpslc-3}
$\Delta'=\phi_{*}\Delta$,  
\item \label{mmpslc-4}
$Z$ is a projective scheme, $f$ and $f'$ are generically finite morphisms, and $f'$ has no exceptional divisors, and 
\item \label{mmpslc-5}
$-(K_{X}+\Delta)$ and $K_{X'}+\Delta'$ are ample over $Z$. 
\end{enumerate}
Let $H$ be a $\mathbb{Q}$-Cartier $\mathbb{Q}$-divisor on $X$. 
Then the above diagram is an {\em MMP step with scaling of} $H$ if the diagram satisfies (\ref{mmpslc-1})--(\ref{mmpslc-5}) stated above and
\begin{enumerate}\setcounter{enumi}{5}
\item \label{mmpslc-6}
$H$ is $f$-ample and $-H':=-\phi_{*}H$ is an $f'$-ample $\mathbb{Q}$-Cartier $\mathbb{Q}$-divisor on $X'$, 
\item \label{mmpslc-7}
$K_{X}+\Delta+\lambda H$ is numerically trivial over $Z$ for some $\lambda\in \mathbb{Q}$, and
\item \label{mmpslc-8}
for every proper curve $C$ on $X$, if $C$ is not contracted by $f$ then the strict inequality $(K_{X}+\Delta+\lambda H)\cdot C>0$ holds.   
\end{enumerate}
\end{defn}

The above definition is different from the usual log MMP even if $(X,\Delta)$ is lc. 
The following example illustrates the difference between the usual log MMP and the log MMP in the sense of Definition \ref{defn--mmpslc}. 
\begin{exam}
Let $(X,\Delta)$ be a projective lc pair such that $\Delta$ is a $\mathbb{Q}$-divisor and $K_{X}+\Delta$ is pseudo-effective but not nef. 
Let $A$ be an ample $\mathbb{Q}$-divisor on $X$ such that $K_{X}+\Delta+A$ is nef but not ample. 
Then $K_{X}+\Delta+A$ is semi-ample (\cite[Theorem 13.1]{fujino-fund}) and big, so it induces a generically finite morphism $f\colon X\to Z$ to a projective variety $Z$. 
Then $-(K_{X}+\Delta)\sim_{\mathbb{Q},Z}A$ is ample over $Z$ and a curve $C$ on $X$ is contracted by $f$ if and only if $(K_{X}+\Delta+A)\cdot C=0$. 
By \cite[Theorem 1.1]{birkar-flip} and \cite[Theorem 1.7]{hashizumehu}, we may run a $(K_{X}+\Delta)$-MMP over $Z$ and we may get a good minimal model $(X_{l},\Delta_{l})$ of $(X,\Delta)$ over $Z$. 
Let $X_{l}\to X'$ be the birational morphism over $Z$ induced by $K_{X_{l}}+\Delta_{l}$. 
Let $\Delta'$ (resp.~$A'$) be the birational transform of $\Delta$ (resp.~$A$) on $X'$. 
Then $(X',\Delta')$ is lc, $K_{X'}+\Delta'$ is ample over $Z$, and $K_{X'}+\Delta'+A'\sim_{\mathbb{Q},Z}0$. 

We can directly check that the birational map $(X,\Delta)\dashrightarrow (X',\Delta')$  over $Z$ is an MMP step with scaling of $A$ for $(X,\Delta)$ as in Definition \ref{defn--mmpslc}. However, this birational map is not a step of the usual $(K_{X}+\Delta)$-MMP unless $f$ is birational and the relative Picard number is one.  
\end{exam}

In general, we cannot construct MMP steps for slc pairs inductively. 
See, for example, \cite[Example 4 and Example 5]{ambrokollar} and \cite[Example 5.4]{fujino-fund-slc}. 
However, by \cite[Theorem 9]{ambrokollar}, we can construct MMP steps for slc pairs whose normalization satisfies the two conditions of Corollary \ref{cor--logabund-preserved-mmp}. 
We will prove this fact. 

\begin{lem}\label{lem--mmpstepslc}
Let $(X,\Delta)$ be a projective slc pair such that $\Delta$ is a $\mathbb{Q}$-divisor. 
Let $\nu \colon (\bar{X},\bar{\Delta})\to (X,\Delta)$ be the normalization, where $K_{\bar{X}}+\bar{\Delta}=\nu^{*}(K_{X}+\Delta)$. 
Suppose that every irreducible component $(\bar{X}^{(j)},\bar{\Delta}^{(j)})$ of $(\bar{X},\bar{\Delta})$ satisfies the following.  
\begin{itemize}
\item
$|K_{\bar{X}^{(j)}}+\bar{\Delta}^{(j)}|_{\mathbb{R}}\neq \emptyset$ and the stable base locus of $K_{\bar{X}^{(j)}}+\bar{\Delta}^{(j)}$ does not contain the image of any prime divisor $P$ over $\bar{X}^{(j)}$ satisfying $a(P,\bar{X}^{(j)},\bar{\Delta}^{(j)})<0$. 
\end{itemize}
Let $H$ be a $\mathbb{Q}$-Cartier $\mathbb{Q}$-divisor on $X$ such that $K_{X}+\Delta+c H$ is ample for some $c>0$.
We define
$$\lambda={\rm inf}\set{\mu \in \mathbb{R}_{\geq 0}|\text{$K_{X}+\Delta+\mu H$ is nef}}.$$ 
If $\lambda>0$, then we can construct a diagram of an MMP step with scaling of $H$ for $(X,\Delta)$ in Definition \ref{defn--mmpslc}
$$
\xymatrix
{
(X,\Delta)\ar[dr]_{f}\ar@{-->}[rr]^{\phi}&&(X',\Delta')\ar[dl]^{f'}\\
&Z
}
$$
such that $f_{*}\mathcal{O}_{X} \simeq \mathcal{O}_{Z}$ and $K_{X'}+\Delta'+c'H'$ is ample for some real number $c' \in (0, \lambda)$, where $H'=\phi_{*}H$. 
Furthermore, for the normalization $\nu'\colon(\bar{X}',\bar{\Delta}')\to (X',\Delta')$, where $K_{\bar{X}'}+\bar{\Delta}'=\nu'^{*}(K_{X}+\Delta)$, every irreducible component $(\bar{X}'^{(j)},\bar{\Delta}'^{(j)})$ satisfies the following.
\begin{itemize}
\item
$|K_{\bar{X}'^{(j)}}+\bar{\Delta}'^{(j)}|_{\mathbb{R}}\neq \emptyset$ and the stable base locus of $K_{\bar{X}'^{(j)}}+\bar{\Delta}'^{(j)}$ does not contain the image of any prime divisor $P'$ over $\bar{X}'^{(j)}$ satisfying $a(P',\bar{X}'^{(j)},\bar{\Delta}'^{(j)})<0$. 
\end{itemize}
\end{lem}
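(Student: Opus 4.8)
The plan is to obtain the contraction $f\colon X\to Z$ by showing that $K_{X}+\Delta+\lambda H$ is nef, big and semi-ample, then to build $(X',\Delta')$ by descending relative flips from the components of the normalization by way of \cite[Theorem 9]{ambrokollar}, and finally to verify the numerics and the persistence of the two hypotheses. First I would note that, since $K_{X}+\Delta+cH$ is ample and ampleness is an open condition, $K_{X}+\Delta+(c-\delta)H$ is ample for small $\delta>0$, so $0<\lambda<c$; by the rationality theorem for projective slc pairs (\cite{fujino-fund-slc}, \cite{fujino-book}) $\lambda\in\mathbb{Q}$, and as the set of $\mu$ with $K_{X}+\Delta+\mu H$ nef is closed, $K_{X}+\Delta+\lambda H$ is nef. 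For every $\mu\in(\lambda,c]$ the divisor $K_{X}+\Delta+\mu H$ is a convex combination of the nef divisor $K_{X}+\Delta+\lambda H$ and the ample divisor $K_{X}+\Delta+cH$, hence ample. Writing $\bar{H}=\nu^{*}H$ and $\bar{H}^{(j)}=\bar{H}|_{\bar{X}^{(j)}}$, the hypothesis $|K_{\bar{X}^{(j)}}+\bar{\Delta}^{(j)}|_{\mathbb{R}}\neq\emptyset$ gives an effective $E^{(j)}\sim_{\mathbb{R}}K_{\bar{X}^{(j)}}+\bar{\Delta}^{(j)}$, and fixing $\mu\in(\lambda,c]$ the identity $K_{\bar{X}^{(j)}}+\bar{\Delta}^{(j)}+\lambda\bar{H}^{(j)}=\tfrac{\lambda}{\mu}(K_{\bar{X}^{(j)}}+\bar{\Delta}^{(j)}+\mu\bar{H}^{(j)})+(1-\tfrac{\lambda}{\mu})E^{(j)}$ presents $K_{\bar{X}^{(j)}}+\bar{\Delta}^{(j)}+\lambda\bar{H}^{(j)}$ as a sum of an ample and an effective divisor, so it is big; hence $K_{X}+\Delta+\lambda H$ is nef and big.

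Next, for $a$ sufficiently large $a(K_{X}+\Delta+\lambda H)-(K_{X}+\Delta)=(a-1)\bigl(K_{X}+\Delta+\tfrac{a\lambda}{a-1}H\bigr)$ with $\tfrac{a\lambda}{a-1}\in(\lambda,c)$, hence ample, so the base-point-free theorem for projective slc pairs (\cite{fujino-fund-slc}) shows $K_{X}+\Delta+\lambda H$ is semi-ample. Let $f\colon X\to Z$ be the associated morphism to a projective scheme, with $K_{X}+\Delta+\lambda H\sim_{\mathbb{Q}}f^{*}A_{Z}$ for an ample $\mathbb{Q}$-divisor $A_{Z}$; since $K_{X}+\Delta+\lambda H$ is big, $f$ is birational onto $Z$, in particular generically finite. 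From $-(K_{X}+\Delta)\sim_{\mathbb{Q},Z}\lambda H$ and $(c-\lambda)H\sim_{\mathbb{Q},Z}K_{X}+\Delta+cH$ we see that $-(K_{X}+\Delta)$ and $H$ are $f$-ample, that $K_{X}+\Delta+\lambda H$ is numerically trivial over $Z$, and that a proper curve $C$ on $X$ is contracted by $f$ if and only if $(K_{X}+\Delta+\lambda H)\cdot C=0$. Now for each component $\bar{X}^{(j)}$ let $g^{(j)}\colon\bar{X}^{(j)}\to\bar{Z}^{(j)}$ be the Stein factorization of $\bar{X}^{(j)}\to Z$; then $g^{(j)}$ is birational, $-(K_{\bar{X}^{(j)}}+\bar{\Delta}^{(j)})$ is $g^{(j)}$-ample, and $K_{\bar{X}^{(j)}}+\bar{\Delta}^{(j)}\sim_{\mathbb{R}}E^{(j)}\geq0$, so the relative log canonical algebra of $(\bar{X}^{(j)},\bar{\Delta}^{(j)})$ over $\bar{Z}^{(j)}$ is finitely generated (\cite{birkar-flip}, \cite{haconxu-lcc}), and its relative $\mathrm{Proj}$ over $\bar{Z}^{(j)}$ is a birational contraction $\bar{X}^{(j)}\dashrightarrow\bar{X}'^{(j)}$ such that $\bar{X}'^{(j)}\to\bar{Z}^{(j)}$ has no exceptional divisors, $K_{\bar{X}'^{(j)}}+\bar{\Delta}'^{(j)}$ is ample over $\bar{Z}^{(j)}$, and $\bar{\Delta}'^{(j)}$ is the push-forward of $\bar{\Delta}^{(j)}$. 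The second hypothesis enters here: the conductor of $\bar{X}^{(j)}$ (a reduced divisor contained in $\llcorner\bar{\Delta}^{(j)}\lrcorner$) consists of prime divisors of discrepancy $-1<0$, so none of its components lies in the stable base locus of $K_{\bar{X}^{(j)}}+\bar{\Delta}^{(j)}$; as $K_{\bar{X}^{(j)}}+\bar{\Delta}^{(j)}+\lambda\bar{H}^{(j)}$ is the pull-back of an ample divisor on $\bar{Z}^{(j)}$, for a $g^{(j)}$-contracted curve $C$ one has $(K_{\bar{X}^{(j)}}+\bar{\Delta}^{(j)})\cdot C=-\lambda\bar{H}^{(j)}\cdot C$, so if a component of the conductor were covered by $g^{(j)}$-contracted curves, a general such curve $C$ would avoid the support of a suitable effective divisor $\sim_{\mathbb{R}}K_{\bar{X}^{(j)}}+\bar{\Delta}^{(j)}$, giving $(K_{\bar{X}^{(j)}}+\bar{\Delta}^{(j)})\cdot C\geq0$ and hence $\bar{H}^{(j)}\cdot C\leq0$, contradicting $g^{(j)}$-ampleness of $\bar{H}^{(j)}$. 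Thus $g^{(j)}$, and therefore $\bar{X}^{(j)}\dashrightarrow\bar{X}'^{(j)}$, is an isomorphism near the generic point of every component of the conductor. Since these maps are canonical (relative $\mathrm{Proj}$ of log canonical algebras) and fix the conductors, they are compatible with the gluing involutions, so \cite[Theorem 9]{ambrokollar} descends them to a birational map $\phi\colon(X,\Delta)\dashrightarrow(X',\Delta')$ with $(X',\Delta')$ projective slc, $\Delta'=\phi_{*}\Delta$, $\phi$ an isomorphism over a dense open set containing every generic point of the codimension-one singular locus of $X$, a projective morphism $f'\colon X'\to Z$ without exceptional divisors, and $K_{X'}+\Delta'$ ample over $Z$. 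Together with the properties of $f$, this is an MMP step with scaling of $H$ as in Definition \ref{defn--mmpslc} (condition $(8)$ holds because $K_{X}+\Delta+\lambda H$ is nef and is $f$-trivial exactly on the $f$-contracted curves).

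It remains to treat the numerics and the persistence of the two conditions. Put $H'=\phi_{*}H$. Since $\phi$ is a birational contraction, $K_{X'}+\Delta'+\lambda H'\sim_{\mathbb{Q}}f'^{*}A_{Z}$ and $-H'$ is $f'$-ample, so for $c'$ slightly smaller than $\lambda$ the divisor $K_{X'}+\Delta'+c'H'=f'^{*}A_{Z}+(\lambda-c')(-H')$ is the sum of the pull-back of an ample divisor and a small $f'$-ample divisor, hence ample. For the normalization $\nu'\colon(\bar{X}',\bar{\Delta}')\to(X',\Delta')$: each $\bar{X}^{(j)}\dashrightarrow\bar{X}'^{(j)}$ is a birational contraction, so the push-forward of $E^{(j)}$ is effective and $\sim_{\mathbb{R}}K_{\bar{X}'^{(j)}}+\bar{\Delta}'^{(j)}$, giving $|K_{\bar{X}'^{(j)}}+\bar{\Delta}'^{(j)}|_{\mathbb{R}}\neq\emptyset$; and if $P'$ is a prime divisor over $\bar{X}'^{(j)}$ with $a(P',\bar{X}'^{(j)},\bar{\Delta}'^{(j)})<0$, then $a(P',\bar{X}^{(j)},\bar{\Delta}^{(j)})\leq a(P',\bar{X}'^{(j)},\bar{\Delta}'^{(j)})<0$, so by hypothesis $c_{\bar{X}^{(j)}}(P')$ is not contained in the stable base locus of $K_{\bar{X}^{(j)}}+\bar{\Delta}^{(j)}$, whence, arguing as in the proof of Corollary \ref{cor--logabund-preserved-mmp} (via Remark \ref{rem--mmp-zariskidecom} and Lemma \ref{lem--bir-relation}), $c_{\bar{X}'^{(j)}}(P')$ is not contained in the stable base locus of $K_{\bar{X}'^{(j)}}+\bar{\Delta}'^{(j)}$. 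The main obstacle is the descent step: one must guarantee that the relative flips built on the $\bar{X}^{(j)}$ are compatible with the identifications along the conductors, so that the glued scheme is again slc and $\phi$ is an isomorphism in codimension one along the singular locus — this is exactly what the stable-base-locus hypothesis (keeping the contractions off the conductors) together with \cite[Theorem 9]{ambrokollar} provide, while the hypothesis $|K_{\bar{X}^{(j)}}+\bar{\Delta}^{(j)}|_{\mathbb{R}}\neq\emptyset$ is what forces $K_{X}+\Delta+\lambda H$ to be big, so that the construction is an honest MMP step rather than a fibration.
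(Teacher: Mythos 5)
Your construction follows the same skeleton as the paper's proof (rationality and base-point-freeness for projective slc pairs from \cite{fujino-fund-slc} to produce $f\colon X\to Z$, componentwise relative canonical models over the Stein factorizations via \cite{birkar-flip}, \cite{haconxu-lcc}, descent by \cite[Theorem 9]{ambrokollar} and \cite[Lemma 12]{ambrokollar}, then the ampleness of $K_{X'}+\Delta'+c'H'$ and the persistence of the hypotheses), but there is a genuine gap at the descent step. What you actually verify is only that each $\bar{X}^{(j)}\dashrightarrow\bar{X}'^{(j)}$ is an isomorphism near the generic point of every codimension-one component of the conductor, and you then assert that, since the canonical models are ``canonical'' and fix the conductors, the maps are compatible with the gluing involutions. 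That assertion is precisely the hard point and does not follow from canonicity: the involution on the normalization of the conductor is extra gluing data, and for the glued slc pair $(X',\Delta')$ to exist one must control the induced finite equivalence relation on the deeper lc centers (Koll\'ar's gluing theory); this is exactly why \cite[Examples 4 and 5]{ambrokollar} exhibit failure of slc MMP steps even though nothing is wrong at the generic points of the conductor divisors. Correspondingly, the condition of \cite[Theorem 9]{ambrokollar} concerns the centers $c_{\bar{X}^{(j)}}(P)$ of \emph{all} prime divisors $P$ over $\bar{X}^{(j)}$ with $a(P,\bar{X}^{(j)},\bar{\Delta}^{(j)})<0$, not merely the conductor components with discrepancy $-1$; note that the hypothesis of the lemma is phrased for all such $P$ for exactly this reason, and your argument never uses that full strength.

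The paper closes this point differently and all at once: taking a common resolution of $\bar{X}^{(j)}\dashrightarrow\bar{X}'^{(j)}$ and using that $-(K_{\bar{X}^{(j)}}+\bar{\Delta}^{(j)})$ is ample over $Z$ while $K_{\bar{X}'^{(j)}}+\bar{\Delta}'^{(j)}$ is ample over $Z$, the negativity lemma shows that the \emph{entire} non-isomorphism locus of $\bar{X}^{(j)}\dashrightarrow\bar{X}'^{(j)}$ is contained in the stable base locus of $K_{\bar{X}^{(j)}}+\bar{\Delta}^{(j)}$; combined with the hypothesis on all negative-discrepancy divisors this yields the condition of \cite[Theorem 9]{ambrokollar}, and your curve-counting argument for the conductor divisors becomes unnecessary. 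To repair your proof you must establish this containment (or otherwise verify the hypothesis of \cite[Theorem 9]{ambrokollar} for every such center, in all codimensions). The remainder of your argument — bigness of $K_{X}+\Delta+\lambda H$ via $|K_{\bar{X}^{(j)}}+\bar{\Delta}^{(j)}|_{\mathbb{R}}\neq\emptyset$, generic finiteness of $f$, conditions (\ref{mmpslc-6})--(\ref{mmpslc-8}) of Definition \ref{defn--mmpslc}, the choice of $c'$, and the persistence of the two conditions on $(\bar{X}',\bar{\Delta}')$ — matches the paper and is fine, up to the small imprecision that a contracted curve through a general point of a conductor component need only be not contained in (rather than disjoint from) the support of an effective divisor $\mathbb{R}$-linearly equivalent to $K_{\bar{X}^{(j)}}+\bar{\Delta}^{(j)}$ to conclude nonnegativity of the intersection number.
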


\begin{proof}
We have $\lambda<c$. 
By the rationality theorem \cite[Theorem 1.18]{fujino-fund-slc}, we see that $\lambda$ is a rational number. 
We have 
$$K_{X}+\Delta+\lambda H=(1-\frac{\lambda}{c})(K_{X}+\Delta)+\frac{\lambda}{c}(K_{X}+\Delta+cH).$$ 
Since $K_{X}+\Delta+cH$ is ample, $K_{X}+\Delta+\lambda H$ is semi-ample by \cite[Theorem 1.15]{fujino-fund-slc}. 
Thus, $K_{X}+\Delta+\lambda H$ induces a projective morphism $f\colon X\to Z$ to a projective scheme $Z$ such that $f_{*}\mathcal{O}_{X} \simeq \mathcal{O}_{Z}$. 
Since $K_{X}+\Delta+cH$ is ample and $K_{X}+\Delta+\lambda H\sim_{\mathbb{Q}}f^{*}A$ for some ample $\mathbb{Q}$-divisor $A$ on $Z$, it follows that $H$ and $-(K_{X}+\Delta)$ are ample over $Z$. 
Moreover, a proper curve $C$ on $X$ is contracted by $f$ if and only if $(K_{X}+\Delta+\lambda H)\cdot C=0$. We put $\bar{H}=\nu^{*}H$. 
For any irreducible component $\bar{X}^{(j)}$ of the normalization $\bar{X}\to X$, since $K_{\bar{X}}+\bar{\Delta}+c\bar{H}$ is ample and $|K_{\bar{X}^{(j)}}+\bar{\Delta}^{(j)}|_{\mathbb{R}}\neq \emptyset$, the pullback of $K_{X}+\Delta+\lambda H$ to $\bar{X}^{(j)}$ is big. 
This implies that $f$ is generically finite. 
In this way, $f\colon X\to Z$, $K_{X}+\Delta$, and $H$ satisfy (\ref{mmpslc-4}), (\ref{mmpslc-5}), (\ref{mmpslc-6}), (\ref{mmpslc-7}), and (\ref{mmpslc-8}) of Definition \ref{defn--mmpslc}. 

For each component $\bar{X}^{(j)}$ of the normalization $\bar{X}\to X$, let $\bar{X}^{(j)}\to Z^{(j)}$ be the Stein factorization of $\bar{X}^{(j)}\to Z$. 
This is a birational morphism. 
Since $K_{\bar{X}}+\bar{\Delta}+\lambda\bar{H}\sim_{\mathbb{Q},Z}0$, applying \cite[Theorem 1.1]{birkar-flip}, the lc pair $(\bar{X}^{(j)},\bar{\Delta}^{(j)})$ has a good minimal model over $Z^{(j)}$ for all $j$. 
Therefore $(\bar{X}^{(j)},\bar{\Delta}^{(j)})$ has the log canonical model $(\bar{X}'^{(j)},\bar{\Delta}'^{(j)})$ over $Z^{(j)}$, that is, a weak lc model over $Z^{(j)}$ such that $K_{\bar{X}'^{(j)}}+\bar{\Delta}'^{(j)}$ is ample over $Z^{(j)}$, for all $j$. 
We put $(\bar{X}',\bar{\Delta}')={\coprod_{j}}(\bar{X}'^{(j)},\bar{\Delta}'^{(j)})$, and we consider the following diagram
$$
\xymatrix
{
(\bar{X},\bar{\Delta})\ar[dr]\ar@{-->}[rr]&&(\bar{X}',\bar{\Delta}')\ar[dl]\\
&Z
}
$$
where the birational map $\bar{X}\dashrightarrow \bar{X}'$ is defined by each birational map $\bar{X}^{(j)} \dashrightarrow \bar{X}'^{(j)}$. 
By construction, this diagram is an MMP step of Definition \ref{defn--mmpslc} for the disjoint union of lc pairs $(\bar{X},\bar{\Delta})$. 
Indeed, (\ref{mmpslc-1}), (\ref{mmpslc-2}), and (\ref{mmpslc-3}) are obvious, and (\ref{mmpslc-4}) and (\ref{mmpslc-5}) follow from the fact that $(\bar{X}'^{(j)},\bar{\Delta}'^{(j)})$ is the log canonical model over $Z^{(j)}$ for all $j$. 
Since $K_{\bar{X}}+\bar{\Delta}+\lambda\bar{H}\sim_{\mathbb{Q},Z}0$ by construction, we have $K_{\bar{X}'}+\bar{\Delta}'+\lambda\bar{H}'\sim_{\mathbb{Q},Z}0$, where $\bar{H}'$ is the birational transform of $\bar{H}$ on $\bar{X}'$. 
Since $K_{\bar{X}'}+\bar{\Delta}'$ is ample over $Z$, we see that $-\bar{H}'$ is ample over $Z$. 
Therefore (\ref{mmpslc-6}) holds, and we have already checked that (\ref{mmpslc-7}) and (\ref{mmpslc-8}) hold. 
From this discussion, we see that the above diagram is an MMP step with scaling of $\bar{H}$. 

For any irreducible component $(\bar{X}'^{(j)},\bar{\Delta}'^{(j)})$, the hypothesis of Lemma \ref{lem--mmpstepslc} on $(\bar{X},\bar{\Delta})$ and the construction of the usual log MMP imply that $|K_{\bar{X}'^{(j)}}+\bar{\Delta}'^{(j)}|_{\mathbb{R}}\neq \emptyset$ and its stable base locus does not contain the image of any prime divisor $P'$ over $\bar{X}'^{(j)}$ satisfying $a(P',\bar{X}'^{(j)},\bar{\Delta}'^{(j)})<0$. 
By taking a common resolution of $\bar{X}^{(j)}\dashrightarrow \bar{X}'^{(j)}$ and applying (\ref{mmpslc-5}) and the negativity lemma, we see that the non-isomorphic locus of  $\bar{X}^{(j)}\dashrightarrow \bar{X}'^{(j)}$ is contained in the stable base locus of $K_{\bar{X}^{(j)}}+\bar{\Delta}^{(j)}$. 
From these facts, the above MMP step satisfies the condition of \cite[Theorem 9]{ambrokollar}, therefore we can apply \cite[Theorem 9]{ambrokollar}. We get a projective slc pair $(X',\Delta')$ and a projective morphism $f'\colon X'\to Z$ such that $(\bar{X}',\bar{\Delta}')$ is the normalization of $(X',\Delta')$. 
By \cite[Lemma 12]{ambrokollar}, the induced diagram
$$
\xymatrix
{
(X,\Delta)\ar[dr]_{f}\ar@{-->}[rr]^{\phi}&&(X',\Delta')\ar[dl]^{f'}\\
&Z
}
$$
is an MMP step with scaling of $H$ for $(X,\Delta)$. 

We recall from the construction that $K_{X}+\Delta+\lambda H$ is $\mathbb{Q}$-linearly equivalent to the pullback of an ample $\mathbb{Q}$-divisor on $Z$. 
Hence the same holds for $K_{X'}+\Delta'+\lambda H'$, where $H'=\phi_{*}H$.  
Since $K_{X'}+\Delta'$ is ample over $Z$, we see that $K_{X'}+\Delta'+\lambda H'+t(K_{X'}+\Delta')$ is ample for some $t>0$. 
Put $c'=\frac{\lambda}{1+t}$. 
Since ${\coprod_{j}}(\bar{X}'^{(j)},\bar{\Delta}'^{(j)})=(\bar{X}',\bar{\Delta}')$ satisfies the condition of Lemma \ref{lem--mmpstepslc}, the above diagram is the desired MMP step for $(X,\Delta)$. 
\end{proof}

\begin{rem}\label{rem--mmp-isomincodim1}
Let 
$$
\xymatrix
{
(X,\Delta)\ar[dr]_{f}\ar@{-->}[rr]^{\phi}&&(X',\Delta')\ar[dl]^{f'}\\
&Z
}
$$
be an MMP step as in Definition \ref{defn--mmpslc}. 
We show that if $f_{*}\mathcal{O}_{X} \simeq \mathcal{O}_{Z}$ then $\phi^{-1}$ is an isomorphism on an open subset of $X'$ containing all the generic points of codimension one singular locus. 
We take the graph $g \colon W \to X$ and $g'\colon W \to X'$ of $\phi$. 
Since $X$ is an S$_{2}$ scheme and $\phi$ is an isomorphism at all codimension one singular points of $X$, we have $g_{*}\mathcal{O}_{W} \simeq \mathcal{O}_{X}$. 
Since we assume $f_{*}\mathcal{O}_{X} \simeq \mathcal{O}_{Z}$, we obtain $\mathcal{O}_{Z} \simeq (f\circ g)_{*}\mathcal{O}_{W} \simeq (f' \circ g')_{*}\mathcal{O}_{W}$. 
This implies that $f'\colon X' \to Z$ has connected fibers. 
Since $f'$ has no exceptional divisors by (\ref{mmpslc-4}) in Definition \ref{defn--mmpslc}, for any codimension one point $\eta$ of $X'$, the image $f'(\eta)$ is a codimension one point of $Z$. 
Then $f^{-1}(f'(\eta))$ is a point because $f$ is birational and $f$ has connected fibers. 
Therefore, $f$ and $f'$ are isomorphisms over $f'(\eta)$. 
From this fact, $\phi^{-1}$ is an isomorphism on an open subset of $X'$ containing all the generic points of codimension one singular locus. 
\end{rem}

\begin{rem}\label{rem--mmpslc}
Let $(X,\Delta)$ be an slc pair as in Lemma \ref{lem--mmpstepslc}. 
Let $\phi \colon (X,\Delta)\dashrightarrow (X',\Delta')$ be an MMP step constructed in Lemma \ref{lem--mmpstepslc}. 
By the construction in Lemma \ref{lem--mmpstepslc} and Remark \ref{rem--mmp-isomincodim1}, it follows that $\phi^{-1}$ is an isomorphism on an open subset of $X'$ containing all the generic points of codimension one singular locus. 
Using this, we check
$$
{\rm dim}\,H^{0}(X,\mathcal{O}_{X}(\llcorner m(K_{X}+\Delta)\lrcorner))={\rm dim}\,H^{0}(X',\mathcal{O}_{X'}(\llcorner m(K_{X'}+\Delta')\lrcorner)).
$$
for every positive integer $m$. 

Let $U_{1}$ be the largest open subset of $X$ on which $\phi$ is an isomorphism, and let $U_{2}$ be the largest open subset of $X$ such that $U_{2}$ is smooth and $\phi$ is a morphism on $U_{2}$. 
We put $U=U_{1} \cup U_{2}$ and $U'=\phi(U_{1})$. 
Then $\phi$ is a morphism on $U$, and $U$ and $U'$ contain all the generic points of codimension one singular locus of $X$ and $X'$ respectively because $\phi$ and $\phi^{-1}$ are isomorphisms at all codimension one singular points. 
Hence, the codimension of $X\setminus U$ in $X$ is at least two, and the codimension of $X'\setminus U'$ in $X'$ is also at least two. 

Fix an integer $m>0$. 
Since $U$ is demi-normal, for every Weil divisor on $U$, we can define the associated divisorial sheaf. 
We put $g'=\phi|_{U}\colon U \to X'$. 
Then $g'$ is a dominant morphism, and therefore there is a natural injective morphism
\begin{equation*}
\begin{split}
H^{0}(X', \mathcal{O}_{X'}(\llcorner m(K_{X'}+\Delta')\lrcorner))\hookrightarrow H^{0}(U, \mathcal{O}_{U}(\llcorner mg'^{*}(K_{X'}+\Delta')\lrcorner)). 
\end{split}
\end{equation*} 

Let $k$ be a positive integer such that $km(K_{X}+\Delta)$ and $km(K_{X'}+\Delta')$ are both Cartier.
We consider the Cartier divisor 
$$E:=km(K_{X}+\Delta)|_{U}-kmg'^{*}(K_{X'}+\Delta').$$ 
Since $\phi$ is an isomorphism on $U_{1}$, we see that $E|_{U_{1}}=0$, hence ${\rm Supp}\,E$ is contained in $U_{2}$. 
Since $U_{2}$ is smooth, we can think of $E$ as a Weil divisor on $U_{2}$. 
We take the normalizations $\nu \colon \bar{X}\to X$ and $\nu'\colon \bar{X}'\to X'$, and we take the graph $W\to \bar{X}$ and $W\to \bar{X}'$ of $\bar{X}\dashrightarrow \bar{X}'$. 
Since $U_{2}$ is smooth and $\phi$ is a morphism on $U_{2}$, we can think of $U_{2}$ as a open subset of $W$. 
By the standard argument as in \cite[Lemma 3.38]{kollar-mori} with the negativity lemma, we see that $E|_{U_{2}}$ is effective. 
Then $E$ is effective since ${\rm Supp}\,E$ is contained in $U_{2}$. 
From this, we obtain
$$\llcorner m(K_{X}+\Delta)|_{U} \lrcorner-\llcorner mg'^{*}(K_{X'}+\Delta') \lrcorner\geq0$$ 
as a relation of Weil divisors on $U$. 
By the above argument, there is a natural injective morphism 
$$ H^{0}(U, \mathcal{O}_{U}(\llcorner mg'^{*}(K_{X'}+\Delta')\lrcorner))\hookrightarrow H^{0}(U, \mathcal{O}_{U}(\llcorner m(K_{X}+\Delta)|_{U} \lrcorner)).$$
Since the codimension of $X\setminus U$ in $X$ is at least two, we obtain a natural isomorphism 
$$H^{0}(U, \mathcal{O}_{U}(\llcorner m(K_{X}+\Delta)|_{U} \lrcorner))\overset{\simeq}{\longrightarrow}H^{0}(X, \mathcal{O}_{X}(\llcorner m(K_{X}+\Delta) \lrcorner)).$$
Thus, we get an injective morphism 
\begin{equation*}\tag{$*$}\label{rem--mmpslc-*}
H^{0}(X', \mathcal{O}_{X'}(\llcorner m(K_{X'}+\Delta') \lrcorner ))\hookrightarrow H^{0}(X, \mathcal{O}_{X}(\llcorner m(K_{X}+\Delta) \lrcorner)).
\end{equation*}

We recall that $U_{1}$ is the largest open subset of $X$ on which $\phi$ is an isomorphism and the codimension of $X'\setminus U'$ in $X'$ is at least two, where $U'=\phi(U_{1})$. 
Thus, we obtain 
\begin{equation*}\tag{$**$}\label{rem--mmpslc-**}
\begin{split}
H^{0}(X, \mathcal{O}_{X}(\llcorner m(K_{X}+\Delta) \lrcorner))\hookrightarrow &H^{0}(U_{1}, \mathcal{O}_{U_{1}}(\llcorner m(K_{X}+\Delta)|_{U_{1}}\lrcorner))\\
\overset{\simeq}{\longrightarrow} &H^{0}(U', \mathcal{O}_{U'}(\llcorner m(K_{X'}+\Delta')|_{U'} \lrcorner))\\
\overset{\simeq}{\longrightarrow} &H^{0}(X', \mathcal{O}_{X'}(\llcorner m(K_{X'}+\Delta')\lrcorner)).
\end{split}
\end{equation*}
By (\ref{rem--mmpslc-*}) and (\ref{rem--mmpslc-**}), we have 
$${\rm dim}\,H^{0}(X, \mathcal{O}_{X}(\llcorner m(K_{X}+\Delta)\lrcorner)) = {\rm dim}\,H^{0}(X', \mathcal{O}_{X'}(\llcorner m(K_{X'}+\Delta')\lrcorner)).$$
\end{rem}

Now we prove Theorem \ref{thm--mmpslclogabund}.

\begin{proof}[Proof of Theorem \ref{thm--mmpslclogabund}]
We put $X=X_{0}$ and $\Delta=\Delta_{0}$.
Let $H_{0}$ be an effective ample $\mathbb{Q}$-Cartier $\mathbb{Q}$-divisor on $X_{0}$ such that $(X_{0},\Delta_{0}+H_{0})$ is slc and $K_{X_{0}}+\Delta_{0}+H_{0}$ is nef.
By using Lemma \ref{lem--mmpstepslc} repeatedly, we get a sequence of MMP steps with scaling of $H_{0}$
$$(X_{0},\Delta_{0})\dashrightarrow \cdots \dashrightarrow (X_{i},\Delta_{i})\dashrightarrow \cdots.$$
For each $i \geq 0$, we set 
$$\lambda_{i}={\rm inf}\set{\mu \in \mathbb{R}_{\geq 0}|\text{$K_{X_{i}}+\Delta_{i}+\mu H_{i}$ is nef}},$$
where $H_{i}$ is the birational transform of $H_{0}$ on $X_{i}$. 
By construction of each MMP step, we have $\lambda_{i}>\lambda_{i+1}$ for all $i\geq 0$. 
For each $i \geq 0$, let $(\bar{X}_{i},\bar{\Delta}_{i})$ be the normalization of $(X_{i},\Delta_{i})$. 
Let $\bar{H}_{0}$ be the pullback of $H_{0}$ to $\bar{X}_{0}$. 
Let $\bar{H}_{i}$ be the birational transform of $\bar{H}_{0}$ on $\bar{X}_{i}$. 
By \cite[Lemma 12]{ambrokollar}, the sequence of birational maps
$$(\bar{X}_{0},\bar{\Delta}_{0})\dashrightarrow \cdots \dashrightarrow (\bar{X}_{i},\bar{\Delta}_{i})\dashrightarrow \cdots$$
is a sequence of MMP steps for $(\bar{X}_{0},\bar{\Delta}_{0})$ with scaling of $\bar{H}_{0}$, and we see that 
$$\lambda_{i}={\rm inf}\set{\mu \in \mathbb{R}_{\geq 0}|\text{$K_{\bar{X}_{i}}+\bar{\Delta}_{i}+\mu \bar{H}_{i}$ is nef}}$$ 
because $\bar{H}_{i}$ is equal to the pullback of $H_{i}$ to $\bar{X}_{i}$. 
In particular, $K_{\bar{X}_{i}}+\bar{\Delta}_{i}+u \bar{H}_{i}$ is nef for all $u\in[\lambda_{i},\lambda_{i-1}]$. 

To complete the proof, it is sufficient to prove that $\lambda_{m}=0$ for some $m \geq 0$ and that $K_{X_{m}}+\Delta_{m}$ is semi-ample. 
By \cite[Theorem 2]{haconxu} or \cite[Theorem 1.5]{fujino-gongyo}, the semi-ampleness of $K_{X_{m}}+\Delta_{m}$ follows from that of $K_{\bar{X}_{m}}+\bar{\Delta}_{m}$. 
Therefore, we may replace $(X_{0},\Delta_{0})$ by any irreducible component of $(\bar{X}_{0},\bar{\Delta}_{0})$ and the MMP steps accordingly. 
In this way, we may assume that $(X_{i},\Delta_{i})$ is lc for every $i \geq 0$. 
Note that after the replacement, the birational maps $(X_{i},\Delta_{i})\dashrightarrow (X_{i+1},\Delta_{i+1})$ can be isomorphisms for some $i$. 
By identifying all $(X_{i},\Delta_{i})$ such that $\lambda_{i}$ are the same values, we may assume that $\lambda_{i}>\lambda_{i+1}$ still holds for all $i\geq 0$. 
The construction of the MMP steps and the negativity lemma show that $(X_{i},\Delta_{i}+u H_{i})$ is a weak lc model of $(X_{0},\Delta_{0}+uH_{0})$ for any $u\in[\lambda_{i},\lambda_{i-1}]$. 

By Corollary \ref{cor--logabund-preserved-mmp} and the hypothesis of Theorem \ref{thm--mmpslclogabund}, we see that the lc pair $(X_{0},\Delta_{0})$ has a good minimal model. 
By \cite[Theorem 1.7]{hashizumehu}, we can construct a sequence of steps of the usual $(K_{X_{0}}+\Delta_{0})$-MMP with scaling of $H_{0}$
$$(X'_{0}:=X_{0},\Delta'_{0}:=\Delta_{0})\dashrightarrow \cdots \dashrightarrow (X'_{j},\Delta'_{j})\dashrightarrow \cdots \dashrightarrow (X'_{n},\Delta'_{n})$$ 
such that $K_{X'_{n}}+\Delta'_{n}$ is semi-ample. 
We set $H'_{0}=H_{0}$ and define
$$\lambda'_{j}={\rm inf}\set{\mu \in \mathbb{R}_{\geq 0}|\text{$K_{X'_{j}}+\Delta'_{j}+\mu H'_{j}$ is nef}},$$
where $H'_{j}$ is the birational transform of $H'_{0}$ on $X'_{j}$. 

We show by induction that for any $0 \leq l \leq n$, there exists $k\geq 0$ such that $\lambda_{k}= \lambda'_{l}$. 
It is clear that $\lambda_{0}=\lambda'_{0}$. 
We prove the case of $l+1$ assuming $\lambda_{k}=\lambda'_{l}$ for some $k$.
If $\lambda'_{l+1}=\lambda'_{l}$, then $\lambda_{k}= \lambda'_{l+1}$. 
If $\lambda'_{l+1}<\lambda'_{l}$, then $(X'_{l+1},\Delta'_{l+1}+u'H'_{l+1})$ is a weak lc model of $(X_{0},\Delta_{0}+u'H_{0})$ for all $u'\in[\lambda'_{l+1},\lambda'_{l}]$. 
Since $(X_{k+1},\Delta_{k+1}+u H_{k+1})$ is a weak lc model of $(X_{0},\Delta_{0}+uH_{0})$ for any $u\in[\lambda_{k+1},\lambda_{k}]$, by taking a common resolution $g\colon W\to X_{k+1}$ and $g'\colon W\to X'_{l+1}$ of the induced birational map $X_{k+1}\dashrightarrow X'_{l+1}$ and the negativity lemma (see also \cite[Remark 2.7]{birkar-flip}), we have 
$$g^{*}(K_{X_{k+1}}+\Delta_{k+1}+u' H_{k+1})=g'^{*}(K_{X'_{l+1}}+\Delta'_{l+1}+u'H'_{l+1})$$
for any $u'\in [\lambda_{k+1},\lambda_{k}] \cap [\lambda'_{l+1},\lambda'_{l}]$. 
Since $\lambda_{k}=\lambda'_{l}$ and $\lambda_{k+1}<\lambda_{k}$, the relation shows
$$g^{*}(K_{X_{k+1}}+\Delta_{k+1}+t H_{k+1})=g'^{*}(K_{X'_{l+1}}+\Delta'_{l+1}+tH'_{l+1})$$
for any $t\in \mathbb{R}$. 
Thus, $K_{X_{k+1}}+\Delta_{k+1}+t H_{k+1}$ is nef if and only if $K_{X'_{l+1}}+\Delta'_{l+1}+tH'_{l+1}$ is nef. 
By definitions of $\lambda_{k+1}$ and $\lambda'_{l+1}$, we have $\lambda_{k+1}=\lambda'_{l+1}$. 

By the above argument, there is $m$ such that $\lambda_{m}=\lambda'_{n}=0$. 
Thus $(X_{m},\Delta_{m})$ is a weak lc model of $(X_{0},\Delta_{0})$. 
By taking a common resolution $h\colon \tilde{W} \to X_{m}$ and $h'\colon \tilde{W} \to X'_{n}$ of the birational map $X_{m}\dashrightarrow X'_{n}$ and the negativity lemma, we have
$$h^{*}(K_{X_{m}}+\Delta_{m})=h'^{*}(K_{X'_{n}}+\Delta'_{n}).$$
Then $K_{X_{m}}+\Delta_{m}$ is semi-ample (see \cite[Remark 2.10 (iii)]{has-mmp} and \cite[Remark 2.7]{birkar-flip}). 
We finish the proof of  Theorem \ref{thm--mmpslclogabund}. 
\end{proof}

\begin{cor}\label{cor--nonvanslc}
Let $(X,\Delta)$ be a projective slc pair such that $\Delta$ is a $\mathbb{Q}$-divisor. 
Let $\nu \colon (\bar{X},\bar{\Delta})\to (X,\Delta)$ be the normalization, where $K_{\bar{X}}+\bar{\Delta}=\nu^{*}(K_{X}+\Delta)$. 
Suppose that every irreducible component $(\bar{X}^{(j)},\bar{\Delta}^{(j)})$ of $(\bar{X},\bar{\Delta})$ satisfies the following conditions. 
\begin{itemize}
\item
$(\bar{X}^{(j)},\bar{\Delta}^{(j)})$ is log abundant and $K_{\bar{X}^{(j)}}+\bar{\Delta}^{(j)}$ is pseudo-effective, and
\item
the stable base locus of $K_{\bar{X}^{(j)}}+\bar{\Delta}^{(j)}$ does not contain the image of any prime divisor $P$ over $\bar{X}^{(j)}$ satisfying $a(P,\bar{X}^{(j)},\bar{\Delta}^{(j)})<0$. 
\end{itemize}
Then $H^{0}(X, \mathcal{O}_{X}(l(K_{X}+\Delta)))\neq \{0\}$ for some integer $l>0$.   
\end{cor}

\begin{proof}
By Theorem \ref{thm--mmpslclogabund} and Remark \ref{rem--mmpslc} (see also the proof of Lemma \ref{lem--mmpstepslc}), we may assume that $K_{X}+\Delta$ is semi-ample. 
Then the corollary is clear. 
\end{proof}

Finally, we prove that we can construct a semi-log canonical model (i.e., an slc pair $(Y,\Delta_{Y})$ such that $K_{Y}+\Delta_{Y}$ is ample) in a special situation. 
Let $(X,\Delta)$ be a projective lc pair. 
We say that $(X,\Delta)$ is {\em log big} if $K_{X}+\Delta$ is big and for any lc center $S$ of $(X,\Delta)$ with the normalization $S^{\nu} \to S$, the pullback $(K_{X}+\Delta)|_{S^{\nu}}$ is big.

\begin{thm}\label{thm--mmpslclogbig}
Let $(X,\Delta)$ be a projective slc pair such that $\Delta$ is a $\mathbb{Q}$-divisor. 
Let $\nu \colon (\bar{X},\bar{\Delta})\to (X,\Delta)$ be the normalization, where $K_{\bar{X}}+\bar{\Delta}=\nu^{*}(K_{X}+\Delta)$. 
Suppose that every irreducible component $(\bar{X}^{(j)},\bar{\Delta}^{(j)})$ of $(\bar{X},\bar{\Delta})$ satisfies the following conditions. 
\begin{itemize}
\item
$(\bar{X}^{(j)},\bar{\Delta}^{(j)})$ is log big, and
\item
the stable base locus of $K_{\bar{X}^{(j)}}+\bar{\Delta}^{(j)}$ does not contain the image of any prime divisor $P$ over $\bar{X}^{(j)}$ satisfying $a(P,\bar{X}^{(j)},\bar{\Delta}^{(j)})<0$. 
\end{itemize}
Then there is a sequence of MMP steps for $(X,\Delta)$
$$(X,\Delta)\dashrightarrow \cdots \dashrightarrow (X_{i},\Delta_{i})\dashrightarrow \cdots \dashrightarrow (X_{m},\Delta_{m})$$
such that $K_{X_{m}}+\Delta_{m}$ is semi-ample. 
Furthermore, there is a projective slc pair $(Y,\Delta_{Y})$ and a birational morphism $\pi\colon X_{m}\to Y$ such that $\pi$ is an isomorphism over the generic points of codimension one singular locus, $K_{X_{m}}+\Delta_{m}=\pi^{*}(K_{Y}+\Delta_{Y})$, and $K_{Y}+\Delta_{Y}$ is ample.   
\end{thm}

\begin{proof}
The statement on MMP steps for $(X,\Delta)$ follows from Theorem \ref{thm--mmpslclogabund} since the property of being log big for $(\bar{X}^{(j)},\bar{\Delta}^{(j)})$ implies the property of being log abundant and the pseudo-effectivity of $K_{\bar{X}^{(j)}}+\bar{\Delta}^{(j)}$. 
Therefore, it is sufficient to prove the existence of $(X_{m},\Delta_{m})\to (Y,\Delta_{Y})$ as in Theorem \ref{thm--mmpslclogbig}. 

\begin{step4}\label{step1--slc}
In this step, we prove that we may assume that $(X_{m},\Delta_{m})=(X,\Delta)$ without loss of generality. 

Let $(\bar{X}_{m},\bar{\Delta}_{m})\to (X_{m},\Delta_{m})$ be the normalization. 
We prove that $(\bar{X}^{(j)}_{m},\bar{\Delta}^{(j)}_{m})$ is log big for all irreducible components $(\bar{X}^{(j)}_{m},\bar{\Delta}^{(j)}_{m})$ of $(\bar{X}_{m},\bar{\Delta}_{m})$. 
Fix an irreducible component $(\bar{X}^{(j)}_{m},\bar{\Delta}^{(j)}_{m})$. 
Since $(X,\Delta)\dashrightarrow (X_{m},\Delta_{m})$ is a sequence of MMP steps, by \cite[Lemma 12]{ambrokollar}, we can find an irreducible component $(\bar{X}^{(j)},\bar{\Delta}^{(j)})$ of the normalization $(\bar{X},\bar{\Delta})\to (X,\Delta)$ and the induced birational map 
$$(\bar{X}^{(j)},\bar{\Delta}^{(j)}) \dashrightarrow (\bar{X}^{(j)}_{m},\bar{\Delta}^{(j)}_{m})$$
that is a sequence of MMP steps of the projective lc pair $(\bar{X}^{(j)},\bar{\Delta}^{(j)})$. 
We pick any lc center $S_{m}$ of $(\bar{X}^{(j)}_{m},\bar{\Delta}^{(j)}_{m})$ with the normalization $S^{\nu}_{m}$. 
Then there is an lc center $S$ of $(\bar{X}^{(j)},\bar{\Delta}^{(j)})$ with the normalization $S^{\nu}$ such that $\bar{X}^{(j)} \dashrightarrow \bar{X}^{(j)}_{m}$ induces a birational map $S^{\nu}\dashrightarrow S^{\nu}_{m}$. 
Let $f_{m}\colon (W_{m},\Delta_{W_{m}})\to (\bar{X}^{(j)}_{m},\bar{\Delta}^{(j)}_{m})$ be a dlt blow-up such that there is a component $T_{m}$ of $\llcorner \Delta_{W_{m}}\lrcorner$ with a surjective morphism $f_{T_{m}}\colon T_{m} \to S^{\nu}_{m}$ induced by $f_{m}$. 
Then, we can construct a dlt blow-up $f\colon (W,\Delta_{W})\to (\bar{X}^{(j)},\bar{\Delta}^{(j)})$ with a component $T$ of $\llcorner \Delta_{W} \lrcorner$ such that $f$ induces a surjective morphism $f_{T}\colon T \to S^{\nu}$, the induced birational map $W\dashrightarrow W_{m}$ is a birational contraction which is an isomorphism near the generic point of $T$, and the birational transform of $T$ on $W_{m}$ is $T_{m}$. 
$$
\xymatrix
{
(W,\Delta_{W})\ar[d]_{f}\ar@{-->}[rr]&&(W_{m},\Delta_{W_{m}})\ar[d]^{f_{m}}\\
(\bar{X}^{(j)},\bar{\Delta}^{(j)})\ar@{-->}[rr]&&(\bar{X}^{(j)}_{m},\bar{\Delta}^{(j)}_{m})
}
\qquad \qquad 
\xymatrix
{
T\ar[d]_{f_{T}}\ar@{-->}[rr]&&T_{m}\ar[d]^{f_{T_{m}}}\\
S^{\nu}\ar@{-->}[rr]&&S^{\nu}_{m}
}
$$

We define the dlt pair $(T,\Delta_{T})$ by adjunction $K_{T}+\Delta_{T}=(K_{W}+\Delta_{W})|_{T}$. 
Similarly, we define the dlt pair $(T_{m},\Delta_{T_{m}})$ by adjunction $K_{T_{m}}+\Delta_{T_{m}}=(K_{W_{m}}+\Delta_{W_{m}})|_{T_{m}}$. 
Then
$$K_{T}+\Delta_{T}\sim_{\mathbb{Q}}f^{*}_{T}(K_{\bar{X}^{(j)}}+\bar{\Delta}^{(j)})|_{S^{\nu}}\qquad {\rm and} \qquad K_{T_{m}}+\Delta_{T_{m}}\sim_{\mathbb{Q}}f^{*}_{T_{m}}(K_{\bar{X}^{(j)}_{m}}+\bar{\Delta}^{(j)}_{m})|_{S^{\nu}_{m}}.$$
By the negativity lemma (see also \cite[Lemma 4.2.10]{fujino-sp-ter}), we have 
$$a(P,T,\Delta_{T})\leq a(P,T_{m},\Delta_{T_{m}})$$
 for any prime divisor $P$ over $T$. 
On the other hand, by construction of the birational map $(W,\Delta_{W}) \dashrightarrow (W_{m},\Delta_{W_{m}})$ and the second condition of Theorem \ref{thm--mmpslclogbig}, we can apply Lemma \ref{lem--discre-relation} to $(W,\Delta_{W}) \dashrightarrow (W_{m},\Delta_{W_{m}})$, $T$, and $T_{m}$. 
By Lemma \ref{lem--discre-relation}, we have 
$$a(Q,T_{m},\Delta_{T_{m}})\leq a(Q,T,\Delta_{T})$$
for any prime divisor $Q$ on $T_{m}$. 
Take a common resolution $g\colon \widetilde{T}\to T$ and $g_{m}\colon \widetilde{T}\to T_{m}$ of the birational map $T\dashrightarrow T_{m}$. 
By comparing $g^{*}(K_{T}+\Delta_{T})$ and $g^{*}_{m}(K_{T_{m}}+\Delta_{T_{m}})$ with the aid of the above relations on discrepancies, 
we may find an effective $g_{m}$-exceptional $\mathbb{R}$-divisor $E$ such that 
$$g^{*}(K_{T}+\Delta_{T})=g^{*}_{m}(K_{T_{m}}+\Delta_{T_{m}})+E.$$ 
By Remark \ref{rem--div} (\ref{rem--div-(2)}), we have
\begin{equation*}
\begin{split}
\kappa_{\iota}(S^{\nu}_{m}, (K_{\bar{X}^{(j)}_{m}}+\bar{\Delta}^{(j)}_{m})|_{S^{\nu}_{m}})&=
\kappa_{\iota}(T_{m}, K_{T_{m}}+\Delta_{T_{m}})=\kappa_{\iota}(T, K_{T}+\Delta_{T})\\
&=\kappa_{\iota}(S^{\nu}, (K_{\bar{X}^{(j)}}+\bar{\Delta}^{(j)})|_{S^{\nu}}).
\end{split}
\end{equation*}
Since $(\bar{X}^{(j)},\bar{\Delta}^{(j)})$ is log big, which is the first condition of Theorem \ref{thm--mmpslclogbig}, we have 
$$\kappa_{\iota}(S^{\nu}, (K_{\bar{X}^{(j)}}+\bar{\Delta}^{(j)})|_{S^{\nu}})={\rm dim}\,S^{\nu}.$$
Since ${\rm dim}\,S^{\nu}={\rm dim}\,S^{\nu}_{m}$, we have
$$\kappa_{\iota}(S^{\nu}_{m}, (K_{\bar{X}^{(j)}_{m}}+\bar{\Delta}^{(j)}_{m})|_{S^{\nu}_{m}})={\rm dim}\,S^{\nu}_{m}.$$ 
Therefore $\bigl(K_{\bar{X}^{(j)}_{m}}+\bar{\Delta}^{(j)}_{m}\bigr)|_{S^{\nu}_{m}}$ is big. 
Since $S_{m}$ is an arbitrary lc center of $(\bar{X}^{(j)}_{m},\bar{\Delta}^{(j)}_{m})$, it follows that $(\bar{X}^{(j)}_{m},\bar{\Delta}^{(j)}_{m})$ is log big. 

For any irreducible component $(\bar{X}^{(j)}_{m},\bar{\Delta}^{(j)}_{m})$ of $(\bar{X}_{m},\bar{\Delta}_{m})$, the divisor $K_{\bar{X}^{(j)}_{m}}+\bar{\Delta}^{(j)}_{m}$ is semi-ample.
Thus, $(X_{m},\Delta_{m})$ satisfies the condition of Theorem \ref{thm--mmpslclogbig}. 
Therefore, we may assume $(X_{m},\Delta_{m})=(X,\Delta)$ without loss of generality. 
In particular, we may assume that $K_{X}+\Delta$ is semi-ample. 
\end{step4}

\begin{step4}\label{step2--slc}
In this step, we construct a projective slc pair $(Y,\Delta_{Y})$ by the gluing theory of Koll\'ar. 

Let $\nu \colon (\bar{X},\bar{\Delta})\to (X,\Delta)$ be the normalization, where $K_{\bar{X}}+\bar{\Delta}=\nu^{*}(K_{X}+\Delta)$. 
We may write $\bar{\Delta}=\bar{D}+\bar{G}$, where $\bar{D}$ is the conductor and $\bar{G}:=\bar{\Delta}-\bar{D}$. 
Let $\bar{D}^{\nu} \to \bar{D}$ be the normalization, and we define $G_{\bar{D}^{\nu}}$ by the divisorial adjunction (\cite[Definition 4.2]{kollar-mmp}). 
Note that we have $K_{\bar{D}^{\nu}}+G_{\bar{D}^{\nu}}=(K_{\bar{X}}+\bar{D}+\bar{G})|_{\bar{D}^{\nu}}$. 
Since $(X,\Delta)$ is slc, as in \cite[5.2]{kollar-mmp}, there is an involution $\tau$ of $\bar{D}^{\nu}$, that is, an automorphism such that $\tau^{2}$ is the identity, such that $\tau_{*}G_{\bar{D}^{\nu}}=G_{\bar{D}^{\nu}}$. 

Let $\amalg_{j} (\bar{X}^{(j)},\bar{\Delta}^{(j)})$ be the decomposition of $(\bar{X},\bar{\Delta})$ into the irreducible components. 
We put $\bar{D}_{j}=\bar{D}|_{\bar{X}^{(j)}}$ and $\bar{G}_{j}=\bar{G}|_{\bar{X}^{(j)}}$. 
Since $(\bar{X}^{(j)},\bar{\Delta}^{(j)})$ is log big, for each index $j$, $K_{\bar{X}^{(j)}}+\bar{D}_{j}+\bar{G}_{j}$ defines a birational morphism $\pi_{j}\colon \bar{X}^{(j)}\to Y_{j}$ to a normal projective variety $Y_{j}$ such that $K_{Y_{j}}+D_{Y_{j}}+G_{Y_{j}}$ is an ample $\mathbb{Q}$-divisor, where $D_{Y_{j}}=\pi_{j*}\bar{D}_{j}$ and $G_{Y_{j}}=\pi_{j*}\bar{G}_{j}$. 
Since $(\bar{X}^{(j)},\bar{\Delta}^{(j)})$ is log big, the pullback of $K_{\bar{X}^{(j)}}+\bar{\Delta}^{(j)}$ to each component of $\bar{D}^{\nu}$ is big. 
This implies that $\pi_{j}$ does not contract any component of $\bar{D}_{j}$. 
Denoting by $D^{\nu}_{Y_{j}}$ as the normalization of $D_{Y_{j}}$, the morphism $\amalg_{j}\pi_{j}\colon \bar{X}=\amalg_{j} \bar{X}^{(j)} \to \amalg_{j}Y_{j}$ induces a birational morphism $\bar{D}^{\nu} \to \amalg_{j}D^{\nu}_{Y_{j}}$. 
$$
\xymatrix
{
\bar{D}^{\nu}\ar[d]\ar[rr]&&\bar{X}\ar[d]^{\amalg_{j}\pi_{j}}\\
\amalg_{j}D^{\nu}_{Y_{j}}\ar[rr]&&\amalg_{j}Y_{j}
}
$$
We recall that $G_{\bar{D}^{\nu}}$ is defined by the divisorial adjunction and $\bar{D}^{\nu}$ has an involution $\tau$ such that $\tau_{*}G_{\bar{D}^{\nu}}=G_{\bar{D}^{\nu}}$. 
Let $G_{\amalg_{j}D^{\nu}_{Y_{j}}}$ be the birational transform of $G_{\bar{D}^{\nu}}$ on  $\amalg_{j}D^{\nu}_{Y_{j}}$. 
Note that $G_{\amalg_{j}D^{\nu}_{Y_{j}}}$ is a $\mathbb{Q}$-divisor on $\amalg_{j}D^{\nu}_{Y_{j}}$. 
By definition of the divisorial adjunction, it follows that $G_{\amalg_{j}D^{\nu}_{Y_{j}}}$ is equal to the $\mathbb{Q}$-divisor defined by applying the divisorial adjunction to $\amalg_{j}(Y_{j},D_{Y_{j}}+G_{Y_{j}})$ and $\amalg_{j}D^{\nu}_{Y_{j}}$. 
Thus, the divisor 
$$K_{\amalg_{j}D^{\nu}_{Y_{j}}}+G_{\amalg_{j}D^{\nu}_{Y_{j}}}=\Bigl(\sum_{j}(K_{Y_{j}}+D_{Y_{j}}+G_{Y_{j}})\Bigr)\Big{|}_{\amalg_{j}D^{\nu}_{Y_{j}}}$$
is ample. 
Moreover, since the morphism $\bar{D}^{\nu} \to \amalg_{j}D^{\nu}_{Y_{j}}$ is birational,  the involution $\tau$ of $\bar{D}^{\nu}$ induces a small birational map $\tau'\colon \amalg_{j}D^{\nu}_{Y_{j}} \dashrightarrow \amalg_{j}D^{\nu}_{Y_{j}}$ such that $\tau'_{*}G_{\amalg_{j}D^{\nu}_{Y_{j}}}=G_{\amalg_{j}D^{\nu}_{Y_{j}}}$. 
Since $K_{\amalg_{j}D^{\nu}_{Y_{j}}}+G_{\amalg_{j}D^{\nu}_{Y_{j}}}$ is ample, it follows that $\tau'$ is an involution of $\amalg_{j}D^{\nu}_{Y_{j}}$. 

We have constructed the following objects.
\begin{itemize}
\item
A disjoint union of projective lc pairs $\amalg_{j}(Y_{j},D_{Y_{j}}+G_{Y_{j}})$ and birational morphisms $\pi_{j}\colon(\bar{X}^{(j)},\bar{D}_{j}+\bar{G}_{j})\to (Y_{j},D_{Y_{j}}+G_{Y_{j}})$ for all $j$ such that $K_{Y_{j}}+D_{Y_{j}}+G_{Y_{j}}$ is ample and $K_{\bar{X}^{(j)}}+\bar{D}_{j}+\bar{G}_{j}=\pi^{*}_{j}(K_{Y_{j}}+D_{Y_{j}}+G_{Y_{j}})$, and 
\item
An involution $\tau'$ of $\amalg_{j}D^{\nu}_{Y_{j}}$, which is induced by $\tau$, such that $\tau'_{*}G_{\amalg_{j}D^{\nu}_{Y_{j}}}=G_{\amalg_{j}D^{\nu}_{Y_{j}}}$. 
\end{itemize}
By the gluing theory of Koll\'ar (\cite[Corollary 5.37, Corollary 5.33, Theorem 5.38]{kollar-mmp}), we get a projective slc pair $(Y,\Delta_{Y})$ whose 
normalization is $\amalg_{j}(Y_{j},D_{Y_{j}}+G_{Y_{j}})$ and the conductor is $\sum_{j}D_{Y_{j}}$. 
Then, we have a morphism
$$(\bar{X},\bar{\Delta})\to (Y,\Delta_{Y})$$
which is the composition of $\bar{X}=\amalg_{j}\bar{X}^{(j)}\to \amalg_{j}Y_{j}$ and $\amalg_{j}Y_{j} \to Y$. 
By construction, $K_{Y}+\Delta_{Y}$ is ample and the involution $\tau$ of  $\bar{D}^{\nu}$ is an involution over $Y$. 
\end{step4}

\begin{step4}\label{step3--slc}
In this final step, we construct a birational morphism $(X,\Delta)\to (Y,\Delta_{Y})$ satisfying the conditions of Theorem \ref{thm--mmpslclogbig}. 

In Step \ref{step2--slc}, we constructed a morphism $(\bar{X},\bar{\Delta})\to (Y,\Delta_{Y})$ such that the involution $\tau$ of $\bar{D}^{\nu}$ is an involution over $Y$. 
Since $\tau$ is the involution defined by the normalization $(\bar{X},\bar{\Delta})\to (X,\Delta)$, the set theoretical equivalence relation defined by $\tau$ is finite. 
By applying \cite[Corollary 5.33, Theorem 5.38]{kollar-mmp} to $\tau$ over $Y$, we get a projective slc pair $(X',\Delta_{X'})$ with a morphism $X'\to Y$ such that the normalization of $X'$ is $\bar{X}$ and the conductor is $\bar{D}$. 
By \cite[Proposition 5.3]{kollar-mmp}, there is an isomorphism $X\to X'$  such that the birational transform of $\Delta$ on $X'$ is $\Delta_{X'}$. 
Let $\pi\colon X\to Y$ be the composition of $X\to X'$ and $X'\to Y$. 

Now we have a morphism
$$\pi\colon (X,\Delta) \to (Y,\Delta_{Y})$$
of projective slc pairs. 
By construction, $\pi$ is birational and an isomorphism over the generic points of codimension one singular locus. 
We also see that $K_{Y}+\Delta_{Y}$ is ample. 
Finally, we will prove $K_{X}+\Delta=\pi^{*}(K_{Y}+\Delta_{Y})$. 
We define $\Gamma$ on $X$ by $$K_{X}+\Gamma=\pi^{*}(K_{Y}+\Delta_{Y}).$$ 
We will show $\Gamma=\Delta$. 
Since $\pi$ is an isomorphism over the generic points of codimension one singular locus, the support of $\Gamma$ does not contain any codimension one singular locus. 
Thus, it is sufficient to show $\Gamma=\Delta$ outside the codimension one singular locus of $X$ and non-normal locus of $X$. 
By construction in Step \ref{step2--slc}, we have $K_{\bar{X}}+\bar{\Delta}=\nu^{*}\pi^{*}(K_{Y}+\Delta_{Y})$, where $\nu\colon \bar{X}\to X$ is the normalization. 
This implies that $\Gamma = \Delta$ on the normal locus of $X$, so it follows that $\Gamma=\Delta$. 
In this way, we have $K_{X}+\Delta=\pi^{*}(K_{Y}+\Delta_{Y})$. 
\end{step4}
As discussed above, $\pi\colon (X,\Delta) \to (Y,\Delta_{Y})$ is the desired birational morphism. 
We finish the proof. 
\end{proof}

\section{Varieties over an algebraically closed field of characteristic zero}\label{sec5}

In this section, we give a remark on generalizations of results of this paper to varieties over an algebraically closed field of characteristic zero.  

We go back to the definitions of the invariant Iitaka dimension (Definition \ref{defn--inv-iitaka-dim}) and the numerical dimension (Definition \ref{defn--num-dim}). 
Let $k$ be an algebraically closed field of characteristic zero. 
For any normal projective variety $X$ over $k$ and $\mathbb{R}$-Cartier divisor $D$ on $X$, we can define $\kappa_{\iota}(X,D)$ and $\kappa_{\sigma}(X,D)$ by the same way as in Definition \ref{defn--inv-iitaka-dim} and Definition \ref{defn--num-dim}, respectively. 
A problem comes up when we define the relative invariant Iitaka dimension and the relative numerical dimension. 
For a given projective morphism from a normal variety to a variety and an $\mathbb{R}$-Cartier divisor, the relative invariant Iitaka dimension and the relative numerical dimension are defined with a sufficiently general fiber of the Stein factorization of the morphism. 
However, when $k$ is countable, we cannot always find a sufficiently general closed point, so sufficiently general fibers do not always exist.   
Therefore, when the base field is not necessarily the complex number field $\mathbb{C}$, we define the relative invariant Iitaka dimension and the relative numerical dimension as follows. 

\begin{defn}\label{defn--dim-2}
Let $X\to Z$ be a projective morphism from a normal variety to a variety over an algebraically closed field ${{k}}$ of characteristic zero, and let $X_{\bar{\eta}}$ be the geometric generic fiber of the Stein factorization of $X\to Z$.  
Let $D$ be an $\mathbb{R}$-Cartier divisor on $X$. 
We define the {\em relative invariant Iitaka dimension} of $D$ over $Z$, denoted by $\hat{\kappa}_{\iota}(X/Z,D)$, by $\kappa_{\iota}(X_{\bar{\eta}},D|_{X_{\bar{\eta}}})$.  
Similarly, we define the {\em relative numerical dimension} of $D$ over $Z$, denoted by $\hat{\kappa}_{\sigma}(X/Z,D)$, by $\kappa_{\sigma}(X_{\bar{\eta}},D|_{X_{\bar{\eta}}})$. 
\end{defn}

\begin{defn}\label{defn--abund-2}
Let $\pi\colon X\to Z$ be a projective morphism from a normal variety to a variety over an algebraically closed field $k$ of characteristic zero. 
For any $\mathbb{R}$-Cartier divisor $D$ on $X$, we say that $D$ is $\pi$-{\em abundant} (or {\em abundant over} $Z$) if the equality $\hat{\kappa}_{\iota}(X/Z,D)=\hat{\kappa}_{\sigma}(X/Z,D)$ holds. 
\end{defn}

With Definition \ref{defn--abund-2}, for any lc pair $(X,\Delta)\to Z$ and any $\mathbb{R}$-Cartier divisor $D$ on $X$, we can define the property of being log abundant over $Z$. 

When ${{k}}=\mathbb{C}$ and the base variety of a morphism is quasi-projective, the following lemma shows that the new definitions of the relative invariant Iitaka dimension and the relative numerical dimension coincide with those of Definition \ref{defn--inv-iitaka-dim} and Definition \ref{defn--num-dim}, respectively. 

\begin{lem}\label{lem--equality}
Let $\pi\colon X\to Z$ be a projective morphism from a normal variety to a variety over $\mathbb{C}$, and let $D$ be an $\mathbb{R}$-Cartier divisor on $X$. 
If $Z$ is quasi-projective, then $\hat{\kappa}_{\iota}(X/Z,D)=\kappa_{\iota}(X/Z,D)$ and $\hat{\kappa}_{\sigma}(X/Z,D)=\kappa_{\sigma}(X/Z,D)$. 
\end{lem}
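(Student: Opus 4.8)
The plan is to reduce both equalities to the definitions by comparing a sufficiently general fiber (in the sense of Definition \ref{defn--inv-iitaka-dim} and Definition \ref{defn--num-dim}) with the geometric generic fiber $X_{\bar\eta}$ (in the sense of Definition \ref{defn--dim-2}). After taking the Stein factorization of $\pi$, which changes neither side of either equality and preserves quasi-projectivity of the base, I may assume $\pi$ is a contraction; write $\eta$ for the generic point of $Z$, $\bar\eta$ for a geometric point over it, $X_\eta$ for the generic fiber and $X_{\bar\eta}=X_\eta\times_{k(\eta)}\overline{k(\eta)}$ for the geometric generic fiber. First I would recall that, since $\mathbb{C}$ is uncountable and $Z$ is a quasi-projective variety over $\mathbb{C}$, a sufficiently general closed point $z\in Z$ exists, and for such $z$ the fiber $F=\pi^{-1}(z)$ with $D|_F$ computes $\kappa_\iota(X/Z,D)$ and $\kappa_\sigma(X/Z,D)$; this is exactly what makes Definition \ref{defn--inv-iitaka-dim} and Definition \ref{defn--num-dim} well posed.

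The key step is the following semicontinuity/spreading-out comparison: there is a dense open $U\subseteq Z$ and an isomorphism $k(\eta)\cong \varinjlim_{z} \mathcal{O}_{U,z}$-type argument showing that, for a general closed point $z\in U$, the pair $(F,D|_F)$ is obtained from $(X_\eta,D|_{X_\eta})$ by base change along $k(\eta)\hookrightarrow \mathbb{C}$ (after first spreading $X_\eta\to \operatorname{Spec}k(\eta)$ and $D|_{X_\eta}$ out to a family over $U$ and then restricting to $z$). Concretely I would (i) choose $U$ so that $X_U\to U$ is flat with $X_U|_{X_\eta}=X_\eta$ and $D_U$ an $\mathbb{R}$-Cartier divisor restricting to $D|_{X_\eta}$, (ii) invoke the invariance of the relevant cohomological data under the two base changes $k(\eta)\to \mathbb{C}$ (for a general $z$) and $k(\eta)\to \overline{k(\eta)}$. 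For the Iitaka dimension side: $h^0\big(F,\mathcal{O}_F(\lfloor mD|_F\rfloor)\big)$, $h^0\big(X_\eta,\mathcal{O}(\lfloor mD|_{X_\eta}\rfloor)\big)$, and $h^0\big(X_{\bar\eta},\mathcal{O}(\lfloor mD|_{X_{\bar\eta}}\rfloor)\big)$ all agree for $m$ sufficiently divisible (when $D$ is $\mathbb{Q}$-Cartier; for $\mathbb{R}$-Cartier $D$ one works via the effective $\mathbb{R}$-divisor $E\sim_{\mathbb{R},Z}D$ when it exists, and notes both sides are $-\infty$ when it does not), because field extension is flat and preserves dimensions of cohomology of coherent sheaves and also preserves $\mathbb{R}$-linear equivalence of effective divisors to a fixed one. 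Hence $\kappa_\iota(F,D|_F)=\kappa_\iota(X_{\bar\eta},D|_{X_{\bar\eta}})$, which is the first equality. For the numerical dimension side, the function $m\mapsto h^0\big(X,\mathcal{O}_X(\lfloor mD\rfloor+A)\big)$ appearing in Definition \ref{defn--num-dim}, evaluated on $F$ respectively on $X_{\bar\eta}$ with $A$ the restriction of an ample Cartier divisor, is again invariant under these flat base changes, so $\sigma(D|_F;A|_F)=\sigma(D|_{X_{\bar\eta}};A|_{X_{\bar\eta}})$ for each $A$, and taking the maximum over $A$ (noting that ample Cartier divisors on $F$ and on $X_{\bar\eta}$ correspond up to the comparison, or that it suffices to use a single ample and its multiples) yields $\kappa_\sigma(F,D|_F)=\kappa_\sigma(X_{\bar\eta},D|_{X_{\bar\eta}})$, the second equality.

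I expect the main obstacle to be the bookkeeping in the spreading-out step: making precise that a ``sufficiently general'' closed fiber really is a base change of the geometric generic fiber, and that the choice of $U$ and of the spread-out family can be made compatibly for the Iitaka-dimension computation (all $m$ at once) and for the numerical-dimension computation (all $m$ and a cofinal family of auxiliary Cartier divisors $A$ at once). This is standard Noetherian spreading-out (EGA IV, or the treatment in \cite{nakayama}) together with flat base change for cohomology, but it must be stated carefully because $D$ is only $\mathbb{R}$-Cartier; the $\mathbb{R}$-coefficient case is handled exactly as in Definition \ref{defn--num-dim} by reducing, via the perturbation $D+\epsilon A$ and the limit $\epsilon\to 0$, to the integral/$\mathbb{Q}$ situation, using that $\kappa_\sigma$ and $\kappa_\iota$ of the two fibers vary in the same way under the perturbation. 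Once the comparison $\big(\kappa_\iota(F,D|_F),\kappa_\sigma(F,D|_F)\big)=\big(\kappa_\iota(X_{\bar\eta},D|_{X_{\bar\eta}}),\kappa_\sigma(X_{\bar\eta},D|_{X_{\bar\eta}})\big)$ is in hand, the lemma follows immediately by unwinding the two sets of definitions.
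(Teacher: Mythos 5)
Your $\kappa_{\sigma}$ comparison is essentially sound and close to the paper's own argument (generic flatness plus semicontinuity of $h^{0}$ along the fibers), but the $\kappa_{\iota}$ part has a genuine gap. By Definition \ref{defn--inv-iitaka-dim}, $\kappa_{\iota}(X/Z,D)$ is $-\infty$ unless there exists a \emph{global} effective $\mathbb{R}$-divisor $E$ on $X$ with $D\sim_{\mathbb{R},Z}E$; it is not unconditionally equal to $\kappa_{\iota}(F,D|_{F})$ for a general fiber $F$. So even granting your spreading-out identification of a very general closed fiber with a base change of $X_{\bar{\eta}}$, the equality $\kappa_{\iota}(F,D|_{F})=\kappa_{\iota}(X_{\bar{\eta}},D|_{X_{\bar{\eta}}})$ does not give the lemma: you still must show that $\hat{\kappa}_{\iota}(X/Z,D)\geq 0$, i.e.\ $D|_{X_{\bar{\eta}}}$ being $\mathbb{R}$-linearly equivalent to an effective divisor, forces the existence of such a global $E$ over $Z$. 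Your sentence that ``both sides are $-\infty$ when it does not [exist]'' is exactly this implication in contrapositive form, and it is asserted rather than proved. Moreover, the tool you offer for the $\mathbb{R}$-coefficient case, perturbing to $D+\epsilon A$ and letting $\epsilon\to 0$, is adapted to $\kappa_{\sigma}$ but not to $\kappa_{\iota}$: the invariant Iitaka dimension is not defined by such limits, and the remark immediately following the lemma in the paper points out that for irrational coefficients all the $h^{0}(\llcorner mD\lrcorner)$-type comparisons you rely on can vanish identically even when $\kappa_{\iota}(X/Z,D)\geq 0$, so the ``standard'' fiberwise $h^{0}$ bookkeeping cannot detect the relevant effectivity.

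The paper closes precisely this gap with a convex-geometry step that your proposal lacks: if $D|_{X_{\bar{\eta}}}$ is $\mathbb{R}$-linearly equivalent to an effective divisor, one writes $D=\sum_{i}r_{i}D_{i}$ with $r_{i}>0$, $\sum_{i}r_{i}=1$, each $D_{i}$ $\mathbb{Q}$-Cartier and $D_{i}|_{X_{\bar{\eta}}}\sim_{\mathbb{Q}}\bar{E}_{i}\geq 0$; for $\mathbb{Q}$-Cartier divisors nonvanishing on the geometric generic fiber does descend (via $\pi_{*}\mathcal{O}_{X}(mD_{i})\neq 0$) to $D_{i}\sim_{\mathbb{Q},Z}E_{i}\geq 0$, and then $E=\sum_{i}r_{i}E_{i}$ is the required global relatively effective representative. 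Once that existence statement is in place, the equality $\hat{\kappa}_{\iota}(X/Z,D)=\kappa_{\iota}(X/Z,D)$ follows directly by computing both sides with $E$, and no identification of very general closed fibers with the geometric generic fiber is needed at all. If you wish to keep your fiber-comparison route, you must add this descent-of-effectivity argument (or an equivalent one); as written, the first equality of the lemma is unproven.
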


\begin{proof}
The equality $\hat{\kappa}_{\sigma}(X/Z,D)=\kappa_{\sigma}(X/Z,D)$ follows from Definition \ref{defn--num-dim}, the generic flatness, the semi-continuity of dimension of cohomology for flat coherent sheaves, and the flat base change theorem for cohomology.  
By definition, $\kappa_{\iota}(X/Z,D) \geq 0$ if and only if $D\sim_{\mathbb{R},Z}E$ for some $E\geq0$. 

We show that the latter condition is equivalent to $\hat{\kappa}_{\iota}(X/Z,D)\geq 0$. 
It is obvious that the existence of an effective $\mathbb{R}$-divisor $E$ such that $D\sim_{\mathbb{R},Z}E$ implies $\hat{\kappa}_{\iota}(X/Z,D)\geq 0$. 
Conversely, suppose that $\hat{\kappa}_{\iota}(X/Z,D)\geq 0$. 
Then, by the standard argument of convex geometry, we can find $\mathbb{Q}$-Cartier divisors $D_{1},\cdots, D_{l}$, positive real numbers $r_{1},\cdots, r_{l}$, and effective $\mathbb{Q}$-divisors $\bar{E}_{1},\cdots , \bar{E}_{l}$ on $X_{\bar{\eta}}$ such that $\sum_{i=1}^{l}r_{i}=1$, $\sum_{i=1}^{l}r_{i}D_{i}=D$, and 
$$D_{i}|_{X_{\bar{\eta}}}\sim_{\mathbb{Q}} \bar{E}_{i}$$
for each $1\leq i\leq l$ (see, for example, \cite[Proof of Lemma 2.10]{hashizumehu}).  
Since $D_{i}$ are $\mathbb{Q}$-Cartier, by the flat base change theorem for cohomology, the restriction of any $D_{i}$ to the generic fiber of $\pi$ is $\mathbb{Q}$-linearly equivalent to an effective $\mathbb{Q}$-divisor.
By the quasi-projectivity of $Z$, for every $1\leq i\leq l$ we can find $E_{i}\geq0$ such that $D_{i}\sim_{\mathbb{Q},Z}E_{i}$. 
Then
$$D=\sum_{i=1}^{l}r_{i}D_{i}\sim_{\mathbb{R},Z}\sum_{i=1}^{l}r_{i}E_{i}.$$
Since all $r_{i}$ are positive, the right hand side is effective. 
In this way, $\hat{\kappa}_{\iota}(X/Z,D)\geq 0$ implies the existence of an effective $\mathbb{R}$-divisor $E$ such that $D\sim_{\mathbb{R},Z}E$. 

The above discussion shows that $\kappa_{\iota}(X/Z,D) \geq 0$ is equivalent to $\hat{\kappa}_{\iota}(X/Z,D)\geq 0$. 
When $\kappa_{\iota}(X/Z,D) \geq 0$, since there is an effective $\mathbb{R}$-divisor $E$ on $X$ such that $D\sim_{\mathbb{R},Z}E$, the equality $\hat{\kappa}_{\iota}(X/Z,D)=\kappa_{\iota}(X/Z,D)$ directly follows. 
From these discussions, we have $\hat{\kappa}_{\sigma}(X/Z,D)=\kappa_{\sigma}(X/Z,D)$ and $\hat{\kappa}_{\iota}(X/Z,D)=\kappa_{\iota}(X/Z,D)$. 
We finish the proof. 
\end{proof}

\begin{rem}
Lemma \ref{lem--equality} for $\mathbb{Q}$-divisors is well-known. 
On the other hand, when $D$ has irrational coefficients, we may have $\kappa_{\iota}(X/Z,D) \geq 0$ though $H^{0}(F,\mathcal{O}_{F}(\llcorner mD|_{F}\lrcorner))=\{0\}$ for all $m \in \mathbb{Z}_{>0}$ and all fibers $F$ of $X\to Z$ (for example, put $X=\mathbb{P}^{1}_{\mathbb{C}}$, $Z={\rm Spec}\,\mathbb{C}$ and $D=\sqrt{2}(p_{1}-p_{2})$ for distinct closed points $p_{1}$ and $p_{2}$). 
Hence, the standard argument with $H^{0}(X_{\bar{\eta}},\mathcal{O}_{X_{\bar{\eta}}}(\llcorner mD|_{X_{\bar{\eta}}}\lrcorner))$ does not work well. 
\end{rem}

Thanks to Lemma \ref{lem--equality}, in the results of this paper we may assume that the property of being log abundant is defined with Definition \ref{defn--abund-2}. 
Then, all the results of this paper hold for varieties over an algebraically closed field $k$ of characteristic zero.



\begin{thebibliography}{99}





\bibitem{ambrokollar} F.~Ambro, J.~Koll\'ar, {Minimal models of semi-log-canonical pairs}. In Moduli of K-stable varieties, Springer INdAM Ser., {\textbf{31}}, Springer, Cham (2019), 1–-13. 






\bibitem{birkar-flip}
C.~Birkar, 
Existence of log canonical flips and a special LMMP, 
Publ. Math. Inst. Hautes \'Etudes Sci. {\textbf{115}} (2012), no.~1, 325--368.


\bibitem{bchm}C.~Birkar, P.~Cascini, C.~D.~Hacon, J.~M\textsuperscript{c}Kernan, Existence of minimal models for varieties of log general type, J. Amer. Math. Soc. {\textbf{23}} (2010), no.~2, 405--468.


\bibitem{bhzariski} C.~Birkar, Z.~Hu, Polarized pairs, log minimal models, and Zariski decompositions, Nagoya Math. J. {\textbf{215}} (2014), 203--224. 


\bibitem{birkarhu-arg}C.~Birkar, Z.~Hu, Log canonical pairs with good augmented base loci, Compos. Math. {\textbf{150}} (2014), no.~4,579--592. 





\bibitem{bz} C.~Birkar, D.~Q.~Zhang, Effectivity of Iitaka fibrations and pluricanonical systems of polarized pairs, Publ. Math. Inst. Hautes \'Etudes Sci. {\textbf{123}} (2016), no.~1, 283--331.



\bibitem{choi} S.~R.~Choi, The geography of log models and its applications, PhD Thesis, Johns Hopkins University, 2008.








\bibitem{fujino-abund-logbig} O.~Fujino, Base Point Free Theorem of Reid-Fukuda Type, J. Math. Sci. Univ. Tokyo {\textbf{7}} (2000), 1--5.

\bibitem{fujino-what-log-ter}O.~Fujino, {What is log terminal?} In Flips for $3$-folds and $4$-folds, Oxford University Press (2007).

\bibitem{fujino-sp-ter}O.~Fujino, {Special termination and reduction to pl flips.} In Flips for $3$-folds and $4$-folds, Oxford University Press (2007).


\bibitem{fujino-fund}O.~Fujino, Fundamental theorems for the log minimal model program, Publ. Res. Inst. Math. Sci. {\textbf{47}} (2011), no.~3, 727--789. 

\bibitem{fujino-abund-saturation} O.~Fujino, Basepoint-free theorems: saturation, $b$-divisors, and canonical bundle formula, Algebra Number Theory {\textbf{6}} (2012), no.~4, 797--823. 






\bibitem{fujino-fund-slc}O.~Fujino, Fundamental theorems for semi log canonical pairs, Algebraic Geom. {\textbf{1}} (2014), no.~2, 194--228. 

\bibitem{fujino-book}O.~Fujino, {Foundations of the minimal model program}, MSJ Mem. \textbf{35}, Mathematical Society of Japan, Tokyo, 2017. 



\bibitem{fg-bundle} O.~Fujino, Y.~Gongyo, On canonical bundle formulas and subadjunctions, Michigan Math. J. {\textbf{61}} (2012), no.~2, 255--264. 


\bibitem{fujino-gongyo}O.~Fujino, Y.~Gongyo, Log pluricanonical representations and abundance conjecture, Compos. Math. {\textbf{150}} (2014) no.~4, 593--620.










\bibitem{fukuda} S.~Fukuda, A base point free theorem of Reid type. II, Proc. Japan Acad. Ser. A Math. Sci. {\textbf{75}} (1999), no.~3, 32--34.




\bibitem{gongyo1}Y.~Gongyo, On the minimal model theory for dlt pairs of numerical log Kodaira dimension zero, Math. Res. Lett., {\textbf{18}} (2011) , no.~5, 991--1000.



\bibitem{gongyolehmann}Y.~Gongyo, B.~Lehmann, Reduction maps and minimal model theory, Compos. Math. {\textbf{149}} (2013), no.~2, 295--308.




\bibitem{hacon-liu}
C.~D.~Hacon, J.~Liu, Existence of flips for generalized lc pairs, preprint (2021), arXiv:2105.13590v3.

\bibitem{hmx-boundgentype}C.~D.~Hacon, J.~M\textsuperscript{c}Kernan, C.~Xu, Boundedness of moduli of varieties of general type, J. Eur. Math. Soc. {\textbf{20}} (2018), no.~4, 865--901.  



\bibitem{haconxu-lcc}C.~D.~Hacon, C.~Xu, Existence of log canonical closures, Invent. Math. {\textbf{192}} (2013), no.~1, 161--195. 

\bibitem{haconxu}C.~D.~Hacon, C.~Xu, On finiteness of $B$-representation and semi-log canonical abundance, Adv, Stud. Pure. Math. {\textbf{70}} (2016), Minimal Models and extremal rays--Kyoto, 2011, 361--378. 

\bibitem{hanli} J.~Han, Z.~Li, 
Weak Zariski decompositions and log terminal models for generalized polarized pairs, 
Math. Z. \textbf{302} (2022), no.~2, 707--741. 


\bibitem{has-trivial}
K.~Hashizume, 
Minimal model theory for relatively trivial log canonical pairs, Ann. Inst. Fourier (Grenoble) {\textbf{68}} (2018), no.~5, 2069--2107. 


\bibitem{has-mmp}
K.~Hashizume, 
Remarks on special kinds of the relative log minimal model program,
Manuscripta Math. {\textbf{160}} (2019), no.~3, 285--314. 

\bibitem{has-class}
K.~Hashizume, 
A class of singularity of arbitrary pairs and log canonicalizations, Asian J. Math, {\textbf{24}} (2020), no. 2, 207--238. 

\bibitem{has-flop} K.~Hashizume, Relations between two log minimal models of log canonical pairs,
Int. J. Math. {\textbf{31}} (2020), no.~13, 2050103.

\bibitem{has-nonvan-gpair} 
K.~Hashizume, 
Non-vanishing theorem for generalized log canonical pairs with a polarization, Selecta Math. (N.S.) (2022). 

\bibitem{has-iitakafibration}
K.~Hashizume, 
Iitaka fibrations for dlt pairs polarized by a nef and log big divisor, Forum Math. Sigma.  {\textbf{10}} (2022), e85.  

\bibitem{has-lc-trivialfib}
K.~Hashizume, 
A note on lc-trivial fibrations, preprint (2022), arXiv:2206.03921v1.

\bibitem{hashizumehu}
K.~Hashizume, Z.~Hu, 
On minimal model theory for log abundant lc pairs, J. Reine Angew. Math.,  {\textbf{767}} (2020), 109--159.  





\bibitem{kawamata-abund} Y.~Kawamata, Pluricanonical systems on minimal algebraic varieties, Invent.  Math. {\textbf{79}} (1985), no.~3, 567--588.











\bibitem{kollar-mmp}J.~Koll\'ar, {Singularities of the Minimal Model Program}, Cambridge Tracts in Mathematics {\textbf{200}}. Cambridge University Press, Cambridge, 2013.


\bibitem{kollarkovacs}J.~Koll\'ar, S.~Kov\'acs, Log canonical singularities are Du Bois, J. Amer. Math. Soc. {\textbf{23}} (2010), no.~3, 791--813.


\bibitem{kollar-mori} J.~Koll\'ar, S.~Mori, {Birational geometry of algebraic varieties}. With the collaboration of C.~H.~Clemens and A.~Corti. Translated from the 1998 Japanese original. Cambridge Tracts in Mathematics {\textbf{134}}. Cambridge University Press, Cambridge, 1998.


\bibitem{lai}C.~J.~Lai, Varieties fibered by good minimal models, Math. Ann. {\textbf{350}} (2011), no.~3, 533--547. 



\bibitem{lt}
V.~Lazi\'c, N.~Tsakanikas, On the existence of minimal models for log canonical pairs, Publ. Res. Inst. Math. Sci. {\textbf{58}} (2022), no.~2, 311--339. 

\bibitem{ltj}
V.~Lazi\'c, N.~Tsakanikas, Special MMP for log canonical generalised pairs, with an appendix joint with Xiaowei Jiang, Selecta Math. (N.S.) (2022).  

\bibitem{liu-xie}
J.~Liu, L.~Xie, Relative Nakayama--Zariski decomposition and minimal models of generalized pairs, preprint (2022), arXiv:2207.09576v3.







\bibitem{nakayama}N.~Nakayama, {Zariski-decomposition and abundance}, MSJ Mem.  {\textbf{14}}, Mathematical Society of Japan, Tokyo, 2004. 




\bibitem{reid-shokurov} M.~Reid, Commentary by M.~Reid, in 3-fold log flips, Appendix by Yujiro Kawamata: The minimal discrepancy coefficients of terminal singularities in dimension three, Russ. Acad. Sci., Izv., Math. 40 (1993), no.~1, 95--202.





\end{thebibliography}
\end{document}